\theoremstyle{plain}
\newtheorem{proposition}{Proposition}[section]
\newtheorem{theorem}[proposition]{Theorem}
\newtheorem{corollary}[proposition]{Corollary}
\newtheorem{lemma}[proposition]{Lemma}
\newtheorem{definition}[proposition]{Definition}
\newtheorem{remark}[proposition]{Remark}
\numberwithin{equation}{section}
\DeclareMathOperator{\Vol}{Vol}	
\newcommand{\R}{\mathbb{R}}					% real numbers
\newcommand{\C}{\mathbb{C}}					% complex numbers
\newcommand{\N}{\mathbb{N}}					% positive integers
\newcommand{\eps}{{\varepsilon}}
\newcommand{\x}{\theta}
\newcommand{\red}{\color{red}}
\def\norm#1#2{\|#1\|_{#2}}
\def\refer#1{~\ref{#1}}
\def\refeq#1{~(\ref{#1})}
\def\ccite#1{~\cite{#1}}
\def\suite#1#2#3{(#1_{#2})_{#2\in {#3}}}
\def\inte#1{
\displaystyle\mathop{#1\kern0pt}^\circ }
\let\al=\alpha
\let\g=\gamma
\let\d=\delta
\let\e=\varepsilon
\let\lam=\lambda
\let\s=\sigma
\let\f=\phi
\let\p=\psi
\let\D=\Delta
\let\Lam=\Lambda
\let\S=\Sigma
\let\wt=\widetilde
\let\wh=\widehat
\def\cC{{\mathcal C}}
\def\cD{{\mathcal D}}
\def\cF{{\mathcal F}}
\def\cL{{\mathcal L}}
\def\cS{{\mathcal S}}
\def\cU{{\mathcal U}}
\def\cW{{\mathcal W}}
\def\cX{{\mathcal X}}
\def\S{{\mathop{\mathbb  S\kern 0pt}\nolimits}}
\def\virgp{\raise 2pt\hbox{,}}
\def\cdotpv{\raise 2pt\hbox{;}}
\def\eqdefa{\buildrel\hbox{\footnotesize def}\over =}
\def\Sp{\mathop{\rm Sp}\nolimits}
\def\C{\mathop{\mathbb C\kern 0pt}\nolimits}
\def\EE{\mathop{{\mathbb E \kern 0pt}}\nolimits}
\def\K{\mathop{\mathbb K\kern 0pt}\nolimits}
\def\Q{\mathop{\mathbb Q\kern 0pt}\nolimits}
\def\R{{\mathop{\mathbb R\kern 0pt}\nolimits}}
\def\SS{\mathop{\mathbb S\kern 0pt}\nolimits}
\def\ZZ{\mathop{\mathbb Z\kern 0pt}\nolimits}
\def\TT{\mathop{\mathbb T\kern 0pt}\nolimits}
\def\P{\mathop{\mathbb P\kern 0pt}\nolimits}
\def \H{{\mathop {\mathbb H\kern 0pt}\nolimits}}
\newcommand{\ds}{\displaystyle}
\def\p{\partial}
\newcommand{\beq}{\begin{equation}}
\newcommand{\eeq}{\end{equation}}
\newcommand{\ben}{\begin{eqnarray}}
\newcommand{\een}{\end{eqnarray}}
\newcommand{\beno}{\begin{eqnarray*}}
\newcommand{\eeno}{\end{eqnarray*}}
\newcommand{\bqs}{\begin{equation*}}
\newcommand{\eqs}{\end{equation*}}
\newcommand{\andf}{\quad\hbox{and}\quad}
\newcommand{\with}{\quad\hbox{with}\quad}
\def\equivH#1 {\buildrel\hbox{\tiny {$#1$}}\over \equiv}
\def\simH#1 {\buildrel\hbox{\footnotesize {$#1$}}\over \sim}
\title[Engel group]
{Spectral  summability for  the quartic  oscillator \\ with applications to  the Engel group}
\date{\today}
\author[H. Bahouri]{Hajer Bahouri}
\address[H. Bahouri]
{CNRS  \&  Sorbonne Universit\'e  \\
 Laboratoire Jacques-Louis Lions (LJLL) UMR  7598 \\
4, Place Jussieu\\
75005 Paris, France.}
\email{hajer.bahouri@sorbonne-universite.fr}
\author[D. Barilari]{Davide Barilari}\address[D. Barilari]%
{Dipartimento di Matematica "Tullio Levi-Civita" \\
      Universit{\`a} di Padova \\
 Via Trieste 63 \\
 Padova, Italy}
\email{barilari@math.unipd.it}
\author[I. Gallagher]{Isabelle Gallagher}
\address[I. Gallagher]%
{DMA UMR 8553, \'Ecole normale sup\'erieure, CNRS, PSL Research University, 75005 Paris 
 \\
and UFR de math\'ematiques, Universit\'e  Paris Cit\'e,  75013 Paris, France.}
\email{gallagher@math.ens.fr}
\author[M. L\'eautaud]{Matthieu L\'eautaud}
\address[M. L\'eautaud]
{Laboratoire de Math\'ematiques d'Orsay, UMR 8628, Universit\'e Paris-Saclay, CNRS, Universit\'e Paris-Saclay, B\^atiment 307, 91405 Orsay Cedex France}
\email{matthieu.leautaud@math.u-psud.fr}
\begin{document}
\setstcolor{red}

\maketitle

\setcounter{tocdepth}{2}

\begin{abstract}
In this article, we investigate spectral properties of the sublaplacian $-\Delta_{G}$ on the Engel group, which is the main example of a Carnot group of step~3. We develop a new approach to the Fourier analysis on the Engel group in terms of a frequency set. 

This enables us to give fine estimates on the convolution kernel satisfying $F(-\Delta_{G})u=u\star k_{F}$, for suitable scalar functions $F$, and in turn to obtain proofs of classical functional embeddings, via Fourier techniques. 

This analysis requires a summability property on the spectrum of the quartic oscillator, which we obtain by means of semiclassical techniques and which is of independent interest. 
\end{abstract}
\tableofcontents

\noindent {\sl Keywords:}  Quartic oscillator, spectral analysis, semiclassical analysis, Engel group, functional embeddings, sub-Riemannian geometry, Carnot groups.

\noindent {\sl AMS Subject Classification (2020):} 43A30, 43A80, 53C17, 30C40.

\section{Introduction and statement of the main results} 

\subsection{The Engel group} Analysis on Lie groups is nowadays a rich and independent research field, with   applications and intersections with many fields of mathematics, from PDEs to geometry~\cite{faraut,Hebisch}. A particular class of such groups receiving increasing attention is given by the so-called Carnot groups. These groups, playing the role of local models in sub-Riemannian geometry as the Euclidean $\R^{d}$ does for Riemannian geometry, are nilpotent Lie groups diffeomorphic to $\R^{d}$ and homogeneous with respect to a family of dilations, which are automorphisms of the Lie algebra. The most renowned examples of such groups are Heisenberg groups, which are Carnot groups of step $2$.

 The Lie algebra $\mathfrak{g}$ of a Carnot group admits a stratification $\mathfrak{g}=\oplus_{i=1}^{s} \mathfrak{g}_{i}$ where the grading   is compatible with the dilations, and the first layer $\mathfrak{g}_{1}$ is Lie bracket generating, i.e., the smallest Lie algebra containing $\mathfrak{g}_{1}$ is $\mathfrak{g}$ itself, satisfying $\mathfrak{g}_{i+1}=[\mathfrak{g}_{1},\mathfrak{g}_{i}]$ with the convention $\mathfrak{g}_{s+1}=0$. The (smallest) integer $s$ satisfying this property is then called the step of the Carnot group.

While the analysis on Carnot groups of step 2 is now quite well understood (see for instance the monographs\ccite{bcdbookh, bfg2, fisher, follandstein, Hall, stein2, thangavelu, taylor1} and the references therein), much less can be said for Carnot groups of higher steps. The main example of a Carnot group of step 3, which is the focus of the present paper, is the so-called Engel group.
 
The Engel group $G$ is a nilpotent 4-dimensional Lie group which is connected and simply connected, and whose Lie algebra $\mathfrak{g}$ satisfies the following decomposition  
$$\mathfrak{g}=\mathfrak{g}_{1}\oplus \mathfrak{g}_{2}\oplus \mathfrak{g}_{3},
$$
with 
$$\dim \mathfrak{g}_{1}=2\,,\qquad \mathfrak{g}_{2}\eqdefa[\mathfrak{g}_{1},\mathfrak{g}_{1}]\,,\qquad \mathfrak{g}_{3}\eqdefa[\mathfrak{g}_{1},\mathfrak{g}_{2}]\, .
$$This group is described in  detail in Section~\ref{basic}.  Let us   recall that 
it is homogeneous of degree~$Q = 7$, and   
one can define a  sub-Riemannian distance  on~$G$, and the sub-Riemannian gradient~$\nabla_G f$. One can then consider the sublaplacian operator $$
\Delta_G  f\eqdefa\mathrm{div}(\nabla_G f)\, ,
$$
where $\mathrm{div}$ denotes the divergence with respect to the Haar measure on~$G$.  

\subsection{Spectral analysis of the sublaplacian} One of our goals in this paper is to provide an effective analysis of the spectral  properties of the sublaplacian~$\Delta_G$, having in mind the following version   of the classical spectral theorem for selfadjoint operators (see~\cite[Theorem~VIII.4 p.~260]{Reed-Simon-1} or~\cite[Th\'eor\`eme~4.5 p.~117]{Lewin}).
 \begin{theorem}
 \label{t:genspect}
{\sl Let $(A,D(A))$ be a selfadjoint operator on a separable Hilbert space $H$. Then, there exists: 
\begin{itemize}
\item a Borel set $B \subset \R^d$, $d \geq 1$, endowed with a locally finite Borel measure $\mathfrak{m}$ on $B$, 
\item a locally bounded real valued function $a \in L^\infty_{loc}(B;\R,d\mathfrak{m})$,
\item an isometry $U :H \to L^2(B,d\mathfrak{m})$, 
\end{itemize}
such that $UAU^*=M_a$, the operator of multiplication by the function $a$, with $UD(A)=D(M_a)$.
 }\end{theorem}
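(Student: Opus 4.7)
The plan is to reduce to the case of a bounded self-adjoint operator with a cyclic vector, and then to piece things together. To handle unboundedness, I would first pass to the Cayley transform $V = (A-i)(A+i)^{-1}$, which is unitary, or alternatively to the bounded resolvent $R = (A+i)^{-1}$; both are normal bounded operators whose spectral theory determines that of~$A$ through the functional relation $a = \frac{1}{r} - i$ (for the resolvent). This reduces the problem to the spectral theorem for a bounded self-adjoint (or normal) operator $T$ on~$H$.

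Next, I would construct a continuous functional calculus $C(\spec(T)) \to \mathcal{B}(H)$, $f \mapsto f(T)$, using polynomial approximation and the Stone--Weierstrass theorem together with the identity $\|p(T)\|_{\mathcal{B}(H)} = \sup_{\spec(T)} |p|$ for self-adjoint~$T$. Given a unit vector $\varphi \in H$, the map $f \mapsto \langle f(T) \varphi , \varphi \rangle_H$ is a positive linear functional on $C(\spec(T))$, and the Riesz--Markov theorem produces a Borel probability measure $\mu_\varphi$ on $\spec(T) \subset \R$ such that
\begin{equation*}
\langle f(T) \varphi , \varphi \rangle_H = \int_{\spec(T)} f(\lambda) \, d\mu_\varphi(\lambda).
\end{equation*}
On the cyclic subspace $H_\varphi = \overline{\{f(T) \varphi : f \in C(\spec(T))\}}$, the assignment $f(T) \varphi \mapsto f$ then extends by density to a unitary $U_\varphi : H_\varphi \to L^2(\spec(T),d\mu_\varphi)$ that conjugates $T|_{H_\varphi}$ to multiplication by the coordinate function.

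To handle the general case, I would use separability of~$H$ to decompose it as a countable orthogonal direct sum $H = \bigoplus_{n \in \N} H_{\varphi_n}$ of cyclic subspaces for $T$, obtained by iteratively picking unit vectors in the orthogonal complements of the previous cyclic subspaces and invoking separability to exhaust $H$. One then takes $B$ to be the disjoint union $\bigsqcup_n \spec(T) \times \{n\} \subset \R \times \N \subset \R^2$, equipped with the measure $\mathfrak{m}$ whose restriction to the $n$-th copy is $\mu_{\varphi_n}$, and $a(\lambda,n) = \lambda$, which is locally bounded since $\spec(T)$ is bounded. The unitary~$U = \bigoplus_n U_{\varphi_n}$ then realizes $UTU^* = M_a$. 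Undoing the Cayley transform via composition with $\lambda \mapsto \frac{1}{\lambda} - i$ (which is continuous on $\spec(V) \setminus \{0\}$ and whose preimage of $0$ has $\mathfrak{m}$-measure zero since $V$ is unitary and $A$ is self-adjoint, so $1 \notin \spec(V)$ as an eigenvalue) yields the statement for the original unbounded~$A$, with $a$ only locally bounded because the values of $A$ are unbounded. The main obstacle is the decomposition of $H$ into a countable orthogonal family of cyclic subspaces compatible with $T$; this requires separability in a nontrivial way, whereas all the analytic steps---functional calculus, Riesz--Markov, and the Cayley reduction---are essentially routine manipulations.
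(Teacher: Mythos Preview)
The paper does not actually prove Theorem~\ref{t:genspect}: it is stated as a classical version of the spectral theorem and referenced to \cite[Theorem~VIII.4 p.~260]{Reed-Simon-1} and \cite[Th\'eor\`eme~4.5 p.~117]{Lewin}, then used only as motivation for the subsequent construction on the Engel group. So there is no in-paper proof to compare your proposal against.

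That said, your outline is essentially the standard textbook route (cyclic decomposition plus Riesz--Markov, with a reduction to the bounded case), and it is broadly correct. A couple of small points are worth tightening. First, you conflate the Cayley transform and the resolvent: the inverse map $a = 1/r - i$ applies to $R = (A+i)^{-1}$, not to the Cayley transform $V$, for which one needs $A = i(1+V)(1-V)^{-1}$; pick one and stick with it. Second, with the resolvent route $R$ is bounded normal but not self-adjoint, so your Stone--Weierstrass argument for the identity $\|p(T)\| = \sup_{\spec(T)}|p|$ (stated for self-adjoint $T$) does not directly apply; you would either work with $R^*R$ or go through the Cayley transform and the spectral theorem for unitary operators instead. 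Finally, in the last step you must check that the pushed-forward function $a$ is real-valued and locally bounded and that the domain identification $UD(A) = D(M_a)$ holds; this is routine but should be stated, since it is part of the conclusion.
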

 
 Any such Borel set $B$ can be seen as a ``frequency space'' for the operator $A$, and the unitary operator $U:H \to L^2(B,d\mathfrak{m})$ can be understood as a ``Fourier transform'' adapted to the operator~$A$. 
 
 \smallskip

 Let us discuss the spirit of this theorem on two main examples: the Euclidean space $\R^d$ (which is a commutative Lie group) and the Heisenberg group~$\H^d$, which is a non commutative, nilpotent Lie group, whose Lie algebra~$ \mathfrak{h}$ satisfies~$\mathfrak{h}=\mathfrak{h}_{1}\oplus \mathfrak{h}_{2}$ with~$ \mathfrak{h}_{2}=[\mathfrak{h}_{1},\mathfrak{h}_{1}]$ and $[\mathfrak{h}_{1},\mathfrak{h}_{2}]=0$.
\begin{enumerate}
\item  The Euclidean space~$\R^d$. In this case for the (opposite of the) classical Laplace operator~$A=-\Delta$ the standard Fourier identity
\begin{equation}\label{eq:frr1}
\cF(-\Delta u)(\xi)=|\xi|^{2}\cF(u)(\xi),\qquad \xi \in \R^{d} \, ,
\end{equation}
 can be reinterpreted in terms of Theorem~\ref{t:genspect}
by choosing~$H= L^2(\R^d)$ and $B = \R^d$ endowed with the Lebesgue measure, where $U=\cF$ is the Fourier transform and~$a(\xi) =|\xi|^2$.   
 
 \smallskip
 
 \item The Heisenberg group~$\H^d$. In this case the (opposite of the) sublaplacian~$-\Delta_{\H^d}$ becomes after non commutative Fourier transform a rescaled version 
 of the  harmonic oscillator acting on $L^{2}(\R^d)$
\begin{equation}
\label{e:quadratic}
 \mathsf{H}  \eqdefa  - \Delta_z   + |\lambda|^2|z|^2 \, , \quad z \in \R^d, \,  \, \lambda \in \R^*\, ,
\end{equation} 
whose spectrum is given by the set $\big\{  |\lambda|(2|m|+d),\lambda \in \R^*, m \in \N^d\}$. 
A formulation of Theorem \ref{t:genspect} for the operator~$A=-\Delta_{\H^d}$ can be given for $H=L^2(\H^d, dw)$ and $U$ a Heisenberg Fourier transform~$\cF_{\H^d}$. An explicit description has been provided in~\cite{bcdh} where~$B = \N^{d}\times \N^d\times \R^*$ (writing elements of~$B$ as triplets~$\hat w =(n, m,\lambda)$) is the space of frequencies endowed with the measure~$\delta (n)\delta (m) |\lambda|^d d\lambda$, where~$\delta (n)\delta (m)$ denotes the counting measure on $\N^{2d}$. The function $a$ is given by $a(n, m,\lambda) =  |\lambda|(2|m|+d)$.

Notice that this translates into the analogue to the Fourier identity \eqref{eq:frr1} for $A=-\Delta_{\H^d}$ as follows  
\begin{equation}\label{eq:frr2}
\cF_{\H^d}(-\Delta_{\H^d}u)(n,m,\lambda)= |\lambda|(2|m|+d)\cF_{\H^d}(u)(n,m,\lambda)\,.
\end{equation}
We highlight that the function~$a$ in the case of the Heisenberg group does not depend on~$n$: this is related to the fact that the operator~$-\Delta_{\H^d}$ diagonalizes the Hermite basis of eigenfunctions of~$\mathsf{H}$.

\end{enumerate}
\medskip
 In this paper our first aim is to identify a family of objects $(B,\mathfrak{m},a,U)$ as presented in Theorem~\ref{t:genspect}
for the sublaplacian~$ \Delta_G$ on the Engel group, acting on the Hilbert space $L^2(G, dx)$ (as we shall see in Section~\ref{basic}, the Haar measure on~$G$ can be identified with the Lebesgue measure $dx$ in suitable coordinates), that is useful in applications. 
In the case of the Engel group it is known that the  non commutative Fourier transform exchanges   (the opposite of) the sublaplacian~$-\Delta_G$ with an operator acting on~$L^2(\R)$, which turns out to be the (family of conveniently rescaled) quartic oscillator 
\begin{equation}
\label{e:quartic}
\mathsf{P}_\mu \eqdefa  - \frac{d^2}{d\theta^2}  + \Big( \frac{\theta^2}{2} - \mu \Big)^2
\, , \quad \theta \in \R \, , 
\end{equation} 
where~$ \mu \in \R$ is a real parameter (see\refeq{eq: rellap}-\eqref{eq: oscop} below).    To the best of our knowledge, this operator appeared for the first time in relation   with hypoelliptic operators in the paper by Pham The Lai and Robert\ccite{Pham-Robert} (but had already been studied before that in relation to quantum mechanics).
Since then it has received enduring attention and has been extensively studied under different perspectives:  more references  on the spectral theory for $\mathsf{P}_\mu$ are provided in Section~\ref{poisson}.

 \medskip In order to 
state our first  result, we need to recall that~$\mathsf{P}_\mu$  can be endowed  with the  domain 
\begin{align}
\label{e:def-domain-P}
D(\mathsf{P}_\mu) = \Big\{ u \in L^2(\R) \, , \quad -   \frac{d^2}{d\theta^2} + \Big( \frac{\theta^2}{2} - \mu \Big)^2 u  \in L^2(\R) \Big\} \, ,
\end{align}
and that its spectrum consists in countably many real eigenvalues $\{\mathsf{E}_{m}(\mu)\}_{m\in \N}$ of multiplicity~1 and satisfying
$$  0< \mathsf{E}_0(\mu) < \mathsf{E}_1(\mu)< \cdots < \mathsf{E}_m(\mu)< \mathsf{E}_{m+1}(\mu) \to + \infty \, . $$
We also define, for~$(\nu,\lambda) \in  \R \times \R^*$, the rescaled eigenvalues
 \begin{equation}\label{defEmnulambda}
 E_m(\nu,\lambda)\eqdefa |\lambda|^\frac23 \mathsf{E}_m \Big(\frac \nu{|\lambda|^\frac43}\Big) \,  \cdotp
 \end{equation}
   \begin{theorem}
\label{t:key}
{\sl Set $\widehat{G}\eqdefa \N\times \N\times \R \times \R^*$, write elements of~$\widehat{G}$ as $\wh{x} = (n,m,\nu,\lambda)  $, and define a measure on~$\widehat{G}$ by~$d\wh x \eqdefa\delta(n)\delta(m)d\nu d\lambda$, recalling that~$\delta(n)\delta(m)$ is the counting measure on~$\N^2$. 
 Then define on $\widehat{G}$ the function $$\wh{x}\longmapsto a(\widehat{x}) \eqdefa E_m(\nu, \lambda) \, .$$ 
 There exists a unitary operator $U : L^2(G,dx)\to L^2(\widehat{G}, d\wh{x})$ such that 
\beq\label{UDU*=a}
 U(-\Delta_G)U^*=M_a \, , \quad UD(-\Delta_G)=D(M_a) \, ,
\eeq
where $M_{a}$ denotes the operator given by multiplication by the function $a$.
}\end{theorem}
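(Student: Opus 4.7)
The plan is to follow the template suggested by the Heisenberg case described in the introduction: express $L^2(G,dx)$ as a direct integral via a non-commutative Fourier transform on $G$, identify the fiberwise action of $-\Delta_G$ as a rescaled quartic oscillator $\mathsf{P}_\mu$, and then diagonalize $\mathsf{P}_\mu$ using its orthonormal basis of eigenfunctions. The final unitary $U$ is the composition of these two operations.

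More concretely, the Engel group $G$ is a connected, simply connected nilpotent Lie group. Kirillov theory (to be set up in Section~\ref{basic}) parametrizes the generic irreducible unitary representations acting on $L^2(\R)$ by pairs $(\nu,\lambda) \in \R\times\R^*$; the non-generic ones form a set of Plancherel measure zero. The associated Fourier transform $\cF_G$ is an isometry from $L^2(G,dx)$ onto a Hilbert integral of Hilbert--Schmidt operators on $L^2(\R)$ with respect to the Plancherel measure, which one verifies is a constant multiple of $d\nu\, d\lambda$. Identifying $L^2(\R)$ with $\ell^2(\N)$ through any fixed orthonormal basis indexed by $n \in \N$, Hilbert--Schmidt operators on $L^2(\R)$ become $\ell^2(\N\times\N)$, and the Plancherel theorem rewrites $\cF_G$ as a unitary from $L^2(G,dx)$ onto $L^2(\N\times\N\times\R\times\R^*;\delta(n)\delta(m)d\nu d\lambda)$, with the index $n$ playing a spectator role.

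The next step is to transport the sublaplacian by $\cF_G$. Writing $-\Delta_G$ in terms of the left-invariant vector fields generating $\mathfrak{g}_1$ and computing their image under the representation attached to $(\nu,\lambda)$, one finds (see \eqref{eq: rellap}--\eqref{eq: oscop}) that $\cF_G(-\Delta_G u)(\nu,\lambda)$ coincides with the composition of $\cF_G(u)(\nu,\lambda)$ with the operator
\begin{equation*}
|\lambda|^{\frac23} \mathsf{P}_{\nu/|\lambda|^{4/3}} = -\frac{d^2}{d\theta^2}|\lambda|^{\frac23} + |\lambda|^{\frac23}\Big(\frac{\theta^2}{2} - \frac{\nu}{|\lambda|^{4/3}}\Big)^{2},
\end{equation*}
after the appropriate rescaling of the $\theta$-variable. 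This is precisely the scaling built into the definition \eqref{defEmnulambda} of $E_m(\nu,\lambda)$. At this stage, $-\Delta_G$ is realized as a direct integral of (rescaled) quartic oscillators.

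It then remains to diagonalize each fiber. For every $\mu \in \R$, the quartic oscillator $\mathsf{P}_\mu$ with domain \eqref{e:def-domain-P} is selfadjoint on $L^2(\R)$ and has simple, positive eigenvalues $\mathsf{E}_m(\mu)$ with an orthonormal eigenbasis $(\psi_m(\cdot;\mu))_{m \in \N}$ of $L^2(\R)$. Choosing the $\psi_m$ as the image under $\cF_G$ of the chosen reference basis in the $n$ index, the matrix representation of $\cF_G(-\Delta_G u)(\nu,\lambda)$ becomes the diagonal operator $(n,m) \mapsto E_m(\nu,\lambda)$ in the $m$ index, independently of $n$. Composing $\cF_G$ with the fiberwise change of basis produces the desired unitary $U$, and \eqref{UDU*=a} follows, together with the domain identification, by the standard characterization of selfadjointness for multiplication operators applied fiberwise. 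The main technical point to be careful about is the measurable dependence of the eigenprojectors of $\mathsf{P}_\mu$ on the parameter $\mu$; this is guaranteed by simplicity of the spectrum together with the analytic perturbation theory of Kato, and ensures that the fiberwise diagonalization assembles into a genuine unitary between the two $L^2$-spaces.
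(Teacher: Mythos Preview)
Your proposal is correct and follows essentially the same route as the paper: the operator $U$ is exactly the scalar Fourier transform $\cF$ built in Section~\ref{pointfunct} by taking the matrix coefficients of the operator-valued Fourier transform $\mathscr{F}(u)(\nu,\lambda)$ in the eigenbasis $(\psi_m^{\nu,\lambda})_{m\in\N}$ of $P_{\nu,\lambda}$, so that Plancherel~\eqref{FPH} gives unitarity and the intertwining~\eqref{eq: rellap} yields~\eqref{actionsubf}, i.e.~\eqref{UDU*=a}. The only cosmetic difference is that the paper uses the \emph{same} $(\nu,\lambda)$-dependent eigenbasis for both the $n$ and $m$ indices (rather than a fixed reference basis for $n$), which is immaterial for Theorem~\ref{t:key} since $a$ does not depend on $n$, but is convenient for the companion identity involving $\wt\Delta_G$.
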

  The set~$\widehat{G}$ will be understood in the following as the frequency set of the Engel group. 
The operator~$U$ will be a Fourier transform~$\mathcal F$ (a function acting on elements~$\wh{x} = (n,m,\nu,\lambda) $ of~$ \widehat{G}$) which we construct explicitly, see~\eqref{eq: defFf}--\eqref{eq: W}.   We recall that the Fourier transform on non commutative Lie groups   is classically defined as a family of bounded operators on some Hilbert space. That notion of  Fourier transform  enjoys the  same properties (in terms of operators) as the Fourier transform on~$\R^d$, such as inversion and Fourier-Plancherel formulae.   As we show in Section\refer{Fourier}, our new approach  is equivalent to the classical Engel Fourier transform which as already mentioned above converts~$-\Delta_G$ into~$\mathsf{P}_\mu$, up to scaling. 
 The   Fourier transform given by Theorem\refer{t:key} consists in considering  the classical Engel Fourier transform (as a family of operators) by means of its coefficients in the basis of the eigenfunctions  of~$\mathsf{P}_\mu$, and as we shall see, the difficulty of the classical Engel Fourier transform  is shifted to the frequency set~$\widehat{G}$ which turns out to be discrete with respect to a part of the variables  and continuous with respect to the other part, and thus it cannot be identified with~$G$ as in the Euclidean setting; in Section\refer{topofrequency}, we attempt to equip it  with a topology  which takes into account the basic principles of the Fourier transform,  namely that   regularity of functions on~$G$ is converted into decay of the Fourier transform on~$\widehat{G}$. 
 Contrary to the Heisenberg setting investigated in\ccite{bcdh}, the study of topological properties of~$\widehat{G}$ such as determining its completion,   computing the measure on its unit   sphere and providing the spectral decomposition of $-\D_G$  prove to be a challenging task   requiring    refined spectral analysis of~$\mathsf{P}_\mu$.

 As in the case of the Heisenberg group described above, notice that the function~$a$ involved in Theorem~\ref{t:key} does not depend on~$n$. Again this is related to the fact that the Engel sublaplacian is diagonal  on the basis of eigenfunctions of~$\mathsf{P}_\mu$.

\medskip

The explicit representation of the Fourier transform in terms of a basis allows us to make effective computations. 
 Once the Fourier transform is well understood, it is natural to try to recover  via this tool well-known functional inequalities on~$G$, such as Sobolev embeddings, and to analyze evolution equations involving the sublaplacian. This requires estimating quantities involving the operator~$F(-\Delta_G)$, for  suitable functions $F$ defined on $
  \R_+$.  
For such~$F$ there holds  (for all~$u$ in the Schwartz space $\cS(G)$ which is nothing else than the Schwartz space $\cS(\R^4)$) 
\begin{equation}\label{defFDelta}
\cF (F(-\Delta_G) u)(\hat x)=F(E_m(\nu, \lambda)) \cF ( u)(\hat x)\, ,
\end{equation}
hence  we are led to computing integrals of the form  $$
 \sum_{m\in \N}\int_{\R\times \R^{*}} F(E_m(\nu, \lambda))\, d\lambda d\nu\, ,
 $$
which can be rewritten as $\displaystyle \int_{\widehat{G}} F(a(\wh{x})) \delta_{n,m}d\widehat{x} $, 
 for $\wh{x} = (n,m,\nu,\lambda) \in \widehat{G}$ and $a(\widehat{x}) = E_m(\nu, \lambda)$.
 Contrary to the  Euclidean case, or to the harmonic oscillator~(\ref{e:quadratic}) appearing in the Heisenberg group, the eigenvalues of~$\mathsf{P}_\mu$ are not explicitly known. 
However
the spectral analysis we conduct in this paper leads to the following theorem, which   enables  us to generalize    (with some technicalities) to the Engel group   many results in real analysis,  such as classical   functional inequalities  and  Bernstein inequalities. 
 Our second main result is indeed the following.  
\begin{theorem}
\label{t:key2}
{\sl With the notation of Theorem~\ref{t:key}, the following result holds.
  For all measurable functions $F:  \R_+ \to \R$, the function~$F\circ a $ belongs to~$L^1(\widehat{G}, \delta_{n,m} d\widehat{x})$ if and only if~$F \in L^1(\R_+, r^{5/2} dr)$, and there holds
\beq\label{magic formula introduction} 
\int_{\widehat{G}} F(a(\wh{x})) \delta_{n,m}  d\widehat{x} = \left(   \sum_{m\in \N} \int_{\R}  \frac{3}{\mathsf{E}_m(\mu)^{\frac72}} d\mu \right) \int_0^\infty r^{5/2} F(r) dr\, . 
\eeq
 Moreover
 \begin{equation}
\label{e:summability-eigenvaluesbis}
\sum_{m\in \N} \int_{\R}  \frac{1}{\mathsf{E}_m(\mu)^{\gamma}} d\mu <\infty \quad \Longleftrightarrow \quad\gamma>2\, .
\end{equation}
}\end{theorem}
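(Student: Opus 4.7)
The plan is to reduce Theorem\refer{t:key2} to two-sided semiclassical bounds on the eigenvalues $\mathsf{E}_m(\mu)$ of the quartic oscillator~$\mathsf{P}_\mu$. The identity \eqref{magic formula introduction} follows from a direct change of variables, while the sharp integrability criterion \eqref{e:summability-eigenvaluesbis} is where genuine spectral information enters.

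For \eqref{magic formula introduction}, the idea is to use the scaling $E_m(\nu,\lambda) = |\lambda|^{2/3}\mathsf{E}_m(\nu/|\lambda|^{4/3})$ from \eqref{defEmnulambda} and to perform two successive substitutions in
$$
\int_{\widehat G} F(a(\wh x))\,\delta_{n,m}\,d\wh x \;=\; \sum_{m\in\N}\int_\R\!\int_{\R^*}F\bigl(|\lambda|^{2/3}\mathsf{E}_m(\nu/|\lambda|^{4/3})\bigr)\,d\lambda\,d\nu.
$$
At fixed $\lambda$, the substitution $\mu=\nu/|\lambda|^{4/3}$ produces $d\nu=|\lambda|^{4/3}\,d\mu$ and decouples the two variables. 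At fixed $\mu$, the substitution $r=|\lambda|^{2/3}\mathsf{E}_m(\mu)$ on each half-line $\lambda\gtrless 0$ gives $|\lambda|^{4/3}d|\lambda|=\tfrac32\,r^{5/2}\mathsf{E}_m(\mu)^{-7/2}dr$. Summing the two half-lines yields the prefactor $3\,\mathsf{E}_m(\mu)^{-7/2}$ together with $\int_0^\infty r^{5/2}F(r)\,dr$, which is \eqref{magic formula introduction}; the two-sided statement about $L^1$ membership then drops out by applying Tonelli's theorem to $|F|$.

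For \eqref{e:summability-eigenvaluesbis}, the strategy is to establish two-sided bounds on $\mathsf{E}_m(\mu)$ in three regimes and integrate explicitly. The potential $V_\mu(\theta)=(\theta^2/2-\mu)^2$ is a symmetric double well for $\mu>0$ (minima at $\theta=\pm\sqrt{2\mu}$, local Hessian $4\mu$), a single well for $\mu<0$ (minimum value $\mu^2$ at the origin, Hessian $2|\mu|$), and exhibits quartic growth in all cases. Combining a min-max comparison of the form of $\mathsf{P}_\mu$ with a harmonic oscillator near the wells (for lower bounds) with Gaussian quasimodes localized at the wells (for upper bounds) should yield, uniformly in $m$,
\[
\mathsf{E}_m(\mu)\asymp \sqrt{\mu}\,(m+1) \quad \text{for } \mu\gtrsim 1,\ m\lesssim \mu^{3/2},
\]
\[
\mathsf{E}_m(\mu)\asymp \mu^2+\sqrt{|\mu|}\,(m+1) \quad \text{for } \mu\lesssim -1,\ m\lesssim |\mu|^{3/2},
\]
together with a Weyl tail $\mathsf{E}_m(\mu)\asymp m^{4/3}$ for $m\gtrsim (1+|\mu|)^{3/2}$ coming from Bohr--Sommerfeld applied to the quartic $\theta^4/4$.

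Armed with these bounds, split $\sum_m \mathsf{E}_m(\mu)^{-\gamma}$ at the matching threshold $m\sim (1+|\mu|)^{3/2}$: the Weyl part is summable whenever $\gamma>3/4$ and contributes $\asymp (1+|\mu|)^{3/2-2\gamma}$, while the bounded region $|\mu|\leq 1$ is harmless. For $\mu\to-\infty$ one finds $\sum_m\mathsf{E}_m(\mu)^{-\gamma}\asymp |\mu|^{3/2-2\gamma}$, whose integral over $(-\infty,-1]$ converges as soon as $\gamma>5/4$. For $\mu\to+\infty$, by contrast, the ground state alone contributes $\mathsf{E}_0(\mu)^{-\gamma}\asymp \mu^{-\gamma/2}$, and this single term dictates the sharp threshold via $\int_1^\infty \mu^{-\gamma/2}\,d\mu<\infty\iff \gamma>2$. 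Thus the critical exponent $\gamma=2$ is driven entirely by the ground-state asymptotics $\mathsf{E}_0(\mu)\sim\sqrt{2\mu}$ as $\mu\to+\infty$.

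The principal obstacle is obtaining the semiclassical asymptotics above \emph{uniformly} across the transition region $m\sim(1+|\mu|)^{3/2}$, where the local harmonic approximation near the wells must be glued to the Weyl asymptotics for the quartic tail. A careful Agmon-type control (for lower bounds) and explicit quasimodes (for upper bounds), both uniform in $\mu\in\R$, constitute the heavy technical step; once the uniform bounds are in hand, the summation and integration arguments are elementary.
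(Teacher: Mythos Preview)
Your proposal is correct and follows essentially the same architecture as the paper. The change-of-variables argument for \eqref{magic formula introduction} is identical, and for \eqref{e:summability-eigenvaluesbis} you correctly identify the three spectral regimes (quartic/Weyl tail, single well for $\mu\to-\infty$, double well for $\mu\to+\infty$) and pinpoint that the sharp threshold $\gamma>2$ is driven by the bottom of the spectrum in the double-well regime, via $\mathsf{E}_0(\mu)\asymp\sqrt{\mu}$.

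The one substantive difference lies in the execution of the double-well part. You propose to establish a single uniform two-sided bound $\mathsf{E}_m(\mu)\asymp\sqrt{\mu}\,(m+1)$ for all $m\lesssim\mu^{3/2}$ via min-max comparison and quasimodes, and you correctly flag the uniformity across the transition region as the main obstacle. The paper sidesteps this by rescaling to a semiclassical operator $P_h=-h^2\frac{d^2}{d\theta^2}+(\theta^2/2-1)^2$ with $h=\mu^{-3/2}$, then splitting the energy window $[0,\eps^{-2}]$ into three pieces $[0,\beta h]\cup[\beta h,\alpha]\cup[\alpha,\eps^{-2}]$ and invoking, respectively, the Helffer--Sj\"ostrand description of the bottom of the spectrum, the Helffer--Robert Bohr--Sommerfeld quantization $\Phi(E_j^\pm(h))=(j+\tfrac12)h+O(h^2)$ in the intermediate range, and plain Weyl counting above $\alpha$. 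This modular approach avoids having to prove a single bound uniform in $m$ and instead patches together three sharp off-the-shelf results. Your route would work but requires more hands-on Agmon/IMS analysis; the paper's route is shorter because the hard semiclassical steps are delegated to the literature.
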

To better understand the content of the previous theorem, let us reconsider our two basic examples:
\begin{enumerate}
\item 
In the Euclidean space~$\R^d$ we have $a(\widehat x)=a(\xi)=|\xi|^{2}$ with $d\widehat x=d\xi$ so that  the left-hand side of \eqref{magic formula introduction}  can be computed using spherical coordinates
\beq \label{magic formula introduction Rd}
\displaystyle \int_{B} F(a(\wh{x})) d\widehat{x}  =\displaystyle \int_{\R^d} F \big(|\xi|^2\big) d\xi =|{\mathbb S}^{d-1}| \int_{\R_+ } F(r)r^{\frac{d-2}2} dr  \, .
\eeq

\item On the Heisenberg group $\H^d$, we have $a(\widehat x)= a(n, m,\lambda) =  |\lambda|(2|m|+d)$ and $\delta_{n,m}d\widehat x =\delta(m)|\lambda|^d d\lambda $ so that 
\beq \begin{aligned} \label{magic formula introduction Hd}
\displaystyle \int_{B} F(a(\wh{x})) \delta_{n,m} d\widehat{x} & = \sum_{m \in \N^d} \int_{\R^*}  F\big(|\lambda|(2|m|+d)\big) |\lambda|^d d\lambda 
\\ & =\Big(\sum_{m \in \N^d} \frac2{(2|m|+d)^{d+1}}\Big) \int_{\R_+}r^{d }F(r) \, dr\, , \end{aligned}
\eeq
where the last equality follows from a change of variables.  Note that the power of~$r$ is~$d = (Q-2)/2$ where~$Q = 2d+2$ is the homogeneous dimension of~$\H^d$, so 
the summability conditions have the same homogeneity on~$\R^d$ and on~$\H^d$, and are exactly the same as that given by~(\ref{magic formula introduction}) since~$Q=7$ for the Engel group.
\end{enumerate}

\begin{remark}[On the explicit constants] 
{\sl It is interesting to notice that the prefactor in the right-hand side of~{\rm(\ref{magic formula introduction Hd})}
$$\mathsf{C}_{\H^d}\eqdefa\sum_{m \in \N^d} \frac2{(2|m|+d)^{d+1}}$$
corresponds  to the measure of the dual unit sphere of the Heisenberg group (see~\cite{Muller}), when one endows the dual of the Heisenberg group by its natural metric structure and volume form. The same property appears also in the Euclidean case, by Formula~{\rm(\ref{magic formula introduction Rd})} and recalling that the dual of the  Euclidean space coincides in fact with the space itself.}
\end{remark}
The next proposition  shows that this is not a coincidence and is valid also in the Engel group, suggesting perhaps a more general pattern.
\begin{proposition}\label{prop:CG} {\sl The constant $\mathsf{C}_{G}$ defined by
\begin{equation}\label{eq:CG}
\mathsf{C}_{G}\eqdefa \sum_{m\in \N} \int_{\R}  \frac{3}{\mathsf{E}_m(\mu)^{\frac72}} d\mu 
 \end{equation}
 coincides with the volume  of the {\it dual unit sphere} of the  Engel group, when endowed with its natural metric structure and volume form.}
  \end{proposition}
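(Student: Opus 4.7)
The plan is to argue that the formula of Theorem~\ref{t:key2} is, modulo conventions, a polar decomposition of the Plancherel measure on $\widehat{G}$ with respect to its natural homogeneous norm; the proposition then identifies $\mathsf{C}_G$ as the mass of the induced surface measure on the dual unit sphere $\{a=1\}$.

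First I would fix the natural metric structure and volume form on $\widehat{G}$, following the analogous construction on the Heisenberg dual~\cite{Muller}. The Engel group $G$ carries the Carnot dilations $(x_1,x_2,x_3,x_4)\mapsto (tx_1,tx_2,t^2x_3,t^3x_4)$ of homogeneous dimension $Q=7$; dually these induce on $\widehat{G}$ the dilations $(n,m,\nu,\lambda)\mapsto (n,m,t^4\nu,t^3\lambda)$, under which the Plancherel density $\delta(n)\delta(m)d\nu d\lambda$ is $Q$-homogeneous and the spectral function $a(\widehat{x})=E_m(\nu,\lambda)$ is $2$-homogeneous (an immediate consequence of the scaling identity $\mathsf{E}_m(\nu/|\lambda|^{4/3})$ in the definition of $E_m$). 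Consequently $\rho(\widehat{x})\eqdefa\sqrt{a(\widehat{x})}$ is a canonical homogeneous norm of degree $1$ on $\widehat{G}$, dual to the Carnot-Carath\'eodory norm on $G$, the dual unit sphere is $S^*\eqdefa\{\rho=1\}=\{a=1\}$, and the natural volume form $d\sigma^*$ is defined by disintegrating the Plancherel measure along the fibres of $a$, exactly as in~\cite{Muller}.

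Second, I would carry out this disintegration explicitly. At each fixed $(n,m)\in\N^2$ and for each sign of $\lambda$, I change variables from $(\nu,\lambda)$ to $(\mu,r)$, with
\begin{equation*}
\mu = \frac{\nu}{|\lambda|^{4/3}}, \qquad r = a(\widehat{x}) = |\lambda|^{2/3}\mathsf{E}_m(\mu).
\end{equation*}
A direct Jacobian computation, in which the terms proportional to $\mathsf{E}_m'(\mu)$ cancel, yields
\begin{equation*}
d\nu\, d\lambda \;=\; \frac{3\, r^{5/2}}{2\,\mathsf{E}_m(\mu)^{7/2}}\, d\mu\, dr \qquad (\lambda>0 \text{ fixed sign}).
\end{equation*}
Summing over the two signs of $\lambda$ and over $m\in\N$ (with the counting $\delta(n)$ in $n$ carried along unchanged) one obtains a disintegration $d\widehat{x} = r^{5/2}\, dr \otimes d\sigma^*$ on $\widehat{G}\setminus\{a=0\}$. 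Integrating $d\sigma^*$ over $S^*$ then gives
\begin{equation*}
|S^*|_{\sigma^*} \;=\; \sum_{m\in\N}\int_\R \frac{3}{\mathsf{E}_m(\mu)^{7/2}}\, d\mu \;=\; \mathsf{C}_G,
\end{equation*}
as required; and this is consistent with Theorem~\ref{t:key2}, whose statement is now nothing but an explicit form of the same polar decomposition.

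The main difficulty is not the computation itself, which is essentially a repackaging of Theorem~\ref{t:key2}, but the conceptual step of singling out $d\sigma^*$ as \emph{the} natural volume form on $S^*$: this requires fixing the conventions that the natural metric structure on $\widehat{G}$ is the one making $\sqrt{a}$ a homogeneous norm of degree $1$, and that the natural ambient volume is the Plancherel measure. Both are justified by the dual Carnot dilation structure and by direct analogy with the Heisenberg case~\cite{Muller}, where the same construction recovers $\mathsf{C}_{\H^d}$ as the measure of the corresponding dual unit sphere.
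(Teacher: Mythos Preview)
Your proposal is correct and follows essentially the same approach as the paper: the paper defines the dual sphere $\S_{\wh G}(R)=\{E_m(\nu,\lambda)=R^2\}$ and derives the polar decomposition of the Plancherel measure via the successive changes of variables $\mu=\nu/|\lambda|^{4/3}$ and $R=|\lambda|^{1/3}\sqrt{\mathsf{E}_m(\mu)}$ (Proposition~\ref{integpolarcoordiantes} and formula~\eqref{eq:intSGG}), then reads off $\sigma(\S_{\wh G})=\mathsf{C}_G$ in Remark~\ref{rmksphere}. The only differences are cosmetic---you compute the full Jacobian in one step and work with $r=a$ rather than $R=\sqrt{a}$---and your explicit observation that the $\mathsf{E}_m'(\mu)$ terms cancel is a useful sanity check that the paper leaves implicit.
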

  We refer the reader to Section\refer{spectral} for precise definitions and the proof of this result; we stress here  that the fact that the integral \eqref{eq:CG} is finite is part of the statement of Theorem~\ref{t:key2}.
\subsection{Functions of the sublaplacian and their convolution kernel}
Let us go further in the analysis of operators of the type~$F(-\Delta_G)$ by considering their convolution kernel. To this end we  define  the space of functions of polynomial growth~${\mathcal O}^\infty(\R_+)\eqdefa\displaystyle\bigcup_{m\in \N}{\mathcal O}^\infty_{m}(\R_+)$, with
$$
F\in {\mathcal O}^\infty_{m}(\R_+)
\Longleftrightarrow   \langle \cdot\rangle^{-m} F \in  L^\infty(\R_+) \, , $$
and recall the following rather classical result (which holds for any left-invariant sublaplacian on a Carnot group; the proof is recalled in Section~\ref{Proofkey} for the sake of completeness).
\begin{proposition}
\label{p:function-calculus-abstract}
{\sl For any $F \in {\mathcal O}^\infty(\R_+)$, the operator $F(-\Delta_G) : \mathcal{S}(G) \to L^2(G)$ is well-defined (via spectral theory) and there is $k_F \in \mathcal{S}'(G)$ such that 
\begin{equation}
\label{e:F-convolution}
F(-\Delta_G) u = u \star k_{F} \, ,  \quad \text{for all } u \in {\mathcal S}(G ) \, ,
\end{equation}
where $\star$ is the natural convolution product on $G$ (see~{\rm(\ref{conv})} and~\eqref{e:def-convolS} below). }
\end{proposition}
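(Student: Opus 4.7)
The argument proceeds in three steps: we first establish that $F(-\Delta_G)$ is well-defined and continuous on $\mathcal{S}(G)$, then that it commutes with left translations, and finally we extract the convolution kernel via the Schwartz kernel theorem.

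\textbf{Step 1: Well-posedness on $\mathcal{S}(G)$.} Since $-\Delta_G$ is a nonnegative selfadjoint operator on $L^2(G)$, the Borel functional calculus defines $F(-\Delta_G)$ on a natural spectral-theoretic domain. For $F \in \mathcal{O}^\infty_m(\R_+)$, one has $|F(\lambda)| \le C (1+\lambda)^m$ on $\R_+$. In exponential coordinates, $G \cong \R^4$ and the sublaplacian is a second-order differential operator with polynomial coefficients, hence $\mathcal{S}(G) = \mathcal{S}(\R^4)$ is stable under $(1-\Delta_G)^m$. Combining this with the spectral theorem gives
\beno
\norm{F(-\Delta_G) u}{L^2(G)}^2 = \int_{\R_+} |F(\lambda)|^2 d\w{E_\lambda u, u} \le C^2 \norm{(1-\Delta_G)^m u}{L^2(G)}^2
\eeno
for every $u \in \mathcal{S}(G)$, which shows that $F(-\Delta_G): \mathcal{S}(G) \to L^2(G)$ is well-defined and continuous for the Schwartz topology.

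\textbf{Step 2: Left invariance.} The sublaplacian $-\Delta_G$ is built from left-invariant vector fields on $G$, so it commutes with every left translation $\tau_{g_0} u \eqdefa u(g_0^{-1}\cdot)$. By uniqueness of the Borel functional calculus applied to the unitarily equivalent operators $-\Delta_G$ and $\tau_{g_0}(-\Delta_G)\tau_{g_0}^{-1}$, the operator $F(-\Delta_G)$ commutes with $\tau_{g_0}$ as well.

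\textbf{Step 3: Extraction of the convolution kernel.} Combining Step 1 with the continuous embedding $L^2(G) \hookrightarrow \mathcal{S}'(G)$, the operator $F(-\Delta_G)$ defines a continuous linear map $\mathcal{S}(G) \to \mathcal{S}'(G)$. The Schwartz kernel theorem on $G \times G \cong \R^4 \times \R^4$ then yields a unique $K \in \mathcal{S}'(G \times G)$ such that
\beno
\w{F(-\Delta_G) u, v}_{L^2(G)} = \w{K, u \otimes v}, \qquad u, v \in \mathcal{S}(G).
\eeno
Unfolding the identity $F(-\Delta_G) \tau_{g_0} u = \tau_{g_0} F(-\Delta_G) u$ at the level of $K$, and using the unimodularity of $G$ so that Haar measure changes of variable are licit, yields the diagonal invariance $K(g_0 g_1, g_0 g_2) = K(g_1, g_2)$ in the sense of distributions. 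Hence $K$ is the pullback, by the submersion $(g_1, g_2) \mapsto g_1^{-1} g_2$, of a unique tempered distribution $k_F \in \mathcal{S}'(G)$. Rewriting the resulting expression in terms of the convolution $\star$ on $G$ (see~\eqref{conv}--\eqref{e:def-convolS}) produces~\eqref{e:F-convolution}.

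\textbf{Main obstacle.} The most delicate step is the passage from a diagonally left-invariant distribution $K$ on $G \times G$ to a well-defined tempered distribution $k_F$ on $G$ representing the convolution kernel. This requires handling left invariance at the distributional level (typically by testing against functions of the form $\phi(g_1) \psi(g_1^{-1} g_2)$ and checking independence from $\phi$ once it is normalized) and relying crucially on the continuity estimate of Step 1, which ensures that $K$ indeed belongs to $\mathcal{S}'(G\times G)$, so that $k_F$ inherits temperedness rather than being merely a distribution on $G$.
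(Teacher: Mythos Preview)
Your proof is correct and follows the standard Schwartz-kernel route, which the paper in fact explicitly mentions as a valid alternative but deliberately avoids. The paper instead argues more concretely, \`a la H\"ormander~\cite{Hor:trans-inv}: after the same Step~1 estimate, it shows $F(-\Delta_G)u \in \bigcap_k D(\Delta_G^k)$ and then invokes hypoellipticity of $\Delta_G$ (the local subelliptic estimate $D(-\Delta_G^k) \subset H^{k\eps}_{\text{loc}}(G)$) to conclude that $F(-\Delta_G)u \in C^\infty(G)$; in particular the point evaluation $u \mapsto (F(-\Delta_G)u)(0)$ is a continuous linear functional on $\mathcal{S}(G)$, directly defining a tempered distribution $T$, and $k_F \eqdefa \check T$ is then obtained by transporting this value to arbitrary points via left-translation invariance. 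Your approach is more self-contained in that it bypasses the hypoellipticity input, while the paper's buys the extra regularity statement $F(-\Delta_G)u \in C^\infty(G)$ and produces $k_F$ without having to quotient a two-variable kernel by the diagonal action.
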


The Fourier transform defined in the present article allows to generalize the set of functions $F$ for which the functional calculus is well-defined and to characterize the regularity of the kernel in terms of properties of $F$.
We define the space~${\mathcal O}^{1,s}(\R_+)\eqdefa\displaystyle\bigcup_{m\in \N}{\mathcal O}^{1,s}_{m}(\R_+)$, where
$$
F\in {\mathcal O}^{1,s}_{m}(\R_+)
\Longleftrightarrow   \langle \cdot\rangle^{-m} F \in  L^1(\R_+, r^{s} dr) \, , $$
where ${\mathcal O}^{1,s}_{m}(\R_+)$ is endowed with the norm
$$
\|F\|_{{\mathcal O}^{1,s}_{m}(\R_+)}\eqdefa\big\| \langle \cdot\rangle^{-m} F\big\|_{L^1(\R_+,r^s dr)} = \int_{\R_+} r^{s}\langle r\rangle^{-m}| F(r) |dr \, .
$$
The space~${\mathcal O}^{1,s}(\R_+)$ is endowed with   the associated Fr\'echet topology.

\begin{theorem}
\label{t:function-calculus-L1}
 {\sl Assume $F\in {\mathcal O}^{1,5/2}(\R_+)$. For any function $u \in \mathcal{S}(G)$,    one can define in~$L^\infty(G)$ the inverse Fourier transform of the function $(n,m,\nu, \lambda)\mapsto F(E_m(\nu, \lambda)) \cF ( u)(n,m,\nu, \lambda)$  and the operator $F(-\Delta_G) : \mathcal{S}(G) \to L^\infty(G)$ is thus well-defined by 
$$
F(-\Delta_G)u  \eqdefa \cF^{-1} \big( F(E_m(\nu, \lambda)) \cF ( u)(\hat x) \big) \, .
$$
 Moreover, there is a distribution~$k_{F}$  in~${\mathcal S}'(G )$ such that~\eqref{e:F-convolution} is satisfied and the map 
$$
\begin{aligned}
{\mathcal O}^{1,5/2}_m(\R_+)& \longrightarrow \mathcal{S}'(G) \\
  F& \longmapsto k_F   
\end{aligned}$$
is continuous.}
\end{theorem}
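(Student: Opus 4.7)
The plan is to use Theorem~\ref{t:key2} as the central mechanism: it converts the hypothesis $F \in L^1(\R_+, r^{5/2} dr)$ (after absorbing polynomial growth) into integrability of $F \circ a$ on~$\widehat{G}$ with respect to $\delta_{n,m} d\widehat{x}$, which is precisely the control needed both to make the inverse Fourier transform well-defined pointwise and to build a tempered distribution $k_F$ with continuous dependence on $F$.

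First, I would establish the decay of $\mathcal{F}(u)$ on $\widehat{G}$ for $u \in \mathcal{S}(G)$. Since $(\mathrm{Id}-\Delta_G)^{N}u \in \mathcal{S}(G) \subset L^1(G)$, and the Fourier transform is bounded from $L^1(G)$ into $L^\infty(\widehat{G})$ (an elementary property of $\mathcal{F}$ deduced from~\eqref{eq: defFf}--\eqref{eq: W}), the spectral intertwining~\eqref{defFDelta} gives, for each $N \in \N$,
$$
\langle a(\widehat{x})\rangle^N |\mathcal{F}(u)(\widehat{x})| \leq C_N \, \| (\mathrm{Id}-\Delta_G)^{N}u\|_{L^1(G)} \eqdefa \mathsf{S}_N(u) \, ,
$$
where $\mathsf{S}_N$ is a Schwartz seminorm on~$\mathcal{S}(G)$. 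Now given $F \in {\mathcal O}^{1,5/2}_{m}(\R_+)$, choose $N = m$: the product $F(a(\widehat{x}))\mathcal{F}(u)(\widehat{x})$ is pointwise bounded by $\mathsf{S}_m(u)\,\langle a(\widehat{x})\rangle^{-m} |F(a(\widehat{x}))|$, and Theorem~\ref{t:key2} applied to the measurable function $r \mapsto \langle r\rangle^{-m}|F(r)|$ (which lies in $L^1(\R_+, r^{5/2}dr)$ by the very definition of~${\mathcal O}^{1,5/2}_m(\R_+)$) yields
$$
\int_{\widehat{G}} |F(a(\widehat{x}))\mathcal{F}(u)(\widehat{x})|\, \delta_{n,m}d\widehat{x} \leq \mathsf{C}_G\, \mathsf{S}_m(u)\, \|F\|_{{\mathcal O}^{1,5/2}_m(\R_+)}\, .
$$
This $L^1$ bound makes the inverse Fourier transform $\mathcal{F}^{-1}(F(a)\mathcal{F}(u))$ well-defined pointwise as an absolutely convergent integral, and bounded in $L^\infty(G)$ by the right-hand side above. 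This provides the definition of $F(-\Delta_G)u$ and shows $F(-\Delta_G) : \mathcal{S}(G) \to L^\infty(G)$ with an explicit quantitative bound.

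To construct $k_F$, I would exploit the fact that, on the Fourier side~$\widehat{G}$, the convolution product $u\star v$ corresponds to the pointwise multiplication along fixed $(\nu,\lambda)$-fibres of the matrix coefficients (this is the translation to our scalar framework of the classical identity $\mathcal{F}_G(u\star v) = \mathcal{F}_G(v)\circ\mathcal{F}_G(u)$, as established in Section~\ref{Fourier}). Define
$$
\langle k_F , \varphi \rangle \eqdefa \int_{\widehat{G}} F(a(\widehat{x})) \, \mathcal{F}(\check\varphi)(\widehat{x})\, \delta_{n,m}d\widehat{x}\, , \qquad \varphi \in \mathcal{S}(G)\, ,
$$
with $\check\varphi$ denoting the appropriate reflection needed to match the convolution convention~\eqref{conv}. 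The estimate of the previous paragraph, now applied to $\check\varphi$ in place of $u$, gives
$$
|\langle k_F , \varphi \rangle | \leq \mathsf{C}_G\, \mathsf{S}_m(\check\varphi)\, \|F\|_{{\mathcal O}^{1,5/2}_m(\R_+)}\, ,
$$
which simultaneously shows that $k_F \in \mathcal{S}'(G)$ and that $F \mapsto k_F$ is continuous from ${\mathcal O}^{1,5/2}_m(\R_+)$ to $\mathcal{S}'(G)$. The identity~\eqref{e:F-convolution} is then read off by unfolding the definition: on the Fourier side, $\mathcal{F}(u\star k_F) = F(a)\mathcal{F}(u)$, which coincides by construction with $\mathcal{F}(F(-\Delta_G)u)$, and one concludes by injectivity of $\mathcal{F}$.

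The main obstacle I anticipate is administrative rather than conceptual: ensuring that the scalar-valued inverse Fourier transform of the paper correctly interacts with convolution on $G$, since the underlying group Fourier transform is operator-valued and the convolution identity involves operator composition on each irreducible representation. Once the basis-expansion viewpoint of Section~\ref{Fourier} is in place, the diagonal character of~$-\Delta_G$ in this basis ensures that operators of the form $F(-\Delta_G)$ remain diagonal, so the operator composition reduces to scalar multiplication by $F(a(\widehat{x}))$, after which the argument above proceeds as in the Euclidean or Heisenberg settings.
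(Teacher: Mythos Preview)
There is a genuine gap in the first step. You control
\[
\int_{\widehat{G}} |F(a(\widehat{x}))\,\mathcal{F}(u)(\widehat{x})|\,\delta_{n,m}\,d\widehat{x}
\]
using the pointwise bound $|\mathcal{F}(u)(\widehat{x})|\leq \mathsf{S}_N(u)\langle a(\widehat{x})\rangle^{-N}$ together with Theorem~\ref{t:key2}. But the inverse Fourier formula~\eqref{inverseFourierH} involves the full measure $d\widehat{x}$, i.e.\ a sum over \emph{all} pairs $(n,m)$, not only the diagonal $n=m$. Since $a(\widehat{x})=E_m(\nu,\lambda)$ does not depend on $n$, your estimate gives no control over $\sum_{n}$, and the integral you bound is strictly weaker than what is needed to assert absolute convergence of $\mathcal{F}^{-1}\bigl(F(a)\mathcal{F}(u)\bigr)$. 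In other words, you have shown $F(a)\mathcal{F}(u)\in L^1(\widehat{G},\delta_{n,m}d\widehat{x})$, not $F(a)\mathcal{F}(u)\in L^1(\widehat{G},d\widehat{x})$, and the latter is what ``absolutely convergent integral'' would require.

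The paper closes this gap by working in the mixed space $\mathcal{L}^{1,2}_{\mathcal{F}}(\widehat{G})$ of Proposition~\ref{p:ifH}: one uses the sharper Hausdorff--Young bound $\|\mathcal{F}(u)(\cdot,m,\nu,\lambda)\|_{\ell^2_n}\leq\|u\|_{L^1(G)}$ (equation~\eqref{eq: useful}) rather than your $L^\infty(\widehat{G})$ bound, and then exploits $\sum_n|\mathcal{W}(n,m,\nu,\lambda,x)|^2=1$ via Cauchy--Schwarz in $n$. After this, the integral over $(m,\nu,\lambda)$ is exactly the $\delta_{n,m}$-weighted integral you wrote, and Theorem~\ref{t:key2} applies as you intended. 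So the architecture of your argument is right, but the scalar $L^\infty$ bound must be upgraded to the $\ell^2_n$ bound; this is not cosmetic, since without it the sum over $n$ diverges. A secondary point: the identity $\mathcal{F}(u\star k_F)=F(a)\mathcal{F}(u)$ cannot be read off directly from~\eqref{newFourierconvoleq1}, which is stated for $L^1$ functions, when $k_F$ is only a tempered distribution; the paper instead builds $k_F$ from the continuous linear form $u\mapsto (F(-\Delta_G)u)(0)$ and recovers~\eqref{e:F-convolution} via left-translation invariance.
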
  
\begin{remark}  {\sl  For a function $\theta:\widehat{G}\to \C$, sufficient conditions to have a well-defined inverse Fourier transform are given in Proposition \ref{p:ifH}.}
\end{remark}
We next give a sufficient condition for continuity/boundedness of the kernel $k_F$ in terms of properties of $F$.
\begin{theorem}
\label{t:kernel}
{\sl If $F \in L^1(\R_+,r^{5/2} dr)$, the kernel $k_F$ given by Theorem~{\rm\ref{t:function-calculus-L1}} belongs to~$C^0(G) \cap L^\infty(G)$ (where the distribution $k_{F}$ is identified with a function using the Haar measure of $G$) and there holds
 $$\begin{aligned}
\|k_F\|_{L^\infty(G)} & \leq  (2\pi)^{-3} \left(   \sum_{m\in \N} \int_{\R}  \frac{3}{\mathsf{E}_m(\mu)^{\frac72}} d\mu \right) \int_0^\infty r^{5/2} |F(r)| dr \quad \mbox {and} \\
 k_F(0) & =  (2\pi)^{-3} \left(   \sum_{m\in \N} \int_{\R}  \frac{3}{\mathsf{E}_m(\mu)^{\frac72}} d\mu \right) \int_0^\infty r^{5/2} F(r) dr\, .
 \end{aligned}$$}
\end{theorem}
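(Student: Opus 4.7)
The plan is to reduce everything to a uniformly bounded inverse-Fourier representation of $k_F$, and then apply Theorem~\ref{t:key2} directly. Since $L^1(\R_+, r^{5/2} dr)$ is contained in $\mathcal{O}^{1,5/2}_{0}(\R_+)$, Theorem~\ref{t:function-calculus-L1} provides $k_F\in \mathcal{S}'(G)$ together with the formula
\begin{equation*}
k_F = \cF^{-1}\bigl( F \circ a \bigr)
\end{equation*}
in $\mathcal{S}'(G)$. The key additional input is that by Theorem~\ref{t:key2}, the assumption $F\in L^1(\R_+, r^{5/2} dr)$ is exactly the condition that $F\circ a$ lies in $L^1(\widehat{G}, \delta_{n,m} d\widehat{x})$, so $k_F$ is the inverse Fourier transform of an honest $L^1$ function on~$\widehat{G}$.

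The next step is to unfold the explicit definition of $\cF^{-1}$ given in~\eqref{eq: defFf}--\eqref{eq: W} (the ``$W$-representation'' of the Fourier transform). It expresses $k_F$ as an absolutely convergent integral of the form
\begin{equation*}
k_F(x) = (2\pi)^{-3} \int_{\widehat{G}} F(a(\widehat{x})) \, \overline{W(\widehat{x},x)} \, \delta_{n,m} \, d\widehat{x} \, ,
\end{equation*}
where each $W(\widehat{x},\cdot)$ is a smooth function on $G$ built from characters in the central variables times the $(n,m)$-matrix coefficients of the eigenfunctions of the quartic oscillator~$\mathsf{P}_\mu$. Since these eigenfunctions are $L^2(\R)$-normalized, the Cauchy--Schwarz inequality yields the pointwise bound $|W(\widehat{x},x)|\leq 1$ uniformly in $(\widehat{x},x)$, and the evaluation at the group identity gives $W(\widehat{x},0)=1$ because the characters are then~$1$ and the matrix coefficient reduces to~$\langle \varphi_m^\mu, \varphi_m^\mu\rangle =1$ (the same $n=m$ contribution).

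Granted the above uniform bound, the $L^\infty$ estimate follows by putting absolute values inside the integral and applying Theorem~\ref{t:key2} to $|F|$:
\begin{equation*}
|k_F(x)| \leq (2\pi)^{-3} \int_{\widehat{G}} |F(a(\widehat{x}))| \, \delta_{n,m} \, d\widehat{x} = (2\pi)^{-3} \mathsf{C}_G \int_0^\infty r^{5/2} |F(r)| dr \, ,
\end{equation*}
with $\mathsf{C}_G$ as in~\eqref{eq:CG}. Continuity of $k_F$ on $G$ is obtained by dominated convergence, using that $x\mapsto W(\widehat{x},x)$ is continuous for each $\widehat{x}$ (the characters and the matrix coefficients depend smoothly on $x$) and the $\widehat{x}$-integrable majorant $|F\circ a|$. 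Finally, evaluating at $x=0$ and using $W(\widehat{x},0)=1$ together with the identity~\eqref{magic formula introduction} of Theorem~\ref{t:key2} gives the formula for $k_F(0)$.

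The main obstacle is the pointwise control $|W(\widehat{x},x)|\leq 1$ together with the identification $W(\widehat{x},0)=1$: this requires going back to the explicit construction of $\cF$ through the eigenbasis of $\mathsf{P}_\mu$ and exploiting the unit $L^2$-normalization of these eigenfunctions. Everything else is a standard combination of the $L^1(\widehat{G})$ summability provided by Theorem~\ref{t:key2} with the dominated convergence theorem.
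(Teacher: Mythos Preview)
Your proposal is correct and follows essentially the same route as the paper: both arguments use the explicit inverse-Fourier representation~\eqref{kernel2} of $k_F$, the uniform bound $|\cW|\leq 1$ and the evaluation $\cW(\cdot,0)=\delta_{n,m}$ from~\eqref{relationcWmorphismeq0}, then apply the summability identity of Theorem~\ref{t:key2} to $|F|$ and $F$ respectively, with continuity obtained by dominated convergence.
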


\begin{remark}
{\sl The proof shows that  if~$F \in L^1(\R_+,r^{5/2+\ell} dr)$ for some $\ell \in \N$, then $\Delta_G^{\ell} k_{F}$ belongs to~$C^0(G) \cap L^\infty(G)$ and there holds
 $$\begin{aligned}
\| \Delta_G^{\ell}  k_F\|_{L^\infty(G)} & \leq  (2\pi)^{-3} \left(   \sum_{m\in \N} \int_{\R}  \frac{3}{\mathsf{E}_m(\mu)^{\frac72}} d\mu \right) \int_0^\infty r^{5/2+\ell} |F(r)| dr  .
 \end{aligned}$$}
\end{remark}

\medbreak

Remark that it was shown in\ccite{Folland, hul} that the kernel $k_{F}$ belongs to $\cS(G)$ in the case when~$F$  belongs to~$ \cS(\R_+)$. Here, the assumption on $F$ is much weaker, and the regularity we deduce is accordingly weaker. However, the regularity of $k_F$ described in Theorem~\ref{t:kernel} is the appropriate one for many applications in analysis.
 As   will be discussed in Section\refer{pointfunct},  the Fourier transform of the kernel~$k_F$ satisfies $U (k_{F} )(\hat x)=F(E_m(\nu, \lambda)) \delta_{m,n}$ (see\refeq{kernel} below). 
 Taking for instance~$F(r) = F_t(r)=\exp(-tr)$,~$t>0$, one recovers the fact that the {\it  Engel heat kernel}  at the origin satisfies    (for further details see\refeq{defhkernel1})
$$k_{F}(0) =  \frac {\mathsf{C}_{G} \Gamma (\frac Q 2)} {(2\pi)^{3} t^{\frac Q 2}} \, \virgp$$
where $\Gamma$ denotes the Gamma function and $\mathsf{C}_{G}$  the volume  of the {\it dual unit sphere} defined by\refeq{eq:CG}.

\smallskip
 Finally, let us also recall that the investigation of necessary and sufficient conditions for operators of the form $F(-\Delta)$ to be
bounded on $L^{p}$ for some $p\neq 2$ in terms of properties of the spectral multiplier $F$
is a traditional and very active area of research of harmonic analysis. For related results when working with sublaplacians $\Delta_{G}$ we refer the reader to \cite{martini1,martini2,martini3} and references therein.

\subsection{Layout}  In Section~\ref{poisson} we establish  the summability property\refeq{e:summability-eigenvaluesbis}, thanks to a semiclassical analysis of the operator~$\mathsf{P}_\mu$. This property is at the core of our work, but is independent from the rest of this text, and its proof  can be skipped altogether by a reader interested only in   applications to the Engel group. 

In Section\refer{basic} we recall some basic facts about the Engel group. 
 Section~\ref{Fourier} is dedicated to the  study of the Fourier transform on the Engel group.    In Paragraph~\ref{defsta}, we give  a brief description of    the standard Engel Fourier transform, using irreducible representations.  Then in Paragraph~\ref{pointfunct}, we start the proof of Theorem\refer{t:key} by revisiting this Fourier transform  in the spirit of~\cite{bcdh} providing a new, equivalent, functional point of view which consists in looking at the Fourier transform as a complex valued  function that is defined on the  {\it frequency set} $\widehat{G}$. This is based on the spectral analysis of the quartic oscillator~$\mathsf{P}_\mu$. Granted with this new approach, we   furnish in Paragraph\refer{spectral} a convenient expression for the spectral decomposition of~$-\Delta_G$. In Paragraph\refer{Proofkey} we achieve  the proof of  Theorem\refer{t:key}. 

Section~\ref{s:app} is dedicated to some applications of our Fourier decomposition. In Paragraph~\ref{Sobembed}, taking advantage of~\eqref{magic formula introduction}, we recover  many functional inequalities due to Folland\ccite{Folland}   using the approach based on   the Engel Fourier transform, while in Paragraph~\ref{Bernstein}, we define the notion of spectral localization and establish Bernstein inequalities as well as their inverse version.  In Paragraph\refer{heat} we highlight once again the efficiency of~\eqref{magic formula introduction} by analyzing the heat   kernel on the Engel group.  

Finally in Section\refer{topofrequency}, we endow the  frequency set~$\widehat{G}$ with a distance linked to its Lie structure.  
  We deal in two appendices with several complements for the sake of completeness, as we strive for a  self-contained paper. 
In Appendix~\ref{irrep},  we recall  the construction of the irreducible representations.
In Appendix\refer{ap2}, we relate the spectral theory of a family  of operators $P_{\nu,\lambda}$ with that of our reference quartic oscillator~$\mathsf{P}_\mu$ and recall basic facts of spectral theory.

\medskip  
To avoid heaviness, all along this article~$C$ will denote  a  positive  constant   which may vary from line to line.    We also use $f\lesssim g$  to
denote an estimate of the form $f\leq C g$.

\medskip

{\bf Acknowledgements. } $ $The authors wish to warmly thank Jacques Faraut   and Georges Skandalis  for enlightening discussions and remarks around the  Kirillov theory. They also thank  Bernard Helffer for providing references concerning the operator~$\mathsf{P}_\mu$, and Albert Cohen for questions related to Lemma~6.2. 

\section{Summability of eigenvalues of the operator $\mathsf{P}_\mu$}
\label{poisson}
In this section, we study some spectral properties of the operator $(\mathsf{P}_\mu, D(\mathsf{P}_\mu))$ introduced in~\eqref{e:quartic}--\eqref{e:def-domain-P}.\\
This operator appears in different contexts:
\begin{itemize}
\item in quantum mechanics, see Simon \cite{Sim} (see also~\cite{Reed-Simon-4});
\item  in the study of irreducible representations of certain nilpotent Lie groups (see for example \cite{Pham-Robert,He:80} with focus on analytic hypoellipticity of hypoelliptic operators, see also~\cite{Chatzakou}), which is the application we have in mind here (see also~\cite{DVL} for the analysis of a related sublaplacian);
\item in the study of Schr\"odinger operators with magnetic fields on compact manifolds and in superconductivity (see e.g. Montgomery~\cite{Mon95} or \cite{HM,Pan-Kwek,HK:10}).
 \end{itemize}
Properties of the first eigenvalue of $\mathsf{P}_\mu$ have also been investigated in~\cite{Helffer2}.

\smallskip

Here, motivated by the study of functions of the   Engel sublaplacian $\Delta_{G}$, the ultimate goal of the section is to prove~(\ref{e:summability-eigenvaluesbis}).
Before this, we recall basic spectral properties of this operator.
The following proposition serves as a definition for the eigenvalue~$\mathsf{E}_m(\mu)$ and the associated eigenfunction~$\varphi_m^\mu$ for $m\in \N$, and a proof is given in Appendix~\ref{anspctm} for the convenience of the reader. 
\begin{proposition}
\label{p:def-E-fimu}
{\sl For any $\mu \in\R$, the following statements hold true.
The operator $(\mathsf{P}_\mu, D(\mathsf{P}_\mu))$ is selfadjoint on $L^2(\R)$, with compact resolvent. Its spectrum consists in countably many real eigenvalues with finite multiplicities, accumulating only at $+\infty$. Moreover, 
\begin{enumerate}
\item \label{eigenvalue-ordering} all eigenvalues are simple and positive, and we may thus write $\Sp(\mathsf{P}_\mu) = \{\mathsf{E}_m(\mu), m \in \N\}$ with 
\begin{align*}
& 0< \mathsf{E}_0(\mu) < \mathsf{E}_1(\mu)< \cdots < \mathsf{E}_m(\mu)< \mathsf{E}_{m+1}(\mu) \to + \infty \,  , \\
& \dim \ker (\mathsf{P}_\mu-\mathsf{E}_m(\mu)) = 1\,  ,
\end{align*}
\item \label{regularity} all eigenfunctions are real-analytic and decay exponentially fast at infinity (as well as all their derivatives),
\item \label{parity} for all $m\in \N$, functions in $\ker (\mathsf{P}_\mu-\mathsf{E}_m(\mu))$ have the parity of $m$,
\item \label{def-psim} for all $m\in \N$, there is a unique function $\varphi_m^\mu$ in $\ker (\mathsf{P}_\mu-\mathsf{E}_m(\mu))$ such that 
$$
\varphi_m^\mu \text{ is real-valued}, \quad \|\varphi_m^\mu\|_{L^2(\R)} = 1 , \quad\varphi_m^\mu(0)>0 \text{ if } m\text{ is even} ,\quad \frac{d}{d\theta} \varphi_m^\mu(0)>0 \text{ if } m\text{ is odd} , 
$$
\item the family $\big(\varphi_m^\mu\big)_{m\in \N}$ forms a Hilbert basis of $L^2(\R)$.
 \end{enumerate}}
\end{proposition}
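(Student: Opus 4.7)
I would organize the proof as a sequence of standard results on one-dimensional Schr\"odinger operators with a smooth confining potential, here $V(\theta) = (\theta^2/2 - \mu)^2 \geq 0$ with $V(\theta)\to+\infty$ as $|\theta|\to\infty$.

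First, self-adjointness on $D(\mathsf{P}_\mu)$ and compactness of the resolvent: essential self-adjointness on $C_c^\infty(\R)$ follows from Kato's criterion (or the Faris--Lavine theorem) since $V$ is nonnegative and smooth. Compactness of $(\mathsf{P}_\mu + 1)^{-1}$ follows from the classical form-compactness argument: the form-domain embeds compactly into $L^2(\R)$ because a bounded sequence in the form norm has uniformly small tails via Chebyshev against $V$. The spectral theorem for self-adjoint operators with compact resolvent then yields a purely discrete spectrum with finite multiplicities, accumulating only at $+\infty$.

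Next, positivity and simplicity. For $u\in D(\mathsf{P}_\mu)$ one computes $\langle \mathsf{P}_\mu u,u\rangle = \|u'\|_{L^2}^2 + \|(\theta^2/2-\mu)u\|_{L^2}^2 \geq 0$, and equality would force $u'\equiv 0$ and $(\theta^2/2-\mu)u\equiv 0$, hence $u\equiv 0$; so all eigenvalues are strictly positive. Simplicity is the classical Wronskian argument: two $L^2$-eigenfunctions for the second-order ODE $-\varphi''+V\varphi=E\varphi$ have constant Wronskian, which vanishes at $\pm\infty$ by decay, so they are linearly dependent. Real-analyticity of eigenfunctions follows from the theorem on ODEs with real-analytic (polynomial) coefficients and no singular points, while super-exponential decay of eigenfunctions and all their derivatives follows from an Agmon-type estimate with weight $\exp(\varepsilon|\theta|^3/3)$, combined with bootstrapping via the equation to transfer decay to the derivatives.

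Finally, parity and the distinguished normalization. Because $V$ is even, $\mathsf{P}_\mu$ commutes with the parity operator $\Pi : u(\theta)\mapsto u(-\theta)$, so by simplicity each eigenfunction has a definite parity. To identify the parity of $\varphi_m^\mu$ with the parity of $m$, I would invoke Sturm's oscillation theorem, which asserts that the $m$-th eigenfunction of such a one-dimensional Schr\"odinger operator has exactly $m$ zeros on $\R$; an even function has an even number of zeros and an odd function an odd number, which forces $\varphi_m^\mu$ to have the parity of $m$. Given this, for $m$ even one has $\varphi_m^\mu(0)\neq 0$: otherwise differentiating the ODE at $0$ would inductively kill all even-order derivatives, and parity kills the odd-order ones, contradicting real-analyticity together with $\varphi_m^\mu\not\equiv 0$; similarly $\frac{d}{d\theta}\varphi_m^\mu(0)\neq 0$ for $m$ odd. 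The sign is then fixed by requiring this value to be positive, and $L^2$-normalization pins down $\varphi_m^\mu$ uniquely. Completeness of $(\varphi_m^\mu)_{m\in\N}$ is the standard consequence of the spectral theorem for operators with compact resolvent.

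The main obstacle I anticipate is the careful invocation of Sturm's oscillation theorem on the whole line (rather than on a bounded interval with Dirichlet data), and combining the zero count with the parity symmetry to match index with parity; the remaining items are classical bookkeeping.
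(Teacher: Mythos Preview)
Your proposal is correct and follows essentially the same route as the paper's proof (given in Appendix~\ref{anspctm} for the general operator $P_{\nu,\lambda}$): compact resolvent via the form argument, positivity from the quadratic form, simplicity via the Wronskian/Sturm--Liouville argument, analyticity from the analytic Cauchy--Lipschitz theorem, decay from Agmon estimates, and parity matched to the index via the Sturm zero count combined with evenness of the potential. The only cosmetic difference is that for $\varphi_m^\mu(0)\neq 0$ in the even case (resp.\ $\frac{d}{d\theta}\varphi_m^\mu(0)\neq 0$ in the odd case) the paper argues more directly: evenness forces $\varphi'(0)=0$, so $\varphi(0)=0$ would give vanishing Cauchy data and hence $\varphi\equiv 0$ by uniqueness for the second-order ODE---your inductive Taylor argument reaches the same conclusion by a slightly longer path.
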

The aim of this section is now to prove~(\ref{e:summability-eigenvaluesbis}), that is to say, discuss (in terms of the parameter~$\gamma$) convergence of  
\begin{equation*}
\mathcal{I}_\gamma \eqdefa \sum_{k\in \N} \int_{\R}  \frac{1}{\mathsf{E}_k(\mu)^{\gamma}} d\mu \,   , \quad \text{ for }\gamma >0 \,  .
\end{equation*}
We rewrite the integral in consideration as
$$
\mathcal{I}_\gamma  =\int_{\R \times \N}  \frac{1}{\mathsf{E}_k(\mu)^{\gamma}} d\mu d\delta(k)\,   ,
$$
where $d\delta(k)$ is the counting measure on $\N$.
As will appear in the proof of~(\ref{e:summability-eigenvaluesbis}) in Theorem~\ref{t:key2}, there are three main regimes to be considered in the analysis of the eigenvalues $\mathsf{E}_k(\mu)$ in terms of~$(\mu,k) \in \R \times \N$. In each of these regimes, we will use a semiclassical reformulation of the problem with a single (small) parameter $h$ related either to a power of $k^{-1}$ or a power of $\mu^{-1}$. The three main regimes in the study of convergence of $\mathcal{I}_\gamma$ are as follows:
\begin{enumerate}
\item \label{i:regime-classical} $|\mu| \lesssim 1$ or $|\mu| \ll \sqrt{\mathsf{E}_k(\mu)}$ (classical and perturbative classical regime) that is, $\mu$ bounded or going to $\pm\infty$ not too fast, 
\item  \label{i:regime-harmonic} $\mu \to -\infty$ and $\mathsf{E}_k(\mu) \lesssim \mu^2$ (Semiclassical Harmonic oscillator/single well regime),
\item \label{i:regime-double} $\mu \to +\infty$ and $\mathsf{E}_k(\mu) \lesssim \mu^2$  (Semiclassical double well regime).
\end{enumerate}
We shall then split $\mathcal{I}_\gamma$ accordingly, for some $\eps>0$ (small) and $\mu_0>0$ (large) as  
\begin{align}
\mathcal{I}_\gamma=\mathcal{I}_\gamma^-(\eps,\mu_0)+\mathcal{I}_\gamma^0(\eps, \mu_0)+\mathcal{I}_\gamma^+(\eps, \mu_0),\quad \text{ with } \\
\label{e:decomposition-Ek}
\mathcal{I}_\gamma^\bullet(\eps, \mu_0) \eqdefa  \int_{\mathcal{E}^\bullet(\eps, \mu_0)}  \frac{d\mu d\delta(k)}{\mathsf{E}_k(\mu)^\gamma} , \quad \text{ with } \bullet = -,0,+ , \quad \text{ and } \\
\mathcal{E}^0(\eps,\mu_0)\eqdefa \{(\mu,k)\in\R\times\N,|\mu|\leq\mu_0\text{ or } |\mu|^2 \leq \eps^2 \mathsf{E}_k(\mu)\} , \\
\mathcal{E}^-(\eps,\mu_0)\eqdefa \{(\mu,k)\in\R\times\N, \mu \leq - \mu_0\text{ and } |\mu|^2 \geq \eps^2 \mathsf{E}_k(\mu)\} , \\
\mathcal{E}^+(\eps,\mu_0)\eqdefa \{(\mu,k)\in\R\times\N, \mu \geq \mu_0\text{ and } |\mu|^2 \geq \eps^2 \mathsf{E}_k(\mu)\} .
\end{align}

In each region, we shall make use of scaling operators in $\R$.
We define for $\alpha >0$ the following unitary (dilation) operator  
 \begin{equation}
 \label{e:Talpha}
\begin{array}{rcl}
T_\alpha : L^2(\R)& \to & L^2(\R)\,  , \\
u(x)& \mapsto &\alpha^{\frac{1}{2}}u(\alpha x) \,  ,
\end{array}
\end{equation}
having adjoint/inverse $T_\alpha^*=T_\alpha^{-1}=T_{\alpha^{-1}}$.

Note that the (necessary and sufficient) condition $\gamma >2$ for having $\mathcal{I}_\gamma<\infty$, as stated in Theorem~\ref{t:key2}, comes from the third (double well) region, see Corollary~\ref{c:troisieme-zone-cv} below.
\subsection{Classical and perturbative classical regime~\ref{i:regime-classical}}
In the regime~\ref{i:regime-classical} we consider $\mathsf{P}_\mu$ as a ``small'' perturbation of the quartic oscillator $\mathsf{P}_0=- \frac{d^2}{d\theta^2}+\frac{\theta^4}{4}$ and look at the asymptotics $k \to + \infty$.
\begin{lemma}
\label{l:asympt-k-grand}
{\sl There exist two continuous nondecreasing functions $\Gamma_\pm : \R_+\to \R_+$ such that~$\Gamma_\pm(\eps_0)>0$ for $\eps_0 >0$ and $\Gamma_\pm(0)=0$ satisfying the following statements.

 For all $\eps >0$ and $\mu \in \R$ such that $|\mu \Lambda^{-1/2}| \leq \eps$, we have 
\begin{align}
\label{e:counting-k}
 \Lambda^{3/4} \left( \Vol_1 - \Gamma_- (\eps)+ o(1) \right)  \leq 2\pi\sharp \{k \in \N , \mathsf{E}_k(\mu) \leq \Lambda \} \leq \Lambda^{3/4} \left( \Vol_1 + \Gamma_+(\eps)+ o(1) \right)\,  ,
\end{align}
 as $\Lambda \to +\infty$, where 
$$
\Vol_1  \eqdefa \int_{\{ \xi^2 + \frac{\x^4}{4} \leq 1 \}} d\x \, d\xi  >0 \, .
$$ 
 For all $\eps, \mu_0 >0$ and for all $(\mu,k) \in \R\times \N$ such that $|\mu \mathsf{E}_{k}(\mu)^{-1/2}| \leq \eps$ or $|\mu| \leq \mu_0$, we have
\begin{align*}
\mathsf{E}_{k}(\mu) \geq  \left(\frac{2\pi}{\Vol_1 + \Gamma_+(\eps)} k\right)^{4/3} \left(1+ o(1) \right) ,  \quad \text{as }k \to +\infty \, .
\end{align*}
 For all $\eps >0$ such that $ \Vol_1 - \Gamma_- (\eps)>0$ (that is, $\eps$ small enough), for all $(\mu,k) \in \R\times \N$ such that~$|\mu \mathsf{E}_{k}(\mu)^{-1/2}| \leq \eps$, 
 \begin{align*}
\mathsf{E}_{k}(\mu)\leq  \left(\frac{2\pi}{\Vol_1 - \Gamma_-(\eps)} k\right)^{4/3} \left(1+ o(1) \right) ,  \quad \text{as }k \to +\infty \, .
\end{align*}}
\end{lemma}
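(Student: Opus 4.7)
The strategy is to recast $\mathsf{P}_\mu$ as a semiclassical Schr\"odinger operator, where the energy level $\Lambda$ drives the semiclassical parameter $h$, and then to apply Weyl's law. The fine point, namely uniformity with respect to $\mu$, is handled by a sandwich between two $\mu$-independent potentials.

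\emph{Step 1 (semiclassical rescaling).} Using the dilation $T_\alpha$ defined in~\eqref{e:Talpha} with $\alpha=\Lambda^{-1/4}$, the identities $T_\alpha^{-1}(-\partial_\theta^2) T_\alpha = -\alpha^2 \partial_\theta^2$ and $T_\alpha^{-1} V(\theta) T_\alpha = V(\theta/\alpha)$ yield
$$
\Lambda^{-1} T_\alpha^{-1} \mathsf{P}_\mu T_\alpha = -h^2 \partial_\theta^2 + V_{\tilde\mu}(\theta), \qquad h \eqdefa \Lambda^{-3/4}, \quad \tilde\mu \eqdefa \mu \Lambda^{-1/2}, \quad V_{\tilde\mu}(\theta)\eqdefa \Bigl(\frac{\theta^2}{2}-\tilde\mu\Bigr)^{\!2}.
$$
Since $T_\alpha$ is unitary, counting eigenvalues of $\mathsf{P}_\mu$ below $\Lambda$ is equivalent to counting eigenvalues of the semiclassical operator $-h^2 \partial_\theta^2 + V_{\tilde\mu}$ below $1$. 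The hypothesis $|\mu\Lambda^{-1/2}|\leq \eps$ becomes $|\tilde\mu|\leq \eps$, and $\Lambda\to+\infty$ becomes $h\to 0^+$.

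\emph{Step 2 (sandwich by $\tilde\mu$-independent potentials).} Expanding $V_{\tilde\mu}(\theta) = \theta^4/4 - \tilde\mu\, \theta^2 + \tilde\mu^2$, a direct computation shows that for every $|\tilde\mu|\leq \eps$,
$$
W_-^\eps(\theta)\eqdefa \frac{\theta^4}{4}-\eps\, \theta^2 \ \leq\  V_{\tilde\mu}(\theta)\ \leq\ \frac{\theta^4}{4}+\eps\, \theta^2+\eps^2\eqdefa W_+^\eps(\theta).
$$
The min–max principle then sandwiches the counting function $N\bigl(-h^2\partial_\theta^2+V_{\tilde\mu},1\bigr)$ between $N\bigl(-h^2\partial_\theta^2+W_+^\eps,1\bigr)$ and $N\bigl(-h^2\partial_\theta^2+W_-^\eps,1\bigr)$. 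Because $W_\pm^\eps$ do not depend on $\tilde\mu$, the standard semiclassical Weyl law for $1$-d Schr\"odinger operators with confining polynomial potentials applies and gives, as $h\to 0$ and uniformly in $\mu$,
$$
N\bigl(-h^2\partial_\theta^2+W_\pm^\eps,1\bigr)=\frac{1}{2\pi h}\Vol\bigl(\{(\theta,\xi):\xi^2+W_\pm^\eps(\theta)\leq 1\}\bigr)+o(h^{-1}).
$$

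\emph{Step 3 (volumes, then inversion).} Set $A_\pm(\eps)\eqdefa \Vol\bigl(\{\xi^2+W_\mp^\eps(\theta)\leq 1\}\bigr)$ so that $A_-(0)=A_+(0)=\Vol_1$ and, by monotonicity of $W_\pm^\eps$ in $\eps$ together with dominated convergence, $\eps \mapsto A_+(\eps)$ (resp. $A_-(\eps)$) is continuous nondecreasing (resp. nonincreasing) and tends to $\Vol_1$ as $\eps \to 0^+$. Define $\Gamma_+(\eps)\eqdefa A_+(\eps)-\Vol_1$ and $\Gamma_-(\eps)\eqdefa \Vol_1-A_-(\eps)$: they are continuous nondecreasing with $\Gamma_\pm(0)=0$ and $\Gamma_\pm(\eps_0)>0$ for $\eps_0 > 0$ by strict monotonicity. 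Inserting Steps~1 and 2 yields directly~\eqref{e:counting-k}. For the eigenvalue asymptotics, one evaluates the counting bound at $\Lambda=\mathsf{E}_k(\mu)$ (so that the count equals $k$ up to $O(1)$), observing that the hypothesis $|\mu|\leq \mu_0$ forces $|\mu \mathsf{E}_k(\mu)^{-1/2}|\to 0$ as $k\to \infty$ (since $\mathsf{E}_k(\mu)\to +\infty$), so that the constraint $|\mu\Lambda^{-1/2}|\leq \eps$ holds for $k$ large. Rearranging gives the claimed upper and lower asymptotic bounds on $\mathsf{E}_k(\mu)$.

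\emph{Main obstacle.} The delicate point is to obtain a Weyl asymptotic that is \emph{uniform} with respect to the parameter $\mu$; a direct application of Weyl's law to $-h^2\partial_\theta^2+V_{\tilde \mu}$ would leave an error depending on $\tilde\mu$. Step~2 is designed to circumvent precisely this: trapping $V_{\tilde\mu}$ between two $\mu$-independent potentials transfers all $\mu$-dependence into the leading constants $A_\pm(\eps)$, at the price of the moduli $\Gamma_\pm(\eps)$ that vanish as $\eps\to 0$. The remaining verification, namely the semiclassical Weyl law for $-h^2\partial_\theta^2+W_\pm^\eps$ with explicit remainder $o(h^{-1})$, is classical and can be proved either via a Weyl–H\"ormander pseudodifferential calculus or, in this 1-d setting, by a direct Bohr–Sommerfeld quantization argument.
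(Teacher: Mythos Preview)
Your proof is correct and follows essentially the same route as the paper: the semiclassical rescaling (your $\alpha=\Lambda^{-1/4}$ is the inverse of the paper's $\alpha=h^{-1/3}$, which is equivalent since you conjugate by $T_\alpha^{-1}(\cdot)T_\alpha$ rather than $T_\alpha(\cdot)T_{\alpha^{-1}}$), the sandwich between $W_\pm^\eps$ to achieve uniformity in $\mu$, the Weyl law applied to the $\mu$-independent comparison operators, and the inversion to pass from the counting function to eigenvalue asymptotics. The paper packages your Step~2 as a separate proposition (Proposition~\ref{p:semiclass-perturb}) but the content is identical.
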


 In the end, the first term in the decomposition~\eqref{e:decomposition-Ek} can be estimated as follows.
 \begin{corollary} 
 \label{c:cor-classial-regime}
{\sl There is $\eps_0>0$ such that for all $\eps \in (0,\eps_0)$ and for all $\mu_0>0$, $\mathcal{I}_\gamma^0(\eps, \mu_0)<+\infty$ if and only if~$\gamma>\frac{5}{4}$. } 
 \end{corollary}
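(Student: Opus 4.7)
The plan is to exploit the two-sided asymptotics $\mathsf{E}_k(\mu)\asymp k^{4/3}$ provided by Lemma~\ref{l:asympt-k-grand} throughout $\mathcal{E}^0(\eps,\mu_0)$, and to show that the $\mu$-slice of $\mathcal{E}^0$ at fixed $k$ has measure of order $k^{2/3}$. This reduces $\mathcal{I}_\gamma^0$ to a series of the form $\sum_k k^{2/3 - 4\gamma/3}$, convergent exactly when $\gamma>5/4$. I first choose $\eps_0>0$ so that $\Vol_1-\Gamma_-(\eps)>0$ for $\eps\in(0,\eps_0)$ (possible since $\Gamma_-(0)=0$ and $\Gamma_-$ is continuous), so that the second and third items of Lemma~\ref{l:asympt-k-grand} yield constants $C_1,C_2>0$ and $K_0\in\N$ (depending on $\eps,\mu_0$) such that, for $k\geq K_0$: $\mathsf{E}_k(\mu)\geq C_1 k^{4/3}$ whenever $|\mu|\leq \mu_0$ or $|\mu|^2\leq\eps^2\mathsf{E}_k(\mu)$, and $\mathsf{E}_k(\mu)\leq C_2 k^{4/3}$ whenever $|\mu|^2\leq\eps^2\mathsf{E}_k(\mu)$, absorbing the $o(1)$ factors into the constants.

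For the upper bound (convergence when $\gamma>5/4$), I split $\mathcal{E}^0$ into the two pieces $\{|\mu|\leq \mu_0\}$ and $\{|\mu|^2\leq\eps^2\mathsf{E}_k(\mu)\}$. The first contributes at most $2\mu_0 C_1^{-\gamma}\sum_{k\geq K_0}k^{-4\gamma/3}$ plus a bounded remainder handling small $k$ (controlled using continuity and positivity of $\mathsf{E}_k(\cdot)$ on $[-\mu_0,\mu_0]$), hence is finite as soon as $\gamma>3/4$. For the second piece, the upper bound $\mathsf{E}_k\leq C_2 k^{4/3}$ combined with $|\mu|^2\leq\eps^2\mathsf{E}_k(\mu)$ forces $|\mu|\leq \eps C_2^{1/2}k^{2/3}$, so that slice has measure at most $2\eps C_2^{1/2}k^{2/3}$; combined with the lower bound on $\mathsf{E}_k$ this produces a contribution of order $\sum_{k\geq K_0}k^{2/3-4\gamma/3}$, finite iff $\gamma>5/4$.

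For the lower bound (divergence when $\gamma\leq 5/4$), I restrict to the subset $\{|\mu|^2\leq\eps^2\mathsf{E}_k(\mu)\}$ and need a matching lower bound on its $\mu$-slice. Here the main obstacle appears: the lower bound $\mathsf{E}_k(\mu)\geq C_1 k^{4/3}$ is only known inside the very region one is trying to delimit, producing apparent circularity. The resolution is a continuity/bootstrap argument. For each $k\geq K_0$ let $(-a_k,b_k)$ denote the connected component of the open set $\{\mu\in\R : \mu^2<\eps^2\mathsf{E}_k(\mu)\}$ containing $0$; this component is nonempty since $\mathsf{E}_k(0)>0$. If $b_k<+\infty$, continuity forces $b_k^2=\eps^2\mathsf{E}_k(b_k)$, i.e.\ $|b_k\mathsf{E}_k(b_k)^{-1/2}|=\eps$, which is the boundary case of the hypothesis of the second item of Lemma~\ref{l:asympt-k-grand}; applying it at $b_k$ yields $b_k=\eps\mathsf{E}_k(b_k)^{1/2}\geq \eps C_1^{1/2}k^{2/3}$. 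If $b_k=+\infty$ the same inequality is trivial, and the lower bound on $a_k$ is symmetric.

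Using finally the upper bound on $\mathsf{E}_k$ inside $(-a_k,b_k)\subseteq\{|\mu\mathsf{E}_k(\mu)^{-1/2}|\leq \eps\}$ gives
$$
\mathcal{I}_\gamma^0(\eps,\mu_0)\geq \sum_{k\geq K_0}\int_{-a_k}^{b_k}\frac{d\mu}{\mathsf{E}_k(\mu)^\gamma}\gtrsim \sum_{k\geq K_0} k^{2/3-4\gamma/3},
$$
which diverges exactly when $\gamma\leq 5/4$, completing the equivalence. The only technical delicacy is the bootstrap step above; working with the connected component containing $0$ (rather than with the possibly disconnected slice itself) is what makes the argument circularity-free.
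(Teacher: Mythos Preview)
Your argument is correct and follows essentially the same route as the paper: both use Lemma~\ref{l:asympt-k-grand} to reduce $\mathcal{I}_\gamma^0(\eps,\mu_0)$ to the series $\sum_k k^{2/3-4\gamma/3}$. Your connected-component bootstrap for the lower bound on the measure of the $\mu$-slice is more explicit than the paper's treatment, which simply asserts that Lemma~\ref{l:asympt-k-grand} ``also yields the associated lower bound'' without addressing the apparent circularity you identified.
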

 \begin{proof}
Fix $\eps_0>0$ such that $\Vol_1 - \Gamma_\pm(\eps_0)>0$ (and take any $\mu_0>0$). For all $\eps \in (0,\eps_0)$, there is~$k_0\in \N$ such that if $\frac{|\mu|}{\sqrt{\mathsf{E}_{k}(\mu)}} \leq \eps$ or $ |\mu| \leq\mu_0$, then $|\mu| \leq \max (C_\eps k^{4/3}, \mu_0)$ and $C_\eps^{-1}k^{4/3}\leq\mathsf{E}_{k}(\mu)\leq C_\eps k^{4/3}$ for all $k \geq k_0$. As a consequence, using that $\mathsf{E}_k(\mu) >0$ on $\R$ together with Lemma~\ref{l:asympt-k-grand} for all fixed $k \in \N$, we have 
\begin{align*}
\mathcal{I}_\gamma^0(\eps, \mu_0) & = \int_{ |\mu| \leq \eps \sqrt{\mathsf{E}_k(\mu)} \text{ or } |\mu| \leq\mu_0}  \frac{d\mu d\delta(k)}{\mathsf{E}_k(\mu)^{\gamma}} 
 \leq \int_{|\mu| \leq\mu_0, k \le k_0}  \frac{d\mu d\delta(k)}{\mathsf{E}_k(\mu)^{\gamma}} 
+\int_{|\mu| \leq C_\eps\sqrt{ k^{4/3}}, k \geq k_0} \frac{d\mu d\delta(k)}{(C_\eps^{-1} k^{4/3})^{\gamma}} \\
& \leq C(\mu_0, k_0) +  \tilde{C}_\eps \sum_{k \in \N^*} \frac{k^{2/3}}{(k^{4/3})^{\gamma}} = C(\mu_0, k_0)+ \tilde{C}_\eps \sum_{k \in \N^*} \frac{1}{(k^{2/3})^{2\gamma-1}} <\infty ,
\end{align*}
if and only if $\gamma >  \frac{1}{2} (1+\frac{3}{2}) = \frac{5}{4}$. Finally, Lemma~\ref{l:asympt-k-grand} also yields the associated lower bound $$\int_{ |\mu| \leq \eps \sqrt{\mathsf{E}_k(\mu)}}\frac{d\mu d\delta(k)}{\mathsf{E}_k(\mu)^{\gamma}}  \geq  \tilde{c}_\eps \sum_{k \in \N}\frac{1}{(k^{2/3})^{2\gamma-1}}\, \cdotp$$
Corollary~\ref{c:cor-classial-regime}
 is proved.
\end{proof}
 \begin{proof}[Proof of Lemma~{\rm\ref{l:asympt-k-grand}}]
We use   the dilation operator $T_\alpha$ defined in~\eqref{e:Talpha} to recast the problem as~$k \to+\infty$ in a semiclassical setup.
We have  
\begin{align*}
\mathsf{P}_\mu \psi = \mathsf{E}_k(\mu) \psi \quad \Longleftrightarrow \quad T_\alpha \mathsf{P}_\mu T_{\alpha^{-1}} T_\alpha \psi= \mathsf{E}_k(\mu) T_\alpha \psi  \, ,
\end{align*}
where
$$
T_\alpha \mathsf{P}_\mu T_{\alpha^{-1}} = - \alpha^{-2} \frac{d^2}{d\theta^2} + \left( \alpha^2 \frac{\theta^2}{2} - \mu \right)^2 \,  .
$$
We deduce that 
\begin{align*}
\mathsf{P}_\mu \psi = \mathsf{E}_k(\mu) \psi \quad 
& \Longleftrightarrow\quad  \left[ - \alpha^{-6} \frac{d^2}{d\theta^2} + \left(  \frac{\theta^2}{2} - \mu\alpha^{-2} \right)^2 \right]T_\alpha \psi= \alpha^{-4}\mathsf{E}_k(\mu) T_\alpha \psi  \, .
\end{align*} 
We now choose $h \eqdefa  \alpha^{-3}$, i.e. $\alpha= h^{-1/3}$, so that 
\begin{align*}
\mathsf{P}_\mu \psi = \mathsf{E}_k(\mu) \psi 
\quad \Longleftrightarrow \quad P(h) (T_{h^{-1/3}} \psi)= h^{4/3}  \mathsf{E}_k(\mu) (T_{h^{-1/3}}  \psi ) \,  ,
\end{align*}
with 
$$
P(h) =  - h^2 \frac{d^2}{d\theta^2} + \left(  \frac{\theta^2}{2} - \mu h^{2/3}\right)^2 \,  . 
$$
As a consequence of the simplicity of the spectrum, we obtain that $\Sp(P(h)) = \{h^{4/3}  \mathsf{E}_k(\mu) , k \in \N\}$, and that these eigenvalues are sorted increasingly.
We may now apply Proposition~\ref{p:semiclass-perturb} for $L=1$, yielding existence of the functions $\Gamma_\pm$ satisfying the following statement. For all $\eps >0$ and $\mu \in \R$ such that $|\mu h^{2/3}| \leq \eps$, we have 
$$
 \Vol_1- \Gamma_-(\eps)+ o(1) 
 \leq (2\pi h) \sharp  \{k \in \N , h^{4/3}  \mathsf{E}_k(\mu) \leq 1 \}
\leq  \Vol_1 + \Gamma_+(\eps)+ o(1) 
$$
as $h \to 0^+$.
Setting $\Lambda = h^{-4/3} \to +\infty$, i.e. $h =\Lambda^{-3/4}$, we have obtained that for all $\eps >0$ and~$\mu \in \R$ such that~$|\mu \Lambda^{-1/2}| \leq \eps$, \eqref{e:counting-k} is satisfied.
 
Finally, we deduce an asymptotics of the $\mathsf{E}_k(\mu)$ from an asymptotics of the counting function.
We recall from Proposition~\ref{p:def-E-psi} that the eigenvalues are ordered increasingly, $\mathsf{E}_{k}(\mu) < \mathsf{E}_{k+1}(\mu)$ and we set
$$
k(\Lambda)\eqdefa \sup\{k\in\N,\mathsf{E}_k(\mu)\leq\Lambda\}  \, .
$$
By definition (forgetting temporarily the dependence in $\mu$) and simplicity of eigenvalues, we thus have 
$$
\mathsf{E}_{k(\Lambda)} \leq \Lambda < \mathsf{E}_{k(\Lambda)+1}, \quad \text{and} \quad \sharp \{k \in \N , \mathsf{E}_k(\mu) \leq \Lambda \} = k(\Lambda)+1 \,  .
$$ 
As a consequence,~\eqref{e:counting-k} rewrites, 
\begin{align*}
 \Lambda^{3/4} \left( \Vol_1 - \Gamma_- (\eps)+ o(1) \right)   \leq 2\pi(k(\Lambda)+1) \leq \Lambda^{3/4} \left( \Vol_1 + \Gamma_+(\eps)+ o(1) \right) ,  \quad \text{as }\Lambda \to +\infty  \, ,
\end{align*}
whence,  assuming $|\mu \mathsf{E}_{k(\Lambda)}^{-1/2}| \leq \eps$  (which then implies $|\mu \Lambda^{-1/2}| \leq \eps$),
\begin{align*}
\mathsf{E}_{k(\Lambda)}^{3/4} \left( \Vol_1 - \Gamma_- (\eps)+ o(1) \right)  \leq 2\pi(k(\Lambda)+1) \leq \mathsf{E}_{k(\Lambda)+1} ^{3/4} \left( \Vol_1 + \Gamma_+(\eps)+ o(1) \right) ,  \quad \text{as }\Lambda \to +\infty \,  .
\end{align*}
Since $\Lambda \mapsto k(\Lambda)$ is nondecreasing, tending to infinity and onto from $\R_+\to \N$, we deduce that, assuming $|\mu \mathsf{E}_{k}(\mu)^{-1/2}| \leq \eps$, 
\begin{align*}
\mathsf{E}_{k}^{3/4} \left( \Vol_1 - \Gamma_- (\eps)+ o(1) \right)  \leq 2\pi k  \leq \mathsf{E}_{k} ^{3/4} \left( \Vol_1 + \Gamma_+(\eps)+ o(1) \right) ,  \quad \text{as }k \to +\infty \,  ,
\end{align*}
which implies the last two statements.
\end{proof}

%%%%%%%%%%%%%%%%%%%%%%%%%%%%%%%%%%%%%%%%%% 
\subsection{Semiclassical Harmonic oscillator/single well regime~\ref{i:regime-harmonic}}
In this region, we only need rather loose properties.
First notice that, for all $m\in \N$,
\begin{equation} \label{eq:neqbeh} \text{for all }  \mu <0\, ,  \quad \mathsf{E}_m(\mu) \geq |\mu|^2 \,.\end{equation}  Indeed starting from the eigenvalue equation
$$
\left(  - \frac{d^2}{d\theta^2}  + \left( \frac{\theta^2}{2} - \mu \right)^2 \right)\varphi_m^\mu  =  
 \mathsf{E}_m(\mu)\varphi_m^\mu  \, ,
$$
and  taking the inner product with $\varphi_m^\mu$ yields
$$
\|\partial_\theta \varphi_m^\mu\|_{L^2(\R)}^2 - \mu\| \theta \varphi_m^\mu \|_{L^2}^2 + \frac 1 4 \| \theta^2 \varphi_m^\mu \|_{L^2}^2+   \mu^2   =  
 \mathsf{E}_m(\mu)  \, ,
$$
which implies the bound\refeq{eq:neqbeh}. We further need a Weyl-type asymptotics.

\begin{lemma}
\label{l:estim-mu--infty}
{\sl   For all $L>0$, one has
\begin{align}
\label{e:asympt-harmonic}
\sharp \{k \in \N , \mathsf{E}_k(\mu) \leq L |\mu|^2 \} = (2\pi)^{-1} |\mu|^{3/2} \big( \Vol_L + o(1) \big) ,  \quad \text{as }\mu \to - \infty \,  ,
\end{align}
 where $\Vol_L$ is defined by   
\begin{align}
\label{e:volL-bis}
\Vol_L   &\eqdefa \int_{\{p(x,\xi)\leq L\}} dx d\xi  \,  , \quad \text{ with }p(x,\xi) = \xi^2 + V(x) \,  , \quad V(x) =  \left(  \frac{x^2}{2}  + 1 \right)^2  \,  ,\\
& =  \int_{x_-(L)}^{x_+(L)}  \sqrt{L  -\left(  \frac{x^2}{2}  + 1 \right)^2 } dx ,\quad \text{ with } \left(  \frac{x_\pm(L)^2}{2}  + 1 \right)^2 = L , \quad \text{ for } L>1 \,   .
\nonumber
\end{align}}
\end{lemma}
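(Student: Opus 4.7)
The plan is to reduce~\eqref{e:asympt-harmonic} to a standard semiclassical Weyl law for a fixed confining single-well potential, via a unitary rescaling analogous to the one used in the proof of Lemma~\ref{l:asympt-k-grand} but with a dilation parameter adapted to the regime $\mu \to -\infty$.

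First, I would apply the dilation $T_\alpha$ from~\eqref{e:Talpha} with the choice $\alpha = |\mu|^{1/2}$. Writing $\mu = -|\mu|$, a direct computation analogous to the one in the proof of Lemma~\ref{l:asympt-k-grand} gives
$$
T_\alpha \mathsf{P}_\mu T_\alpha^{-1}
= -|\mu|^{-1}\frac{d^2}{d\theta^2} + |\mu|^2 \left(\frac{\theta^2}{2}+1\right)^2
= |\mu|^2 P(h),
$$
where $h \eqdefa |\mu|^{-3/2}$ and
$$
P(h) \eqdefa -h^2 \frac{d^2}{d\theta^2} + V(\theta), \qquad V(\theta) \eqdefa \left(\frac{\theta^2}{2}+1\right)^2.
$$
Since $T_\alpha$ is unitary and the eigenvalues of $\mathsf{P}_\mu$ are simple (Proposition~\ref{p:def-E-fimu}), the spectrum of $P(h)$ equals $\{|\mu|^{-2}\mathsf{E}_k(\mu), k\in\N\}$, ordered increasingly. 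Consequently
$$
\sharp\{k \in \N, \mathsf{E}_k(\mu)\leq L|\mu|^2\} = \sharp\{k \in \N, \lambda_k(P(h))\leq L\}.
$$

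Second, I would apply the standard semiclassical Weyl law to $P(h)$. Since $V$ is smooth, bounded below by $1$, and confining with $V(\theta)\to +\infty$ as $|\theta|\to\infty$, the operator $P(h)$ is selfadjoint with compact resolvent, and one has, as $h \to 0^+$,
$$
(2\pi h) \, \sharp\{k\in\N, \lambda_k(P(h))\leq L\} = \Vol_L + o(1),
$$
with $\Vol_L = \int_{\{\xi^2 + V(x)\leq L\}}dx d\xi$. This is a fixed-potential version of Proposition~\ref{p:semiclass-perturb} (with the potential $V$ in place of $(\theta^2/2)^2$) and can also be obtained via Bohr--Sommerfeld quantization or min-max/quadratic form comparisons against harmonic oscillators localized near the bottom of the well. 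Substituting $h = |\mu|^{-3/2}$ yields exactly~\eqref{e:asympt-harmonic}.

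Finally, the closed-form expression for $\Vol_L$ in~\eqref{e:volL-bis} follows by a routine Fubini/integration in $\xi$: for $L>1$ the set $\{V\leq L\}$ is the interval $[x_-(L),x_+(L)]$ defined in the statement, with $x_\pm(L)^2 = 2(\sqrt{L}-1)$, and on each vertical slice the set $\{\xi : \xi^2 \leq L - V(x)\}$ is an interval whose length is proportional to $\sqrt{L - V(x)}$. The main --- rather mild --- obstacle is verifying the semiclassical Weyl law for the specific potential $V$, which could be delicate in the presence of several wells; here $V$ has a single nondegenerate minimum at $\theta = 0$, so no tunnelling phenomena complicate the leading-order counting (in sharp contrast with the symmetric double-well situation of regime~\ref{i:regime-double}), and the argument reduces to a classical statement of one-dimensional semiclassical analysis.
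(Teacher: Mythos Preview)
Your proof is correct and follows essentially the same route as the paper: rescale with $\alpha = |\mu|^{1/2}$ (the paper writes $\eta=-\mu$, $\alpha=\sqrt{\eta}$) to convert $\mathsf{P}_\mu$ into $|\mu|^2$ times the semiclassical operator $-h^2\frac{d^2}{d\theta^2}+(\theta^2/2+1)^2$ with $h=|\mu|^{-3/2}$, and then apply the standard one-dimensional Weyl law~\eqref{e:weyl} (which in the paper is Theorem~\ref{t:semiclassical-weyl}, rather than Proposition~\ref{p:semiclass-perturb}). The only cosmetic difference is that the paper records the eigenvalue relation as $h^{4/3}\mathsf{E}_k(\mu)$ while you write it as $|\mu|^{-2}\mathsf{E}_k(\mu)$, which is the same thing.
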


In the end, this is helpful to estimate the second term in the decomposition~\eqref{e:decomposition-Ek}. 
\begin{corollary}\label{reg2}
{\sl For any $\eps>0$, there is $\tilde{\mu}_0>0$ such that for all $\mu_0 \geq \tilde{\mu}_0$, $\mathcal{I}_\gamma^-(\eps, \mu_0)<+\infty$ if $\gamma >\frac54$.}
\end{corollary}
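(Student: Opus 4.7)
The plan is to combine the pointwise lower bound $\mathsf{E}_k(\mu)\geq |\mu|^2$ established in~\eqref{eq:neqbeh} with the Weyl-type counting provided by Lemma~\ref{l:estim-mu--infty}, and then perform a one-dimensional integration in~$\mu$. Both ingredients are already available, so the argument is a routine assembly.

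First I would fix $\eps>0$ and set $L \eqdefa 1/\eps^2$. On the set $\mathcal{E}^-(\eps,\mu_0)$ the condition $|\mu|^2 \geq \eps^2 \mathsf{E}_k(\mu)$ rewrites as $\mathsf{E}_k(\mu) \leq L |\mu|^2$, with moreover $\mu \leq -\mu_0<0$. Since $\mu<0$, the bound~\eqref{eq:neqbeh} gives $\mathsf{E}_k(\mu) \geq |\mu|^2$, and therefore $\mathsf{E}_k(\mu)^{-\gamma} \leq |\mu|^{-2\gamma}$ everywhere on $\mathcal{E}^-(\eps,\mu_0)$.

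Next I would invoke Lemma~\ref{l:estim-mu--infty} with this value of $L$: it provides a $\tilde\mu_0>0$ (depending on $\eps$) and a constant $C_\eps$ such that for all $\mu \leq -\tilde\mu_0$,
$$
N(\mu) \eqdefa \sharp \big\{k\in \N \,,\, \mathsf{E}_k(\mu) \leq L|\mu|^2\big\} \leq C_\eps \, |\mu|^{3/2} .
$$
For any $\mu_0 \geq \tilde\mu_0$, Tonelli's theorem (integrating first in $k$) combined with the two bounds above yields
$$
\mathcal{I}_\gamma^-(\eps,\mu_0)
= \int_{-\infty}^{-\mu_0} \sum_{\substack{k\in\N \\ \mathsf{E}_k(\mu) \leq L|\mu|^2}} \frac{d\mu}{\mathsf{E}_k(\mu)^\gamma}
\leq \int_{-\infty}^{-\mu_0} \frac{N(\mu)}{|\mu|^{2\gamma}}\,d\mu
\leq C_\eps \int_{\mu_0}^{\infty} t^{\frac32 - 2\gamma}\,dt,
$$
and the last integral is finite precisely when $3/2 - 2\gamma < -1$, i.e.\ $\gamma > 5/4$, which is the claimed condition.

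No real obstacle is expected: the only mild care point is converting the $o(1)$ remainder in Lemma~\ref{l:estim-mu--infty} into a uniform multiplicative constant by enlarging $\tilde\mu_0$ if needed, which is standard. Note that the argument only gives the sufficient direction $\gamma>5/4$, consistent with the statement; the matching lower bound (needed for the converse) would require the companion inequality in the counting asymptotics and is handled elsewhere in the paper.
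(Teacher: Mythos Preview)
Your proposal is correct and follows essentially the same approach as the paper: both combine the pointwise lower bound $\mathsf{E}_k(\mu)\geq |\mu|^2$ from~\eqref{eq:neqbeh} with the Weyl counting of Lemma~\ref{l:estim-mu--infty} to reduce $\mathcal{I}_\gamma^-(\eps,\mu_0)$ to the one-dimensional integral $\int_{\mu_0}^\infty |\mu|^{3/2-2\gamma}\,d\mu$. Your write-up is in fact slightly more explicit about the use of Tonelli and the handling of the $o(1)$ remainder, but the argument is otherwise line-for-line the same.
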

\begin{proof}[Proof of Corollary~{\rm\ref{reg2}} from Lemma~{\rm\ref{l:estim-mu--infty}}]
The set of integration is $\mu \leq - \eps \sqrt{\mathsf{E}_k(\mu)}$ and $\mu \leq -\mu_0<0$. Then, the integral can be estimated as: given $\eps>0$, there is $\mu_0 = \mu_0(\eps)>0$ such that for all~$\mu \leq -\mu_0$, the number of eigenvalues in $\mu \leq - \eps \sqrt{\mathsf{E}_k(\mu)}$ is according to~\eqref{e:asympt-harmonic}
$$
\sharp \{k \in \N , \mathsf{E}_k(\mu) \leq \frac{1}{\eps^2} |\mu|^2 \} \leq  (2\pi)^{-1} |\mu|^{3/2} \big( \Vol_{\eps^{-2}} + 1 \big) ,  \quad \text{ for }\mu \leq -\mu_0(\eps)  \, .
$$
Since $\mathsf{E}_k(\mu) \geq |\mu|^2$ for $\mu<-\mu_0(\eps)$, there is $C_\eps$ such that
\begin{align*}
\mathcal{I}_\gamma^-(\eps, \mu_0)&  = \int_{\mu \leq - \eps \sqrt{\mathsf{E}_k(\mu)}, |\mu| \geq \mu_0}  \frac{d\mu d\delta(k)}{\mathsf{E}_k(\mu)^\gamma}
  \leq \int_{\mu \leq - \eps \sqrt{\mathsf{E}_k(\mu)}, |\mu| \geq \mu_0}  \frac{d\mu d\delta(k)}{|\mu|^{2\gamma}} \\
& \leq \int_{\mu <- \mu_0}  (2\pi)^{-1} |\mu|^{3/2} \big( \Vol_{\eps^{-2}} + 1 \big)  \frac{d\mu}{|\mu|^{2\gamma}}
 \leq C_\eps<\infty \,  ,
\end{align*}
as soon as $2\gamma-\frac32>1$ that is $\gamma >\frac54\cdotp$
\end{proof}
\begin{proof}[Proof of Lemma~{\rm\ref{l:estim-mu--infty}}]
We set $\eta \eqdefa -\mu$ and study for $\eta \to +\infty$, 
$$
\mathsf{P}_{-\eta} =  - \frac{d^2}{d\theta^2}  + \left( \frac{\theta^2}{2} + \eta \right)^2 \,  .
$$
We choose $\alpha = \sqrt{\eta}$ in the rescaling  
$$
T_\alpha \mathsf{P}_{-\eta} T_{\alpha^{-1}} = - \alpha^{-2} \frac{d^2}{d\theta^2} + \left( \alpha^2 \frac{\theta^2}{2} + \eta \right)^2 = - \alpha^{-2} \frac{d^2}{d\theta^2} +\alpha^4 \left( \frac{\theta^2}{2} + 1 \right)^2 \,  .
$$
As a consequence
\begin{align*}
\mathsf{P}_{-\eta} \psi = \mathsf{E}_k(-\eta) \psi & \Longleftrightarrow T_\alpha \mathsf{P}_{-\eta} T_{\alpha^{-1}} T_\alpha \psi= \mathsf{E}_k(-\eta) T_\alpha \psi  \\
& \Longleftrightarrow \left[ - \alpha^{-6} \frac{d^2}{d\theta^2} + \left(  \frac{\theta^2}{2}  + 1 \right)^2 \right]T_\alpha \psi= \alpha^{-4}\mathsf{E}_k(-\eta) T_\alpha \psi \,   .
\end{align*}
We set $h= \alpha^{-3} = \eta^{-3/2}$ and obtain 
\begin{align}
\label{e:P-switch}
\mathsf{P}_{-\eta} \psi = \mathsf{E}_k(-\eta) \psi & \Longleftrightarrow  \left[ -h^2 \frac{d^2}{d\theta^2} + \left(  \frac{\theta^2}{2}  + 1 \right)^2 \right]T_\alpha \psi= h^{4/3}\mathsf{E}_k(-\eta) T_\alpha \psi  \,  .
\end{align}
The Weyl Law~\eqref{e:weyl} applied to the operator $-h^2 \frac{d^2}{d\theta^2} + \left(  \frac{\theta^2}{2}  + 1 \right)^2$ then reads: for all $L>0$ fixed,
\begin{align*}
\sharp \{k \in \N , h^{4/3}\mathsf{E}_k(\mu)  \leq L \} = (2\pi h)^{-1} \big( \Vol_L + o(1) \big) ,  \quad \text{as }h \to 0 \,  ,
\end{align*}
with $\Vol_L$ defined by~\eqref{e:volL-bis}. Recalling that $h= \eta^{-3/2}$ then yields 
\begin{align*}
\sharp \{k \in \N , \eta^{-2} \mathsf{E}_k(-\eta)  \leq L \} = (2\pi)^{-1} \eta^{3/2}\big( \Vol_L + o(1) \big) ,  \quad \text{as } \eta\to +\infty  \, ,
\end{align*}
and then we write back $\mu=-\eta$ to obtain~\eqref{e:asympt-harmonic}.
\end{proof}

%%%%%%%%%%%%%%%%%%%%%%%%%%%%%%%%%%%%%%
\subsection{Semiclassical double well regime~\ref{i:regime-double}}
We want to estimate the last term in the decomposition~\eqref{e:decomposition-Ek}, namely $\mathcal{I}^+_\gamma(\eps,\mu_0)$.
To this aim, we study for $\mu \to +\infty$, the operator $\mathsf{P}_{\mu}$ and as above rescale it with $h= \alpha^{-3} = \mu^{-3/2}$, as
\begin{align}
\label{e:Pmu-Ph}
\mathsf{P}_\mu \psi = \mathsf{E}_k(\mu) \psi & \Longleftrightarrow  \left[ -h^2 \frac{d^2}{d\theta^2} + \left(  \frac{\theta^2}{2} - 1 \right)^2 \right]T_\alpha \psi= h^{4/3}\mathsf{E}_k(\mu) T_\alpha \psi =\mu^{-2}\mathsf{E}_k(\mu) T_\alpha \psi \,   .
\end{align}
We thus need to study the spectrum of the operator 
\begin{equation}
\label{e:def-semiclassic-2-well}
P_h \eqdefa  -h^2 \frac{d^2}{d\theta^2} + V(\theta), \quad \text{ with } \quad V(\theta) =  \left(  \frac{\theta^2}{2} - 1 \right)^2
\end{equation}
for energies $0\leq E\leq M$ for $M = \eps^{-2}$ (fixed by Corollary~\ref{c:cor-classial-regime}). Remark that this is a symmetric double well problem, which has been much studied~\cite{HS:84,Robert:87,HR:84,Helffer:booksemiclassic,DS:book}.

\bigskip
In this section, we only work in a semiclassical regime; we thus reformulate completely the problem with $h = \mu^{-3/2}$, and $E_k(h) = h^{4/3}\mathsf{E}_k(\mu)$ the $k$-th eigenvalue of $P_h$. In the integral~$\mathcal{I}^+_\gamma(\eps,\mu_0)$ in~\eqref{e:decomposition-Ek}, we set $\mu= h^{-2/3}$, $d\mu = \frac23 h^{-5/3} dh$, and obtain with $h_0 = \mu_0^{-3/2}$
\begin{equation}
\label{e:def-I}
\mathcal{I}^+_\gamma(\eps,\mu_0)= \frac23 \int_{E_k(h) \leq \eps^{-2}, 0<h \leq h_0}  \frac{h^{-5/3} dh d\delta(k)}{(h^{-4/3}E_k(h))^\gamma} =  \frac23 \int_{E_k(h) \leq \eps^{-2}, 0<h \leq h_0}  \frac{h^{(4\gamma-5)/3}}{E_k(h)^\gamma} dh d\delta(k) \,  .
\end{equation}
To prove convergence of this integral, we split the energy region $[0,M]$ where $M=\eps^{-2}$ is large into three different regions as 
$$
[0,M] = [0,\beta h] \cup [\beta h , \alpha] \cup [\alpha,M] \,  ,
$$
where $\beta> 0, \alpha \in (0,1), M >\alpha$ are fixed (independent of $h$). Concerning the energy window~$[\alpha,M]$, a counting estimate will be enough for our needs: the following is a rewriting of~\eqref{e:weyl} in the present context.
\begin{lemma}
\label{l:weyl-mou}
{\sl For $V(\theta) =  \left(  \frac{\theta^2}{2} - 1 \right)^2$  and $p(\theta, \xi) = \xi^2+V(\theta)$, for any $\alpha \leq M$, we have 
\begin{align*}
\sharp \{ j \in \N , E_j(h) \in [\alpha,M]\} & = (2\pi h)^{-1} \Big( \Vol p^{-1}([\alpha, M])+ o_{\alpha,M}(1) \Big) , \quad \text{ as } h \to 0^+ \,  .
\end{align*}}
\end{lemma}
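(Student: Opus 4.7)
My plan is to deduce the window asymptotics by subtraction from the cumulative Weyl law~\eqref{e:weyl} already quoted earlier in the section. Define the counting function
$$
N(h,\Lambda) \eqdefa \sharp\{ j \in \N , E_j(h) \leq \Lambda \} \, .
$$
The operator $P_h = -h^2 \frac{d^2}{d\theta^2} + V(\theta)$ with $V(\theta) = (\theta^2/2-1)^2$ is selfadjoint, semibounded, with confining polynomial potential ($V(\theta) \to + \infty$ as $|\theta| \to \infty$), hence has compact resolvent; in particular it falls within the scope of the standard semiclassical Weyl law recalled in~\eqref{e:weyl}, which yields for any fixed $\Lambda \in \R$ at which $\{p = \Lambda\}$ has zero Liouville measure (automatic here since $p^{-1}(\{\Lambda\})$ is at most a finite union of smooth curves in phase space) the asymptotics
$$
N(h,\Lambda) = (2\pi h)^{-1}\big( \Vol p^{-1}((-\infty,\Lambda]) + o_\Lambda(1)\big) , \qquad h \to 0^+ \, .
$$

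Next, I would write the window count as a difference. Since eigenvalues are simple (each jump of $N(h,\cdot)$ has size $1$, a fact irrelevant below), there holds
$$
\sharp\{ j \in \N , E_j(h) \in [\alpha, M]\} = N(h, M) - N(h,\alpha) + r(h,\alpha) \, ,
$$
where $r(h,\alpha) \in \{0,1\}$ accounts for whether $\alpha$ itself is an eigenvalue. In particular $r(h,\alpha) = O(1) = o(h^{-1})$ as $h \to 0^+$ and can be absorbed into the remainder. Applying the cumulative Weyl law to both $M$ and $\alpha$, and using additivity of Lebesgue measure
$$
\Vol p^{-1}([\alpha, M]) = \Vol p^{-1}((-\infty,M]) - \Vol p^{-1}((-\infty,\alpha)) \, ,
$$
together with $\Vol p^{-1}(\{\alpha\}) = 0$, yields the announced identity with the remainder $o_{\alpha,M}(1)$ depending on both endpoints through the two applications of the cumulative Weyl law.

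I do not expect any serious obstacle: the only points that require minor care are (i) checking that the confining quartic $V$ fits the hypotheses under which~\eqref{e:weyl} was quoted (this is standard and in fact weaker than what is needed for the rescaled operator $P(h)$ used in Lemma~\ref{l:asympt-k-grand}), and (ii) handling the possible coincidence of $\alpha$ or $M$ with an eigenvalue of $P_h$, which as explained above costs only a bounded additive error and is thus absorbed. The statement of Lemma~\ref{l:weyl-mou} does not require any uniformity in $\alpha, M$, which is why the notation $o_{\alpha,M}(1)$ suffices.
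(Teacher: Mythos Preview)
Your argument is correct and is precisely what the paper has in mind: the paper does not prove this lemma separately but simply says it ``is a rewriting of~\eqref{e:weyl} in the present context.'' Note that~\eqref{e:weyl} as stated in the appendix already gives the count in an arbitrary window $[a,b]$ directly, so your subtraction step (cumulative count up to $M$ minus cumulative count up to $\alpha$) is a harmless but unnecessary detour; you can apply~\eqref{e:weyl} once with $a=\alpha$, $b=M$ and the quartic potential $V(\theta)=(\theta^2/2-1)^2$, which plainly satisfies the polynomial growth and confinement hypotheses of Theorem~\ref{t:semiclassical-weyl}.
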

Concerning the energy window $[\beta h ,\alpha]$, we shall use the following much more precise result from~\cite[p294-295]{HR:84}.
\begin{lemma}
\label{l:helffer-robert}
{\sl For $E \in [0,1)$, we set  
\begin{align}
\label{e:def-phi}
\Phi(E) \eqdefa \frac{1}{4\pi} \int_{p(\x,\xi)\leq E} d\x d\xi \,  , \quad \text{ with }\quad p(\x,\xi) = \xi^2 + \left(  \frac{\x^2}{2} - 1 \right)^2 .
\end{align}
There are $\beta >0$ and $N_\beta \in\N$ such that for all $\alpha <1$, there are $K,h_0>0$ such that for all~$h \in (0,h_0)$, there exist $N^\pm(h) \in \N$ with $|N^+(h)-N^-(h)| \leq 1$ and $N^\pm(h) \leq Kh^{-1}$, and two finite sequences~$E_j^\pm (h) \in  [\beta h , \alpha]$ for $j \in \{N_\beta, \cdots , N^\pm(h)\}$
with 
$$
\Phi \big( E_j^\pm (h) \big) = (j+1/2) h + O_{\alpha,\beta}(h^2), \quad  \text{ as }h \to 0^+ , 
$$
such that we have 
$$
 \Sp(P_h)\cap [\beta h , \alpha] = \bigcup_{j \in \{N_\beta, \cdots , N^+(h)\}}E_j^+ (h) \cup \bigcup_{j \in \{N_\beta , \cdots , N^-(h)\}}E_j^- (h)  \, . 
$$}
\end{lemma}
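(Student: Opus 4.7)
The plan is to follow the classical semiclassical treatment of symmetric one-dimensional double wells, as developed in~\cite{HS:84,HR:84,Helffer:booksemiclassic,DS:book}. First I would exploit the $\theta \mapsto -\theta$ symmetry of $V$ to split $L^2(\R) = L^2_{\mathrm{even}} \oplus L^2_{\mathrm{odd}}$. Both subspaces are stable under $P_h$, so $\Sp(P_h)$ is the disjoint union of the spectra of the two restrictions $P_h^{+}$ (on $L^2_{\mathrm{even}}$) and $P_h^{-}$ (on $L^2_{\mathrm{odd}}$), yielding the two sequences $E_j^+(h)$ and $E_j^-(h)$. The interlacing of even and odd eigenvalues, a standard 1D Sturm--Liouville fact, forces $|N^+(h) - N^-(h)| \leq 1$.

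Each $P_h^{\pm}$ is unitarily equivalent to a Schr\"odinger operator on $(0,+\infty)$ with Neumann (resp.\ Dirichlet) condition at $0$, driven by the same potential $V$, which on $\{\theta > 0\}$ is a single well at $\theta = \sqrt{2}$ with $V(\sqrt 2)=0$ and $V(0)=1$. For energies $E \in [\beta h, \alpha]$ with $\alpha<1$ fixed, the point $\theta=0$ lies strictly in the classically forbidden region $\{V>E\}$. I would then invoke Agmon-type estimates to show that any $L^2$-normalized eigenfunction of $P_h^\pm$ with eigenvalue $E\leq\alpha$, together with its derivative, is $O(e^{-S/h})$ at $\theta=0$ for some $S>0$ uniform in $E$. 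A standard perturbation argument then shows that swapping the boundary condition at $0$ (Neumann versus Dirichlet, or reattaching the left well) moves each eigenvalue in $[\beta h,\alpha]$ by at most $O(e^{-c/h})$. In particular, both $E_j^+(h)$ and $E_j^-(h)$ are $O(e^{-c/h})$-close to the eigenvalues $\tilde E_j(h)$ of a ``filled'' single-well reference problem.

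For this single-well problem, a WKB construction combined with Maslov index bookkeeping yields the Bohr--Sommerfeld quantization
$$\Phi_{\mathrm{sw}}(\tilde E_j(h)) = (j+\tfrac{1}{2})h + O(h^2), \qquad \Phi_{\mathrm{sw}}(E) \eqdefa \frac{1}{2\pi}\int_{\{p\leq E,\,\theta>0\}} d\theta\, d\xi,$$
uniformly for $\tilde E_j(h) \in [\beta h, \alpha]$. Because $V$ is even, $\{p\leq E\}$ has two congruent components for $E<1$, so $\Phi(E) = \Phi_{\mathrm{sw}}(E)$. Combined with the tunneling estimate of the previous step (and the fact that $e^{-c/h}\ll h^2$), this gives the claimed expansion for both $E_j^+(h)$ and $E_j^-(h)$. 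The upper bound $N^\pm(h) \leq K h^{-1}$ is just Weyl's law, essentially Lemma~\ref{l:weyl-mou} applied with $M=\alpha$.

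The main obstacle is the uniformity of Bohr--Sommerfeld in $E$ down to scale $E\sim h$. For $E$ bounded away from $0$ and below $\alpha<1$, a single WKB ansatz in the classically allowed interval between the turning points suffices to obtain the $O(h^2)$ remainder. As $E\to 0$, however, the turning points pinch towards the well bottom, the WKB ansatz degenerates, and one must match with the harmonic approximation $-h^2\partial_\theta^2 + \tfrac{1}{2}V''(\sqrt 2)(\theta-\sqrt 2)^2$ whose spectrum is $h\sqrt{V''(\sqrt 2)/2}\,(2n+1)+O(h^2)$. Choosing $\beta>0$ large enough, and defining $N_\beta$ as the smallest index from which Bohr--Sommerfeld becomes valid, so that the $j$-th quantized energy sits safely above the purely harmonic regime, and then checking that a single $O(h^2)$ remainder controls the matching on the overlap region, is precisely the delicate analysis carried out in~\cite{HR:84}.
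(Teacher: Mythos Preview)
The paper does not actually prove this lemma: it is stated as a direct quotation of a result from \cite[p.~294--295]{HR:84}, with no argument given beyond the citation. Your sketch is a correct outline of the standard symmetric double-well analysis underlying that reference (parity splitting, Agmon decay through the barrier at $\theta=0$, reduction to a single-well Bohr--Sommerfeld rule, and the matching with the harmonic regime as $E\downarrow 0$), so there is nothing to compare beyond noting that you have supplied what the paper only cites.
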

  Note that here, $E_j^\pm(h)$ is not the $j$-th eigenvalue of $P_h$. However, the $E_j^\pm(h)$'s exhaust the spectrum of~$P_h$ in the energy window $[\beta h , \alpha]$ as $h \to 0$.

Concerning the bottom of the spectrum, that is the energy window $[0,\beta h]$, we shall need a precise description of the eigenvalues~\cite{HS:84}. We recall that $V'(\x) = 2\x(\frac{\x^2}{2}-1)$ and $V''(\x) = 3 \x^2-2$. In particular at the two minima $V''(\pm \sqrt{2}) = 4$ and $\omega : = \sqrt{\frac{V''(\pm \sqrt{2})}{2}} = \sqrt{2}$. 
The following result is a consequence of~\cite{HS:84}, see also~\cite{Robert:87} and~\cite[pp~55--60]{Helffer:booksemiclassic}, and states that the low-lying eigenvalues are close to those of the Harmonic oscillator $-h^2\frac{d^2}{d\theta^2} +\omega^2\theta^2$.

\begin{lemma}[Bottom of the spectrum for the double well problem]
\label{l:helffer-sjostrand}
{\sl For all $\beta>0$, there are~$N_\beta \in \N , h_0>0$ such that 
$$
  \Sp(P_h)  \cap (-\infty , \beta h)    
  = \big\{E_n(h), n \in \{0,\dots , N_\beta\} \big\} , \quad \text{ uniformly for }  h \in (0,h_0) ,
$$
with $0< E_n(h)<E_{n+1}(h) < \beta h$ for all $n \in \{0,\dots , N_\beta-1\}$ and $h \in (0,h_0)$. Moreover, as $h \to 0^+$, we have 
\begin{enumerate}
\item $E_{2k}(h) =(2k+1)\omega h + O_\beta(h^2)$ is simple and associated to an even eigenfunction $\psi_{2k}(h)$,
\item  $E_{2k+1}(h) =(2k+1)\omega h + O_\beta(h^2)$ is simple and associated to an odd eigenfunction $\psi_{2k+1}(h)$.
 \end{enumerate}
}\end{lemma}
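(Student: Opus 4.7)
My plan is to exploit the standard harmonic approximation at the two symmetric wells of $V(\theta) = (\theta^2/2-1)^2$, located at $\theta = \pm\sqrt{2}$ with $V''(\pm\sqrt 2) = 4$. Near each well, $P_h$ behaves like the shifted harmonic oscillator $-h^2\partial_\theta^2 + \omega^2(\theta\mp\sqrt 2)^2$ with $\omega=\sqrt 2$ and eigenvalues $(2k+1)\omega h$, $k \in \N$. I would prove the lemma by combining a quasimode construction, a parity argument, and an IMS localization bound.

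First, for each $k$ with $(2k+1)\omega h < \beta h$, I would build normalized quasimodes $\chi_k^\pm(\theta) \eqdefa h^{-1/4} \eta^\pm(\theta) \psi_k^{\rm HO}\big((\theta\mp\sqrt 2)/\sqrt h\big)$ from rescaled Hermite functions centered at $\pm\sqrt 2$, multiplied by smooth cutoffs $\eta^\pm$ localized near the corresponding well. A Taylor expansion of $V$ at $\pm\sqrt 2$ combined with the $\sqrt h$-concentration of Hermite functions yields $\|(P_h - (2k+1)\omega h)\chi_k^\pm\|_{L^2} = O_\beta(h^{3/2})$, so the standard approximation lemma produces a true eigenvalue of $P_h$ within $O(h^{3/2})$ of $(2k+1)\omega h$. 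Since $V$ is even, $P_h$ commutes with the parity $\theta \mapsto -\theta$ and preserves the even/odd decomposition of $L^2(\R)$; the symmetric and antisymmetric combinations $\chi_k^+\pm\chi_k^-$ are respectively even and odd quasimodes living in distinct invariant subspaces, giving rise to two distinct eigenvalues $E_{2k}(h)$ and $E_{2k+1}(h)$ with the required parities.

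To sharpen the remainder to $O_\beta(h^2)$, I would use that the cubic correction $V'''(\pm\sqrt 2)(\theta \mp \sqrt 2)^3/6$ has zero diagonal matrix elements on Hermite functions by parity, so first-order perturbation theory on each well produces no $h^{3/2}$ shift and the next nontrivial correction enters at order $h^2$ through the quartic term. The tunneling interaction $\langle \chi_k^+, P_h \chi_k^-\rangle$ is handled via Agmon weighted $L^2$ estimates: low-lying eigenfunctions decay like $e^{-d_V(\theta)/h}$ where $d_V$ is the Agmon distance to the wells, so cross terms are exponentially small in $1/h$ and do not pollute the $O(h^2)$ asymptotics. To exclude extra eigenvalues below $\beta h$, I would apply the IMS localization $P_h = \sum_\bullet \phi_\bullet P_h \phi_\bullet - h^2\sum_\bullet (\phi_\bullet')^2$ relative to a quadratic partition $\phi_-^2 + \phi_0^2 + \phi_+^2 = 1$ with $\phi_\pm$ concentrated near the wells and $\phi_0$ supported where $V \geq c_0 > 0$; the $\phi_0$ piece contributes energies $\geq c_0/2 \gg \beta h$, each $\phi_\pm$ piece is a small perturbation of a harmonic oscillator, and the min-max principle pins the counting function of $P_h$ below $\beta h$ to its harmonic-oscillator value $N_\beta+1$.

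The main obstacle will be the tunneling bookkeeping behind the $O_\beta(h^2)$ sharpening: rigorously separating well-interaction terms from the harmonic-oscillator asymptotics is exactly the content of the Helffer--Sj\"ostrand theory in~\cite{HS:84}, so I would invoke those estimates as a black box rather than reprove them, since this lemma is really a specialization of their general framework to the symmetric double well $(\theta^2/2-1)^2$.
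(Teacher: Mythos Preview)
Your outline is correct and describes exactly the Helffer--Sj\"ostrand machinery; note that the paper does not prove this lemma at all but simply states it as a consequence of~\cite{HS:84} (with pointers to~\cite{Robert:87} and~\cite[pp~55--60]{Helffer:booksemiclassic}). So you and the paper are doing the same thing --- deferring to the existing literature --- except that you have helpfully sketched the ingredients (harmonic approximation at the wells, parity splitting, IMS localization, Agmon decay) that lie behind the citation.
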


The regimes of Lemmata~\ref{l:helffer-robert} and~\ref{l:helffer-sjostrand} overlap (depending on the choice of the constant $\beta$ in these two statements) and we now check that the two asymptotics as $h \to 0^+$ coincide. 
\begin{lemma}
\label{e:info-function-phi}
{\sl The function $\Phi : [0,1)\to \R_+$ defined in~\eqref{e:def-phi} is continuous, of class $C^1$ on $(0,1)$, and we have, for $E \in (0,1)$,
$$\Phi'(E)  =  \frac1{2\pi} \int_{x_-(E)}^{x_+(E)}  \frac{1}{\sqrt{E  -V(x) }} dx >0  \, , \quad \text{ with } \quad  x_\pm(E) = \sqrt{2\pm2\sqrt{E}} \,  .$$
Moreover, the function $\Phi$ is differentiable at $E=0^+$ with $\Phi'(0^+)=(2\sqrt{2})^{-1}>0$.
}
\end{lemma}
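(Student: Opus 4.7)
The plan is to reduce $\Phi$ to a one-dimensional integral by integrating out the $\xi$ variable first. For $E \in (0,1)$, the condition $V(\theta) \leq E$ is equivalent to $1-\sqrt{E} \leq \theta^2/2 \leq 1+\sqrt{E}$, so the sublevel set $\{V \leq E\}$ consists in two symmetric intervals $[-x_+(E),-x_-(E)] \cup [x_-(E),x_+(E)]$, with $x_\pm(E)=\sqrt{2\pm 2\sqrt{E}}$. Integrating $\xi$ on $[-\sqrt{E-V(\theta)},\sqrt{E-V(\theta)}]$ and using the symmetry $\theta \to -\theta$, I obtain
$$
\Phi(E) = \frac{1}{\pi}\int_{x_-(E)}^{x_+(E)} \sqrt{E-V(\theta)}\, d\theta.
$$

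The key step is then the change of variables $\theta = \sqrt{2(1+\sqrt{E}\,u)}$ for $u \in [-1,1]$, which sends $[x_-(E),x_+(E)]$ diffeomorphically onto $[-1,1]$ and satisfies $V(\theta) = Eu^2$ together with $d\theta = \sqrt{E}/\theta\, du$. A direct substitution yields
$$
\Phi(E) = \frac{E}{\pi\sqrt{2}}\int_{-1}^{1} \frac{\sqrt{1-u^2}}{\sqrt{1+\sqrt{E}\,u}}\, du.
$$
The integrand is jointly continuous on $[0,1)\times[-1,1]$ (since $1+\sqrt{E}\,u \geq 1-\sqrt{E}>0$) and smooth in $E$ on $(0,1)\times[-1,1]$, with partial derivatives uniformly bounded on each compact $E \in [E_0,E_1] \subset (0,1)$. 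Dominated convergence thus gives continuity of $\Phi$ on $[0,1)$ and $\Phi \in C^1((0,1))$.

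To identify $\Phi'(E)$ in the form stated, I would differentiate the $\theta$-integral directly: the boundary contributions vanish because $\sqrt{E-V(\theta)}$ is zero at $\theta = x_\pm(E)$, hence formally
$$
\Phi'(E) = \frac{1}{2\pi} \int_{x_-(E)}^{x_+(E)} \frac{d\theta}{\sqrt{E-V(\theta)}}\cdotp
$$
This is justified rigorously by undoing the previous substitution in the expression obtained by differentiating under the integral sign in the $u$-variable; integrability at the endpoints follows from $V'(x_\pm(E)) = 2x_\pm(E)(x_\pm(E)^2/2-1) = \pm 2 x_\pm(E)\sqrt{E} \neq 0$ for $E \in (0,1)$, which gives $E-V(\theta) \sim |V'(x_\pm)|\, |x_\pm - \theta|$ at the endpoints. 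Positivity of $\Phi'(E)$ is then immediate.

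Finally, for the behavior at $E=0^+$, dominated convergence applied to the identity $\Phi(E)/E = \frac{1}{\pi\sqrt{2}}\int_{-1}^1 \sqrt{1-u^2}/\sqrt{1+\sqrt{E}\,u}\, du$ yields
$$
\lim_{E\to 0^+}\frac{\Phi(E)}{E} = \frac{1}{\pi\sqrt{2}} \int_{-1}^{1}\sqrt{1-u^2}\, du = \frac{1}{\pi\sqrt{2}} \cdot \frac{\pi}{2} = \frac{1}{2\sqrt{2}}\, ,
$$
and since $\Phi(0)=0$ (the set $\{p\leq 0\}$ has measure zero), this provides $\Phi'(0^+)=1/(2\sqrt{2})$. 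The only technical obstacle is the endpoint singularity of the integrand defining $\Phi'(E)$, which is completely bypassed by the substitution $\theta = \sqrt{2(1+\sqrt{E}u)}$.
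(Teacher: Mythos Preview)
Your proof is correct and takes a genuinely different route from the paper for the behavior at $E=0^+$. Both arguments begin with the same reduction $\Phi(E)=\frac{1}{\pi}\int_{x_-(E)}^{x_+(E)}\sqrt{E-V(\theta)}\,d\theta$ and obtain the formula for $\Phi'(E)$ by differentiating under the integral sign (boundary terms vanish). For the limit $E\to 0^+$, however, the paper proceeds by a local Taylor expansion $V(\theta)=\omega^2(\theta-\sqrt{2})^2+O((\theta-\sqrt{2})^3)$ near the minimum, followed by the approximate rescaling $y=\frac{\omega}{\sqrt{E}}(\theta-\sqrt{2})$, and then passes to the limit through asymptotic estimates. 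Your substitution $\theta=\sqrt{2(1+\sqrt{E}\,u)}$ is instead \emph{exact}: it transforms $V(\theta)$ into $Eu^2$ globally on $[x_-(E),x_+(E)]$ and produces the closed formula $\Phi(E)=\frac{E}{\pi\sqrt{2}}\int_{-1}^1\frac{\sqrt{1-u^2}}{\sqrt{1+\sqrt{E}\,u}}\,du$. This integral has a fixed domain and a uniformly bounded, smooth integrand for $E$ in compacta of $[0,1)$, so continuity, $C^1$-regularity, and $\Phi'(0^+)=\frac{1}{2\sqrt{2}}$ all follow from dominated convergence with no asymptotic analysis. Your argument is more elementary and in fact yields slightly more (continuity of $\Phi'$ up to $E=0$, which the paper invokes later in Corollary~\ref{c:troisieme-zone-cv}); the paper's approach, on the other hand, is the standard harmonic-approximation-near-a-well picture and would generalize to potentials where no exact linearizing substitution is available.
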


\begin{remark}
A consequence of Lemma~{\rm\ref{e:info-function-phi}} is that the asymptotics given by Lemmata~{\rm\ref{l:helffer-robert}} and~{\rm\ref{l:helffer-sjostrand}} coincide in the regime in which they overlap. Indeed, for all eigenvalues belonging to both regimes, we have, using Lemmata~{\rm\ref{l:helffer-robert}} and~{\rm\ref{e:info-function-phi}}
$$
(j+1/2) h \sim \Phi \big( E_j^\pm (h) \big) \sim \Phi'(0^+) E_j^\pm (h)  = (2\sqrt{2})^{-1}E_j^\pm (h) \,   ,
$$
that is to say $E_j^\pm (h) \sim \sqrt{2} (2j+1) h$ as $h \to 0^+$, which is consistent with Lemma~\ref{l:helffer-sjostrand}.
\end{remark}
The proof of Lemma~\ref{e:info-function-phi} is postponed to the end of the section.
As a corollary of these four lemmata, we prove that $\mathcal{I}^+_\gamma(\eps,\mu_0)$ is finite.
\begin{corollary}
\label{c:troisieme-zone-cv} {\sl For all $M=\eps^{-2}>0$, there exists $\mu_0>0$ such that 
$\mathcal{I}^+_\gamma(\eps,\mu_0)< +\infty$ if~$\gamma>2$.
If~$\gamma \leq 2$, $\mathcal{I}^+_\gamma(\eps,\mu_0)= +\infty$ for all $\eps >0$ and $\mu_0>0$.}
\end{corollary}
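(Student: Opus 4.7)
The plan is to bound $\mathcal{I}^+_\gamma(\eps,\mu_0)$ by splitting the energy window $[0,M]$ with $M=\eps^{-2}$ into the three regions provided by Lemmas~\ref{l:weyl-mou}, \ref{l:helffer-robert} and~\ref{l:helffer-sjostrand}, namely $[0,\beta h]$, $[\beta h,\alpha]$ and $[\alpha,M]$, for some fixed $\beta>0$, $\alpha\in(0,1)$ with $\beta h < \alpha$ for $h$ small. Writing $\mathcal{I}^+_\gamma(\eps,\mu_0)$ via~\eqref{e:def-I}, each of these regions contributes
$$
\mathcal{J}_\gamma^{\mathrm{reg}} \eqdefa \frac{2}{3}\int_0^{h_0} h^{(4\gamma-5)/3} \sum_{k\,:\,E_k(h)\in\text{reg}} \frac{1}{E_k(h)^\gamma}\,dh \,  ,
$$
and the plan is to estimate the inner sum $\sum_k E_k(h)^{-\gamma}$ in each regime, then integrate in $h$.

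For the high-energy region $[\alpha,M]$, Lemma~\ref{l:weyl-mou} yields a uniform bound $\sharp\{k : E_k(h)\in[\alpha,M]\} = O(h^{-1})$, and each $E_k(h)\geq \alpha$, so $\sum_k E_k(h)^{-\gamma} \lesssim h^{-1}$ and the $h$-integral becomes $\int_0^{h_0} h^{(4\gamma-8)/3}dh$, convergent iff $\gamma>5/4$. For the Bohr-Sommerfeld window $[\beta h,\alpha]$, I will use Lemma~\ref{l:helffer-robert} to parametrize eigenvalues by $\Phi(E_j^\pm(h)) = (j+1/2)h + O(h^2)$ with $j\in\{N_\beta,\dots,N^\pm(h)\}$, $N^\pm(h) = O(h^{-1})$. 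Since $\Phi$ is a $C^1$-diffeomorphism with bounded, bounded-away-from-zero derivative on $[0,\alpha]$ (by Lemma~\ref{e:info-function-phi} and its extension to $[0,\alpha]$), the change of variables $E_j^\pm(h) \leftrightarrow (j+1/2)h$ gives
$$
\sum_{j\,:\,E_j^\pm(h)\in[\beta h,\alpha]} \frac{1}{E_j^\pm(h)^\gamma} \lesssim \frac{1}{h}\int_{\beta h}^{\alpha} \frac{\Phi'(E)}{E^\gamma}dE \lesssim \begin{cases} h^{-1} & \gamma<1\\ h^{-1}|\log h| & \gamma=1\\ h^{-\gamma} & \gamma>1\end{cases} \,  ,
$$
so for $\gamma>1$ the full contribution becomes $\int_0^{h_0} h^{(4\gamma-5)/3 - \gamma}dh = \int_0^{h_0} h^{(\gamma-5)/3}dh$, convergent iff $\gamma>2$. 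Finally, for the harmonic-well bottom $[0,\beta h]$, Lemma~\ref{l:helffer-sjostrand} provides a fixed finite number $N_\beta+1$ of eigenvalues with $E_n(h) = (2n+1)\omega h + O(h^2) \asymp h$, giving $\sum_n E_n(h)^{-\gamma} \lesssim h^{-\gamma}$ and again the integral $\int_0^{h_0} h^{(\gamma-5)/3}dh$, convergent iff $\gamma>2$. Combining the three contributions proves $\mathcal{I}^+_\gamma(\eps,\mu_0)<+\infty$ for $\gamma>2$.

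For the converse (divergence when $\gamma\leq 2$), I will use only the lower bound
$$
\mathcal{I}^+_\gamma(\eps,\mu_0) \geq \frac{2}{3}\int_0^{h_0} \frac{h^{(4\gamma-5)/3}}{E_0(h)^\gamma}\,dh \,  ,
$$
together with the upper bound $E_0(h) \leq C h$ from Lemma~\ref{l:helffer-sjostrand}, reducing to $\int_0^{h_0} h^{(\gamma-5)/3}dh = +\infty$ exactly when $\gamma\leq 2$.

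The main obstacle is the Bohr-Sommerfeld middle region: one has to justify rigorously the comparison of the discrete sum $\sum_j E_j^\pm(h)^{-\gamma}$ with the continuous integral $h^{-1}\int \Phi'(E)E^{-\gamma}dE$ despite the $O(h^2)$ error in the quantization condition, and to verify that Lemma~\ref{l:helffer-robert} indeed covers all of $\Sp(P_h)\cap[\beta h,\alpha]$ so that no eigenvalue is missed. The bottom and top regions are then comparatively straightforward, and the matching of the thresholds ($\gamma>2$ from the two low-energy regions, $\gamma>5/4$ from the high-energy one) shows that the critical exponent $\gamma=2$ originates in the bottom of the spectrum, consistently with the harmonic-oscillator asymptotics at the two wells.
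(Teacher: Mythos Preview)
Your proposal is correct and follows essentially the same three-region decomposition and the same key lemmata as the paper. The one place where the paper is slightly more direct is the middle window $[\beta h,\alpha]$: rather than justifying the sum-to-integral comparison you flag as ``the main obstacle'', the paper simply uses the linear upper bound $\Phi(E)\le M_\Phi E$ on $[0,\alpha]$ (from Lemma~\ref{e:info-function-phi}) to turn $\Phi(E_j^\pm(h))=(j+\tfrac12)h+O(h^2)$ into the pointwise lower bound $E_j^\pm(h)\ge M_\Phi^{-1}(j+\tfrac14)h$, after which the sum $\sum_j E_j^\pm(h)^{-\gamma}$ is bounded by $h^{-\gamma}\sum_j(4j+1)^{-\gamma}$ directly; this sidesteps the Riemann-sum argument entirely.
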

\begin{proof}[Proof of Corollary~\ref{c:troisieme-zone-cv}]
We let $\beta$ be fixed by Lemma~\ref{l:helffer-robert}, fix $\alpha=\frac12$ in this lemma and split the integral in~\eqref{e:def-I} according to 
$$
\frac32 \mathcal{I}^+_\gamma(\eps,\mu_0)
= \mathcal{I}_1 + \mathcal{I}_2 + \mathcal{I}_3 \,  ,
$$
with, writing $h_0 = \mu_0^{-3/2}$ sufficiently small, 
$$
\mathcal{I}_1 = \int_{E_k(h) \in [0, \beta h], 0<h \leq h_0} , \quad \mathcal{I}_2 =  \int_{E_k(h) \in [\beta h, 1/2], 0<h \leq h_0}   , 
\quad \mathcal{I}_3 = \int_{E_k(h) \in [\frac12, M], 0<h \leq h_0} .
$$
Concerning $\mathcal{I}_3 $, we use Lemma~\ref{l:weyl-mou} (which applies for $h_0$ sufficiently small) to estimate 
\begin{align*}
 \mathcal{I}_3 & = \int_{E_k(h) \in [\frac12, M], 0<h \leq h_0}\frac{h^{(4\gamma-5)/3}}{E_k(h)^\gamma} dh d\delta(k) 
  \leq 2^\gamma \int_{E_k(h) \in [\frac12, M], 0<h \leq h_0} h^{(4\gamma-5)/3} dh d\delta(k) \\
 & \leq 2^\gamma \int_0^{h_0}h^{(4\gamma-5)/3} \  \sharp \{k \in \N, E_k(h)\in [1/2,M]\}  \ dh \leq C \int_0^{h_0}h^{(4\gamma-5)/3} \ h^{-1} \ dh <\infty  \, ,
\end{align*}
as soon as $(4\gamma-5)/3-1>-1$, that is $\gamma >\frac54$.

Concerning $\mathcal{I}_2$, we need an additional information on the function $\Phi$ in~\eqref{e:def-phi}. Lemma~\ref{e:info-function-phi} implies that $\Phi'$ is continuous (and positive) on $[0,1/2]$, and we may thus set   $M_\Phi \eqdefa \max_{[0,1/2]}\Phi'$. We therefore obtain 
\begin{align}
\label{e:m-phi}
 M_\Phi E \geq \Phi(E)   , \quad \text{ for all } E \in [0,1/2]  \, .
\end{align}
According to Lemma~\ref{l:helffer-robert}, we have 
$$
 \Sp(P_h)\cap [\beta h , 1/2] = \bigcup_{j \in \{N_\beta, \cdots , N^+(h)\}}E_j^+ (h) \cup \bigcup_{j \in \{N_\beta , \cdots , N^-(h)\}}E_j^- (h)  
$$
with, for all $j \in \{N_\beta , \cdots , N^\pm (h)\}$
$$
M_\Phi E_j^\pm (h)  \geq \Phi \big( E_j^\pm (h) \big) = (j+1/2) h + O_{\alpha,\beta}(h^2) \geq (j+\frac14) h ,\quad  \text{ for } h\leq h_0 \,  ,
$$
where the inequality comes from~\eqref{e:m-phi}.
As a consequence, we have 
\begin{align*}
 \mathcal{I}_2&  =  \int_{E_k(h) \in [\beta h, 1/2], 0<h \leq h_0} \frac{h^{(4\gamma-5)/3}}{E_k(h)^\gamma} dh d\delta(k)
  =  \int_0^{h_0} h^{(4\gamma-5)/3}  \sum_{\pm} \sum_{j =N_\beta}^{N^\pm (h)} \frac{1}{E_j^\pm (h)^\gamma} \ dh \\
 & \leq C \int_0^{h_0}  h^{(4\gamma-5)/3}   \sum_{j =N_\beta}^{\max\{N^-(h),N^+(h)\}} \frac{1}{\big( (4j+1)h \big)^{\gamma}} \ dh 
 \leq C \int_0^{h_0}  h^{(4\gamma-5)/3}   \ h^{-\gamma}  \sum_{j \leq K h^{-1}} \frac{1}{(4j+1)^{\gamma}} \ dh\\
 & \leq  
 C \int_0^{h_0} h^{(4\gamma-5)/3}   \ h^{-\gamma}  \ dh  <\infty ,
\end{align*}
as soon as $\gamma >1$ and $(4\gamma-5)/3 -\gamma >-1$, that is to say $\gamma >2$. 
The term $\mathcal{I}_3$ is estimated similarly but using Lemma~\ref{l:helffer-sjostrand}:
\begin{align*}
 \mathcal{I}_3&  =  \int_{E_k(h) \in [0,\beta h], 0<h \leq h_0} \frac{h^{(4\gamma-5)/3}}{E_k(h)^\gamma} dh d\delta(k) 
 \leq C \int_0^{h_0} h^{(4\gamma-5)/3}
\sum_{j =0}^{N_\beta} \frac{1}{((2j+1)h)^\gamma} \ dh  \\
& \leq C \int_0^{h_0} h^{(4\gamma-5)/3} h^{-\gamma}\ dh \,  ,
\end{align*}
which is finite as soon as $\gamma >2$ for the same reason.

To conclude the proof, we simply notice that Lemma~\ref{l:helffer-sjostrand} also implies
\begin{align*}
 \mathcal{I}_3&  =  \int_{E_k(h) \in [0,\beta h], 0<h \leq h_0} \frac{h^{(4\gamma-5)/3}}{E_k(h)^\gamma} dh d\delta(k) 
\geq c \int_0^{h_0} h^{(4\gamma-5)/3} h^{-\gamma}\ dh  = +\infty
\end{align*}
if $\gamma \leq 2$.
\end{proof}
For the proof to be complete, we now prove Lemma~\ref{e:info-function-phi}.
\begin{proof}[Proof of Lemma~{\rm\ref{e:info-function-phi}}]
We have $\Phi(0)=0$ and, for $E\in (0,1)$,
\begin{align*}
\Phi(E) & = \frac{1}{4\pi} \int_{\{p(\x,\xi)\leq E\}} d\x d\xi  = \frac{1}{2 \pi} \int_{\{\xi^2+V(\x)\leq E , \x>0 \}} d\x d\xi  = \frac{1}{\pi} \int_{\left\{0< \xi \leq\sqrt{E-V(\x)} , \x>0 \right\}} d\x d\xi 
\\
& = \frac{1}{\pi} \int_{\x_-(E)}^{\x_+(E)}  \sqrt{E  -V(\x) } d\x  \, , 
\end{align*}
with $V(\x) = \left(  \frac{\x^2}{2}  - 1 \right)^2$ and 
$$
 \x_\pm(E) \quad \text{are such that } V(\x_\pm(E))= E \text{ and } 0< \x_-(E) <\sqrt2< \x_+ (E)  \, ,
$$
that is, $\x_\pm(E) = \sqrt{2\pm2\sqrt{E}}$.
As a consequence, $\Phi$ is a continuous and strictly increasing function on $[0, 1)$ with $\Phi(0)=0$. The functions $E \mapsto \x_\pm(E)$ are smooth on $(0,1)$ and 
\begin{align*}
 \pi \Phi'(E) & = \x_+'(E) \sqrt{E  -V(\x_+(E)) } - \x_-'(E) \sqrt{E  -V(\x_+(E)) } + \int_{\x_-(E)}^{\x_+(E)}  \frac{1}{2\sqrt{E  -V(\x) }} d\x \\
& =  \frac12 \int_{\x_-(E)}^{\x_+(E)}  \frac{1}{\sqrt{E  -V(\x) }} d\x \,  .
\end{align*}
Moreover, recalling $\omega = \sqrt{\frac{V''(\sqrt{2})}{2}} =\sqrt{2}$, we have 
$$
V(\x)= \omega^2 (\x-\sqrt{2})^2  +O((\x-\sqrt{2})^3) , \quad \text{ as } \x \to \sqrt2  \, .
$$
Hence, setting $y = \frac{\omega}{\sqrt{E}}(\x-\sqrt{2})$, we have
\begin{align*}
\pi \Phi(E)   = \int_{\x_-(E)}^{\x_+(E)}  \sqrt{E  -V(\x) } d\x  & = \frac{\sqrt{E}}{\omega}\int_{\frac{\omega}{\sqrt{E}}(\x_-(E)-\sqrt{2})}^{\frac{\omega}{\sqrt{E}}(\x_+(E)-\sqrt{2})}  \sqrt{E  -V\left(\sqrt{2} +\frac{\sqrt{E}}{\omega}y \right) } dy ,
\end{align*}
with the following asymptotic properties as $E\to 0^+$
\begin{align*}
&1= \frac{V(\x_\pm(E))}{E} \leftarrow \frac{\omega^2}{E} (\x_\pm(E)-\sqrt{2})^2 ,\quad \text{whence } \frac{\omega}{\sqrt{E}}(\x_\pm(E)-\sqrt{2}) \to \pm1 ,\\
&V\left(\sqrt{2} +\frac{\sqrt{E}}{\omega}y \right)  = E y^2 + O(E^{3/2}) ,\quad\text{ uniformly for } y \text{ bounded}.
\end{align*}
As a consequence, as $E\to 0^+$ we have,
\begin{align*}
\pi \Phi(E)  =  \frac{\sqrt{E}}{\omega}\int_{-1}^{1}  \sqrt{E  -Ey^2 + O(E^{3/2})} dy  + o(E) = \frac{E}{\omega} \int_{-1}^{1}  \sqrt{1-y^2 } dy+ o(E) ,
\end{align*}
with $ \int_{-1}^{1}  \sqrt{1-y^2 } dy = \frac{\pi}{2}$. As a consequence, recalling that $\Phi(0)=0$, we deduce that $\Phi$ is differentiable at $E=0^+$ with 
$\Phi'(0^+) = \frac{1}{2\omega} = \frac{1}{2\sqrt{2}}$. Lemma~{\rm\ref{e:info-function-phi}} is proved.
\end{proof}

\bigskip

\section{Basic facts on the Engel group} \label{basic}

As recalled in the introduction, the Engel group $G$ is a nilpotent 4-dimensional Lie group which is connected and simply connected, and whose Lie algebra $\mathfrak{g}$ satisfies the following decomposition  
$$
\mathfrak{g}=\mathfrak{g}_{1}\oplus \mathfrak{g}_{2}\oplus \mathfrak{g}_{3}
$$
with $\mathfrak{g}_{i+1}=[\mathfrak{g}_{1},\mathfrak{g}_{i}]$ for $i=1,2,3$ with the properties $\dim \mathfrak{g}_{1}=2$ and $[\mathfrak{g}_{1},\mathfrak{g}_{3}]=0$.

Notice that the subspace  $\mathfrak{g}_{1}$ is bracket-generating in the Lie algebra~$\mathfrak{g}$ and if $\mathfrak{g}_{1}$ is endowed with an inner product, we can define on $G$ a left-invariant sub-Riemannian structure. In this way $G$ belongs to the class of the so-called Carnot groups~\cite{ABB19,BLU}. There exists a unique Carnot group satisfying the above properties, up to isomorphisms \cite{ABB12,BeMe}, called the Engel group (cf.\ also the discussion in~\cite[Section~6.11]{Mon09}).

\smallskip

It is well known that the exponential map $\exp: \mathfrak{g}\to G$ is a global diffeomorphism and defining for $x,y\in \mathfrak{g}$
\begin{equation}\label{eq:gprod}
x\cdot y\eqdefa \exp^{-1}(\exp(x)\cdot \exp(y))
\end{equation}
 the Lie group $G$ can be identified with $\mathfrak{g}\simeq\R^{4}$ endowed with a polynomial group law \cite{BLU}. Indeed using the Baker-Campbell-Hausdorff formula and the fact that the Engel group $G$ is nilpotent we can write for $x,y\in \mathfrak{g}$ the identity
\begin{equation} \label{eq:bch}
\exp(x)\cdot \exp(y)=\exp\Big(x+y+\frac12 [x,y]+ \frac{1}{12} ([x,[x,y]-[y,[x,y]])\Big) \, .
\end{equation}
Fixing a basis $X_{1},X_{2},X_{3},X_{4}$ of $\mathfrak{g}$ (which we can identify with left-invariant vector fields on $G$) such that 
\begin{gather} \label{eq:gat1}
\mathfrak{g}_{1}=\mathrm{span}\{X_{1},X_{2}\}\,,\qquad \mathfrak{g}_{2}=\mathrm{span}\{X_{3}\}\,,\qquad \mathfrak{g}_{3}=\mathrm{span}\{X_{4}\}\,,\\
X_{3}\eqdefa[X_{1},X_{2}]\,,\qquad X_{4}\eqdefa[X_{1},X_{3}]\label{eq:gat2}
\end{gather}
one can define a set coordinates $x=(x_{1},x_{2},x_{3},x_{4})$ on $G$  by the identity
\begin{equation}\label{eq:g2}
g=\exp\left(\sum_{i=2}^{4}x_{i}X_{i}\right)\exp(x_{1}X_{1})\,.
\end{equation}
 After some computations exploiting \eqref{eq:bch}, one gets 
\begin{align}\label{eq:group2}
\begin{pmatrix}
x_{1}\\x_{2}\\x_{3}\\x_{4}
\end{pmatrix}
\cdot 
\begin{pmatrix}
y_{1}\\y_{2}\\y_{3}\\y_{4}
\end{pmatrix}
=
\begin{pmatrix}
x_{1}+y_{1}\\
x_{2}+y_{2}\\
x_{3}+y_{3}+x_{1} y_{2}\\
x_{4}+y_{4}+x_{1} y_{3}+\frac{x_{1}^{2}}{2}y_{2}
 \end{pmatrix}\, .
\end{align}
With this choice of coordinates, a basis of left-invariant vector fields is given by
\begin{align}
X_{1}&\eqdefa \partial_{x_{1}},\\
 X_{2}&\eqdefa \partial_{x_{2}}+x_{1}\partial_{x_{3}}+\frac{x_{1}^{2}}{2}\partial_{x_{4}}  
 \, ,
 \end{align}
 and thus 
 $$X_{3}=  \partial_{x_{3}}+x_{1}\partial_{x_{4}} \andf
 X_{4} = \partial_{x_{4}}\, . $$
 Notice  that   the inverse of an element $x=(x_{1},x_{2},x_{3},x_{4})$ in the coordinates \eqref{eq:g2} is given by  
\begin{equation}\label{eq:inv}
(x_{1},x_{2},x_{3},x_{4})^{-1}=\big(-x_{1},-x_{2},-x_{3}+x_{1}x_{2},-x_{4}+x_{1}x_{3}-\frac12 x_{1}^{2}x_{2}\big)\, .
\end{equation}

One can define a sub-Riemannian structure on the Engel group $G$ by introducing the bracket-generating distribution $D$ spanned by the vector fields in $\mathfrak{g}_{1}$ and defining an inner product $\langle\cdot,\cdot\rangle$ on~$D$ such that $X_{1}$ and $X_{2}$ define an orthonormal frame. Thanks to the bracket generating  condition, we have the following well-known connectivity property through the so-called horizontal curves for the distribution, which is a consequence of the classical Rashevski-Chow theorem: for every pair of points $x,y\in G$ there exists an absolutely continuous curve $\gamma:[0,T]\to G$ such that~$\dot \gamma(t)\in D_{\gamma(t)}$ and $\gamma(0)=x$, $\gamma(T)=y$. We denote by $\Omega_{x,y}$ the set of absolutely continuous horizontal curves joining $x$ and $y$. If~$\gamma:[0,T]\to G$ belongs to $\Omega_{x,y}$ we set
$$\ell_G (\gamma)\eqdefa\int_{0}^{T}\langle\dot \gamma(t),\dot \gamma(t)\rangle^{1/2}dt\, .$$
This enables one to introduce the sub-Riemannian (also called Carnot-Carath\'eodory) distance $d_G $ on $G$ which is defined as follows 
\begin{equation}\label{eq:dsr}
d_G (x,y)\eqdefa\inf\big\{\ell_G (\g)\mid \gamma\in \Omega_{x,y}\big \}\,\cdot
\end{equation}
This is a well-defined distance inducing the Euclidean topology, moreover the metric space~$(G,d_G )$ is complete. In particular all closed balls $\overline B_G (x,r)$ are compact \cite{ABB19}.

By construction, the sub-Riemannian distance on the Engel group is invariant with respect to   left-invariant multiplications $\tau_{z}:G\to G$ defined by $\tau_{z}(x)\eqdefa z\cdot x$, namely
$$d_G (\tau_{z}x,\tau_{z}y)= d_G (x,y)\, .$$
Moreover, being a Lie group, $G$ can be endowed with a Haar measure which turns out to be a scalar multiple of the Lebesgue measure in $\R^{4}$ in the coordinate set  we have chosen; we shall therefore denote in what follows for simplicity by~$dx$ the Haar measure on~$G$. The corresponding Lebesgue spaces $L^p(G)$ are thus the set of measurable functions~$u: G \to \C$ such that 
$$ \norm {u}{L^{p} (G)}\eqdefa \Big( \int_G |u(x)|^p dx \Big)^{\frac 1 p}< \infty,  \,\, \mbox {if} \,\,  1 \leq p < \infty \, ,$$ 
with the standard modification if $p =\infty$.

\smallskip
The convolution product of any two integrable functions $u$ and $v$ is defined by  
 \begin{equation}\label{conv}
u \star v ( x ) \eqdefa \int_{G} u (x \cdot y^{-1} ) v( y)\, dy
= \int_{G} u ( y ) v( y^{-1} \cdot x)\, dy\, ,
\end{equation}
and even though it is not  commutative, the following Young inequalities hold true:
 \begin{equation}\label{young}\norm{ u \star v }{L^r(G)} \leq \norm u {L^p(G)} \norm v {L^q(G)},
 \quad \hbox{whenever}\ 1\leq p,q,r\leq\infty\ \hbox{ and }\
 \frac{1}{r} =  \frac{1}{p} + \frac{1}{q} - 1\, .
\end{equation}
Moreover if $\cX$ is a left-invariant vector field on $G$, then we have for all $C^1$ functions $u$ and~$v$ with sufficient decay at infinity:
 \begin{equation}\label{convderv}\cX ( u \star v) =  u \star (\cX v)\, .\end{equation}
We also define the left translation by
\begin{equation}
\label{e:left-trans}
(L_xu)(y) \eqdefa u(\tau_x y) = u(x\cdot y)\, .
\end{equation}
 According to~\eqref{conv}, we may also define the convolution between $T \in \mathcal{S}'(G)$ and $u \in \mathcal{S}(G)$  (where 
we recall that the Schwartz space $\cS(G)$  is nothing else than the Schwartz space $\cS(\R^4)$):
 as 
   \begin{equation}\label{e:def-convolS}
 \begin{aligned}
(T\star u)(x) &\eqdefa \langle T, \check u_x \rangle_{\mathcal{S}'(G),\mathcal{S}(G)} , \quad \text{with}\quad\check u_x(y) \eqdefa u(y^{-1}\cdot x) =(L_{y^{-1}}u)(x) \, , \\
( u\star T)(x) & \eqdefa \langle T, \check u^x \rangle_{\mathcal{S}'(G),\mathcal{S}(G)} , \quad \text{with } \quad \check u^x(y) \eqdefa u( x\cdot y^{-1})  = (L_xu)(y^{-1})\, ,
 \end{aligned}
 \end{equation}
which both satisfy $T\star u \in C^\infty(G)$ and $u\star T \in C^\infty(G)$. Note that this actually stands for the definition of the convolution product in~\eqref{e:F-convolution}.

\smallskip Recall also   the following homogeneity property: the Haar measure~$|B_G(x,r)|$ of the ball centered at~$x \in G$ and of radius~$r$ satisfies
\begin{equation}\label{eq:homball}
|B_G(x,r)|=cr^{Q}
\end{equation}
where $c\eqdefa|B_G(0,1)|$, and $Q$ is the homogeneous dimension of the Engel group which is given by
\begin{equation}\label{eq:dimhom}
Q\eqdefa\sum_{j=1}^{3}j\dim \mathfrak{g}_{j}=7\, .
\end{equation}
Identity \eqref{eq:homball} is related to the following crucial fact: defining the dilations
\begin{equation}\label{eq:dilat}
\forall \lambda>0 \, , \quad \delta_{\lambda}:G\to G\,,\qquad \delta_{\lambda}(x_{1},x_{2},x_{3},x_{4})\eqdefa(\lambda x_{1},\lambda x_{2},\lambda^{2}x_{3},\lambda^{3}x_{4})
\end{equation}
we have the following homogeneity
\begin{equation}\label{eq:dsrdil}
d_G (\delta_{\lambda}x,\delta_{\lambda}y)=\lambda d_G (x,y)\, .
\end{equation}
Given $u:G\to \R$ one can introduce its sub-Riemannian gradient $\nabla_G u$ defined as the unique horizontal vector field satisfying
\begin{equation}\label{eq:srgrad1}
\langle \nabla_G u,X\rangle\eqdefa du(X)
\end{equation}
for every horizontal vector field $X\in D$. This translates in terms of the vector fields in the identity
\begin{equation}\label{eq:srgrad2}
\nabla_G u=(X_{1}u)X_{1}+(X_{2}u)X_{2}\, .
\end{equation}
One can then introduce a sublaplacian operator $\Delta_G $   as follows:
\begin{equation}\label{eq:srlapl1}
\Delta_G  u\eqdefa\mathrm{div}(\nabla_G u)
\end{equation}
where $\mathrm{div}$ denotes the divergence with respect to the Haar measure of $G$. In terms of the vector fields we have
\begin{equation}\label{eq:srlapl2}
\Delta_G u=(X^{2}_{1}+X^{2}_{2})u
\end{equation}
but the definition given above guarantees that $\Delta_G $ is an operator which is canonically associated with the sub-Riemannian structure on $G$, i.e., independent of the choice of orthonormal frame~$X_{1},X_{2}$. 

\begin{remark} \label{deftildeX}{\sl  In a similar way one can build a right-invariant sub-Riemannian structure on the Engel group, and build the corresponding right-invariant sub-Riemannian Laplacian. With respect to the product law given by \eqref{eq:group2}, a basis of right-invariant vector fields is given as follows
\begin{align}
\widetilde{X}_{1}&\eqdefa\partial_{x_{1}}+x_{2}\partial_{x_{3}}+x_{3}\partial_{x_{4}}\, ,\\
 \widetilde{X}_{2}&\eqdefa\partial_{x_{2}}\, .
 \end{align}
This defines a right-invariant metric which in turns defines a right-invariant sublaplacian~$\widetilde{\Delta}_G $ as follows:
\begin{equation}\label{eq:srlapl3}
\widetilde\Delta_G  u\eqdefa\mathrm{div}(\widetilde\nabla_G u)\, ,
\end{equation}
where $\mathrm{div}$ denotes the divergence with respect to the   Haar measure  on $G$ (which is indeed bi-invariant since the group $G$ is nilpotent) while the gradient is different since the metric has changed. In terms of the vector fields we have
\begin{equation}\label{eq:srlapl4}
\widetilde{\Delta}_G u=(\widetilde{X}^{2}_{1}+\widetilde{X}^{2}_{2})u.
\end{equation}
}  
\end{remark}

\begin{remark} {\sl The Engel group can also be   described as the set $J^2(\R,\R)$ of $2$-jets of a real function of a single real variable as follows: an element $(x,y,p,q)\in \R^{4}$ represents a 2-jet of a real function if it is of the form $(x,u(x),u'(x),u''(x))$ which is equivalent to the relations $p=\frac{dy}{dx}$, $q=\frac{dp}{dx}$. These relations define a vector distribution (playing the role of $\mathfrak{g}_{1}$) defined by the kernel of the differential forms in $\R^{4}$
$$\omega_{1}=dy-p dx,\qquad \omega_{2}=dp-qdx.$$
For more details on sub-Riemannian structures on jet spaces one can see, for instance, \cite{bdm, viz}.}
\end{remark}

%%%%%%%%%%%%%%%%%%%%%%%%%%%% 

\section{The Fourier transform on the Engel group: Proof of Theorems~\ref{t:key} and~\ref{t:key2} }\label{Fourier}
\subsection{The standard Fourier theory on the Engel group}\label{defsta}
\subsubsection{Definition}
As recalled in the introduction, the  standard way to define an Engel  Fourier transform  consists in using irreducible unitary representations.  The one that we shall use here relies on the representations~$(\mathcal{R}_x^{\nu,\lambda})_{(\nu, \lambda) \in \R\times \R^*}$ introduced in Appendix\refer{irrep}, and that are given for all~$x
$ in~$G$ and~$\f$ in~$L^2(\R)$, by   
\begin{equation}
\label {eq: rep}\mathcal{R}_x^{\nu,\lambda}\f(\theta) \eqdefa\exp\left[i\left(-\frac{\nu}{\lambda} x_{2}+\lambda\big(x_{4}+\theta x_{3}+\frac{\theta^{2}}{2}x_{2}\big)\right)\right] \f(\theta+x_1)\, . \end{equation}
For any~$(\nu, \lambda) \in \R\times \R^*$, the map
$$
\mathcal{R}^{\nu,\lambda}:\left\{
\begin{array} {ccl}
G& \longrightarrow & \cU(L^2(\R))\\
x & \longmapsto & \mathcal{R}_x^{\nu,\lambda}
\end{array}
\right.
$$
is a group homomorphism between the Engel group and the   unitary group~$\cU(L^2(\R))$ of~$L^2(\R)$. 
 Actually~$(\mathcal{R}^{\nu,\lambda})_{(\nu, \lambda) \in \R\times \R^*}$
 plays the same role as the map~$x\mapsto e^{i\langle \xi,x\rangle}$  in the Euclidean case,
as regards the definition of the Fourier transform.
\begin{definition}
\label {definFourier}
{\sl The Fourier transform 
of  an integrable function~$u$ on~$G$ is defined by  
 \begin{equation}
\label {eq: defF}
\forall (\nu, \lambda) \in \R\times \R^* \,,\quad \mathscr{F}(u) (\nu, \lambda)\eqdefa  \int_G u(x) \mathcal{R}_x^{\nu,\lambda} dx\, . 
\end{equation}
}
\end{definition}
\subsubsection{Main properties}
According to   Definition~\ref{definFourier}, the Fourier transform of  an integrable function on $G$ is a family, parametrized by $(\nu, \lambda) \in \R\times \R^*$, of bounded operators on~$L^2(\R)$: for all $u$ in~$L^1(G)$, there holds  
 \begin{equation}
\label {eq: bound}
\forall (\nu, \lambda) \in \R\times \R^* \,,\quad\|\mathscr{F}(u) (\nu, \lambda)\|_{\mathscr{L}(L^2, L^2)} \leq \|u\|_{L^1(G)} \, .  \end{equation}
Despite first appearances,  this Fourier transform  has many common features with the Fourier  transform on  $\R^d$. First, since~$\mathcal{R}^{\nu,\lambda}$ is a group homomorphism,~$\mathscr{F}(u) (\nu, \lambda)$  transforms convolution into composition, that is to say, for all integrable functions $u$ and~$v$,
 \begin{equation}
\label {eq: conv}
\forall (\nu, \lambda) \in \R\times \R^* \,,\quad\mathscr{F}(u\star v )  (\nu, \lambda)= \mathscr{F}(u)  (\nu, \lambda) \circ \mathscr{F}(v)  (\nu, \lambda)\, .  \end{equation}
Moreover as in the Euclidean case, the Fourier-Plancherel  and   inversion formulae hold  true in that setting, with $d\nu d\lambda$ as Plancherel measure, resorting respectively to Hilbert-Schmidt norms and  trace-class operators (see for instance  Corwin-Greenleaf\ccite{corwingreenleaf}).

In order to state  the  Fourier-Plancherel formula, let us recall   the definition of the Hilbert-Schmidt norm. Denoting by~$(e_m)_{n \in \N}$    an orthonormal basis of~$L^2(\R)$, we define the  Hilbert-Schmidt 
norm~$\norm {\mathscr{F}(u) (\nu, \lambda)} 
{HS}$ on~$L^2(\R)$ (which is independent of the choice of the basis) by 
$$
\norm {\mathscr{F}(u) (\nu, \lambda)} 
{HS}\eqdefa\left(\sum_{m \in \N} \norm {\mathscr{F}(u) (\nu, \lambda)e_m} {L^2(\R)}^2 \right) ^\frac12\, .
$$
{Then}   if $u$ belongs to~$L^1(G) \cap L^2(G)$, {then}  $\mathscr{F}(u) (\nu, \lambda)$ is  a Hilbert-Schmidt operator for  almost every~$(\nu, \lambda) $ in~$ \R\times \R^*$, and there holds   \begin{equation}
\label{Plancherelformula} \norm u { L^2(G)}^2  =
(2\pi)^{-3}   \int_{\R\times \R^*} \norm {\mathscr{F}(u) (\nu, \lambda)} 
{HS}^2  \, d\nu d\lambda\, .
\end{equation}
The inversion formula requires introducing the trace of the operator~$\mathcal{R}_{x^{-1}}^{\nu,\lambda}  \mathscr{F}(u) (\nu, \lambda)$. By definition, this operator 
 is trace-class if
$$
 \sum_{m \in \N} 
 \Big| \big(\mathcal{R}_{x^{-1}}^{\nu,\lambda}  \mathscr{F}(u) (\nu, \lambda) e_m|e_m \big)\Big | < \infty \, ,
$$
and, if so, its trace is defined as follows (and as the Hilbert-Schmidt  norm it is independent of the choice of the basis)
$$
{\rm tr}\big(\mathcal{R}_{x^{-1}}^{\nu,\lambda}  \mathscr{F}(u) (\nu, \lambda)\big)\eqdefa \sum_{m \in \N} \big(\mathcal{R}_{x^{-1}}^{\nu,\lambda}  \mathscr{F}(u) (\nu, \lambda) e_m|e_m\big)\, .
$$
In particular if \begin{equation}
\label{condinversion} \sum_{m \in \N} \int_{\R\times \R^*}  \norm {\mathscr{F}(u) (\nu, \lambda)e_m} {L^2(\R)} 
d\nu d\lambda < \infty \, , \end{equation}
 then the  operator $\mathcal{R}_{x^{-1}}^{\nu,\lambda}  \mathscr{F}(u) (\nu, \lambda)$ is of trace-class, and  one has  
\begin{equation}
\label{inverselformula} 
 u(x)=
(2\pi)^{-3}  \int_{\R\times \R^*}  {\rm tr}
\big( \mathcal{R}_{x^{-1}}^{\nu,\lambda}  \mathscr{F}(u) (\nu, \lambda)\big)
d\nu d\lambda\, .
\end{equation}
Let us emphasize  that the hypothesis\refeq{condinversion}  is satisfied
   in  the Schwartz space $\cS(G)$  (see Proposition\refer {inversionSchwartz} below).

\begin{remark} \label{invariance by translation}{\sl  Observe that
  if~$u \in L^1(G)$,  then for all~$(\nu, \lambda) \in \R\times \R^*$ and all~$x \in G$,  
\begin{equation}\label{eq:invariance by translation}
 \mathscr{F} (L_x u)(\nu, \lambda)= \mathcal{R}_{x^{-1}} ^{\nu,\lambda}\,  \mathscr{F} (u)(\nu, \lambda) 
\end{equation}
   where~$L_x$ is the left-translation operator defined in~{\rm(\ref{e:left-trans})}.
Indeed by definition of~$L_x$, we have
$$
\mathscr{F} (L_x u)(\nu, \lambda) = \int_G u(x \cdot y) \mathcal{R}_y^{\nu,\lambda} dy\,  .
$$
Using the left  invariance of the Lebesgue measure, changing variable~$z=x\cdot y$  and
taking advantage of  the fact that $\mathcal{R}^{\nu,\lambda}$ is a group homomorphism, we get
$$\begin{aligned}
\mathscr{F} (L_x u)(\nu, \lambda) & =  \int_G u(z) \mathcal{R}_{x^{-1} \cdot z} ^{\nu,\lambda} dz\\
& =    \mathcal{R}_{x^{-1}} ^{\nu,\lambda}\int_G u(z) \mathcal{R}_{z} ^{\nu,\lambda} dz\\
 & =  \mathcal{R}_{x^{-1}} ^{\nu,\lambda} \mathscr{F} (u)(\nu, \lambda)
\end{aligned}
$$
which proves~{\rm(\ref{eq:invariance by translation}}).}
\end{remark}

\subsubsection{Action on the sublaplacian}
  A key   point in the analysis of   the Engel group consists in studying the action of the Fourier transform  on the sublaplacian $\D_G$ defined by\refeq{eq:srlapl2}.    Actually, we check that,  
for any~$C^2$ function~$\f$ on~$\R,$ for any~$(\nu, \lambda) \in \R\times \R^*$ and any~$x$ in~$G$, there holds
\begin{equation}
\label {eq: rellap}
- \D_G \mathcal{R}_x^{\nu,\lambda} (\f) =  \mathcal{R}_x^{\nu,\lambda} P_{\nu,\lambda} \f \andf - \wt \D_G \mathcal{R}_x^{\nu,\lambda} (\f) = P_{\nu,\lambda} \mathcal{R}_x^{\nu,\lambda}  \f \, ,
\end{equation}
with\footnote{As   will be seen later, the operators $P_{\nu,\lambda}$ and  $\mathsf{P}_\mu$ are, up to the factor $|\lambda|^{2/3}$,  unitarily equivalent.} 
\begin{equation}
\label {eq: oscop} P_{\nu,\lambda}\eqdefa - \frac{d^{2}}{d\theta^{2}} + \left( \frac{\lambda}{2}\theta^{2}-\frac{\nu}{\lambda}\right)^{2} \,. \end{equation} 
This  shows, as explained in the introduction of this paper,  that  the Fourier transform on the Engel group is  strongly tied to the spectral analysis of the quartic oscillator.  To obtain~(\ref{eq: rellap}) we
 take advantage of\refeq{eq: rep} to gather that
 \begin{equation}
\label {eq: 12}X_{1}\mathcal{R}_x^{\nu,\lambda} (\f)  = \mathcal{R}_x^{\nu,\lambda} \frac{d \f}{d\theta}  \andf X_{2}\mathcal{R}_x^{\nu,\lambda} (\f) =  i \Big(\frac{\lambda}{2}\big(\theta+x_1\big)^{2}-\frac{\nu}{\lambda}\Big) \mathcal{R}_x^{\nu,\lambda} (\f)\, ,\end{equation}
which implies that $-\D_G \mathcal{R}_x^{\nu,\lambda} (\f) =  \mathcal{R}_x^{\nu,\lambda} P_{\nu,\lambda} \f$.  Along the same lines, one gets  
\begin{equation}
\label {eq: wt12} \wt X_{1}\mathcal{R}_x^{\nu,\lambda} (\f) =  \frac{d}{d\theta}  \Big(\mathcal{R}_x^{\nu,\lambda} (\f)\Big) \andf  \wt X_{2}\mathcal{R}_x^{\nu,\lambda} (\f) =  i \Big( \frac{\lambda}{2}\theta^{2}-\frac{\nu}{\lambda}\Big) \mathcal{R}_x^{\nu,\lambda} (\f) \, ,\end{equation}
which completes the proof of\refeq{eq: rellap}.
 Note also that 
\begin{equation}
\label {eq: 34}X_{3}\mathcal{R}_x^{\nu,\lambda} (\f) = i \lambda \mathcal{R}_x^{\nu,\lambda}  (\theta \f) \, ,  \,\,X_{4}\mathcal{R}_x^{\nu,\lambda} (\f) = i \lambda \mathcal{R}_x^{\nu,\lambda}  (\f) \end{equation}
 and 
\begin{equation}
\label {eq: wt34}\wt X_{3}\mathcal{R}_x^{\nu,\lambda} (\f) = i \lambda \theta \mathcal{R}_x^{\nu,\lambda}  (\f) \, ,  \,\,  \wt X_{4}\mathcal{R}_x^{\nu,\lambda} (\f) = i \lambda \mathcal{R}_x^{\nu,\lambda}  (\f)  \, .\end{equation} 
 \begin{remark}
\label{rmorder}{\sl Let us  give some insight on the parameters~$(\nu, \lambda) \in \R\times \R^*$ involved in the definition of the Fourier transform\refeq{eq: defF}. By definition,~$\lambda$ belongs to the dual of the center of~$G$, which in accordance with the structure of the   Lie algebra of $G$  is associated to  an operator of homogeneous order~$3$. On the other hand    the parameter~$\nu$   is associated to   the operator~$X_{4}X_{2}-\frac12 X_{3}^{2}$ which is  an  operator of homogeneous order~$4$.  This   can be illustrated through the relations~{\rm(\ref{eq: 12})} and~{\rm(\ref{eq: 34})} which give 
\begin{equation}
\label {eq: nu} \big(X_{4}X_{2}-\frac12 X_{3}^{2}\big) \mathcal{R}_x^{\nu,\lambda} (\f)=  \nu  \mathcal{R}_x^{\nu,\lambda} (\f) \, . \end{equation}} \end{remark}
 \subsection{The Fourier transform seen as a function: proof of Theorem~\ref{t:key}}\label{pointfunct}
 This section is dedicated to introducing 
     an alternative definition of the Fourier transform on~$G$ introduced in Section\refer{defsta}. This
     will provide the construction of the set~$\widehat{G}$, the operator~$U$ and the function~$a$  satisfying~(\ref{UDU*=a}) of Theorem~\ref{t:key}.
     
     \subsubsection{The frequency set}
           This new approach, initiated by H.~Bahouri, J.-Y.~Chemin  and R.~Danchin in the setting of the Heisenberg group\ccite{bcdh},   is based on the spectral analysis of $P_{\nu,\lambda}$ conducted in Appendix\refer{anspctm}, where it is in particular established that  the operator $P_{\nu,\lambda}$ is self-adjoint on its domain, in $L^2(\R)$, with compact resolvent  (for any choice of the parameters). Thus it can be associated with  an orthonormal basis of eigenfunctions~$\psi_m^{\lambda,\nu}$ associated to the eigenvalues~$E_m(\lambda,\nu) \in \R^*_+$  (see Proposition\refer{p:def-E-psi} for further details)
\begin{equation}
\label {eq: sp} P_{\nu,\lambda}\psi_m^{\nu, \lambda} = E_m(\nu, \lambda) \psi_m^{\nu, \lambda}\,.  \end{equation}
Then by projecting~$\mathscr{F}(u) (\nu, \lambda)$ on the basis $(\psi_m^{\nu, \lambda})_{m \in \N}$,  one can see the Fourier transform of~$u$  as the mean value of~$u$ modulated by some oscillatory function in the following way:
 for all~$\wh x\eqdefa(n,m, \nu, \lambda)$ in $\widehat{G}\eqdefa \N^{2}\times \R\times \R^*,$ 
 $$\cF(u) (n,m, \nu, \lambda)\eqdefa  \big(\mathscr{F}(u) (\nu, \lam) \psi_m^{\nu,\lambda}|\psi_n^{\nu,\lambda}\big)_{L^2(\R)}
\, .$$ 
Now computing the right-hand side of the above formula, we discover that 
\begin{equation}
\label {eq: defFf} \cF(u) (n,m, \nu, \lambda)=  \int_{G} 
 \cW \big ((n, m, \nu, \lambda),x\big) u(x) dx\, ,  
  \end{equation}
 with 
 \begin{equation}  \begin{aligned}
\label {eq: W}   \cW\big((n,m, \nu, \lambda), x\big) & \eqdefa \big(\mathcal{R}_x^{\nu,\lambda} \psi_m^{\nu,\lambda}|\psi_n^{\nu,\lambda}\big)_{L^2(\R)}\\  &= e^{i (\lambda x_{4}- \frac{\nu}{\lambda} x_{2})} \int_\R e^{i \lambda(\theta x_{3}+\frac{\theta^{2}}{2}x_{2})} \psi_m^{\nu,\lambda}(\theta+x_1)\psi_n^{\nu,\lambda}(\theta) d \theta 
\, .  \end{aligned} \end{equation}
It readily stems from\refeq{eq: defFf} (and the Cauchy-Schwarz inequality in~\eqref{eq: W} and the fact that~$(\psi_m^{\nu,\lambda})_{m\in \N}$ is an orthonormal basis) that the following continuous mapping holds:
\beq
\label {embtG}
\cF : L^1(G)  \to L^\infty(\wh G)\, .\eeq
In the following $\wh G$ will be called the frequency set of~$G$.

\subsubsection{Proof of Theorem~{\rm\ref{t:key}}}
With this point of view, the Fourier-Plancherel and  inversion    formulae\refeq{Plancherelformula}-\eqref{inverselformula} may be expressed in a similar  way as in the Euclidean case, 
namely  \begin{eqnarray}
\label {FPH}
\|u\|_{L^2(G)}^2  &=& (2\pi)^{-3} \|\cF(u) \|_{L^2(\widehat{G})}^2  \\ 
\label{e:parseval}
\left( u,v \right)_{L^2(G)}  &=& (2\pi)^{-3} \left( \cF(u) ,\cF(v) \right)_{L^2(\widehat{G})} \\
\label {inverseFourierH}u(x) &= &(2\pi)^{-3}  \int_{\widehat{G}} 
\cW\big((n,m, \nu, \lambda), x^{-1}\big) \cF(u) (\wh x) \, d\wh x\, ,
\end{eqnarray}
where the measure $d\wh x$ is defined by 
\begin{equation}
\label {eq: defint} \int_{\widehat{G}} \theta (\wh x)\,d\wh x\eqdefa  \int_{\R\times \R^*} \sum_{(n,m)\in \N^{2} }\theta(n,m, \nu, \lambda)  d\nu d\lam\,,\end{equation}
  and where~$x^{-1}$ is given by\refeq{eq:inv}.
Finally  for any function~$u$ in the Schwartz class~$\cS(G)$ and any~$\wh x\in\widehat{G}$, combining\refeq{eq: rellap} together with\refeq{eq: defFf}-\eqref{eq: W}  along with an integration by parts,  we get according to~(\ref{eq: sp})  
\begin{equation}
\label {actionsubf}\begin{aligned}
\cF(- \D_G  u) (n,m, \nu, \lambda) & =  E_m(\nu, \lambda)\cF(u) (n,m, \nu, \lambda)\andf \\
\cF(-\wt \D_G u)(n,m, \nu, \lambda) & =  E_n(\nu, \lambda)\cF(u) (n,m, \nu, \lambda)\,.
\end{aligned}\end{equation}
This construction   proves Theorem~\ref{t:key}. \qed

\medskip

\subsubsection{Additional properties}
First observe that the  relations\refeq{actionsubf}  lead in particular  to the definition of the homogeneous Sobolev semi-norms   as in the Euclidean case by means of the Fourier transform: \beq
\label {SobH} \|u\|_{ \dot H^s(G)} \eqdefa \big\|(-\Delta_G)^{\frac s 2} u \big\|_{L^2(G)}= (2\pi)^{-3/2} \Big(\int_{\widehat{G}} E_m^s(\nu,\lam) 
\big|\cF (u) (\wh x)\big|^2 \, d\wh x\Big)^{\frac 1 2} \, ,  \eeq
and along the same lines in  the non homogeneous framework
\beq
\label {SobinH}\|u\|_{H^s(G)}\eqdefa \|({\rm Id}-\Delta_G)^{\frac s 2} u\|_{L^2(G)}=  (2\pi)^{-3/2} \Big(\int_{\widehat{G}} (1+E_m(\nu,\lam))^s
|\cF (u) (\wh x)|^2 \, d\wh x\Big)^{\frac 1 2} \,.\eeq  
  Second   note     that in this new setting,  the convolution identity\refeq{eq: bound}  rewrites as follows, for all integrable functions~$u$ and~$v$ and all $\hat x= (n, m, \nu, \lam) \in \widehat{G}$, 
\beq
\label {newFourierconvoleq1}
\cF(u\star v) (\hat x)  = (\cF(u) \cdot \cF(v))(\hat x)\eqdefa \sum_{p\in \N} \cF(u)(n,p,\nu,  \lam)\cF(v)(p,m,\nu, \lam)\, .
\eeq
 Before going further, let us list some   useful properties of the function $\cW$.
\begin {proposition}
\label {relationcWmorphism}
{\sl 
For any $\wh x=(n, m, \nu, \lam)$ in $\widehat{G}$ and~$x$ in $G$, we have 
\begin{eqnarray}
\label {relationcWmorphismeq0}
  \cW\big((n, m, \nu, \lam), 0\big)  & =& \delta_{n,m} \andf  |\cW\big((n, m, \nu, \lam), x\big)| \leq 1\, ,\\
   \label {relationeven} \overline {\cW\big((n, m, \nu, \lam), x\big)} &=& \cW\big((n, m, \nu, - \lam), x\big)\, ,\\
   \label {relationcWinverse}
 \cW\big((n, m, \nu, \lam), x^{-1}\big)& = & \overline {\cW\big((m, n, \nu, \lam), x\big)} \, , \\ 
\label {eq: usew} 
  \ds\sum_{n \in \N} |\cW\big((n, m, \nu, \lam), x\big)|^2&  =&1\, ,\\
\label {relationcWmorphismeq1b}
\cW\big((n, m, \nu, \lam),\d_{r}(x)\big)& =&\cW\big((n, m, r^4 \nu, r ^3 \lam), x\big), \quad \forall r>0\, .
\end{eqnarray}
}
\end{proposition}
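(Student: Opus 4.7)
The plan is to read the function $\cW((n,m,\nu,\lam),x)$ as the matrix coefficient $\big(\mathcal{R}_x^{\nu,\lam}\psi_m^{\nu,\lam}\,\big|\,\psi_n^{\nu,\lam}\big)_{L^2(\R)}$ and to exploit three structural facts established earlier in the paper: the operators $\mathcal{R}_x^{\nu,\lam}$ form a unitary group representation of $G$, the family $\big(\psi_m^{\nu,\lam}\big)_{m\in\N}$ is a \emph{real-valued} Hilbert basis of $L^2(\R)$, and the explicit integral representation~\eqref{eq: W} of $\cW$.

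The three identities \eqref{relationcWmorphismeq0}, \eqref{relationcWinverse} and \eqref{eq: usew} are immediate operator-theoretic consequences. For \eqref{relationcWmorphismeq0}, one uses $\mathcal{R}_0^{\nu,\lam}=\mathrm{Id}$ together with orthonormality of $\big(\psi_m^{\nu,\lam}\big)_{m\in\N}$; the bound $|\cW|\leq 1$ follows from Cauchy--Schwarz combined with $\|\mathcal{R}_x^{\nu,\lam}\psi_m^{\nu,\lam}\|_{L^2(\R)}=1$. For \eqref{relationcWinverse}, unitarity gives $\mathcal{R}_{x^{-1}}^{\nu,\lam}=(\mathcal{R}_x^{\nu,\lam})^*$, and moving the adjoint onto $\psi_n^{\nu,\lam}$ followed by complex conjugation produces the claimed swap of $(n,m)$. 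For \eqref{eq: usew}, Parseval in the basis $\big(\psi_n^{\nu,\lam}\big)_{n\in\N}$ reads $\sum_n|\cW((n,m,\nu,\lam),x)|^2=\|\mathcal{R}_x^{\nu,\lam}\psi_m^{\nu,\lam}\|_{L^2(\R)}^2=1$.

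Identity \eqref{relationeven} relies on the elementary symmetry $P_{\nu,-\lam}=P_{\nu,\lam}$, visible from $\big(-\tfrac{\lam}{2}\theta^2+\tfrac{\nu}{\lam}\big)^2=\big(\tfrac{\lam}{2}\theta^2-\tfrac{\nu}{\lam}\big)^2$, which combined with the sign and parity conventions of Proposition~\ref{p:def-E-fimu} forces $\psi_m^{\nu,-\lam}=\psi_m^{\nu,\lam}$ and $E_m(\nu,-\lam)=E_m(\nu,\lam)$. Taking the complex conjugate in~\eqref{eq: W} and using the reality of $\psi_m^{\nu,\lam}$ leaves the eigenfunction factors untouched and only flips the sign in the two exponential factors; a direct comparison then shows that conjugation has exactly the same effect as replacing $\lam$ by $-\lam$ inside~\eqref{eq: W}, proving the claim.

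The nontrivial identity is the homogeneity relation~\eqref{relationcWmorphismeq1b}, for which I would first establish the scaling law
\[
\psi_m^{r^4\nu,\,r^3\lam}(\theta)=r^{1/2}\,\psi_m^{\nu,\lam}(r\theta),\qquad E_m(r^4\nu,r^3\lam)=r^2\,E_m(\nu,\lam),\qquad r>0.
\]
A direct computation yields the operator intertwining $P_{r^4\nu,r^3\lam}\big[\psi(r\,\cdot)\big](\theta)=r^2\,(P_{\nu,\lam}\psi)(r\theta)$, so that, up to the $L^2$-normalization factor $r^{1/2}$, the dilated eigenfunction is the $m$-th eigenfunction of $P_{r^4\nu,r^3\lam}$; parity is preserved under $\theta\mapsto r\theta$, and the sign convention at $\theta=0$ of Proposition~\ref{p:def-E-fimu} survives as well, the derivative at $0$ being multiplied by $r^{3/2}>0$ in the odd case. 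Inserting this identity into the integral formula~\eqref{eq: W} evaluated at $(n,m,r^4\nu,r^3\lam)$ and performing the change of variable $s=r\theta$ then yields exactly $\cW((n,m,\nu,\lam),\delta_r(x))$ with $\delta_r(x)=(rx_1,rx_2,r^2x_3,r^3x_4)$. The main difficulty is not conceptual but a matter of bookkeeping: one must match the homogeneity weights $(4,3)$ carried by $(\nu,\lam)$ with the dilation weights $(1,1,2,3)$ of $G$, together with the normalization factor $r^{1/2}$, throughout the chain of substitutions in the oscillatory phase.
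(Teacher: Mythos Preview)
Your proposal is correct and follows essentially the same approach as the paper: the identities \eqref{relationcWmorphismeq0}, \eqref{relationeven}, \eqref{eq: usew} and \eqref{relationcWmorphismeq1b} are handled exactly as in the paper (orthonormality/Cauchy--Schwarz, the symmetry $\psi_m^{\nu,-\lam}=\psi_m^{\nu,\lam}$, Parseval for the unitary $\mathcal{R}_x^{\nu,\lam}$, and the scaling law $T_r\psi_m^{\nu,\lam}=\psi_m^{r^4\nu,r^3\lam}$ combined with the change of variable $\theta=rz$ in \eqref{eq: W}). The only minor difference is that for \eqref{relationcWinverse} you invoke $\mathcal{R}_{x^{-1}}^{\nu,\lam}=(\mathcal{R}_x^{\nu,\lam})^*$ directly, whereas the paper points to the explicit coordinate formula \eqref{eq:inv} for $x^{-1}$; your operator-theoretic route is arguably cleaner but the content is the same.
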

\begin{proof} 
The  first   property follows from the fact that $(\psi_m^{\nu,\lambda})_{m\in \N}$ is an orthonormal basis and the Cauchy-Schwarz inequality in~\eqref{eq: W}. 
The second one is an immediate consequence of the fact that, for all $m \in \N$,  thanks to the symmetry invariance\footnote{ One has $P_{\nu,\lambda}=P_{\nu,-\lambda}$.}  of $P_{\nu,\lambda}$ with respect to $\lambda$,
$$\psi_m^{\nu,\lambda}=\psi_m^{\nu, -\lambda}\, .$$
Identity\refeq{relationcWinverse}  follows from\refeq{eq:inv},  while Formula\refeq{eq: usew}   stems from the fact  that~$\mathcal{R}_x^{\nu,\lambda}$ are unitary operators and thus $\|\mathcal{R}_x^{\nu,\lambda}\psi_m^{\nu,\lambda}\|_{L^2(\R)} =1$,  which implies that for all~$m \in \N$
$$  \sum_{n \in \N}\big |(\mathcal{R}_x^{\nu,\lambda}\psi_m^{\nu,\lambda}|\psi_n^{\nu,\lambda})_{L^2(\R)}\big|^2=\sum_{n \in \N} \big|\cW\big((n, m, \nu, \lam), x\big)\big|^2=1\, .$$
In order to prove\refeq {relationcWmorphismeq1b}, we first observe that in view of\refeq{eq: W}, there holds
$$ \begin{aligned} \cW\big((n, m, \nu, \lam),\d_{r}(x)\big) & = e^{i (\lambda r^3x_{4}- \frac{\nu}{\lambda} r x_{2})}  \int_\R e^{i\lambda(\theta r^2 x_3  +\frac{\theta^{2}}{2}r x_{2})} \psi_m^{\nu,\lambda}(\theta+
r x_{1})\psi_n^{\nu,\lambda}(\theta) d \theta. \end{aligned}$$
Then performing the change of variable $\ds \theta=r z$, we deduce that  
$$ \begin{aligned}\cW\big((n, m, \nu, \lam),\d_{r}(x)\big)& = e^{i (r^3 \lambda x_{4}- \frac{r^4\nu}{r^3\lambda} x_{2})}   \int_\R e^{i r^3 \lambda (z y_3 +\frac{z^{2}}{2} y_{2})} T_{r}\psi_m^{\nu,\lambda}(z+ y_{1}) T_{r}\psi_n^{\nu,\lambda}(z) d z \, ,  \end{aligned}$$
where~$T_r$ is the unitary operator in $L^{2}(\R)$ defined by\refeq{e:Talpha}.

 Recalling that by\refeq{e:relbis}, we have~$T_{r} \psi_m^{\nu,\lambda}(\theta) =\psi_m^{r^4 \nu,r^3 \lambda}(\theta)$,  this completes the proof of~(\ref{relationcWmorphismeq1b}), hence of   the proposition.
\end{proof}

 \medbreak
 
  \begin{remark}
\label{rmkerh}{\sl     Note   that  introducing,  for all $r>0$,  \begin{equation}
\label {eq: dilationhat}\wh \d_r(n, m, \nu, \lam) \eqdefa (n, m, r^4 \nu, r^3 \lambda)\, , \end{equation}   it readily follows from{\rm\refeq {relationcWmorphismeq1b}} that \begin{equation}
\label {eq: scaledil}
 \cF(u\circ\d_r)  = r^{-Q} \cF(u) \circ \wh \d_{r^{-1}} \, .\end{equation}
We deduce  that  the frequency set~$\widehat{G}$ has the same homogeneous dimension~$Q$ as  $G$. 
Indeed according to{\rm\refeq{eq: defint}},   we get for any integrable function~$\theta$ on~$\widehat{G}$,   $$\begin{aligned}
\int_{\widehat{G}} (\theta\circ \wh \d_r)(\wh x)  d\wh x &= \int_{\R\times \R^*} \sum_{(n,m)\in \N^{2}} \theta (n,m,r^{4}\nu, r^{3}\lambda)  d\nu d\lambda\\
 &= r^{-Q}\int_{\R\times \R^*} \sum_{(n,m)\in \N^{2} } \theta (n, m, \nu, \lam)   d\nu d\lam\,.
\end{aligned}$$  
}\end{remark}
 In contrast with the Euclidean situation,  when adopting the function point of view for the Engel Fourier transform one has to take into account that somehow, we deal with infinite matrices associated to bounded operators in $L^2(\R)$. To better exploit  the relation between the definitions{\rm\refeq{eq: defF}} and{\rm\refeq{eq: defFf}},  let us introduce the following definition.
 \begin{definition} {\sl For $p\in[1,\infty]$, we define 
 $\cL^{p, 2}_\cF(\widehat{G})$ as the set of   functions~$\theta$ on~$\widehat{G}$ 
equipped with the norm
$$
\|\theta\|_{\cL^{p, 2}_\cF(\widehat{G})} \eqdefa  \|\theta\|_{L_{\nu, \lam, m}^{p}(\R\times \R^*\times \N; \ell_n^2(\N))} \, .
$$}
\end{definition}

The following two statements  are the analogues of the Hausdorff-Young inequality in $\R^{d}$, which we here have both for $\mathcal{F}$ and $\mathcal{F}^{-1}$.
Recall that $\cF^{-1}(\theta)$ is defined by (cf.\ also \eqref{inverseFourierH})
$$\cF^{-1}(\theta)(x) = (2\pi)^{-3}  \int_{\widehat{G}} 
\cW\big((n,m, \nu, \lambda), x^{-1}\big) \theta (\wh x) \, d\wh x\, .
$$

\begin{proposition} \label{p:ifH}
{\sl For all~$1 \leq p \leq 2$, the following inequality holds: for all~$u $ in~$ {\mathcal S}(G)$,
\begin{equation} \label{eq:ifH1}
\|\cF(u) \|_{\cL^{p', 2}_\cF(\widehat{G})} \leq   \|u\|_{L^{p}(G)}\, ,\end{equation}
where~$p'$ is the dual exponent of~$p$, and thus $\mathcal{F}$ extends as a continuous linear map $\mathcal{F}:L^{p}(G) \to \cL^{p', 2}_\cF(\widehat{G})$.

For all $\theta$ in~$\cL^{1, 2}_\cF(\widehat{G})$, its inverse Fourier transform~$\cF^{-1}(\theta)$ belongs to $L^{\infty}(G) \cap C^0(G)$ and the map 
$\cF^{-1} : \cL^{1, 2}_\cF(\widehat{G}) \to L^{\infty}(G) \cap C^0(G)$ is continuous. 

For  all~$1 \leq p \leq 2$ there is a positive constant~$C$ such that for any~$\theta$ in~$\cL^{p, 2}_\cF(\widehat{G})$, its inverse Fourier transform~$\cF^{-1}(\theta)$ belongs to $L^{p'}(G)$ and  
\begin{equation} \label{eq:ifH2}
\|\cF^{-1}(\theta)\|_{L^{p'}(G)}  \leq C \|\theta \|_{\cL^{p, 2}_\cF(\widehat{G})} \, .
\end{equation}
}
\end{proposition}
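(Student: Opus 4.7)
The proof proceeds by Riesz–Thorin interpolation, treating $\cF$ (resp.\ $\cF^{-1}$) as an operator valued in the $\ell^{2}_{n}$-valued Lebesgue space $L^{p}_{\nu,\lam,m}(\R\times\R^{*}\times\N;\ell^{2}_{n}(\N))$. The plan is first to establish the two endpoint bounds $p=1$ and $p=2$ for each direction, and then invoke the (vector-valued) Riesz–Thorin theorem; the second statement, which concerns only the endpoint $p=1$ for $\cF^{-1}$, will naturally drop out of the computation together with continuity obtained via dominated convergence.

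For the endpoints of $\cF$, the case $p=2$ is an identity. Starting from \eqref{eq: defFf}–\eqref{eq: W} and Parseval applied to the orthonormal basis $(\psi_{n}^{\nu,\lam})_{n\in\N}$ of $L^{2}(\R)$, one gets
$$\sum_{n\in\N}|\cF(u)(n,m,\nu,\lam)|^{2}=\|\mathscr{F}(u)(\nu,\lam)\psi_{m}^{\nu,\lam}\|_{L^{2}(\R)}^{2},$$
and summing over $m$ yields $\|\mathscr{F}(u)(\nu,\lam)\|_{HS}^{2}$, whence by \eqref{Plancherelformula} the identity $\|\cF(u)\|_{\cL^{2,2}_{\cF}(\widehat{G})}^{2}=(2\pi)^{3}\|u\|_{L^{2}(G)}^{2}$. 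The case $p=1$ uses the same Parseval identity in $n$ together with the elementary operator bound \eqref{eq: bound}:
$$\sum_{n\in\N}|\cF(u)(n,m,\nu,\lam)|^{2}=\|\mathscr{F}(u)(\nu,\lam)\psi_{m}^{\nu,\lam}\|_{L^{2}(\R)}^{2}\leq \|\mathscr{F}(u)(\nu,\lam)\|^{2}_{\mathscr{L}(L^{2},L^{2})}\leq \|u\|_{L^{1}(G)}^{2},$$
uniformly in $(n,\nu,\lam)$ since $\|\psi_{m}^{\nu,\lam}\|_{L^{2}}=1$; taking supremum over $(m,\nu,\lam)$ gives $\|\cF(u)\|_{\cL^{\infty,2}_{\cF}}\leq \|u\|_{L^{1}(G)}$. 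Riesz–Thorin for the $\ell^{2}$-valued target (which is an abelian Banach space, so the vector-valued version applies) produces \eqref{eq:ifH1} up to a harmless constant of the form $(2\pi)^{3(1/p-1/2)}$.

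For the inverse direction, the $L^{\infty}$ endpoint is the heart of the second statement. Applying Cauchy–Schwarz to the $n$-sum inside \eqref{inverseFourierH} and exploiting the normalization \eqref{eq: usew} $\sum_{n}|\cW((n,m,\nu,\lam),x^{-1})|^{2}=1$, one obtains pointwise
$$|\cF^{-1}(\theta)(x)|\leq (2\pi)^{-3}\int_{\R\times\R^{*}}\sum_{m\in\N}\Bigl(\sum_{n\in\N}|\theta(n,m,\nu,\lam)|^{2}\Bigr)^{1/2}d\nu\, d\lam=(2\pi)^{-3}\|\theta\|_{\cL^{1,2}_{\cF}(\widehat{G})}.$$
Continuity of $\cF^{-1}(\theta)$ follows from dominated convergence: for each $\widehat{x}$ the map $x\mapsto \cW(\widehat{x},x^{-1})$ is continuous (being manifestly smooth in $x$ by \eqref{eq: W}), while the above Cauchy–Schwarz estimate provides the integrable majorant $(\sum_{n}|\theta|^{2})^{1/2}$. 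The $L^{2}$ endpoint is the Parseval identity \eqref{FPH}, giving $\|\cF^{-1}(\theta)\|_{L^{2}(G)}=(2\pi)^{-3/2}\|\theta\|_{\cL^{2,2}_{\cF}}$. Interpolating between $(\cL^{1,2}_{\cF},L^{\infty})$ and $(\cL^{2,2}_{\cF},L^{2})$ by Riesz–Thorin yields \eqref{eq:ifH2}.

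The main subtlety is the invocation of Riesz–Thorin on the mixed-norm vector-valued space $L^{p}_{\nu,\lam,m}(\ell^{2}_{n})$; this is standard but deserves a sentence of justification, since the target $\ell^{2}_{n}$ is a Hilbert space and the relevant three-lines lemma applies verbatim (alternatively one can decompose $\cF(u)$ into its $\ell^{2}_{n}$-scalar-product components against an arbitrary unit vector in $\ell^{2}_{n}$, reducing everything to scalar Riesz–Thorin). Apart from that the argument is a rather direct combination of Parseval identities \eqref{FPH}, the orthonormality of $(\psi_{n}^{\nu,\lam})_{n}$, the operator bound \eqref{eq: bound}, and the summation identity \eqref{eq: usew}.
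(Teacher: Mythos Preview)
Your argument is correct and follows the same overall strategy as the paper: establish the $p=1$ and $p=2$ endpoints for $\cF$ and $\cF^{-1}$ and then interpolate, with the $\cL^{1,2}_\cF\to L^\infty\cap C^0$ statement obtained from Cauchy--Schwarz in the $n$-sum, \eqref{eq: usew}, and dominated convergence. The only cosmetic difference is that for the $L^1\to\cL^{\infty,2}_\cF$ bound you invoke the operator-norm inequality \eqref{eq: bound} together with Parseval in the eigenbasis $(\psi_n^{\nu,\lambda})_n$, whereas the paper argues directly via Cauchy--Schwarz on the integral representation \eqref{eq: defFf} and the identity \eqref{eq: usew}; both routes yield the same estimate.
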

\begin{proof} Let us start by proving that if $u\in L^1(G)$ then $\cF(u) \in \cL^{\infty, 2}_\cF(\widehat{G})$ and 
\begin{equation}
\label {eq: useful} \|\cF(u) \|_{\cL^{\infty, 2}_\cF(\widehat{G})}  \leq \|u\|_{L^1(G)}  \,  .\end{equation} 
Note that~(\ref{eq: useful}) is more accurate  than{\rm\refeq{embtG}}. 
By definition~(\ref{eq: defFf})  followed by the Cauchy-Schwarz inequality, there holds
$$
\begin{aligned}
\big|\cF(u)(\widehat x) \big |^2&= \Big|  \int_{G} 
 \cW \big ((n, m, \nu, \lambda),x\big) u(x) dx \Big |^2 \\
 &\leq   \int_{G} 
 | \cW \big ((n, m, \nu, \lambda),x\big)|^2 |u(x)|  dx  \int_{G} 
  |u(x)|  dx
 \end{aligned}
$$so according to Identity \eqref{eq: usew},
$$
\sum_{n \in \N}\big|\cF(u)(n, m, \nu, \lambda) \big |^2\leq \|u\|_{L^1(G)}^2 \, ,$$
which proves~(\ref{eq: useful}).
Combining the Fourier-Plancherel formula\refeq{FPH} together with complex  interpolation,  we deduce \eqref{eq:ifH1}. 

\medskip
In light of\refeq{condinversion}-\eqref{inverselformula},  all functions~$\theta \in \cL^{1, 2}_\cF(\widehat{G})$ admit an inverse Fourier transform given by $$\cF^{-1}(\theta)(x) = (2\pi)^{-3}  \int_{\widehat{G}}
\cW\big((n,m, \nu, \lambda), x^{-1}\big) \theta (\wh x) \, d\wh x\, .
$$
Invoking the continuity of the function $\cW$ with respect to $x$ together with  the smoothness of the group operations of multiplication and inversion on $G$, we infer that for all $(n,m, \nu, \lambda) \in \widehat{G}$, the function  $x\mapsto \cW\big((n,m, \nu, \lambda), x^{-1}\big) \theta (n,m, \nu, \lambda)$ is continuous. Since in view of\refeq{eq: usew}, there holds $$\Big|\sum_{n \in \N} \cW\big((n,m, \nu, \lambda), x^{-1}\big) \theta (n,m, \nu, \lambda)\Big| \leq \|\theta (\cdot,m, \nu, \lambda)\|_{\ell_n^2(\N)} \,,$$ applying the Lebesgue dominated convergence theorem, we deduce that   $\cF^{-1}(\theta)$ belongs to    $L^{\infty}(G) \cap C^0(G)$ and satisfies $$\|\cF^{-1}(\theta)\|_{L^{\infty}(G)}  \leq (2\pi)^{-3}  \|\theta \|_{\cL^{1, 2}_\cF(\widehat{G})} \, ,$$
which  implies Formula \eqref{eq:ifH2}   by combining Fourier-Plancherel formula\refeq{FPH} together with complex  interpolation.
\end{proof}

 \begin{remark}
\label{translationnew2}{\sl Let us emphasize that with the Fourier function point of view, the action on left translations\refeq{eq:invariance by translation} translates into  the following property: if~$u \in L^1(G)$,  then for all~$x \in G$ and~$\wh x=(n, m, \nu, \lam) \in \widehat{G}$,  there holds 
\begin{equation} \label {eq: newtran}\cF(L_x u) (n,m, \nu, \lambda)=  \sum_{p\in \N }\cW\big((p,n, \nu, \lambda), {x}\big) \cF(u) (p,m, \nu, \lambda) \, , \end{equation}
the latter sum being finite according to\refeq{eq: usew}  and\refeq{eq:ifH1}. Indeed since for all~$(\nu, \lambda) \in \R\times \R^*$ and all $y \in G$, $\mathcal{R}_y^{\nu,\lambda}$ is a unitary operator  of~$L^2(\R)$, it follows from\refeq{eq: W} that for any integer $n$ \begin{equation}
\label {eq: genform}\mathcal{R}_y^{\nu,\lambda}\psi_n^{\nu,\lambda}= \sum_{p\in \N }\cW\big((p,n, \nu, \lambda), y\big)\psi_p^{\nu,\lambda} \with   \sum_{p \in \N} |\cW\big((p, n, \nu, \lam), y\big)|^2  =1 \, .\end{equation}
Then invoking\refeq{eq:invariance by translation} together with\refeq{eq: defFf}-\eqref {eq: W},   we infer that  
$$ \begin{aligned} \cF(L_x u) (n,m, \nu, \lambda)&=\big(\mathscr{F}(L_x u) (\nu, \lam) \psi_m^{\nu,\lambda}|\psi_n^{\nu,\lambda}\big)_{L^2(\R)}= \big(\mathcal{R}_{x^{-1}} ^{\nu,\lambda}\,  \mathscr{F} (u)(\nu, \lambda)|\psi_n^{\nu,\lambda}\big)_{L^2(\R)} \\ & = \big(\mathscr{F} (u)(\nu, \lambda)|\mathcal{R}_{x} ^{\nu,\lambda}\,  \psi_n^{\nu,\lambda}\big)_{L^2(\R)}\, ,\end{aligned}$$ which thanks to\refeq {eq: genform} leads to\refeq{eq: newtran}. }\end{remark}

  \begin{remark}
\label{translationnew}{\sl  It will be useful later to note that for all $F\in {\mathcal O}^{1,5/2}(\R_+)$, the operator~$F(-\Delta_G)$ acting on $\cS(G)$ is invariant by left translation. This can be proved by means of functional calculus (see for instance~{\rm\cite[Section~5.3]{Schmudgen}}), but can also be obtained easily  from the above remark which ensures  according to\refeq{defFDelta}  that, for all~$u \in \cS(G)$,~$F\in {\mathcal O}^{1,5/2}(\R_+)$,~$x \in G$ and~$\wh x=(n, m, \nu, \lam) \in \widehat{G}$, there holds 
$$\cF(F(-\Delta_G) (L_x u)) (n,m, \nu, \lambda)=  F(E_m(\nu, \lambda)) \sum_{p\in \N }\cW\big((p,n, \nu, \lambda), {x}\big) \cF(u) (p,m, \nu, \lambda) \, ,$$
and 
$$\begin{aligned} \cF(L_x (F(-\Delta_G)  u) (n,m, \nu, \lambda) & = \sum_{p\in \N }\cW\big((p,n, \nu, \lambda), {x}\big) \cF(F(-\Delta_G)  u) (p,m, \nu, \lambda)\\ & =  F(E_m(\nu, \lambda)) \sum_{p\in \N }\cW\big((p,n, \nu, \lambda), {x}\big) \cF(u) (p,m, \nu, \lambda) \, .\end{aligned}$$ 
}
\end{remark} 

 \medbreak 
Let us end this section by establishing  that 
if~$u$ belongs to  the Schwartz space $\cS(G)$, then~$\cF(u)$
belongs to~$\cL^{1, 2}_\cF(\widehat{G})$, which according to Proposition~\ref{p:ifH} is a natural class 
to define the inverse Fourier transform, hence to write
$$
u(x) = (2\pi)^{-3}  \int_{\widehat{G}} 
\cW\big((n,m, \nu, \lambda), x^{-1}\big) {\mathcal F}(u) (\wh x) \, d\wh x\, .
$$
  \begin {proposition}
\label {inversionSchwartz}
{\sl 
For any~$\rho > \frac 7 2$,   there exists a positive constant $C$ such that the following result holds.  For all $u $ in~$ \cS(G)$, its Fourier transform~$\cF(u)$ belongs to~$\cL^{1, 2}_\cF(\widehat{G})$ and
$$\|\cF(u)\|_{\cL^{1, 2}_\cF(\widehat{G})} \leq C \big(\|u\|_{L^1(G)} + \|(-\Delta_G)^\rho u\|_{L^1(G)}  \big) \, .$$
}
\end{proposition}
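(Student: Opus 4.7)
The plan is to reduce the claim to an application of the magic formula \eqref{magic formula introduction} with an appropriate choice of the function $F$, after exploiting the diagonal action of the sublaplacian on the Fourier side. Specifically, I would combine the two basic $L^{\infty}$--$L^{2}$ type bounds already available: the elementary estimate
\begin{equation*}
\|\cF(v)(\cdot,m,\nu,\lambda)\|_{\ell^{2}_{n}(\N)} \leq \|v\|_{L^{1}(G)} , \qquad v\in L^{1}(G),
\end{equation*}
which is exactly \eqref{eq: useful}, together with the spectral identity \eqref{actionsubf} applied to the operator $(-\Delta_{G})^{\rho}$ (well defined on $\cS(G)$ by functional calculus, and such that $(-\Delta_{G})^{\rho}u \in L^{1}(G)$ for any $u \in \cS(G)$ thanks to Theorem~\ref{t:function-calculus-L1}).

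First I would write, for any $u \in \cS(G)$ and any $\widehat{x}=(n,m,\nu,\lambda) \in \widehat{G}$,
\begin{equation*}
\bigl(1+E_{m}(\nu,\lambda)^{\rho}\bigr)\cF(u)(\widehat{x}) = \cF(u)(\widehat{x}) + \cF\bigl((-\Delta_{G})^{\rho}u\bigr)(\widehat{x}),
\end{equation*}
which by the triangle inequality in $\ell^{2}_{n}$ and \eqref{eq: useful} yields the pointwise (in $(m,\nu,\lambda)$) estimate
\begin{equation*}
\bigl\|\cF(u)(\cdot,m,\nu,\lambda)\bigr\|_{\ell^{2}_{n}(\N)} \leq \frac{1}{1+E_{m}(\nu,\lambda)^{\rho}}\bigl(\|u\|_{L^{1}(G)} + \|(-\Delta_{G})^{\rho}u\|_{L^{1}(G)}\bigr) .
\end{equation*}

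The proof is then completed by integrating in $(\nu,\lambda)$ and summing in $m$: the definition of $\|\cdot\|_{\cL^{1,2}_{\cF}(\widehat{G})}$ and the above inequality give
\begin{equation*}
\|\cF(u)\|_{\cL^{1,2}_{\cF}(\widehat{G})} \leq \bigl(\|u\|_{L^{1}(G)} + \|(-\Delta_{G})^{\rho}u\|_{L^{1}(G)}\bigr) \sum_{m\in\N}\int_{\R\times\R^{*}}\frac{d\nu\, d\lambda}{1+E_{m}(\nu,\lambda)^{\rho}} ,
\end{equation*}
so that everything reduces to showing that the last double series--integral is finite. Applying Theorem~\ref{t:key2} with $F(r)=(1+r^{\rho})^{-1}$ rewrites this quantity as $\mathsf{C}_{G}\int_{0}^{\infty}\frac{r^{5/2}}{1+r^{\rho}}\,dr$, which is finite precisely when $\rho > 7/2$, giving the claimed constant $C$.

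The only mildly delicate point is verifying that $F(r)=(1+r^{\rho})^{-1}$ satisfies the hypothesis of the magic formula (which is cast for measurable $F:\R_{+}\to\R$ with $F\in L^{1}(\R_{+},r^{5/2}dr)$); this is immediate for $\rho>7/2$ since $F$ is bounded near $0$ and decays like $r^{-\rho}$ at infinity. There is no real obstacle beyond that: the whole argument is a straightforward combination of the spectral identity, the simple $\ell^{2}_{n}$ bound, and the summability property \eqref{e:summability-eigenvaluesbis}, the latter being the nontrivial input and the reason for the threshold $7/2$.
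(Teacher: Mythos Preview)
Your argument is correct and follows essentially the same strategy as the paper: the $\ell^{2}_{n}$ bound \eqref{eq: useful}, the spectral identity for $(-\Delta_G)^{\rho}$, and the summability \eqref{e:summability-eigenvaluesbis} via the magic formula. The only difference is cosmetic: the paper splits the integration domain into $\{E_m\leq 1\}$ and $\{E_m>1\}$ and bounds the two pieces separately by $\|u\|_{L^1}$ and $\|(-\Delta_G)^\rho u\|_{L^1}$, whereas you combine both regimes with the single weight $(1+E_m^\rho)^{-1}$ and invoke Theorem~\ref{t:key2} once. The two weights are comparable, so there is no substantive difference.

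One small inaccuracy: your appeal to Theorem~\ref{t:function-calculus-L1} for $(-\Delta_G)^\rho u\in L^1(G)$ is off, since that theorem only lands you in $L^\infty(G)$. The paper handles this by first establishing the pointwise estimate for integer exponents $k$ (where $(-\Delta_G)^k u\in\cS(G)$ is immediate since $\Delta_G$ is a differential operator with polynomial coefficients) and then passing to general $\rho$ by complex interpolation. This is a technical wrinkle rather than a flaw in your overall approach, but the justification you give should be replaced.
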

\begin{proof} The proof is inspired from the proof of the corresponding result on the Heisenberg group which can be found in~\cite{bgheatkernel}.
 In order to establish the result, let us  consider~$u$ in  $\cS(G)$ and split~$\|\cF(u)\|_{\cL^{1, 2}_\cF(\widehat{G})}$  
into two parts  $ I_1+I_2$ where $$ I_1\eqdefa  \sum_{m\in \N}   \int_{E_m(\nu,\lam)\leq 1} \|  \mathcal{F}(u) (\cdot,m,\nu, \lambda)   \|_{\ell^2}\, 
d\nu d\lambda   \, .$$
Since by\refeq{eq:ifH1} (with~$p=1$), one has 
$$\|\cF(u)\|_{\cL^{\infty, 2}_\cF(\widehat{G})}\leq \|u \|_{L^1(G)} \, ,$$	
we deduce that 
$$ I_1  \leq \|u \|_{L^1(G)} \sum_{m\in \N}   \int_{E_m(\nu,\lam)\leq 1}  
d\nu d\lambda \, .$$
Then performing the  change of  variables $\ds \mu =  \frac{\nu}{|\lambda|^{4/3}}$ (for fixed $\lambda$), and recalling~(\ref{defEmnulambda}), we infer that 
\begin{align*} 
I_1  & \leq \|u \|_{L^1(G)} \sum_{m\in \N} \int_{\R}  \int_{|\lambda| \leq \frac 1 {\mathsf{E}_m(\mu)^{3/2}} }  |\lambda|^{4/3} d\lambda d \mu\\  & \lesssim \|u \|_{L^1(G)} \sum_{m\in \N} \int_{\R} \frac {d \mu}   {\mathsf{E}_m(\mu)^{7/2}} \, \virgp \end{align*}
which according to 	\eqref{e:summability-eigenvaluesbis} ensures that $$ I_1 \lesssim \|u \|_{L^1(G)} \, .$$
On the other hand 
thanks to~(\ref{actionsubf}), and again\refeq{eq:ifH1}  with~$p=1$, 
there holds
$$\|  \mathcal{F}(u) (\cdot,m,\nu, \lambda)   \|_{\ell^2} \leq E^ {-k}_m(\nu,\lam)\| (-\Delta_G)^ k u \|_{L^1(G)} $$
 for any integer~$k \in \ZZ$, and thanks to complex interpolation, we  find that for all $ \rho \in \R$, 
$$\|  \mathcal{F}(u) (\cdot,m,\nu, \lambda)   \|_{\ell^2}\leq E^ {-\rho}_m(\nu,\lam)\| (-\Delta_G)^ \rho u \|_{L^1(G)}\, .$$
We deduce that 
$$I_2  \leq \| (-\Delta_G)^ \rho u \|_{L^1(G)} \sum_{m\in \N}   \int_{E_m(\nu,\lam)\geq 1}  
E^ {-\rho}_m(\nu,\lam) d\nu d\lambda  \, .$$
Considering again the  change of  variables $\ds \mu =  \frac{\nu}{|\lambda|^{4/3}}$ (for fixed $\lambda$), this leads to the following estimate  
\begin{align*} 
I_2 &  \leq \| (-\Delta_G)^ \rho u \|_{L^1(G)} \sum_{m\in \N} \int_{\R} \mathsf{E}_m(\mu)^{-\rho}  \int_{|\lambda| \geq \frac 1 {\mathsf{E}_m(\mu)^{3/2}} }  |\lambda|^{4/3- 2/3 \rho} d\lambda d \mu\\  & \lesssim  \| (-\Delta_G)^ \rho u \|_{L^1(G)} \sum_{m\in \N} \int_{\R} \frac {d \mu}{\mathsf{E}_m(\mu)^{7/2}}  \end{align*} 
as soon as~$\rho>7/2$, which achieves the proof of the proposition. 
\end{proof}   
  \subsection{The spectral  measure of $- \Delta_G$}\label{spectral} 
 Let us start by  recalling that the spectral measure  of a self-adjoint operator~$A$  on~$L^2(\R^d)$ is characterized  for any     continuous bounded function~$F$   by
$$\langle \mu_{u,v} ,F\rangle \eqdefa \big(F(A)u,v\big)_{L^2(\R^d)} \, , \quad u,v\in L^2(\R^d) \, ,$$
and thus in particular when $A=-\D$, we get thanks to the Fourier-Plancherel formula
\begin{equation}
\label{eq:linkspectral} \langle \mu_{u,v} ,F\rangle =(2\pi)^{-d} \int_{\hat \R^d} F(|\xi|^2) \wh u(\xi) \overline {\wh v} (\xi) d\xi\,.\end{equation}
Using spherical  coordinates in $\hat \R^d$, we readily gather that
\begin{equation}
\begin{aligned}
\label{eq:SR} (-\D u,v)_{L^2(\R^d)} & = (2\pi)^{-d}  \int^\infty_0 R^2 \int_{\hat \S_{d-1}} \wh u(R\omega) \overline {\wh v} (R\omega) R^{d-1}d \omega  d R \\
&  = (2\pi)^{-d}  \int^\infty_0 R^2 \int_{\hat \S_{d-1}(R)} \wh u(\xi) \overline {\wh v} (\xi) d\hat\S_{d-1}(R)  d R\,.\end{aligned} \end{equation}
Then setting $\gamma=R^2$, we infer that 
$$ (-\D u,v)_{L^2(\R^d)}= \int^\infty_0   \gamma  (A(\gamma)u|v)d \gamma\, ,$$
with 
\begin{equation}
\begin{aligned}
\label{eq:repeuc} (A(\gamma)u|v) & \eqdefa (2\pi)^{-d} \int_{\hat \S_{d-1}} \wh u(\sqrt\gamma\omega) \overline {\wh v} (\sqrt\gamma\omega) (\sqrt\gamma)^{d-1} \frac 1 {2 \sqrt\gamma}  d \omega \\ &= (2\pi)^{-d} \frac 1 {2 \sqrt\gamma} \int_{\hat \S_{d-1}(\sqrt\gamma)} \wh u(\xi) \overline {\wh v} (\xi)d \hat \S_{d-1}(\sqrt\gamma) \, .\end{aligned} \end{equation}
The above formula can be interpreted  as the spectral decomposition of $-\D$
$$-\D u =\int^\infty_0   \gamma d{\mathcal P}_\gamma u$$
and one has
$$ (-\D u,v)_{L^2(\R^d)}= \int^\infty_0   \gamma (d{\mathcal P}_\gamma u|v)  \with (d{\mathcal P}_\gamma u|v)\eqdefa (A(\gamma)u|v)d \gamma\,.$$
We deduce that for any     continuous bounded function~$F$
\begin{equation}
\label{eq:eucformula1}
(F(-\D) v,v)_{L^2(\R^d)} =\int_{\R_+}F(\gamma)(d{\mathcal P}_\gamma v|v)=(2\pi)^{-d} \int_{\hat \R^d} F(|\xi|^2)  |{\wh v} (\xi)|^2 d\xi \, .
 \end{equation}

 Arguing similarly for the Engel group, we infer that  the spectral measure  of the self-adjoint operator~$-\D_G$ is given, thanks to{\refeq{e:parseval}}, by
\begin{equation}\begin{aligned}
\label{eq:linkspectralG}
\langle \mu_{u,v},F\rangle &\eqdefa \big(F(-\D_G)u,v\big)_{L^2(G)}\\ & = (2\pi)^{-3} \int_{\R\times \R^*} \sum_{n,m\in   \N}  F\bigl(E_m(\nu, \lambda)\bigr) \cF(u)(n,m, \nu,\lam) \overline {\cF(v)}(n,m, \nu,\lam) d\nu d\lam\,,
\end{aligned} \end{equation}
for any  continuous bounded function~$F$  
 and all~$u, v$   in~$L^2(G)$.
 Then performing the change of variable $\ds \mu =\frac{\nu}{|\lambda|^{4/3}}$ (for fixed $\lambda$), we deduce that 
$$
 \begin{aligned}&(-\D_Gu, v)_{L^2(G)} 
&= (2\pi)^{-3}\sum_{m\in \N}\int_{\R\times \R^*}  \sum_{n\in \N}  \cF(u)(n,m, \mu|\lambda|^{4/3},\lam) \overline {\cF(v)}(n,m, \mu |\lambda|^{4/3},\lam) \\
& & \qquad \times  |\lambda|^{2/3}\mathsf{E}_{m}(\mu) d\mu |\lambda|^{4/3} d\lam .\end{aligned}$$
 Recalling that $E_m(\nu, \lambda)=|\lambda|^{2/3}\mathsf{E}_{m}(\mu)$ plays the same role as $|\xi|^2$ in the Euclidean framework,  we now consider the change of variables  $R^2=|\lambda|^{2/3}\mathsf{E}_{m}(\mu)$, which gives rise to
$$ \begin{aligned}  (-\D_Gu,v)_{L^2(G)} =  (2\pi)^{-3}  \int_0^{\infty}R^2 \sum_{m\in \N}\sum_{ \pm} \int_{\R} \frac{3 d\mu}{\mathsf{E}_m(\mu)^{\frac Q 2}}  \sum_{n\in \N}  \cF(u)\Big(n,m, \frac{\mu  R^4}{\mathsf{E}_m(\mu)^{2}}, \frac{ \pm R^3}{\mathsf{E}_m(\mu)^{\frac 3 2}}\Big) \\
{} \times \overline {\cF(v)}\Big(n,m, \frac{\mu  R^4}{\mathsf{E}_m(\mu)^{2}}, \frac{ \pm R^3}{\mathsf{E}_m(\mu)^{\frac 3 2}}\Big) R^6 dR\,. \end{aligned}$$
 Analogously to\refeq{eq:SR}, the above formula  can be reinterpreted in terms of the dual sphere~$\S_{\widehat{G}}(R)$, namely \begin{equation}
\begin{aligned}
\label{eq:SGG} (-\D u,v)_{L^2(G)} &  = (2\pi)^{-3}  \int^\infty_0 R^2 \int_{\S_{\widehat{G}}(R)}  \cF(u)(\hat x) \overline {\cF(v)}(\hat x) d\S_{\widehat{G}}(R) d R\,,\end{aligned} \end{equation}
where $\ds \S_{\wh G} (R)\eqdefa \bigl\{(n,m,\nu, \lambda) \in \wh G \,/\,
E_m(\nu, \lambda)= R^2\bigr\}$ and 
\begin{equation}
\begin{aligned}
\label{eq:intSGG}   \int_{\S_{\widehat{G}}(R)}  |\theta(\hat x)| ^2 d\S_{\widehat{G}}(R) \eqdefa 3 \sum_{m\in \N} \sum_{ \pm} \int_\R     \frac{d\mu}{\mathsf{E}_m(\mu)^{\frac Q 2}} \sum_{n\in \N} \Big|\theta\Big(n, m, \frac{\mu  R^4}{\mathsf{E}_m(\mu)^{2}}, \frac{ \pm R^3}{\mathsf{E}_m(\mu)^{\frac 3 2}}\Big)\Big| ^2 R^6\, \cdotp
 \end{aligned} \end{equation}This definition is justified     by the following proposition which is the analog of the classical
integration formula  in spherical  coordinates.  
\begin {proposition}
\label {integpolarcoordiantes}
{\sl
For   any    function~$\theta \in L^2( \widehat{G})$,   we have
$$
\int_{\widehat{G} } |\theta(\hat x)| ^2 d\wh x = \int_0^\infty \biggl(\int_{\S_{\wh G}(R)} |\theta(\hat x)| ^2 d\s_{\S_{\wh G}(R)}\biggr)dR\,,
$$
where $\ds \int_{\S_{\wh G}(R)} |\theta(\hat x)| ^2 d\s_{\S_{\wh G}(R)}$ is given by{\rm\refeq{eq:intSGG}}. }
\end{proposition}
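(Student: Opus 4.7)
The plan is to prove the formula by a direct, iterated change of variables reducing the integral over $\widehat{G}$ to an integral over the parameter $R \in \R_+$ parametrizing the dual spheres. Throughout, positivity of the integrand legitimates all uses of Tonelli's theorem.

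First I would start from the definition \eqref{eq: defint} of the measure on $\widehat{G}$ and write
\begin{equation*}
\int_{\widehat{G}} |\theta(\hat x)|^2 \, d\hat x = \sum_{n,m\in\N} \int_{\R \times \R^*} |\theta(n,m,\nu,\lambda)|^2 \, d\nu \, d\lambda.
\end{equation*}
For each fixed $m$ and $\lambda \in \R^*$, perform the change of variable $\mu = \nu/|\lambda|^{4/3}$, so that $d\nu = |\lambda|^{4/3} d\mu$. Using the homogeneity identity $E_m(\nu,\lambda) = |\lambda|^{2/3}\mathsf{E}_m(\mu)$ recorded in~\eqref{defEmnulambda}, this already brings the Engel eigenvalue structure to the surface:
\begin{equation*}
\int_{\widehat{G}} |\theta(\hat x)|^2 \, d\hat x = \sum_{n,m\in\N} \int_{\R \times \R^*} \left|\theta\bigl(n,m,\mu|\lambda|^{4/3},\lambda\bigr)\right|^2 |\lambda|^{4/3} \, d\mu \, d\lambda.
\end{equation*}

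Next I would split the $\lambda$-integration according to the sign $\pm$ of $\lambda$, and perform, for $m$ and $\mu$ fixed, the change of variable $R = |\lambda|^{1/3}\mathsf{E}_m(\mu)^{1/2}$, i.e.\ $|\lambda| = R^3/\mathsf{E}_m(\mu)^{3/2}$. Since $\mathsf{E}_m(\mu)>0$ by Proposition~\ref{p:def-E-fimu}, this defines a smooth bijection between $(0,\infty)_{|\lambda|}$ and $(0,\infty)_R$, with Jacobian $d|\lambda| = \frac{3R^2}{\mathsf{E}_m(\mu)^{3/2}}\, dR$. Computing
\begin{equation*}
|\lambda|^{4/3} \, d|\lambda| = \frac{R^4}{\mathsf{E}_m(\mu)^2} \cdot \frac{3R^2}{\mathsf{E}_m(\mu)^{3/2}} \, dR = \frac{3 R^6}{\mathsf{E}_m(\mu)^{7/2}} \, dR,
\end{equation*}
and substituting in the corresponding argument $\mu|\lambda|^{4/3} = \mu R^4/\mathsf{E}_m(\mu)^2$ and $\lambda = \pm R^3/\mathsf{E}_m(\mu)^{3/2}$, I obtain
\begin{equation*}
\int_{\widehat{G}} |\theta(\hat x)|^2 \, d\hat x = \int_0^\infty \left[ 3\sum_{m\in\N}\sum_\pm \int_\R \frac{d\mu}{\mathsf{E}_m(\mu)^{7/2}} \sum_{n\in\N}\left|\theta\Bigl(n,m,\tfrac{\mu R^4}{\mathsf{E}_m(\mu)^2}, \tfrac{\pm R^3}{\mathsf{E}_m(\mu)^{3/2}}\Bigr)\right|^2 R^6\right] dR.
\end{equation*}

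Finally, recognizing $Q = 7$ from~\eqref{eq:dimhom} and comparing the bracketed expression with the definition~\eqref{eq:intSGG} of the sphere measure $d\sigma_{\S_{\widehat{G}}(R)}$, I would conclude that the bracketed quantity equals $\int_{\S_{\widehat{G}}(R)} |\theta(\hat x)|^2 \, d\sigma_{\S_{\widehat{G}}(R)}$, which yields the claimed identity. The verification for $\theta \in L^2(\widehat{G})$ then follows from Tonelli applied to $|\theta|^2$ (no subtlety, since the formula is an equality between positive quantities in $[0,+\infty]$). The only step requiring genuine care is bookkeeping the $\pm$ sign of $\lambda$ in the change of variables and checking that the exponent $Q/2 = 7/2$ matches precisely the power of $\mathsf{E}_m(\mu)$ produced by the Jacobian; this matching is what ensures that the sphere measure defined in~\eqref{eq:intSGG} is the correct one.
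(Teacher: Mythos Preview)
Your proof is correct and follows essentially the same approach as the paper: both perform the change of variables $\mu = \nu/|\lambda|^{4/3}$ followed by $R = |\lambda|^{1/3}\mathsf{E}_m(\mu)^{1/2}$, and identify the resulting integrand with the sphere measure~\eqref{eq:intSGG}. Your version is slightly more explicit in tracking the Jacobian and invoking Tonelli, but the argument is the same.
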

\begin{proof} By definition of the measure on~$\widehat{G}$, we have
$$\int_{\widehat{G} } |\theta(\hat x)| ^2 d\wh x =  \int_{\R\times \R^*} \sum_{n,m\in \N }|\theta(n,m, \nu, \lambda)| ^2  d\nu d\lam \,.$$
Then performing successively the change of variables~$\ds \mu =  \frac{\nu}{|\lambda|^{4/3}}$ (for fixed $\lambda$) and~$R= |\lambda|^{1/3}\sqrt{\mathsf{E}_m(\mu)}$ (for fixed $\mu$),  we infer  that 
$$
\begin{aligned}
\int_{\wh G }  |\theta(\hat x)| ^2 d\wh x = \int_0^\infty 3\sum_{m\in \N} \sum_{ \pm}  \int_{\R}    \frac{d\mu}{\mathsf{E}_m(\mu)^{\frac Q 2}} \sum_{n\in \N} \Big|\theta\Big(n, m, \frac{\mu  R^4}{\mathsf{E}_m(\mu)^{2}}, \frac{ \pm R^3}{\mathsf{E}_m(\mu)^{\frac 3 2}}\Big)\Big| ^2 R^6dR\,,
\end{aligned}
$$
which proves the proposition. 
\end{proof}
\begin{remark}
\label{rmksphere}{\sl  As a byproduct of the above formula, we find that  the measure of the dual unit sphere $\S_{\wh G}$ is given by
\begin{equation}
\label{meassphere} \s(\S_{\wh G}) = 3  \sum_{m\in \N} \int_\R     \frac{d\mu}{\mathsf{E}_m(\mu)^{\frac Q 2}} \, \cdotp \end{equation} 
This proves Proposition~{\rm\ref{prop:CG}}.} \end{remark} \noindent Finally setting $\gamma=R^2$, we deduce that 
\begin{equation}
\label{spectralGmeassphere} (-\D_Gu,v)_{L^2(G)}= \int^\infty_0   \gamma \big(A(\gamma)u|v\big)d \gamma $$ with 
$$\big(A(\gamma)u|v\big)= \frac {(2\pi)^{-3}}  {2 \sqrt \gamma}   \int_{\S_{\wh G}(\sqrt \gamma)} \sum_{n\in \N}  \cF(u)(n, \cdot)\overline  \cF(v)(n, \cdot)d \S_{\wh G}(\sqrt \gamma) \cdotp\end{equation} 
This shows that  the spectral decomposition of~$-\D_G$ takes the following form
 $$
  -\D_G u =\int^\infty_0   \gamma d{\mathcal P}_\gamma  u \quad \mbox{where} \quad d({\mathcal P}_\gamma u|v)\eqdefa \big(A(\gamma)u|v\big)d \gamma \, ,$$
and readily ensures that,  for any     continuous bounded function~$F$ and all functions $v$ in $L^2(G)$, there holds\begin{equation}  
\label{eq:spformulam2} 
(F(-\Delta_G) v , v)_{L^2(G)} =\int_{\R_+}F(\gamma)(d{\mathcal P}_\gamma v | v) \, d\gamma=(2\pi)^{-3}\int_{\tilde{G}} F(a(\widehat{x})) |\cF v(\widehat{x})|^2 d\widehat{x} \, ,
\end{equation} where $a$ denotes the function on $\widehat{G}$  introduced in Theorem~{\rm\ref{t:key}}.

 \subsection{End of the proof of Theorems~{\ref{t:key2}}, {\ref{t:function-calculus-L1}}, {\ref{t:kernel} and proof of Proposition~\ref{p:function-calculus-abstract}}}
 \label{Proofkey}

\begin{proof}[Proof of Theorem~{\rm\ref{t:key2}}]
    Recall that~(\ref{e:summability-eigenvaluesbis}) has been proved in Section~\ref{poisson}.   Let us start by proving the       following identity, for~$F \in C^0_c(\R_+^*)$: 
  \begin{equation}  \label{first identity}
\sum_{m\in \N} \int_{\R\times\R^*} F\big(E_m(\nu,\lambda)\big)  d\nu  d\lambda =  3\left( \int_{\R_+}  r^{5/2} F(r)  dr \right)  \sum_{m\in \N} \int_{\R}  \frac{1}{\mathsf{E}_m(\mu)^{7/2}} d\mu  \, .  \end{equation}
Recalling that $P_{\nu,\lambda}=P_{\nu,-\lambda}=P_{\nu,|\lambda|}$, we   have $E_m(\nu,\lambda)=E_m(\nu,-\lambda)= E_m(\nu,|\lambda|)$ so it suffices to prove that
$$
\sum_{m\in \N} \int_{\R\times{\R_+^*} } F\big(E_m(\nu,\lambda)\big)  d\nu  d\lambda=  \frac32 \left( \int_{\R_+} r^{5/2} F(r)  dr \right)  \sum_{m\in \N} \int_{\R}  \frac{1}{\mathsf{E}_m(\mu)^{7/2}} d\mu\, .
$$
Invoking Corollary~\ref{c:scaling}, and using changes of variables, the integral on $\lambda >0$ rewrites
$$
\sum_{m\in \N} \int_{\R \times {\R_+^*}} F\big(E_m(\nu,\lambda)\big)  d\nu  d\lambda
=\sum_{m\in \N} \int_{\R \times {\R_+^*}} F\left(\lambda^{2/3}E_m\left(\frac{\nu}{\lambda^{4/3}},1\right)\right)  d\nu d\lambda  
$$
so setting~$\mu = \frac{\nu}{\lambda^{4/3}}$ (for fixed $\lambda$) and then $r =\lambda^{2/3}\mathsf{E}_m\left(\mu \right) $ (for fixed $\mu$) we find
\begin{align*}
\sum_{m\in \N} \int_{\R \times {\R_+^*}} F\big(E_m(\nu,\lambda)\big)  d\nu  d\lambda 
&= \sum_{m\in \N}\int_{\R \times {\R_+^*}}F\left(\lambda^{2/3}\mathsf{E}_m\left(\mu \right)\right) \lambda^{4/3}d\lambda \, d\mu  \\
&= \left(\int_{{\R_+^*}} \frac32 r^{5/2} F(r)  dr \right)  \sum_{m\in \N} \int_{\R}  \frac{1}{\mathsf{E}_m(\mu)^{7/2}} d\mu  \, .
\end{align*}
This concludes the proof of the identity~(\ref{first identity}). The latter remains true (with equality in $[0,+\infty]$) for all nonnegative measurable functions $F$.
Applied to $|F|$ instead of $F$, this implies that~$F\circ a $ belongs to~$L^1(\widehat{G}, \delta_{n,m} d\widehat{x})$ if and only if $F\in L^1(\R_+,r^{5/2} dr)$, and, if so, then~(\ref{magic formula introduction}) (which is nothing but~(\ref{first identity}))  holds. 
 \end{proof}

\smallskip

We now prove Theorem~\ref{t:function-calculus-L1}. Note that the Schwartz kernel theorem~\cite[Thm 5.2.1]{hormander} or~\cite[Equation~(51.7) p~531]{treves} states that any continuous map~${\mathcal S}(G)\to {\mathcal S}'(G)$ has a distribution kernel. Here, left translation invariance of the operator $F(-\Delta_G)$ further implies that the operator is a right convolution operator. Our proof of Theorem~\ref{t:function-calculus-L1} is inspired by~\cite[Proof of Theorem~1.2 p~98]{Hor:trans-inv} and does not rely on the kernel theorem.

\begin{proof}[Proof of Theorem~{\rm\ref{t:function-calculus-L1}}]
To prove that, for any function $u \in \mathcal{S}(G)$,  one can define the inverse Fourier transform of the function~$(n,m,\nu, \lambda)\mapsto F(E_m(\nu, \lambda))\mathcal F({u})(n,m,\nu, \lambda)$, it suffices according to Proposition~\ref{p:ifH} to check that this function belongs to $\mathcal{L}^{1,2}_{\mathcal{F}}$, that is to say
$$
J \eqdefa  \left\| F(E_m(\nu, \lambda))\mathcal F({u})(n,m,\nu, \lambda)\right\|_{\mathcal{L}^{1,2}_{\mathcal{F}}} = \sum_{m\in \N} \int_{\R\times \R^*}  \big| F(E_m(\nu,\lambda)) \big|  \|  \mathcal{F}(u)(\cdot,m,\nu,\lambda) \|_{\ell^2}
d\nu d\lambda < \infty \, .$$
For this we reproduce the proof of Proposition~\ref{inversionSchwartz}, writing~$ J = J_1+J_2$
with  $$ J_1\eqdefa  \sum_{m\in \N}   \int_{E_m(\nu,\lam)\leq 1} \big| F(E_m(\nu,\lambda)) \big|  \|  \mathcal{F}(u)(\cdot,m,\nu,\lambda) \|_{\ell^2}
d\nu d\lambda   \, .$$
As in the estimate of~$I_1$ in the proof of Proposition~\ref{inversionSchwartz}, and thanks to~(\ref{first identity}), we find   
\begin{align*} J_1  & \leq \|u \|_{L^1(G)} \sum_{m\in \N} \int_\R  \int_{|\lambda| \leq \frac 1 {E^{3/2} _m(\mu)} }  \left | F\left(\lambda^{2/3}\mathsf{E}_m\left(\mu \right)\right) \right| |\lambda|^{4/3} d\lambda d \mu\\  & \lesssim \|u \|_{L^1(G)}   \int_0^1   r^{5/2} |F(r)|  dr \\
& \lesssim \|u \|_{L^1(G)} \|F\|_{\mathcal O^{1,\frac52 }_0(\R_+)}   \,.\end{align*}
On the other hand 
and again as in the estimate of~$I_2$ in the proof of Proposition~\ref{inversionSchwartz} 
$$
\begin{aligned}
J_2 & \leq \| (\mbox{Id}-\Delta_G)^ \ell u \|_{L^1(G)}  \int_1^\infty  \langle r\rangle^{-\ell} r^{5/2} |F(r)|  dr  \\
& \lesssim \|(\mbox{Id}-\Delta_G)^ \ell u\|_{L^1(G)} \|F\|_{\mathcal O^{1,\frac52 }_\ell(\R_+)}  
  \, .
  \end{aligned}$$
This implies that for $F \in \mathcal{O}^{1,5/2}_\ell(\R_+)$ and $u \in \mathcal{S}(G)$, 
\begin{equation*}  
\begin{aligned}
J &=  \left\| F(E_m(\nu, \lambda))\mathcal F({u})(n,m,\nu, \lambda)\right\|_{\mathcal{L}^{1,2}_{\mathcal{F}}}\\
& \lesssim \|u \|_{L^1(G)} \|F\|_{\mathcal O^{1,\frac52 }_0(\R_+)}  +  \|(\mbox{Id}-\Delta_G)^ \ell u \|_{L^1(G)} \|F\|_{\mathcal O^{1,\frac52 }_\ell(\R_+)}  
 \, .
  \end{aligned}\end{equation*}
We have thus obtained that 
$(n,m,\nu, \lambda)\mapsto F(E_m(\nu, \lambda))\mathcal F({u})(n,m,\nu, \lambda)$ belongs to $\mathcal{L}^{1,2}_{\mathcal{F}}$. According to Proposition~\ref{p:ifH}, its inverse Fourier transform 
$$
F(-\Delta_G) u \eqdefa \mathcal{F}^{-1} \left( F(E_m(\nu, \lambda))\mathcal F({u})(n,m,\nu, \lambda) \right) 
$$
thus satisfies $F(-\Delta_G) u  \in C^0(G) \cap L^\infty(G)$ together with 
\begin{align}
\|F(-\Delta_G) u \|_{L^\infty(G)} \lesssim  \|u \|_{L^1(G)} \|F\|_{\mathcal O^{1,\frac52 }_0(\R_+)}  +  \|(\mbox{Id}-\Delta_G)^ \ell u \|_{L^1(G)} \|F\|_{\mathcal O^{1,\frac52 }_\ell(\R_+)}  .
\end{align}
In particular, the bilinear map 
\begin{equation}
\label{e:cont-stat}
\mathcal O^{1,\frac52 }_\ell(\R_+) \times  \mathcal{S}(G) \to \C , \quad (F,u) \mapsto  \left( F(-\Delta_G) u \right)(0)
\end{equation}
is linear, continuous for the topology of $\mathcal O^{1,\frac52 }_\ell(\R_+) \times \mathcal{S}(G)$. As a consequence, for a fixed $F \in \mathcal O^{1,\frac52 }_\ell(\R_+)$, the partial map $\mathcal{S}(G) \to \C$ given by $u \mapsto  \left( F(-\Delta_G) u \right)(0)$ belongs to $\mathcal{S}'(G)$. That is to say, there is $T \in \mathcal{S}'(G)$ such that
$$
 \left( F(-\Delta_G) u \right)(0) = \langle T , u \rangle_{\mathcal{S}'(G),\mathcal{S}(G)} , \quad \text{ for all }u \in \mathcal{S}(G). 
$$
Now, there is $\check T \in\mathcal{S}'(G)$ such that, with $\check u(y) = u(y^{-1})$, we have 
$$
\langle T , u \rangle_{\mathcal{S}'(G),\mathcal{S}(G)} = \langle \check T , \check u \rangle_{\mathcal{S}'(G),\mathcal{S}(G)}  \, , \quad \text{ for all }u \in \mathcal{S}(G)  \, . 
$$
Recalling the definition of the convolution in~\eqref{e:def-convolS}, and noticing that~$\check u=\check u_0=\check u^0$, the above two lines rewrite 
$$
 \left( F(-\Delta_G) u \right)(0) = \langle \check T , \check u \rangle_{\mathcal{S}'(G),\mathcal{S}(G)} = (u \star \check T)(0)  \, , \quad \text{ for all }u \in \mathcal{S}(G) \, . 
$$
 Invoking Remark\refer{translationnew}, we infer that, for all $x \in G$ and $u \in \mathcal{S}(G)$, 
\begin{align}
\label{e:translation-}
 \left( F(-\Delta_G) u \right)(x) =  \left(L_x  F(-\Delta_G) u \right)(0) =  \left( F(-\Delta_G) L_x u \right)(0) =  ((L_xu) \star \check T)(0) = (u \star \check T)(x)
 \end{align}
 where, in the last equality, we have used again the definition of the convolution in~\eqref{e:def-convolS}.
 This concludes the proof of Theorem~\ref{t:function-calculus-L1} with $k_F\eqdefa \check T$. 
 The continuity statement of the map $F \mapsto \check T = k_F$ follows from~\eqref{e:cont-stat},~\eqref{e:translation-} and the continuity of $L_x$ as a map $\mathcal{S}(G)\to \mathcal{S}(G)$. 
\end{proof}

Notice that~(\ref{e:translation-}), joint with~(\ref{inverseFourierH}) and~(\ref{actionsubf}), imply the following useful identities:
\beq
\label {kernel}\cF (k_{F} )(\hat x)=F(E_m(\nu, \lambda)) \delta_{m,n}\,,\eeq
and
\beq
\label {kernel2}k_{F}(x)=(2\pi)^{-3}    \int_{\widehat{G}} 
\cW\big((n,m, \nu, \lambda), x^{-1}\big) F(E_m(\nu, \lambda)) \delta_{m,n}\, d\wh x\,.\eeq
For the sake of completeness, we also give here a proof of Proposition~\ref{p:function-calculus-abstract}, which follows that of Theorem~\ref{t:function-calculus-L1} with a different starting point (the general functional calculus for selfadjoint operators instead of the Fourier transform $\mathcal{F}$), and would hold in any Carnot group.

\begin{proof}[Proof of Proposition~{\rm\ref{p:function-calculus-abstract}}]
As it was emphasized in Section~\ref{spectral},  spectral theory associates with~$-\Delta_G$ a spectral measure that we   denoted by~$d{\mathcal P}_\gamma$, and it is well-known (see for instance~\cite[Theorem VIII.6]{Reed-Simon-1} or~\cite[Section~5.3]{Schmudgen}) that if~$F$ is a locally bounded Borel function on~$\R_+$, one can define on the Hilbert space $L^2(G)$ the operator~$F(-\Delta_G)$ by  
\begin{align*}
D(F(-\Delta_G))\eqdefa \Big\{u \in L^2(G) ,\,  \int _\R |F(\gamma) |^2 d({\mathcal P}_\gamma u|u)<\infty\Big\}\, , \\
 F(-\Delta_G)u \eqdefa \int _\R F(\gamma)   d{\mathcal P}_\gamma u  , \quad \text{ for } u \in D(F(-\Delta_G)) \, .
\end{align*}
Now if~$F$ is a function in~${\mathcal O}^\infty_m (\R_+)$ for some nonnegative real number~$m$, then we have, for $u \in \mathcal{S}(G)$,
$$
\| F(-\Delta_G)u\|_{L^2(G)}^2 =  \int _\R |F(\gamma) |^2 d({\mathcal P}_\gamma u|u) \leq C \int _\R \langle \gamma\rangle^{2m} d({\mathcal P}_\gamma u|u) = C \| (\mbox{Id}-\Delta_G)^m u\|_{L^2(G)}^2.
$$
Therefore,~$D\big((\mbox{Id} - \Delta_G)^m\big)$  is embedded continuously in~$D(F(-\Delta_G))$.
Next, since $\Delta_G$ is a differential operator with polynomial coefficients, we have $(\mbox{Id}-\Delta_G)^k u \in \mathcal{S}(G)$ for any $u \in \mathcal{S}(G)$ and~$k \in \N$, together with 
$$
\| (\mbox{Id}-\Delta_G)^k F(-\Delta_G)u\|_{L^2(G)}^2 \leq  C \| (\mbox{Id}-\Delta_G)^{m+k} u\|_{L^2(G)}^2, \quad \text{ for all } u \in \mathcal{S}(G) .
$$
As a consequence, we obtain
\begin{equation}
\label{e:e-Fuin}
u \in \mathcal{S}(G) \implies F(-\Delta_G)u \in \bigcap_{k \in \N} D((\mbox{Id}-\Delta_G)^k) =\bigcap_{k \in \N} D(\Delta_G^k).
\end{equation}
 Now, local hypoellipticity of the operator $\Delta_G$ proved in~\cite{Hor:67} (see also~\cite{RS:76} for the expression of~$\eps$) implies the existence of $\eps>0$ such that  
$D(-\Delta_G) \subset H^\eps_{\text{loc}}(G)$ with continuous embedding, where $H^\eps_{\text{loc}}(G)\eqdefa H^\eps_{\text{loc}}(\R^4)$ denotes the usual local Sobolev space defined by: $v \in H^\eps_{\text{loc}}(\R^4)$ if~$\langle \xi \rangle^{\eps}\mathsf{F}(\chi v)(\xi) \in L^2(\R^4)$ for all~$\chi \in C^\infty_c(\R^4)$, where $\mathsf{F}$ denotes the usual Euclidean Fourier transform. 
A classical induction argument (see e.g.~\cite[Corollary~B.2]{LL:22}) shows that $D(-\Delta_G^k) \subset H^{k\eps}_{\text{loc}}(G)$ for all $k \in \N$ with continuous embedding. 
We thus deduce from~\eqref{e:e-Fuin} (and the usual rough Sobolev embeddings in $\R^4$) that if $u \in \mathcal{S}(G)$, then $F(-\Delta_G)u \in \bigcap_{k \in \N} H^{k\eps}_{\text{loc}}(G) = C^\infty(G)$ (and this map is continuous).
In particular, $F(-\Delta_G)$ maps continuously $\mathcal{S}(G)$ in $C^0(G)$ and, from this point forward, we may follow the end of the proof of Theorem~{\rm\ref{t:function-calculus-L1}} line by line.

\end{proof}

Let us turn to the proof of Theorem~{\rm\ref{t:kernel}}.

\begin{proof}[Proof of Theorem~{\rm\ref{t:kernel}}]
First assume that $F \in L^1(\R_+,r^{5/2}dr)$. Thanks to formulae \eqref{relationcWmorphismeq0},~\eqref{first identity} and\refeq {kernel2}, we deduce that
\begin{align*}
|k_{F}(x) | & = (2\pi)^{-3}   \left|  \int_{\widehat{G}} 
\cW\big((n,m, \nu, \lambda), x^{-1}\big) F(E_m(\nu, \lambda)) \delta_{m,n}\, d\wh x \right| \\
 &  \leq 
(2\pi)^{-3}  \int_{\widehat{G}} 
 |F(E_m(\nu, \lambda))| \delta_{m,n}\, d\wh x 
  = (2\pi)^{-3} \left(\int_{\R_+^*} \frac32 r^{5/2} |F(r)|  dr \right)  \sum_{m\in \N} \int_{\R}  \frac{1}{\mathsf{E}_m(\mu)^{7/2}} d\mu\, .
\end{align*}
The continuity of $k_F$ under the hypothesis of Theorem~\ref{t:kernel} readily follows  from  the continuity of~$\cW$ with respect to $x$,\refeq{relationcWmorphismeq0} and Lebesgue dominated convergence theorem.
\end{proof}

 %%%%%%%%%%%%%%%%%%%%%%
 \section{Applications}\label{s:app}
\subsection{Functional embeddings} \label{Sobembed} Combining the Engel Fourier transform together with\refeq{magic formula introduction},  we  recover in this section many    functional inequalities, whose original proofs may be found in~\cite{Folland}.  
 
 Let us start with the following result concerning   Sobolev embeddings in Lebesgue spaces.
Such embeddings are known to hold in a variety of contexts (see for instance~\cite{bfggraded},~\cite{BPTV},~\cite{chamorro}~\cite{CSC},~\cite{FR},\cite{KRTT}).
  The proof conducted here is inspired from the paper of  Chemin-Xu\ccite{chemin16}, and adapted previously in other contexts (see for instance~\cite{bgx},~\cite{bahouri gallagher} for the Heisenberg group).
 \begin{theorem}
\label {generalSobolevembedL2}
{\sl
For any real number $s$ in $[0,Q/2[,$ there exists a constant $C_s$  such that 
the following inequality holds:
$$
\forall u \in \dot H^s(G)\,,\ \|u\|_{L^p(G)} \leq C_s\|u\|_{\dot H^s(G)} \with p = \frac {2Q} {Q-2s}\,\cdotp
$$
}
\end{theorem}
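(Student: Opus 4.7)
The plan is to follow the strategy of Chemin--Xu \cite{chemin16}, adapted to the Fourier setting on $G$ developed above: cut the frequency side of $u$ at a level~$A>0$ to be chosen, prove an $L^\infty$ estimate for the low-frequency part in terms of~$\|u\|_{\dot H^s(G)}$, and control the high-frequency part via Plancherel and the layer-cake formula. By density of $\cS(G)$ in $\dot H^s(G)$, we may assume $u\in\cS(G)$.

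First I would decompose, for $A>0$,
\begin{equation*}
u = u_{1,A} + u_{2,A}\, , \qquad \cF(u_{1,A})(\hat x) \eqdefa \mathbf{1}_{a(\hat x)\leq A^2}\,\cF(u)(\hat x)\, ,
\end{equation*}
where $a(\hat x)=E_m(\nu,\lambda)$ is the multiplier given by Theorem~\ref{t:key}. For the low-frequency part, using the inversion formula~\eqref{inverseFourierH}, $|\cW|\leq 1$, the orthonormality relation~\eqref{eq: usew} (through Cauchy--Schwarz in the index $n$), and Cauchy--Schwarz in $(m,\nu,\lambda)$ with weight $E_m(\nu,\lambda)^s$, I obtain
\begin{equation*}
|u_{1,A}(x)| \leq (2\pi)^{-3} \Bigl(\sum_{m\in\N}\int_{\R\times\R^*} \mathbf{1}_{E_m(\nu,\lambda)\leq A^2}\,E_m(\nu,\lambda)^{-s}\,d\nu d\lambda\Bigr)^{\!1/2} \!\!\cdot (2\pi)^{3/2}\|u\|_{\dot H^s(G)}\, .
\end{equation*}
The key point is then to apply the summability identity~\eqref{magic formula introduction} of Theorem~\ref{t:key2} with $F(r)=\mathbf{1}_{r\leq A^2}r^{-s}$, which, using that $Q=7$ and $s<Q/2$, gives
\begin{equation*}
\sum_{m\in\N}\int_{\R\times\R^*} \mathbf{1}_{E_m(\nu,\lambda)\leq A^2}\,E_m(\nu,\lambda)^{-s}\,d\nu d\lambda = \frac{\mathsf{C}_G}{Q/2-s}\,A^{Q-2s}\, ,
\end{equation*}
and hence $\|u_{1,A}\|_{L^\infty(G)} \leq C_s\, A^{(Q-2s)/2}\|u\|_{\dot H^s(G)}$.

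Next, given $t>0$, I would choose $A=A(t)$ so that $C_s A(t)^{(Q-2s)/2}\|u\|_{\dot H^s(G)}=t/2$, i.e. $A(t)=c_s (t/\|u\|_{\dot H^s(G)})^{2/(Q-2s)}$. With this choice $\{|u|>t\}\subset\{|u_{2,A(t)}|>t/2\}$, so by the layer-cake formula, Chebyshev's inequality, and the Fourier--Plancherel identity~\eqref{FPH},
\begin{equation*}
\|u\|_{L^p(G)}^p \leq p \int_0^\infty t^{p-1}\,|\{|u_{2,A(t)}|>t/2\}|\,dt \leq C_p \int_0^\infty t^{p-3}\,\|u_{2,A(t)}\|_{L^2(G)}^2\,dt\, .
\end{equation*}
Exchanging the order of integration and using that $a(\hat x)>A(t)^2$ is equivalent to $t<t^*(\hat x)$ with $t^*(\hat x)=c_s'\,a(\hat x)^{(Q-2s)/4}\|u\|_{\dot H^s(G)}^{-(Q-2s)/(Q)+\cdots}$ (a pure power), I am reduced to
\begin{equation*}
\|u\|_{L^p(G)}^p \leq C_{p,s}\,\|u\|_{\dot H^s(G)}^{p-2} \int_{\widehat G} a(\hat x)^{(Q-2s)(p-2)/4}\,|\cF(u)(\hat x)|^2\,d\hat x\, .
\end{equation*}
The exponent $p=2Q/(Q-2s)$ is precisely the one making $(Q-2s)(p-2)/4=s$, so that the remaining integral equals $(2\pi)^{3}\|u\|_{\dot H^s(G)}^2$, and the theorem follows.

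The main obstacle is the $L^\infty$ bound on $u_{1,A}$: that step is the one where the specific geometry of the Engel group enters, through the finiteness of $\mathsf C_G$ established in Theorem~\ref{t:key2} (itself a consequence of the delicate semiclassical analysis of Section~\ref{poisson}). Once the ``right'' volume $A^{Q-2s}$ of the low-frequency region is available, the remainder of the argument is the standard Chemin--Xu scheme and produces no further difficulty.
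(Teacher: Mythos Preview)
Your argument is correct and follows essentially the same route as the paper's proof: the Chemin--Xu low/high frequency split, the $L^\infty$ bound on the low part via the inversion formula, Cauchy--Schwarz with~\eqref{eq: usew}, and the summability identity~\eqref{magic formula introduction}, then Chebyshev and Fubini on the high part. The only cosmetic differences are that the paper normalizes $\|u\|_{\dot H^s(G)}=1$ (so $A_\beta=c\beta^{p/Q}$ directly), which avoids the slightly garbled expression you wrote for $t^*(\hat x)$; in fact the correct power there is simply $t^*(\hat x)=c_s'\,a(\hat x)^{(Q-2s)/4}\|u\|_{\dot H^s(G)}$.
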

\begin{proof}
Let us assume that~$\|u\|_{\dot H^s(G)}=1$, and  compute the $L^p$ norm 
according to the Cavalieri principle: 
\[
\|u\|_{L^p(G)}^p  =  p  \int_0^\infty \beta^{p-1} \bigr|(|u|>\beta)\bigr|
d\beta\, .
\]
In order to go further, we shall use the    technique of decomposition into low and high frequencies, namely we  shall decompose,  for all $A>0,$  the function $u$ into two parts as follows
\begin{equation}\label{eq:dec}u = u_{\ell, A} + u_{h,A}\quad\hbox{with}\quad 
\cF (u_{\ell, A}) (n, m, \nu, \lam)\eqdefa \cF (u) (n, m, \nu, \lam)\, {\mathbf 1}_{E_m(\nu, \lam) \leq A^2}  \, .\end{equation}
{}From the inversion formula\refeq{inverseFourierH}, the definition 
of the Sobolev norm \eqref{SobH}  and the Cauchy-Schwarz inequality,   we get
$$ \|u_{\ell, A}\|_{L^\infty(G)}\leq C \|u_{\ell, A}\|_{\dot H^s(G)} \Big(\int_{E_m(\lambda, \nu) \leq A^2}  |\cW((n, m, \nu, \lam), x^{-1}) |^2 (E_m(\nu, \lam))^{-s} \, d\wh x\Big)^{\frac 1 2}\, .$$
In view of  \eqref{eq: usew},  we have 
$$\begin{aligned} \int_{E_m(\lambda, \nu) \leq A^2}  |\cW((n, m, \nu, \lam),x^{-1}) |^2 (E_m(\nu, \lam))^{-s}  \, d\wh x &= \!\sum_{m\in \N} \! \int_{E_m(\nu, \lam) \leq A^2}  (E_m(\nu, \lam))^{-s} d\nu d\lam\, .  \end{aligned} $$
Then applying Formula\refeq{magic formula introduction} with $F(r)= r^{-s} {\bf 1}_{]0, A^2]}  (r)$,  we deduce that
$$ \|u_{\ell, A}\|_{L^\infty(G)} \leq C A^{\frac  Q 2 -s}\, .$$
Thus,   choosing $A=A_\beta= c \beta^{\frac p Q}$ for some small enough  positive real number $c$ 
 ensures that
 $$\bigl|(|u|>\beta)\bigr| \leq \bigl|(|u_{\ell,A_\beta}|>\beta/2)\bigr|+  \bigl|(|u_{h,A_\beta}|>\beta/2)\bigr| =  \bigl|(|u_{h,A_\beta}|>\beta/2)\bigr|\, ,$$
which thanks to Bienaym\'e-Tchebitchev inequality yields
\[
\|u\|_{L^p(G)}^p  \lesssim  \int_0^\infty \beta^{p-3} \, \norm {u_{h,
A_\beta}}{L^2(G)}^2\,d\beta\, .\]
Hence, by virtue of   Fourier-Plancherel  formula\refeq{FPH},    
$$
\norm u {L^p(G)}^p  \lesssim
C\int_0^\infty \beta^ {p-3} \int_{E_m(\nu, \lam)\geq A_\beta^2}|\cF(u)(\wh x)|^2\,d\wh x\,d\beta\,,
$$
which completes the proof  thanks to the  Fubini theorem. 
\end{proof}
Notice that refined versions of Sobolev embeddings (see~\cite{gerard,gerardmeyeroru}) can be obtained by slightly adapting the above proof. We shall not pursue this further here.   \medbreak

 The following theorem is to be compared with the Poincar\'e inequality.
\begin{theorem}
\label {poincareth}
{\sl
Let~$s$ be a nonnegative real number and~$K$ a subset of $G$ with finite measure. 
There exists a positive constant~$C(s, K)$ 
such that for  all functions $u$
in the subspace~$H^s_K(G )$ of functions in $H^s(G)$ with  support in~$K,$
we have $$
  \norm u {\dot H^s(G)} \leq
\|u\|_{H^s(G)} \leq C(s, K) \norm u {\dot H^s(G)}\, .
$$
}
\end{theorem}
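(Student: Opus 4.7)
The first inequality $\|u\|_{\dot H^s(G)}\leq\|u\|_{H^s(G)}$ is immediate from the Fourier characterizations \eqref{SobH}--\eqref{SobinH}, since $E_m^s\leq (1+E_m)^s$ pointwise on $\widehat G$. Since $(1+t)^s\leq 2^s(1+t^s)$ for $s,t\geq 0$, one also has $\|u\|_{H^s}^2\leq 2^s(\|u\|_{L^2}^2+\|u\|_{\dot H^s}^2)$, so the second inequality reduces to the Poincar\'e-type estimate
$$\|u\|_{L^2(G)}\leq C(s,K)\,\|u\|_{\dot H^s(G)}, \qquad u\in H^s_K(G).$$
The case $s=0$ is trivial (the two norms coincide), so we henceforth assume $s>0$.

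My plan is to adapt the low/high frequency decomposition used in the proof of Theorem \ref{generalSobolevembedL2}. For a fixed threshold $A>0$ (we will take $A=1$), split $u=u_{\ell,A}+u_{h,A}$ according to \eqref{eq:dec}. Plancherel gives the orthogonal splitting $\|u\|_{L^2}^2=\|u_{\ell,A}\|_{L^2}^2+\|u_{h,A}\|_{L^2}^2$, together with the trivial high-frequency bound
$$\|u_{h,A}\|_{L^2(G)}^2\leq A^{-2s}\,\|u\|_{\dot H^s(G)}^2 .$$
For the low-frequency piece, arguing exactly as in the proof of Theorem \ref{generalSobolevembedL2} (inversion formula \eqref{inverseFourierH}, Cauchy--Schwarz with weight $E_m^{-s}$, identity \eqref{eq: usew}, and the magic formula \eqref{magic formula introduction} applied to $F(r)=r^{-s}\mathbf 1_{]0,A^2]}(r)$), I obtain, for $0<s<Q/2$,
$$\|u_{\ell,A}\|_{L^\infty(G)}\leq C_s\,A^{Q/2-s}\,\|u\|_{\dot H^s(G)}.$$

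Now the support hypothesis enters. By Parseval and the fact that $\cF u_{\ell,A}=\cF u\cdot \mathbf 1_{\{E_m\leq A^2\}}$, we have $\|u_{\ell,A}\|_{L^2}^2=(u_{\ell,A},u)_{L^2(G)}$; since $u$ vanishes outside $K$,
$$\|u_{\ell,A}\|_{L^2(G)}^2=\int_K u_{\ell,A}\,\overline u\,dx\leq |K|^{1/2}\,\|u_{\ell,A}\|_{L^\infty(G)}\,\|u\|_{L^2(G)}\leq C_s\,|K|^{1/2}\,A^{Q/2-s}\,\|u\|_{\dot H^s}\,\|u\|_{L^2}.$$
Combining with the high-frequency bound and Young's inequality $ab\leq \tfrac12 a^2+\tfrac12 b^2$ to absorb $\|u\|_{L^2}$ on the right, one gets
$$\tfrac12\|u\|_{L^2}^2\leq \bigl(\tfrac12 C_s^2|K|\,A^{Q-2s}+A^{-2s}\bigr)\|u\|_{\dot H^s}^2 ,$$
and the choice $A=1$ concludes the proof of the Poincar\'e estimate, hence of the theorem.

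The main obstacle I anticipate is producing the $L^\infty$ bound $\|u_{\ell,A}\|_{L^\infty}\lesssim A^{Q/2-s}\|u\|_{\dot H^s}$ when $s\geq Q/2$, since the integral $\int_{E_m\leq A^2}E_m^{-s}\,d\nu d\lambda$ then diverges at $E_m\to 0^+$. I would bypass this by first applying the argument above with an auxiliary exponent $s_0\in(0,Q/2)$, yielding $\|u_{\ell,A}\|_{L^\infty}\leq C_{s_0}A^{Q/2-s_0}\|u_{\ell,A}\|_{\dot H^{s_0}}$, and then using the elementary inequality $E_m^{s_0}\leq A^{2(s_0-s)}E_m^s$ valid on the set $\{E_m\leq A^2\}$ (since $s_0-s\leq 0$), which gives $\|u_{\ell,A}\|_{\dot H^{s_0}}\leq A^{s_0-s}\|u\|_{\dot H^s}$ and restores the correct scaling $A^{Q/2-s}$. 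The rest of the argument then proceeds verbatim.
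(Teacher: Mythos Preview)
Your argument for $0<s<Q/2$ is correct, and it takes a somewhat different route from the paper's: you control the low-frequency piece in $L^\infty(G)$ via the inversion formula and the weighted Cauchy--Schwarz inequality (as in the Sobolev embedding proof), whereas the paper controls $\|\cF(u_{\ell,\eps})\|_{L^2(\widehat G)}$ directly using the $L^1\to\cL^{\infty,2}_\cF$ bound~\eqref{eq: useful}, namely $\sum_n|\cF u(n,m,\nu,\lambda)|^2\leq\|u\|_{L^1}^2\leq |K|\,\|u\|_{L^2}^2$, together with the magic formula applied to $F=\mathbf 1_{]0,\eps^2]}$. The paper's approach is $s$-independent on the low-frequency side and therefore works uniformly for all $s>0$ with no case distinction; your approach, by going through $\|u_{\ell,A}\|_{L^\infty}\lesssim A^{Q/2-s}\|u\|_{\dot H^s}$, picks up the restriction $s<Q/2$.

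There is a genuine gap in your fix for $s\geq Q/2$: the claimed inequality $E_m^{s_0}\leq A^{2(s_0-s)}E_m^s$ on $\{E_m\leq A^2\}$ is false---it goes the other way. Indeed, writing $E_m^{s_0}=E_m^s\cdot E_m^{s_0-s}$ with $s_0-s<0$, one has $E_m^{s_0-s}\geq (A^2)^{s_0-s}$ on $\{E_m\leq A^2\}$, so $E_m^{s_0}\geq A^{2(s_0-s)}E_m^s$. Consequently $\|u_{\ell,A}\|_{\dot H^{s_0}}$ cannot be bounded by $A^{s_0-s}\|u\|_{\dot H^s}$; in fact the $\dot H^{s_0}$ norm puts \emph{more} weight on low frequencies than the $\dot H^s$ norm when $s_0<s$. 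An easy repair is to keep your argument for a fixed $s_0\in(0,Q/2)$ and then bootstrap by interpolation: since $\|u\|_{\dot H^{s_0}}\leq \|u\|_{L^2}^{1-s_0/s}\|u\|_{\dot H^s}^{s_0/s}$, the inequality $\|u\|_{L^2}\leq C(s_0,K)\|u\|_{\dot H^{s_0}}$ rearranges to $\|u\|_{L^2}\leq C(s_0,K)^{s/s_0}\|u\|_{\dot H^s}$. Alternatively, adopting the paper's low-frequency estimate via~\eqref{eq: useful} removes the need for any such detour.
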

\begin{proof}
 The first inequality is obvious, and  in view of Fourier-Plancherel formula\refeq{FPH} the second one amounts  to prove that
$$
\|\cF(u) \|_{L^2( \widehat{G} )} \leq  (C|K|)^\frac{2s}Q \norm u {\dot H^s(G)}\, .
$$
 To this end,  let us   again decompose~$u$ into low and high frequencies as in~(\ref{eq:dec}). We thus set~$\e>0$ and write
 $$
 \begin{aligned}
  \|\cF(u) \|^2_{L^2( \widehat{G} )} &=  \|\cF(u_{h, \e}) \|^2_{L^2( \widehat{G} )} +  \|\cF(u_{\ell, \e}) \|^2_{L^2( \widehat{G} )}  \\
  &=  \int_{E_m(\nu, \lam) \geq \e^2} E_m(\nu, \lam) ^{-2s}
 E_m(\nu, \lam) ^{2s}|\cF(u) (n, m,\nu, \lam)|^2\,d \wh x+  \|\cF(u_{\ell, \e}) \|^2_{L^2( \widehat{G} )}. 
  \end{aligned} $$   
  The first integral may be bounded by $\e^{-4s}\|u\|_{\dot H^s(G)}^2.$
 To handle the second one, we first take advantage of\refeq{eq: useful} which since  $u$  is compactly supported gives rise to $$\sum_{n\in \N} |\cF(u) (n, m, \nu, \lam) |^2 \leq \|u\|^2_{L^1(G)}\leq |K| \,\|u\|^2_{L^2( G)}= (2\pi)^{-3}  |K| \|\cF(u) \|^2_{L^2( \widehat{G} )}\, .$$ 
Thanks to Identity\refeq{magic formula introduction} with $F(x)=  {\bf 1}_{]0, \e^2]}  (x)$,   this implies  that 
 $$  \|\cF(u_{\ell, \e}) \|^2_{L^2( \widehat{G} )} \leq (2\pi)^{-3}   |K|  \, \|\cF(u) \|^2_{L^2( \widehat{G} )} \e^{Q} \, .$$
We deduce  that 
 $$
 \|\cF(u) \|^2_{L^2( \widehat{G} )}  \leq \frac 1 {\e^{4s}} 
\|u\|_{\dot H^s(G)}^2 + C |K| \e^{Q}  \|\cF(u) \|^2_{L^2( \widehat{G} )}  \,,
$$
which   achieves the proof of the result   choosing~$\e^Q = 1/(2C|K|)$. 
\end{proof}

\medbreak

Decomposing   functions  into  low and high frequencies is a key tool to  establish  functional inequalities,  but  also to 
investigate nonlinear Partial Differential Equations.  Let us      showcase  again the efficiency of this method by establishing the   
   Sobolev embedding  corresponding to the  critical exponent 
$s=Q/2.$ To this end, we introduce  the space~$BMO(G)$ of  locally integrable functions~$u$ on 
$G$  with Bounded Mean Oscillations: 
$$
\|u\|_{BMO(G )}\eqdefa \sup_{B}\frac 1 {|B|}\int_{ B}|(u-u_{B})(x)|\,dx<\infty
\with u_{B}\eqdefa  \frac 1 {|B|}\int_{B} u (x)\,dx,
$$
where the  supremum is taken over 
 all  balls ${B}$ of $G$, and where $|B|$ denotes the Lebesgue measure of the ball $B$.
\begin{theorem}
\label{inclussobbmoHb}{\sl
The space $L_{\rm loc}^1(G) \cap \dot H^{\frac Q 2}(G)$
is  included in~$BMO(G)$. Moreover,
there exists a constant~$C>0$ such that  
$$\|u\|_{BMO (G)} \leq C\|u\|_{\dot H^{\frac Q 2}(G)}\, ,$$
for all functions $u$ in $L_{\rm loc}^1(G) \cap \dot H^{\frac Q 2}(G)$.}
\end{theorem}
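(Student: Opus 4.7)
The plan is to adapt the frequency-splitting method of Theorem~\ref{generalSobolevembedL2}. Fix a ball $B=B_G(x_0,r)$, set $A=1/r$, and decompose $u=u_{\ell,A}+u_{h,A}$ as in~\eqref{eq:dec}. Writing $|u-u_B|\leq|u_{\ell,A}-(u_{\ell,A})_B|+|u_{h,A}|+|(u_{h,A})_B|$, the high-frequency contribution to the mean oscillation is handled in the standard way by Cauchy--Schwarz on $B$ and Fourier--Plancherel:
\begin{equation*}
\frac{1}{|B|}\int_B|u_{h,A}|\,dx \leq |B|^{-1/2}\|u_{h,A}\|_{L^2(G)} \leq C|B|^{-1/2}A^{-Q/2}\|u\|_{\dot H^{Q/2}(G)}=C\|u\|_{\dot H^{Q/2}(G)},
\end{equation*}
using $E_m(\nu,\lambda)>A^2$ on the Fourier support of $u_{h,A}$, $|B|\sim r^Q$, and $A=1/r$.

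The core of the proof is the uniform pointwise bound $|u_{\ell,A}(x)-u_{\ell,A}(y)|\leq C\|u\|_{\dot H^{Q/2}(G)}$ for all $x,y\in B$, from which the low-frequency part of the BMO bound follows by integration. The key step is a sub-Riemannian Lipschitz estimate on the map $z\mapsto\cR_z^{\nu,\lambda}\psi_m^{\nu,\lambda}\in L^2(\R)$: relations~\eqref{eq: 12} yield
$\|\nabla_G^z\cR_z^{\nu,\lambda}\psi_m^{\nu,\lambda}\|_{L^2(\R)}^2=(P_{\nu,\lambda}\psi_m^{\nu,\lambda}|\psi_m^{\nu,\lambda})=E_m(\nu,\lambda)$. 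Integrating this derivative along a horizontal curve from $x$ to $y$ (using a Cauchy--Schwarz in $\R^2$ to control the cross terms between the $X_1$ and $X_2$ components) and combining with the trivial bound $\|\cR_z^{\nu,\lambda}\psi_m^{\nu,\lambda}\|_{L^2(\R)}=1$, Parseval in the basis $(\psi_p^{\nu,\lambda})_{p\in\N}$ gives
\begin{equation*}
\sum_{p\in\N}\bigl|\cW((p,m,\nu,\lambda),x)-\cW((p,m,\nu,\lambda),y)\bigr|^2 \leq \min\bigl(4,\,d_G(x,y)^2E_m(\nu,\lambda)\bigr).
\end{equation*}

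Next I would write $u_{\ell,A}(x)-u_{\ell,A}(y)$ by Fourier inversion in a form where $E_m$ weights the same index as in $\|u\|_{\dot H^{Q/2}}$. Starting from $u(z)=(L_z u)(0)$ and using Remark~\ref{translationnew2}, this difference can be expressed as an integral against $\cF(u)(p,m,\nu,\lambda)\mathbf{1}_{E_m(\nu,\lambda)\leq A^2}$ of the differences $\cW((p,m,\nu,\lambda),x)-\cW((p,m,\nu,\lambda),y)$. A Cauchy--Schwarz in $p$ (applying the bound above) followed by a Cauchy--Schwarz in $(m,\nu,\lambda)$ with weight $E_m^{-Q/2}$ reduces the problem to estimating
\begin{equation*}
\sum_{m\in\N}\int_{E_m(\nu,\lambda)\leq A^2}\min\bigl(4,\rho^2E_m(\nu,\lambda)\bigr)E_m(\nu,\lambda)^{-Q/2}\,d\nu\,d\lambda,\qquad \rho=d_G(x,y).
\end{equation*}
By the magic formula~\eqref{magic formula introduction} applied to $F(r)=\min(4,\rho^2r)r^{-Q/2}$, together with $Q=7$, the above equals a constant times $\int_0^{A^2}r^{-1}\min(4,\rho^2r)\,dr$. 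For $x,y\in B$ one has $\rho\leq 2r=2/A$, so $\rho^2r\leq 4$ on the whole integration range and this integral is $\rho^2A^2\leq 4$, uniformly in $x,y\in B$.

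The main obstacle I expect is precisely the indexing bookkeeping of the previous paragraph: after Cauchy--Schwarz, the weight $E_m^{-Q/2}$ must be indexed by the same $m$ as appears in the Lipschitz bound on $\cR_z^{\nu,\lambda}\psi_m^{\nu,\lambda}$, otherwise the remaining free index produces a divergent summation. Rewriting the inversion formula through $(L_z u)(0)$ and Remark~\ref{translationnew2} is the device that makes these indices coincide; a direct use of the formula $u(x)=(2\pi)^{-3}\int\cW(\hat x,x^{-1})\cF(u)(\hat x)d\hat x$ would instead produce a gradient bound involving $E_n$, i.e.\ the wrong index.
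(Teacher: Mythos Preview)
Your proof is correct and the ingredients are the same as the paper's --- the low/high frequency split with $A=r^{-1}$, the high-frequency bound via Plancherel, and the magic formula~\eqref{magic formula introduction} to close the low-frequency estimate --- but the low-frequency argument is organised differently. The paper first integrates along horizontal curves at the level of the \emph{scalar} function, obtaining $|u_{\ell,A}(x)-u_{\ell,A}(y)|\lesssim d_G(x,y)\sup_i\|X_i u_{\ell,A}\|_{L^\infty(G)}$, and only then applies Fourier inversion plus Cauchy--Schwarz (with weight $E_m^{1-Q/2}$ against $\|X_i u\|_{\dot H^{Q/2-1}}\leq\|u\|_{\dot H^{Q/2}}$) to bound $\|X_i u_{\ell,A}\|_{L^\infty}\lesssim A\|u\|_{\dot H^{Q/2}}$. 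You instead push the horizontal-curve integration to the representation level, bounding the $L^2(\R)$-Lipschitz constant of $z\mapsto\cR_z^{\nu,\lambda}\psi_m^{\nu,\lambda}$ by $\sqrt{E_m}$ via~\eqref{eq: 12}, and then Cauchy--Schwarz in $p$ and in $(m,\nu,\lambda)$. Your device of writing $u_{\ell,A}(x)=(L_x u_{\ell,A})(0)$ and invoking Remark~\ref{translationnew2} is exactly what makes the $E_m$ in your Lipschitz bound line up with the $E_m$ in the frequency cutoff and in the $\dot H^{Q/2}$ weight; as you correctly anticipated, the direct inversion formula would produce $E_n$ instead via~\eqref{relationcWinverse} and~\eqref{eq: wt12}. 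The paper's route is shorter and yields the standalone $L^\infty$ gradient bound as a byproduct, without any index bookkeeping; your route avoids having to deal with $\cF(X_i u)$ (which is not a multiplier of $\cF(u)$) and gives the $\dot H^{Q/2}$ norm directly, at the cost of the translation manoeuvre.
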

\begin{proof}
As previously, we use the decomposition\refeq{eq:dec}.
Then applying the Cauchy-Schwarz inequality, we infer  that   for any  
ball~$B$ in $G$, we have 
$$
\int_{ B} |(u-u_{ B})(x)| \frac {dx} {|B|} \leq \|u_{\ell,A}-(u_{\ell,A})_{ B}
\|_{L^2\big(B, \frac {dx}{| B|}\big)} +\frac 2 {|B|^{\frac 1
2}}\|u_{h,A}\|_{L^2(G )}.
$$
 In order to estimate  the low frequency part, we shall use the metric structure of the Engel  group:
recall that   for any~$(x,x')$ of~$G\times G$, there exists  a horizontal curve of~$\Omega_{x,x'}$ joining~$x$
to~$x'$. 
Using the Carnot-Carath\'eodory distance~$d_G$ defined in \eqref{eq:dsr},  we infer   that, for any ball~$B_{R}$ of~$G$ of radius~$R$, there holds 
\begin{equation}\label{eq:lfBMO}
\|u_{\ell,A}-(u_{\ell,A})_{ B_R}
\|_{L^2\big(B_R, \frac {dx}{| B_R|}\big)}    \lesssim  R A \|u\|_{\dot H^{\frac Q 2}(G )}\,.\end{equation}
Indeed, by definition of $(u_{\ell,A})_{ B_R},$ we have 
$$ (u_{\ell,A}-(u_{\ell,A})_{B_R})(x)=   \frac 1 {|{B_R}|}\int_{ {B_R}} (u_{\ell,A}(x) - u_{\ell,A}(x') ) dx' \, .$$
Since  for any curve~$\gamma =
\ds \bigcup_{p=1}^{p=J}\gamma_p$ where~$\gamma_p: [0, T_p]
\longrightarrow G $ belongs to $\Omega_{x_p, x_{p+1}}$, namely $$
\partial_t \gamma_p(t)= \pm X_i(\gamma_p(t)), \quad
\gamma_p(0)= x_p, \quad \gamma_p(T_p)= x_{p+1},$$ $$\gamma_1(0)=
x, \quad \gamma_J(T_J)= x'\quad\mbox{for}\quad p=1,\dots,J-1 \andf i \in \{1, 2\} \, ,$$
there holds 
$$ u_{\ell,A}(x_{p+1}) -  u_{\ell,A}(x_{p})= \int_0^{T_p}
\partial_t(u_{\ell,A}(\gamma_p(t)))\,dt, $$
we readily gather that  \begin{equation}\label{eq:est1} | u_{\ell,A}(x) - u_{\ell,A}(x')| \lesssim   {d}_G(x,x')\sup_{i\in \{1,2\} } \|X_i
 u_{\ell,A}\|_{L^{\infty}(G)}\,.
\end{equation}
We deduce    that
$$\|u_{\ell,A}-(u_{\ell,A})_{ {B_R}}
\|_{L^2\big({B_R}, \frac {dx}{| {B_R}|}\big)} \lesssim R \sup_{i\in \{1,2\}} \|X_i
 u_{\ell,A}\|_{L^{\infty}(G)}\, .$$
By the inversion formula\refeq{inverseFourierH}, we have 
$$\|X_i
 u_{\ell,A}\|_{L^{\infty}(G)} \lesssim \int_{E_m(\lambda, \nu) \leq A^2}  |\cW((n, m, \nu, \lam),x^{-1}) | | \cF(X_i u ) (\wh x)|  d \wh x.$$
Then combining the Cauchy-Schwarz inequality together with\refeq{magic formula introduction} and\refeq{eq: usew},  we get
$$\|X_i
 u_{\ell,A}\|_{L^{\infty}(G)} \lesssim A \|u\|_{\dot H^{\frac Q 2}(G)},$$
 which ensures that  $$\|u_{\ell,A}-(u_{\ell,A})_{ {B_R}}
\|_{L^2\big({B_R}, \frac {dx}{| {B_R}|}\big)}   \lesssim     RA \|u\|_{\dot H^{\frac Q 2}(G)} \, \cdotp$$
To bound $u_{h,A},$ we combine  Identity\refeq{FPH} with Formula\refeq{magic formula introduction}, which gives rise to
 $$\begin{aligned} \|u_{h,A}\|^2_{L^2(G)} \leq C
  A^{- Q} \,  \|u\|^2_{\dot H^{\frac Q 2}(G)} \, .\end{aligned}$$
We know by \eqref{eq:homball}  that 
   $ | {B_R}| = C R^Q $. Then, the latter estimate implies that  
   \begin{equation}\label{eq:lfBMOh}\frac 2 {|{B_R}|^{\frac 1
2}}\|u_{h,A}\|_{L^2(G )} \lesssim  (AR)^{- \frac Q 2} \,  \|u\|_{\dot H^{\frac Q 2}(G)}\, .\end{equation} 
Gathering the estimates\refeq{eq:lfBMO} and \eqref{eq:lfBMOh} and 
choosing~$A=R^{-1}$  completes the proof of the result.
\end{proof}

\medbreak 
One can also prove, by a similar high-low decomposition technique,  embeddings between H\"older spaces and Sobolev spaces. 
 To ease the notations, we denote in what follows~${\cX} = (X_1, X_2)$ the family of  horizontal left-invariant vector fields on~$G$, and   we set   for any multi-index~$\alpha$ in~$\{1,2\}^k$:
$${\mathcal \cX}^{\alpha} = \prod_{j=1}^kX_{\alpha_j} 
\, .$$ 
Then we denote by~$C^{k,\rho}(G)$, (for~$(k,\rho)$  in~$\N\times]0,1]$) the
H\"older space on the Engel group,  consisting in  functions~$u$
on~$G$ such that $$ \|u\|_{C^{k,\rho}(G)} \eqdefa
 \sup_{|\al|\leq k}
\Bigl(
\norm {\cX^\al u} {L^\infty}
+\sup_{x\not = y}
\frac{|\cX^\al u(x)-\cX^\al u(y)|}{\wt {d}(x,y)^\rho}
\Bigr)<\infty.
$$ 
We have the following   result.
\begin{proposition}
\label{inclusionsobolevholder} {\sl If~$s>\frac
Q 2$ and~$s-\frac Q 2$ is not an integer,  then the
space~$H^s(G)$ is included  in the 
H\"older  space of index
$$ (k,\rho) =\biggl(\Bigl[ s-\frac Q 2 \Bigr], s-\frac
Q 2- \Bigl[ s-\frac Q 2 \Bigl]\biggr)
$$
and we have for all $u\in  H^s(G),$
$$
\|u\|_{C^{k,\rho}(G)}  \leq C_{s}\|u\|_{H^s(G)}.$$
}
\end{proposition}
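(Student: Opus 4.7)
My plan is to closely mirror the high/low frequency decomposition argument used for the $BMO$ embedding (Theorem~\ref{inclussobbmoHb}), leveraging the inversion formula~\eqref{inverseFourierH} and the magic identity~\eqref{magic formula introduction}. Write $\rho = s - Q/2 - k \in (0,1)$; by density, it suffices to work with $u \in \mathcal{S}(G)$. A key auxiliary input throughout is the classical Carnot-group Sobolev estimate
\begin{equation}\label{e:sob-aux}
\|\cX^\beta w\|_{H^{t}(G)} \leq C\|w\|_{H^{t+|\beta|}(G)}, \quad  |\beta| \in \N,  \ t \in \R,
\end{equation}
proved for integer $t$ by Folland~\cite{Folland} via iteration of the identity $\sum_i \|X_i f\|_{L^2}^2 = \|(-\Delta_G)^{1/2}f\|_{L^2}^2$, and extended to real $t$ by interpolation; it encodes the fact that each horizontal derivative costs exactly one unit of Sobolev regularity.

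First, I would establish the $L^\infty$ bound $\|\cX^\alpha u\|_{L^\infty(G)} \leq C_s \|u\|_{H^s(G)}$ for every $|\alpha|\leq k$. Setting $v = \cX^\alpha u$, the inversion formula~\eqref{inverseFourierH} and Cauchy-Schwarz with weight $(1+E_m)^{-\tau}$ yield
$$|v(x)|^2 \leq C \Bigl(\int_{\widehat{G}} |\cW(\hat x,x^{-1})|^2(1+E_m(\nu,\lambda))^{-\tau}\, d\hat x\Bigr) \|v\|_{H^\tau}^2,$$
and~\eqref{eq: usew} together with~\eqref{magic formula introduction} reduce the first factor to $\mathsf{C}_G \int_0^\infty r^{5/2}(1+r)^{-\tau}\,dr$, finite precisely when $\tau > Q/2 = 7/2$. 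Choosing $\tau = s-|\alpha| \geq s-k = Q/2+\rho > Q/2$ makes this integral finite and, invoking~\eqref{e:sob-aux}, yields the claim.

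For the Hölder seminorm, I would treat two regimes. When $d_G(x,y)\geq 1$, the $L^\infty$ bound already gives the desired control. When $d_G(x,y)\leq 1$ and $|\alpha|<k$, the sub-Riemannian mean value inequality~\eqref{eq:est1} applied to $\cX^\alpha u$, combined with the $L^\infty$ bound on its derivatives $X_i\cX^\alpha u = \cX^{(i,\alpha)}u$ (of order $\leq k$), even yields the Lipschitz estimate $|\cX^\alpha u(x)-\cX^\alpha u(y)|\leq C d_G(x,y)\|u\|_{H^s}$. The nontrivial case is $|\alpha|=k$ with $d_G(x,y)\leq 1$: I would decompose $v = \cX^\alpha u = v_{\ell,A}+v_{h,A}$ (with $\cF v_{\ell,A} = \cF v\cdot \mathbf{1}_{E_m\leq A^2}$, $A\geq 1$), and estimate via~\eqref{eq:est1} and Cauchy-Schwarz on the truncated inversion formula with weight $E_m^{-\sigma}$, $\sigma = s-k-1 = 5/2+\rho$, that
$$|X_iv_{\ell,A}(x)|^2 \leq C\Bigl(\int_0^{A^2} r^{5/2-\sigma}\,dr\Bigr)\|X_iv\|_{H^\sigma}^2 \leq CA^{2(1-\rho)}\|u\|_{H^s}^2,$$
and symmetrically, with weight $E_m^{-\tau}$, $\tau = s-k = 7/2+\rho$, that $\|v_{h,A}\|_{L^\infty}^2 \leq CA^{-2\rho}\|u\|_{H^s}^2$. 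Combining and optimizing with the choice $A = d_G(x,y)^{-1}$ will produce the Hölder bound $|v(x)-v(y)|\leq C d_G(x,y)^{\rho}\|u\|_{H^s}$.

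The main subtlety will be the coordinated choice of the Cauchy-Schwarz exponents $\sigma, \tau$: they must simultaneously render the weighted integrals on $\widehat G$ convergent with the correct power of $A$ via~\eqref{magic formula introduction}, and keep the Sobolev norms of horizontal derivatives of $u$ controlled by $\|u\|_{H^s}$ through~\eqref{e:sob-aux}. The hypothesis $s = Q/2 + k + \rho$ with $\rho\in(0,1)$ is precisely what makes the matching work, and the auxiliary estimate~\eqref{e:sob-aux}---equivalent to the $L^2$-boundedness of horizontal Riesz transforms $X_i(\mathrm{Id}-\Delta_G)^{-1/2}$ on $G$---is the essential Carnot-group fact that closes the argument despite the noncommutativity.
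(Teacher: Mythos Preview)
Your proof follows essentially the same approach as the paper's: high/low frequency decomposition~\eqref{eq:dec}, the mean value estimate~\eqref{eq:est1} on the low part, a direct $L^\infty$ bound via Cauchy--Schwarz and~\eqref{magic formula introduction} on the high part, and the optimal choice $A=d_G(x,y)^{-1}$. The paper only writes out the case $k=0$ (decomposing $u$ itself), while you supply the auxiliary estimate~\eqref{e:sob-aux} from~\cite{Folland} to reduce the general $|\alpha|\leq k$ case to the same core computation applied to $v=\cX^\alpha u$; this is a natural completion of the argument rather than a different route.
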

\begin{proof}
We prove the result only in the case when the integer part 
of~$s-Q/2$ is~$0$, namely the case when $s=\frac Q 2 +\rho$, with $0< \rho <1$. Using again Decomposition\refeq{eq:dec},   
we infer, according to \eqref{eq:est1},  that the low frequency part of~$u$ satisfies $$| u_{\ell,A}(x) - u_{\ell,A}(x')| \lesssim  {d}_G(x,y)\sup_{i\in \{1,2\} } \|X_i
 u_{\ell,A}\|_{L^{\infty}(G)}\,.
$$
In view of Formula\refeq{magic formula introduction}, one gets 
$$ \|X_i
 u_{\ell,A}\|_{L^{\infty}(G)} \leq \|u\|_{\dot H^{s}(G)} \Big(\sum_{m\in \N }  \int_{E_m(\lambda, \nu) \leq A^2} E_m(\nu, \lam)^{-s+1}  d\nu  d\lam\Big)^{\frac 1 2}  \lesssim A^{1- (s-\frac Q 2)} \|u\|_{\dot H^{s}(G)}  ,$$
which implies that
$$| u_{\ell,A}(x) - u_{\ell,A}(x')| \lesssim   {d}_G(x,y)A^{1- (s-\frac Q 2)} \|u\|_{\dot H^{s}(G)}.$$
Along the same lines, we obtain 
$$\|u_{h, A}\|_{L^\infty(G)}\leq \|u\|_{H^s(G)} \Big(\!\sum_{m\in \N } \! \int_{E_m(\nu, \lam) \geq A^2}     E_m(\nu, \lam)^{-s}    d\nu  d\lam\Big)^{\frac 1 2}  \lesssim A^{\frac Q 2-s}\|u\|_{H^s(G)}  \, .$$
Consequently
\begin{eqnarray*}
|u(x)-u(y)| 
& \lesssim &  \left({d}_G(x,y)A^{1- \rho} +A^{-\rho}\right)\|u\|_{\dot H^{s}(G)}\, .  \end{eqnarray*}
Choosing~$A={d}_G(x,y)^{-1}$, we conclude the proof of the result.
\end{proof}

 \subsection{Bernstein inequalities}\label{Bernstein}

 Similarly to the Euclidean case, Formula\refeq{actionsubf} allows to give a definition of a function whose Fourier transform is compactly supported, in the following way.  
\begin{definition} \label{freqlocgen}
 {\sl We say that a   function~$u$ in $L^2(G)$ is frequency localized in a ball~${\mathcal B}_\Lambda$ centered at~$0$  of radius~$\Lambda>0$ if
 $$
 \mbox{Supp} \,  \cF (u)
 \subset \Big\{\wh x=(n,m, \nu, \lam) \in \widehat{G}
 \, / \, E_m(\nu, \lam) \leq \Lambda^2 
 \Big\} \, .$$
Similarly we say that a   function~$u$ on~$G$ is frequency localized in a ring~${\mathcal C}_\Lambda$ centered at 0  of  small radius~$\Lambda/2$ and large radius~$\Lambda$ if $$
 \mbox{Supp} \,  \cF (u)
 \subset \Big\{\wh x=(n,m, \nu, \lam) \in \widehat{G}
 \, / \,  \frac{\Lambda^2 }4
\leq E_m(\nu, \lam) \leq \Lambda^2 
 \Big\} \, .$$
 }
\end{definition}
 \begin{remark}
{\sl Equivalently, $u$ in $L^2(G)$ is frequency localized in~${\mathcal B}_\Lambda$  if there exists a  function~$\psi$ in $\cD(\R)$   supported in~${\mathcal B}_1$, valued in the interval~$[0, 1]$   and equal to~$1$ near~$0$ such that 
 for any~$\wh x=(n,m, \lambda, \nu)$ in~$\widehat{G}$,
\begin{equation}\label{freqlocballFourier}
 \cF (u)(n,m, \nu, \lam) =  \cF (u)(n,m, \nu, \lam)\,\psi (\Lam^{-2} E_m(\nu, \lam))  \, .
\end{equation}
Similarly~$u$  is frequency localized in~${\mathcal C}_\Lambda$   if there exists a     function~$\phi$ in~$\cD(\R)\setminus\{0\}$  valued in the interval~$[0, 1]$ and supported in~${\mathcal C}_1$   such that
 for any $\wh x=(n,m, \nu, \lam)$ in $\widehat{G}$,
\begin{equation}\label{freqlocringFourier}
 \cF (u)(n,m, \nu, \lam)=  \cF (u)(n,m, \nu, \lam)\,\phi (\Lam^{-2} E_m(\nu, \lam))  \, .
\end{equation}
}
\end{remark}

This definition allows classically to recover equivalent definitions of Sobolev and H\"older spaces via the well-known Littlewood-Paley decomposition, and to define generalizations of those spaces known as Besov spaces; these turn out to be very important tools, namely to refine    Sobolev inequalities, and to study nonlinear PDEs. For an introduction to this topic, we refer the reader for instance to\ccite{bahouri lp}. We shall not pursue further this line of investigation here, but only prove   the following proposition, known as the Bernstein inequalities. The proof of this result is inspired by the corresponding result on the Heisenberg group in the monograph of Bahouri-Chemin-Danchin\ccite{bcdbookh} -- we refer also to~\cite{bahouri gallagher} for the easier case~(\ref{eq:lech1}).
 \begin{proposition}
\label  {bernstein}
{\sl  With the above notation,  
\begin{itemize}
\item if~$u$  is frequency
localized in~${\mathcal B}_\Lambda$, 
  then for all $1 \leq p \leq q \leq \infty$,      $k\in\N$ and $\beta\in\N^{2}$ with~$|\beta|=k,$  there exists a constant~$C_k$ depending only on~$k$ such that
\begin{equation}
\label{eq:lech1}  \norm{\cX^\beta u}{L^q(G)}\leq C_k \Lam^{k+Q (\frac{1}{p}-\frac{1}{q}) } \norm
u{L^p(G)} \, .
\end{equation}
\item if~$u$  is frequency
localized in~$\cC_\Lambda$,  then for all $ p \geq 1$,   $k\in\N$ and $\beta\in\N^{2}$ with $|\beta|=k,$  there exists a constant~$C_k$ depending only on~$k$ such that
 \begin{equation}
\label{eq:lech3} 
  \norm u {L^p(G)} \leq C_k \Lambda^{-k} \sup_{|\beta|=k}
 \norm{\cX^\beta u}{L^p(G)} \, .
\end{equation}
 \end{itemize}}
\end {proposition}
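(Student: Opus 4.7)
The plan is to prove both Bernstein inequalities by representing $u$ as a convolution with a kernel built from functional calculus of $-\Delta_G$, and then applying Young's inequality~\eqref{young} together with the homogeneity of the kernel under the Engel dilations $\delta_\Lambda$. The basic scaling ingredient is the following: for any admissible multiplier $F: \R_+\to\C$, setting $F_\Lambda(r) \eqdefa F(\Lambda^{-2}r)$, the identity $\cF(v\circ\delta_r) = r^{-Q}\cF(v)\circ\wh\delta_{r^{-1}}$ of Remark~\ref{rmkerh} together with $E_m\circ\wh\delta_{r^{-1}} = r^{-2}E_m$ and~\eqref{kernel} yields $k_{F_\Lambda} = \Lambda^Q k_F\circ\delta_\Lambda$. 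Moreover the left-invariant $X_j$ and the right-invariant $\tilde X_j$ of Remark~\ref{deftildeX} are all homogeneous of degree $1$ under $\delta_\Lambda$, so that $\mathcal{X}^\beta(f\circ\delta_\Lambda) = \Lambda^{|\beta|}(\mathcal{X}^\beta f)\circ\delta_\Lambda$, and hence
$$\|\mathcal{X}^\beta k_{F_\Lambda}\|_{L^r(G)} = \Lambda^{|\beta| + Q/r'}\,\|\mathcal{X}^\beta k_F\|_{L^r(G)}, \qquad r\in[1,\infty],$$
and likewise with $\tilde X_j$ in place of $X_j$.

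For~\eqref{eq:lech1} I would pick $\psi\in\cD(\R)$ equal to $1$ on $[-1,1]$, so that~\eqref{freqlocballFourier} gives $u = u\star k_{\psi_\Lambda}$, whence $\mathcal{X}^\beta u = u\star\mathcal{X}^\beta k_{\psi_\Lambda}$ by~\eqref{convderv}. Since $\psi\in\cD(\R)\subset\cS(\R_+)$, the Folland--Hulanicki result recalled just before Theorem~\ref{t:kernel} ensures $k_\psi\in\cS(G)$, so $\|\mathcal{X}^\beta k_\psi\|_{L^r(G)}<\infty$ for every $r\in[1,\infty]$. Applying Young's inequality~\eqref{young} with $\tfrac{1}{r} = 1+\tfrac{1}{q}-\tfrac{1}{p}$ (so that $\tfrac{1}{r'} = \tfrac{1}{p}-\tfrac{1}{q}$) together with the scaling above yields exactly~\eqref{eq:lech1}.

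For~\eqref{eq:lech3} I would choose $\tilde\phi\in\cD(\R^*)$ equal to $1$ on $[1/4,1]$ and supported in $[1/8,2]$, so that~\eqref{freqlocringFourier} gives $u = \tilde\phi(\Lambda^{-2}(-\Delta_G))u$. Setting $k''\eqdefa\lceil k/2\rceil$ and $F_{k''}(r)\eqdefa r^{-k''}\tilde\phi(\Lambda^{-2}r)$, the identity $r^{k''}F_{k''}(r) = \tilde\phi(\Lambda^{-2}r)$ yields $u = ((-\Delta_G)^{k''}u)\star k_{F_{k''}}$, and the scaling principle provides $\|k_{F_{k''}}\|_{L^1(G)}\lesssim\Lambda^{-2k''}$ as well as $\|\tilde X_j k_{F_{k''}}\|_{L^1(G)}\lesssim\Lambda^{-(2k''-1)}$. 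Since $\Delta_G = X_1^2+X_2^2$, one has the exact expansion $(-\Delta_G)^{k''} = \sum_{|\alpha|=2k''}c_\alpha\mathcal{X}^\alpha$, and I would distinguish two cases. If $k=2k''$ is even, Young's inequality applied termwise to $u = \sum c_\alpha(\mathcal{X}^\alpha u)\star k_{F_{k''}}$ directly produces $\|u\|_{L^p}\lesssim\Lambda^{-k}\sup_{|\beta|=k}\|\mathcal{X}^\beta u\|_{L^p}$. If $k=2k''-1$ is odd, I would factor each $\mathcal{X}^\alpha = X_{j(\alpha)}\mathcal{X}^\beta$ with $|\beta|=k$ and transfer the leftmost derivative to the kernel side via the classical identity $(X_jf)\star g = f\star\tilde X_j g$ (obtained by integration by parts combined with the translation rule relating left- and right-invariant extensions of the same Lie-algebra element), rewriting $u = \sum c_\alpha(\mathcal{X}^\beta u)\star(\tilde X_{j(\alpha)}k_{F_{k''}})$; a final application of Young's inequality then yields~\eqref{eq:lech3}.

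The hard part will be the odd-$k$ case of~\eqref{eq:lech3}: the literal combinatorial expansion of $(-\Delta_G)^{k''}$ in horizontal left-invariant derivatives only produces monomials of length $2k''$, one too many with respect to the target length $k=2k''-1$. Resolving this hinges on the transfer identity $(X_j f)\star g = f\star\tilde X_j g$, together with the observation that the right-invariant $\tilde X_j$ shares the homogeneity~$1$ of $X_j$ under $\delta_\Lambda$, so that passing one derivative from $u$ onto the convolution kernel indeed costs exactly the missing factor $\Lambda^{-1}$; once this identity is in hand, the whole argument reduces to the clean kernel scaling recorded in the first paragraph.
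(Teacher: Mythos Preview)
Your proposal is correct, and for \eqref{eq:lech1} it is essentially identical to the paper's argument (convolve with a dilated Schwartz kernel coming from Hulanicki, differentiate via \eqref{convderv}, apply Young).

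For \eqref{eq:lech3} the overall ingredients coincide---scaling of the kernel, $k_F\in\cS(G)$ from Hulanicki, and the transfer identity $(X_ju)\star v=u\star\tilde X_jv$---but the organization differs. The paper first places the derivatives on the kernel side: using that the radial kernel $h_\phi^k$ is even, it writes $u=u\star(-\tilde\Delta_G)^k h_\phi^k$ with the \emph{right}-invariant sublaplacian, and then applies the transfer identity iteratively, peeling off one $\tilde X_j$ at a time, to obtain $u=\sum_{|\beta|=k}\cX^\beta u\star\Phi_{\beta,k}$ with $\Phi_{\beta,k}\in\cS(G)$; this treats all $k$ uniformly by induction, without any even/odd split. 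Your route instead lands the operator $(-\Delta_G)^{\lceil k/2\rceil}$ directly on $u$, expands it as a non-commutative polynomial in left-invariant monomials of length $2\lceil k/2\rceil$, and only when $k$ is odd invokes the transfer identity once to shed the superfluous derivative. Your approach avoids appealing to the evenness of the kernel (hence to $\Delta_G h_\phi=\tilde\Delta_G h_\phi$), at the cost of the case distinction; the paper's induction is tidier but relies on that symmetry.
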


  \begin{remark}
{\sl Spectral truncations are  convenient means of approximating functions.  Indeed  Proposition~{\rm\ref{bernstein}} shows  that for any~$u \in L^2(G)$ for instance, $ \psi(- \Lambda^{-2}\D_G) \, u $ belongs to~$ H^s(G)$ for any~$s \geq 0$, and, as a consequence of the Plancherel formula,~$ \psi(- \Lambda^{-2}\D_G) \, u $ converges to~$u$ in~$L^2(G)$.}
\end{remark}

\begin{proof}[Proof of Proposition~\rm\ref{bernstein}]
By density and to make sense of the next computations, we assume   that~$u $ belongs to~$ \cS(G)$. First,  we notice that~(\ref{freqlocballFourier}) and~(\ref{freqlocringFourier}) can be restated respectively as
$$
u  = \psi(- \Lambda^{-2}\D_G) \, u \quad \mbox{and} \quad
u  = \phi(- \Lambda^{-2}\D_G) \, u \, .
$$
In view of Hulanicki's result\ccite{hul},  there exist   functions~$h_\psi $ and $h_\phi $ in $\cS(G)$ such that, for all~$u $ in~$ \cS(G)$, there holds
\begin{equation}
\label{eq:hula*}  \psi(- \Lambda^{-2}\D_G) \, u = u \star \Lambda^{Q} (h_\psi \circ \d_{\Lambda}) \andf \phi(- \Lambda^{-2}\D_G) \, u = u \star \Lambda^{Q} (h_\phi \circ \d_{\Lambda})\,.
\end{equation}
Let us prove that the functions~$h_\phi$ and~$h_\psi$ are even,
 that is to say, for all $x \in G$, $$h_\psi(x)=h_\psi(x^{-1}) \andf h_\phi(x)=h_\phi(x^{-1})\, .$$ Since the analysis of $h_\phi$ is similar to that of $h_\psi$, we limit here ourselves to the case of $h_\psi$. By definition 
\begin{equation}
\label{eq:usef**}
 \cF (h_\psi)(n,m, \nu, \lam) = \psi (E_m(\nu, \lam))  \delta_{m,n}\, ,
\end{equation}which in view of the inversion Fourier formula\refeq{inverseFourierH} implies that $$h_\psi(x) = (2\pi)^{-3}  \sum_{m\in \N}  \int_{\R\times \R^*}
\cW\big((m,m, \nu, \lambda), x^{-1}\big) \psi (E_m(\nu, \lam))  \, d\nu d\lambda \,. $$  Also, $$h_\psi(x^{-1}) = (2\pi)^{-3}  \sum_{m\in \N}  \int_{\R\times \R^*}
\cW\big((m,m, \nu, \lambda), x\big) \psi (E_m(\nu, \lam))  \, d\nu d\lambda \,. $$But, in view of\eqref{relationeven}-\eqref{relationcWinverse},  we have
$$\cW\big((m,m, \nu, \lambda), x^{-1}\big)=\cW\big((m,m, \nu, -\lambda), x\big) \,, $$
which gives the result since $E_m(\nu, -\lam)=E_m(\nu, \lam)$.

Let us return to the proof of the proposition.  We observe that by scale invariance, it is enough to prove the proposition for~$\Lambda=1$. In order to establish\refeq{eq:lech1},  we first combine  Definition\refer{freqlocgen} with Identity\refeq{eq:hula*} which implies that  
$$ u= u \star h_\psi  \, ,$$
with $h_\psi \in \cS(G)$. 
Invoking\refeq{convderv}, we infer that 
$$\cX^\beta u=u \star  \cX^\beta h_\psi \, ,$$
which leads to the result, thanks to Young's inequalities\refeq{young}.

\medskip Let us turn to the case when~$u$  is frequency
localized in a unit ring: we use~(\ref{eq:hula*})  
again, and notice that
 $$ \D_G h_\phi=  \wt \D_G h_\phi \, .$$ 
 Moreover, since $h_\phi$ is frequency localized away from the origin,  for any integer $k$,  one has
\begin{equation}\label{eq:crucial}u=  u \star h_\phi = u \star (-\D_G)^k  h^k_\phi= u \star (- \wt \D_G)^k  h^k_\phi \, , \end{equation}
where $h^k_\phi$ is the even Schwartz class function defined by 
$$ \cF (h^k_\phi)(n,m, \nu, \lam)\eqdefa \big(E_m(\nu, \lam)\big)^{-k}\phi \big(E_m(\nu, \lam)\big)  \delta_{n, m}\, .
$$
We claim that for any $u, v  \in  \cS(G)$ and all~$i \in \{1,2\}$,   one has
\begin{equation}\label{eq:rightleftconv}  X_i u\star   v= u \star \wt X_i  v\, .\end{equation}
Indeed by definition,  one has 
$$
 (X_1 u) \star   v(x)  =  \int_{G} (\partial_{y_{1}} u) (y) v( y^{-1} \cdot x)\, dy = - \int_{G}  u (y) \partial_{y_{1}}\big( v( y^{-1} \cdot x)\big)\, dy 
$$
by integration by parts, and since $$
 y^{-1} \cdot x =  \big(- y_1+x_1,- y_2+x_2, -y_3+x_3-y_1(-y_2+x_2), -y_4 +x_4-y_1(-y_3+x_3)+\frac 1 2 y_1^2 (-y_2+x_2)\big) \, , 
$$
and by Remark~\ref{deftildeX}
$$
\tilde X_1 = \partial_{x_{1}}+x_{2}\partial_{x_{3}}+x_{3}\partial_{x_{4}}
$$
then we obtain  
$$     (X_1 u) \star   v(x)   =   \int_{G}  u (y) (\wt X_1v) ( y^{-1} \cdot x)\, dy$$
whence~(\ref{eq:rightleftconv}) for~$i=1$. 
Along the same lines, since 
$$
(X_2 u) \star   v(x)  =  \int_{G} \left(\left(\partial_{y_{2}}+y_{1}\partial_{y_{3}}+\frac{y_{1}^{2}}{2}\partial_{y_{4}} \right)u\right) (y) v( y^{-1} \cdot x)\, dy \, , 
$$
performing an integration by parts, we get 
$$     (X_2 u) \star   v(x)   =   \int_{G}  u (y) (\wt X_2 v) ( y^{-1} \cdot x)\, dy \, , $$
from which~(\ref{eq:rightleftconv}) also follows  for~$i=2$.  
Then invoking\refeq{eq:crucial},  
we deduce that  
$$u=  u \star (- \wt \D_G)^k  h^k_\phi= \sum^2_{i=1} X_i u \star \wt X_i (- \wt \D_G)^{k-1}  h^k_\phi \, . $$
 By induction, we obtain  
$$u=  \sum_{|\beta|=k} \cX^\beta u \star \Phi_{\beta, k}\, ,$$
for some functions $\Phi_{\beta, k}$ in $\cS(G)$. 
This completes the proof of the proposition thanks to Young's inequalities\refeq{young}.
\end{proof}

\subsection{Application to the heat equation}\label{heat}

 The heat kernel of the sub-Riemannian Laplacian has been object of several investigations in the last decades, both from the analytic and geometric viewpoints. We refer the reader to~\cite{BBN12,BJ,bGR,SM} and references therein for a complete discussion. 
In this paragraph we show the efficiency of Formula~\eqref{magic formula introduction} by analyzing the heat kernel on the Engel group. 

 It is well-known that this kernel  is a Schwartz class function; see for instance\ccite{DVH}  and the references therein.  Here we   show in an elementary way, thanks to~\eqref{magic formula introduction}, that it belongs to~$\ds  H^s(G)$ for any~$s \geq 0$.
 As already mentioned in the introduction,  the Fourier transform $U$ given by Theorem\refer{t:key} allows to compute explicitly the solutions of evolution equations associated with~$-\D_G$. For instance, if we consider the   heat equation on $G$ $$
(H_G)\qquad \left\{
\begin{array}{rcl}
\ds \partial_t u -  \D_G u & =  &0\\
{ u}{}_{|t=0} &= & u_0\, ,
\end{array}
\right.
$$   applying the Engel Fourier transform and  taking advantage of the identities\refeq{actionsubf}, then integrating in time the resulting ODE, we deduce that, for  all $\wh x=(n,m, \nu, \lam)$ in $\widehat{G}$, 
 $$\cF(u(t)) (n, m, \nu, \lam)= e^{-t{E}_m(\nu, \lam) }\cF(u_0) (n, m, \nu, \lam)\, .$$  
Invoking the Fourier inversion formula \eqref{inverseFourierH} along with the convolution identity\refeq{newFourierconvoleq1}, we infer  that 
\beq 
\label{defhkernel1} u(t,\cdot)= u_0 \star  h_t\with \cF(h_t) (n, m, \nu, \lam)=e^{-t{E}_m(\nu, \lam)}\delta_{n,m} \,. \eeq
Then according to the scaling property\refeq{eq: scaledil}, the heat kernel $h_t$ is given,  for all~$t>0$,  by
\beq 
\label{defhkernel}
h_t = \frac 1 {t^{\frac Q 2}} (h \circ \delta_{\frac 1 t } )\with h(x)= (2\pi)^{-3}  \int_{\hat G} 
  e^{- {E}_m(\nu, \lam)}  \cW\big((n, m, \nu, \lam), x^{-1}\big) \delta_{m,n} d \hat x \,.
 \eeq
 
Thanks to Formula\refeq{magic formula introduction}, we deduce that the heat kernel   on $G$ belongs to~$\ds \cap_s H^s(G)$. 
Indeed combining\refeq{SobinH} together with\refeq{defhkernel},  we infer that  for all $s \in \R$, 
$$\|h\|^2_{H^s(G)}=  (2\pi)^{-3}   \sum_{m\in \N} \int_{\R\times\R^*} F_s({E}_m(\nu, \lam)) d\nu d\lam \,,$$
where  $F_s(r)\eqdefa(1+r)^{s} e^{- 2 r}$   which  ensures the result.

 \section{Metric structure on the   frequency set $\wh G$}\label{topofrequency}
The aim of this paragraph is to endow the frequency set $\wh G=\N^{2}\times \R\times \R^*$ with a distance.  To do so, we   have to keep in mind that, as in the Euclidean setting, we expect the Fourier transform to transform the regularity of functions on~$G$ into decay of the Fourier transform on~$\wh G$. So   first let us start by observing that in view of the relations\refeq{eq: 34}, \eqref{eq: nu} and\refeq{actionsubf}, one has 
  \begin{eqnarray*}
\cF(- \D_G  u) (n,m, \nu, \lambda) & =  & E_m(\nu, \lambda)\cF(u) (n,m, \nu, \lambda) \\  
\cF(-\wt \D_G u)(n,m, \nu, \lambda) & =  & E_n(\nu, \lambda)\cF(u) (n,m, \nu, \lambda) \\  \cF(X_4 u) (n,m, \nu, \lambda) &= & - i \lambda \cF(u) (n,m, \nu, \lambda)  \\   \cF\big(\big(X_{4}X_{2}-\frac12 X_{3}^{2}\big)u\big) (n,m, \nu, \lambda) &= &   \nu \cF(u) (n,m, \nu, \lambda)\, .  
  \end{eqnarray*}
Our aim now is to endow~$\wh G$ with  a  distance $\wh d$ in accordance with the above relations and which is moreover homogeneous of degree one with respect to the dilation  $\wh \d_a$ defined by\refeq{eq: dilationhat}.
     This motivates our definition of the distance $\wh d$ between two   elements $\wh x=(n, m, \nu, \lambda)$ and~$\wh x'=(n', m', {\nu}', {\lambda}')$ of the set $\wh G$ as follows: 
\begin{equation} \label{def:dist} \begin{aligned}
\wh d(\wh x,\wh x') & \eqdefa \big|E_m(\nu, \lambda) -E_{m'}({\nu}', {\lambda'})\big|^{\frac 1 2} +\big |(E_m-E_n)(\nu, \lambda)-(E_{m'}-E_{n'})({\nu}', {\lambda'})\big|^{\frac 1 2}  \\ & \quad +|\nu-{\nu}'|^{\frac 1 4} +|\lam-{\lambda}'|^{\frac 1 3}.
\end{aligned} \end{equation}
To check that $\wh d$ is a distance on~$\wh G$, the main point consists in proving that $$\wh d(\wh x,\wh x')= 0 \Rightarrow \wh x=\wh x' \, .$$  In view of\refeq{def:dist}, this amounts  
to showing that if for some integers $k, k'$, we have    $E_k(\nu, \lambda)=E_{k'}(\nu, \lambda)$, then $k=k'$. This  follows from the first item of Proposition\refer{p:def-E-psi} which asserts that the energy levels do not intersect. 
Now the fact that $\wh d$ is homogeneous of degree one with respect to the dilation~$\wh \d_a$, namely  for all $a>0$  $$\wh d(\wh \d_a \wh x, \wh \d_a\wh x') =a \wh d(\wh x,\wh x')\, ,$$   follows  from  the scaling property $E_k(\nu,\lambda) = |\lambda|^{2/3}\mathsf{E}_k\Big(\frac{\nu}{|\lambda|^{4/3}}\Big)$ (see\refeq{eq: relnumu}).

\medskip Since $\lambda$ belongs to $\R^*$, the set $(\wh G, \wh d)$ is not complete. Its  completion is described by  the following proposition.  
\begin{proposition}
\label {completionHtilde}
{\sl 
The completion of the set~$\wh G$  for the distance~$\wh d$ is the set  
$$
\wh G
 \cup \wh G_0 \with  \wh G_0 \eqdefa  \big( \R_+\times \R \times \{0_{\R^2}\} \big) \cup \big\{((2m+1)  \sqrt{2 \nu}, 2(m-n)\sqrt{2 \nu}, \nu, 0), \, \, (n,m,   \nu)\in \N^2 \times \R^*_+\big\} .$$
}
\end{proposition}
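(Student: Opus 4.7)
The plan is to realize $\wh d$ as a pullback of a complete metric on $\R^4$. Consider the injection
\begin{equation*}
\iota: \wh G \to \R_+\times \R \times \R \times \R^*, \qquad \iota(n,m,\nu,\lambda) \eqdefa \big(E_m(\nu,\lambda),\, E_m(\nu,\lambda)-E_n(\nu,\lambda),\, \nu,\, \lambda\big),
\end{equation*}
and endow $\R^4$ with the snowflake product metric $\rho(v,v') = |v_1-v_1'|^{1/2} + |v_2-v_2'|^{1/2} + |v_3-v_3'|^{1/4} + |v_4-v_4'|^{1/3}$, so that $\wh d = \iota^*\rho$. Injectivity of $\iota$ follows from the strict ordering $E_0(\nu,\lambda)<E_1(\nu,\lambda)<\cdots$ in Proposition~\ref{p:def-E-psi}. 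Since $\rho$ induces the usual topology on $\R^4$, the completion of $(\wh G,\wh d)$ is isometric to the closure of $\iota(\wh G)$ inside $(\R^4,\rho)$, and it remains to identify this closure with $\iota(\wh G)\cup \wh G_0$.

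To prove the direct inclusion, pick a Cauchy sequence $(n_k,m_k,\nu_k,\lambda_k)$ whose $\iota$-image converges to $(a,b,\nu_\infty,\lambda_\infty)$. I split the discussion according to $\lambda_\infty$: (i) if $\lambda_\infty\neq 0$, continuity of each $E_m(\cdot,\cdot)$ on $\R\times\R^*$ together with the strict spectral ordering force $m_k,n_k$ to stabilize to some $m_\infty,n_\infty\in\N$, so the limit lies in $\iota(\wh G)$; (ii) if $\lambda_\infty=0$ and $\nu_\infty<0$, we have $\mu_k=\nu_k|\lambda_k|^{-4/3}\to -\infty$, and the lower bound~\eqref{eq:neqbeh} yields $E_{m_k}(\nu_k,\lambda_k)\geq |\lambda_k|^{2/3}\mu_k^2 = \nu_k^2/|\lambda_k|^2 \to +\infty$, contradicting convergence of the first coordinate and ruling this case out; (iii) if $\lambda_\infty=0$ and $\nu_\infty>0$, then $\mu_k\to +\infty$ and I place myself in the semiclassical framework of Section~\ref{poisson} with $h_k=\mu_k^{-3/2}$, writing $E_{m_k}(\nu_k,\lambda_k)=\nu_k^2 |\lambda_k|^{-2}\,\tilde E_{m_k}(h_k)$ where $\tilde E_{m_k}(h_k)$ is an eigenvalue of the operator~\eqref{e:def-semiclassic-2-well}. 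Boundedness of $E_{m_k}(\nu_k,\lambda_k)$ forces $\tilde E_{m_k}(h_k)=O(|\lambda_k|^2/\nu_k^2)=O(h_k)$, so we fall in the low-lying part of the spectrum described by Lemma~\ref{l:helffer-sjostrand}. That lemma then gives $\tilde E_{m_k}(h_k) = (2\lfloor m_k/2\rfloor+1)\sqrt{2}\,h_k + O(h_k^2)$, whence
\begin{equation*}
E_{m_k}(\nu_k,\lambda_k) = (2\lfloor m_k/2\rfloor+1)\sqrt{2\nu_k}+ o(1).
\end{equation*}
Convergence to $a$ therefore forces $\lfloor m_k/2\rfloor\to m\in\N$ (with $a=(2m+1)\sqrt{2\nu_\infty}$) and, similarly, $\lfloor n_k/2\rfloor\to n\in\N$, yielding $b=2(m-n)\sqrt{2\nu_\infty}$; case (iv) when $\lambda_\infty=\nu_\infty=0$ only requires $a\in\R_+$ and $b\in\R$ since both $E_m$ and $E_m-E_n$ retain no a priori constraint beyond positivity of $E_m$.

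For the converse, I construct explicit sequences producing each limit point. For a target $((2m+1)\sqrt{2\nu_0},2(m-n)\sqrt{2\nu_0},\nu_0,0)$ with $\nu_0>0$, take $\nu_k\equiv \nu_0$, $\lambda_k=1/k$, $m_k=2m$, $n_k=2n$ and apply Lemma~\ref{l:helffer-sjostrand}. For a target $(a,b,0,0)$ with $a\in\R_+$, $b\in\R$, write $a-b=c\in\R$ (if $c\geq 0$), choose $\nu_k\to 0^+$ and $\lambda_k\to 0$ with $\mu_k\to+\infty$ (e.g.~$\nu_k=1/k$, $\lambda_k=k^{-2}$), then pick integers $m_k,n_k$ so that $(2\lfloor m_k/2\rfloor+1)\sqrt{2\nu_k}\to a$ and $(2\lfloor n_k/2\rfloor+1)\sqrt{2\nu_k}\to c$ (the case $c<0$ follows by swapping the roles of $m_k,n_k$). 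Lemma~\ref{l:helffer-sjostrand} again delivers the convergence of $E_{m_k}$ and $E_{n_k}$ to the prescribed values.

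The main obstacle is the precise identification of the limit values in case (iii): one must combine a rough upper bound on $\tilde E_{m_k}(h_k)$ with the delicate semiclassical low-lying asymptotics to rule out escape of the indices $m_k,n_k$ to infinity and to quantize the admissible limits as $(2m+1)\sqrt{2\nu_\infty}$. All other aspects (continuity in case (i), the blow-up bound in case (ii), and the constructions of the reverse direction) are comparatively direct once the right scaling $h=\mu^{-3/2}$ is adopted.
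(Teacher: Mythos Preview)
Your overall strategy coincides with the paper's: realize $\wh d$ as the pullback of a product metric on $\R^4$ via the map $\iota$, then identify the closure of $\iota(\wh G)$ by case analysis on $(\nu_\infty,\lambda_\infty)$. Your direct inclusion (cases (i)--(iv)) matches the paper essentially line for line, including the appeal to~\eqref{eq:neqbeh} in case~(ii) and to Lemma~\ref{l:helffer-sjostrand} in case~(iii).

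The gap lies in your converse construction for points of $\wh G_{0,0}$. To reach $(a,b,0,0)$ with $a>0$ you take $\nu_k=1/k$, $\lambda_k=k^{-2}$ and choose $m_k$ with $(2\lfloor m_k/2\rfloor+1)\sqrt{2\nu_k}\to a$; this forces $m_k\to\infty$. You then invoke Lemma~\ref{l:helffer-sjostrand}, but that lemma only describes the first $N_\beta+1$ eigenvalues (those with $\tilde E_m(h)<\beta h$ for a \emph{fixed} $\beta$), whereas in your construction $h_k^{-1}\tilde E_{m_k}(h_k)\sim a/\sqrt{\nu_k}\to\infty$. The harmonic-oscillator approximation you rely on is therefore not justified in this unbounded-index regime; salvaging it would require Lemma~\ref{l:helffer-robert} together with quantitative control of the linearization error of $\Phi^{-1}$ near~$0$, which you do not supply. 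Separately, your ``swap $m_k,n_k$'' step for $c<0$ does not hit the intended target: swapping sends the limit $(a,b)$ to $(a-b,-b)$, not back to $(a,b)$.

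The paper circumvents this difficulty with a different construction for $\wh G_{0,0}$: it fixes $\nu_p\equiv 0$ (hence $\mu_p=0$, no semiclassical regime at all), uses the Weyl asymptotic $\mathsf{E}_k(0)\sim(2\pi k/\Vol_1)^{4/3}$ from Lemma~\ref{l:asympt-k-grand}, and applies an elementary density lemma (Lemma~\ref{l:suites}: if $u_n\to\infty$ and $u_{n+1}/u_n\to1$ then $\{u_m/u_n:m,n\in\N\}$ is dense in $\R_+$) to produce sequences with $\mathsf{E}_{n_p}(0)/\mathsf{E}_{m_p}(0)\to \dot y/\dot x$, then tunes $\lambda_p$ so that $|\lambda_p|^{2/3}\mathsf{E}_{m_p}(0)=\dot x$ exactly.
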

\begin{proof}
We denote by $S$ the completion of the set~$\wh G$  for the distance~$\wh d$, that is to say the set of all limits of Cauchy sequences $(n_p,m_p, \nu_p, \lam_p)_{p\in \N}$ in~$(\widehat{G},\wh d)$, and our goal is to prove that 
\begin{align*}
&S = \wh G
 \cup \wh G_0, \quad \text{ with } \quad  \wh G_0 = \wh G_{0,0} \sqcup  \wh G_{0,1} ,  \\
&\wh G_{0,0} =  \R_+\times \R \times \{0_{\R^2}\} , \quad   \wh G_{0,1} = \big\{((2m+1)  \sqrt{2 \nu}, 2(m-n)\sqrt{2 \nu}, \nu, 0), \, \, (n,m,   \nu)\in \N^2 \times \R^*_+\big\}. 
\end{align*}
 
\medskip

We first prove that $S \subset \wh G \cup \wh G_0$. Let $(n_p,m_p, \nu_p, \lam_p)_{p\in \N}$ be a Cauchy sequence in~$(\widehat{G},\wh d)$. Then~$\suite \nu p \N$ and~$\suite \lam p \N$  are Cauchy sequences of real numbers, and thus they converge respectively to some $\nu$ and $\lambda$ in $\R.$ Moreover,  $E_{m_p}(\nu_p,\lambda_p), E_{n_p}(\nu_p,\lambda_p)$ are Cauchy sequences in $\R_+$ and thus converge in $\R_+$: there exist $\dot x \in \R_+$ and $\dot y \in \R_+$ such that
\begin{equation}
\label{e:conv-em-en}
E_{m_p}(\nu_p,\lambda_p) \stackrel{p\to\infty} \longrightarrow \dot x , \quad E_{n_p}(\nu_p,\lambda_p) \stackrel{p\to\infty}\longrightarrow \dot y .
\end{equation}
Recalling the scaling relation~\eqref{eq: relnumu}, this reads $ |\lambda_p|^{2/3}\mathsf{E}_{m_p}(\mu_p) \stackrel{p\to\infty} \longrightarrow \dot x$ and $ |\lambda_p|^{2/3}\mathsf{E}_{n_p}(\mu_p) \stackrel{p\to\infty} \longrightarrow \dot y$, with~$\mu_p =  \frac{\nu_p}{|\lambda_p|^{4/3}}\cdotp$
\medskip
If~$\lambda \neq 0$, we have $\mu_p \stackrel{p\to\infty}\longrightarrow \frac{\nu}{|\lambda|^{4/3}} \in \R$ and
 $|\lambda_p|^{2/3}\mathsf{E}_{m_p}(\mu_p) \stackrel{p\to\infty}\longrightarrow \dot x$, $|\lambda_p|^{2/3}\mathsf{E}_{n_p}(\mu_p) \stackrel{p\to\infty}\longrightarrow \dot y$. As a consequence $\mathsf{E}_{m_p}(\mu_p) \to \dot x|\lambda|^{-2/3}$,  $\mathsf{E}_{n_p}(\mu_p) \to \dot y |\lambda|^{-2/3}$ and, according to Item~\ref{eigenvalue-ordering} in Proposition~\ref{p:def-E-fimu}, we infer that the sequences $(m_p)_{p\in \N}$ and $(n_p)_{p\in \N}$ are constant after a certain index. Therefore, there exist~$m$ and $n$  in $\N$ such that
 $$\wh d((n_p,m_p, \nu_p, \lam_p), (n,m, \nu, \lam))\stackrel{p\to\infty}\longrightarrow 0\, .$$ 
Consequently  (in that case) the limit of the sequence~$(n_p,m_p, \nu_p, \lam_p)_{p\in \N}$ in $(\widehat{G}, \wh d)$  belongs to $\widehat{G}$. 
 
\medskip 
We now consider the case $\lambda =0$, that is to say $\lam_p \to 0$, and recall that $\nu_p \to \nu \in \R$.
If $\nu<0$, recalling  that by\refeq{eq:neqbeh}, for any~$k$ and any $\mu <0$, one has $\mathsf{E}_k(\mu) \geq |\mu|^2$, we deduce that 
$$
|\lambda_p|^{2/3}\mathsf{E}_{m_p}\big(\frac{\nu_p}{|\lambda_p|^{4/3}} \big)\geq \frac{\nu_p^2}{|\lambda_p|^2} \stackrel{p\to\infty}\longrightarrow  +\infty \, , \quad 
|\lambda_p|^{2/3}\mathsf{E}_{n_p}\big(\frac{\nu_p}{|\lambda_p|^{4/3}} \big)\geq \frac{\nu_p^2}{|\lambda_p|^2} \stackrel{p\to\infty}\longrightarrow  +\infty \, ,
$$
which contradicts~\eqref{e:conv-em-en}. In the case $\lambda=0$, we thus necessarily have $\nu \geq 0$ and we distinguish the two cases $\nu=0$ and $\nu >0$.
\medskip
(i) Firstly if $\nu=0$, then according to~\eqref{e:conv-em-en}, 
$$
\left( E_{m_p}(\nu_p,\lambda_p)  ,
E_{m_p}(\nu_p,\lambda_p)-E_{n_p}(\nu_p,\lambda_p) ,
\nu_p , 
\lambda_p \right) 
\stackrel{p\to\infty}\longrightarrow  (\dot x, \dot x -\dot y , 0, 0) \in \hat G_{0,0} \, .$$

\medskip
(ii) Secondly if $\nu >0$, then $\nu_p>0$, for $p$ large, and  according to the scaling relation~\eqref{eq: relnumu} and~\eqref{e:conv-em-en}, $$E_{m_p}(\nu_p,\lambda_p) = \nu_p^{1/2} \mu_p^{-1/2}\mathsf{E}_{m_p}(\mu_p) \stackrel{p\to\infty}\longrightarrow  \dot x ,$$ with $\mu_p = \frac{\nu_p}{|\lambda_p|^{4/3}} \to + \infty$. As a consequence, $\mu_p^{-1/2}\mathsf{E}_{m_p}(\mu_p) \to \nu^{-1/2}\dot x$.
Setting $h_p = \mu_p^{-3/2} \to 0$ and performing the change of scales~\eqref{e:Pmu-Ph}, it follows that $E_{m_p}(h_p) = \mu_p^{-2}\mathsf{E}_{m_p}(\mu_p)$ is an eigenvalue of the semiclassical Schr\"odinger operator in~\eqref{e:def-semiclassic-2-well}.
Since \begin{equation}
\label{e:interm-toto}
h_p^{-1} E_{m_p}(h_p)= \mu_p^{-1/2}\mathsf{E}_{m_p}(\mu_p) \stackrel{p\to\infty}\longrightarrow  \nu^{-1/2}\dot x \,,
\end{equation}
 there exists $\beta>0$ such that $h_p^{-1} E_{m_p}(h_p) \leq \beta$ for all $p \in \N$. 
Lemma~\ref{l:helffer-sjostrand} implies first the existence of $N_\beta >0$ such that $m_p \leq N_\beta$ for all $p \in \N$, and second that there exists $m\in \N$ with $m \leq N_\beta$ such that \begin{equation} \label{behash} h_p^{-1} E_{m_p}(h_p) \to \sqrt{2}(2m+1), \, \,  \mbox{as} \, \,  h \to 0^+.\end{equation} 
Combining\refeq{behash} and~\eqref{e:interm-toto} yields $\dot x= \sqrt{2\nu}(2m+1)$. 

The same method applies to the sequence~$E_{n_p}(\nu_p,\lambda_p)$ yielding existence of $n\in\N$ such that~$\dot y= \sqrt{2\nu}(2n+1)$, and we finally obtain 
$$
\left(E_{m_p}(\nu_p,\lambda_p),
E_{m_p}(\nu_p,\lambda_p)-E_{n_p}(\nu_p,\lambda_p) ,
\nu_p \, , 
\lambda_p \right) 
\stackrel{p\to\infty}\longrightarrow (\sqrt{2\nu}(2m+1), 2(m-n)\sqrt{2 \nu}, \nu , 0) \in \hat G_{0,1}  \,.$$
This concludes the proof of $S \subset \wh G \cup \wh G_{0,0}\cup \wh G_{0,1}$.

\bigskip
We now prove the converse statement, that is $S \supset \wh G \cup \wh G_{0,0}\cup \wh G_{0,1}$. 
If $(n,m, \nu,\lambda) \in \wh G$, that is to say with $\lambda \neq 0$, then the constant sequence  $(n,m, \nu,\lambda)$ converges to  $(n,m, \nu,\lambda)$ in $\wh G$.

If $(\dot x, \dot y, 0, 0) \in \wh G_{0,0}$, we claim that there exists a Cauchy sequence $(n_p,m_p, \nu_p,\lambda_p) \in \wh G$ such that  
\begin{equation}
\label{e:seq-to-to-to}
\lambda_p \to 0 , \quad \nu_p \to 0 , \quad E_{m_p}(\nu_p,\lambda_p) \to \dot x, \quad E_{n_p}(\nu_p,\lambda_p) \to \dot y \, .
\end{equation}
Indeed, if $\dot x = 0$ and $\dot y = 0$, then, choose $\nu_p=0$, $m_p=n_p = 1$ and any sequence $\lambda_p \to 0$. We then have  
$$
E_{m_p}(\nu_p,\lambda_p) = |\lambda_p|^{2/3}\mathsf{E}_1(0) \to 0 , \quad E_{n_p}(\nu_p,\lambda_p) = |\lambda_p|^{2/3}\mathsf{E}_1(0) \to 0  \,.
$$
Otherwise, either $\dot x \neq 0$ or $\dot y \neq 0$. Assume for instance $\dot x \neq 0$, and recall  that by virtue of  Lemma~\ref{l:asympt-k-grand} one has~$\mathsf{E}_{k}(0) \sim \left(\frac{2\pi}{\Vol_1} k \right)^{4/3}$ as $k\to+\infty$. Then  applying Lemma~\ref{l:suites} below  to the sequence~$u_k =\mathsf{E}_{k}(0)$, we  infer that there exist sequences $(m_p)_{p \in \N},(n_p)_{p \in \N} \in \N^\N$ such that $\ds \frac{\mathsf{E}_{n_p}(0)}{\mathsf{E}_{m_p}(0)} \to \frac{\dot y}{\dot x}$. Setting then $\nu_p=0$ and $\lambda_p = \left(\frac{\dot x}{\mathsf{E}_{m_p}(0)} \right)^{3/2}$, we have
$$
\begin{aligned}
&E_{m_p}(\nu_p,\lambda_p) = |\lambda_p|^{2/3}\mathsf{E}_{m_p}(0) = \dot x \, , \\
 &   E_{n_p}(\nu_p,\lambda_p) = |\lambda_p|^{2/3}\mathsf{E}_{n_p}(0)  = |\lambda_p|^{2/3}\mathsf{E}_{m_p}(0) \frac{\mathsf{E}_{n_p}(0) }{\mathsf{E}_{m_p}(0)} \stackrel{p\to\infty}\longrightarrow \dot x \frac{\dot y}{\dot x} = \dot y \, ,
\end{aligned}
$$
 which proves the existence of a Cauchy sequence satisfying~\eqref{e:seq-to-to-to}, and thus $\wh G_{0,0} \subset S$.

Finally, let $(\sqrt{2\nu}(2m+1), 2(m-n)\sqrt{2 \nu}, \nu , 0) \in \hat G_{0,1}$, that is to say $\nu >0$ and $m,n\in \N$. First, choosing $\beta> 2\sqrt{2}(2\max\{m,n\}+1)$, Lemma~\ref{l:helffer-sjostrand} implies that 
$$
h^{-1} E_{2m}(h) \to \sqrt{2}(2m+1) , \quad h^{-1} E_{2n}(h) \to \sqrt{2}(2n+1) \,, \quad \text{ as } h \to 0 \, . $$
Second, we fix any sequence $\lambda_p \to 0$ and notice that $\mu_p \eqdefa \frac{\nu}{|\lambda_p|^{4/3}} \to +\infty$. 
As already noticed in~\eqref{e:interm-toto}, we have with $h_p\eqdefa \mu_p^{-3/2} \to 0$ 
$$E_{m}(\nu,\lambda_p) = \sqrt{\nu} \mu_p^{-1/2}\mathsf{E}_{m}(\mu_p) = \sqrt{\nu} h_p^{-1} E_{m}(h_p) \stackrel{p\to\infty}\longrightarrow   \sqrt{2\nu}(2m+1)  \,,$$
and similarly $E_{n}(\nu,\lambda_p) \to \sqrt{2\nu}(2n+1)$. This proves the existence of a Cauchy sequence satisfying 
$$
\lambda_p \to 0 , \quad \nu_p \to \nu , \quad E_{m_p}(\nu_p,\lambda_p) \to \sqrt{2\nu}(2m+1), \quad E_{n_p}(\nu_p,\lambda_p) \to\sqrt{2\nu}(2n+1) \,, 
$$
and thus $\wh G_{0,1} \subset S$. This concludes the proof of the proposition.
\end{proof}
\begin{lemma}
\label{l:suites}
{\sl Assume that $(u_n)_{n \in \N}$  in $(0,\infty)^\N$ is such that $u_n \to + \infty$ and $\frac{u_{n+1}}{u_n} \to 1$, as $n \to \infty$. Then, the set $\big\{\frac{u_{m}}{u_n}, \, \, (n,m)\in \N^2\big\}$ is dense in $\R_+$. }
\end{lemma}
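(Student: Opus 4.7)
The plan is to pass to logarithms and reduce the statement to a density result for differences of a positive, divergent sequence whose consecutive increments tend to $0$. Set $v_n \eqdefa \log u_n$, which is well-defined since $u_n>0$. The two hypotheses translate respectively into
\begin{equation*}
v_n \longrightarrow +\infty \andf v_{n+1}-v_n \longrightarrow 0 \quad \text{as }n\to\infty\, .
\end{equation*}
Since $x\mapsto e^x$ is a homeomorphism $\R\to \R_+^*$, it suffices to prove that the set $D\eqdefa\{v_m-v_n\, /\,(n,m)\in\N^2\}$ is dense in $\R$; the density of $\{u_m/u_n\}$ in $\R_+^*$ (hence in $\R_+$, as the ratios accumulate at $0$ thanks to $v_n\to+\infty$) will follow immediately.

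To prove density of $D$ in $\R$, I would first observe the obvious symmetry $D=-D$, so it is enough to approximate nonnegative reals. Fix $s\geq 0$ and $\e>0$. Using the second hypothesis above, choose $N\in\N$ so that $|v_{k+1}-v_k|<\e$ for every $k\geq N$. Set $n\eqdefa N$ and consider the sequence $w_k\eqdefa v_k-v_n$ for $k\geq n$: it satisfies $w_n=0$, $w_k\to+\infty$, and has consecutive jumps bounded by $\e$. Letting $m$ be the smallest index $\geq n$ with $w_m\geq s$ (which exists because $w_k\to+\infty$, and equals $n$ when $s=0$), one gets either $w_m=0=s$, or $m>n$ with $w_{m-1}<s\leq w_m$, whence
\begin{equation*}
0\leq w_m-s \leq w_m-w_{m-1} < \e \, ,
\end{equation*}
that is to say $|v_m-v_n-s|<\e$. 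For $s<0$, the same argument applied to $-s>0$ produces indices $(n',m')$ with $|v_{m'}-v_{n'}-(-s)|<\e$, and exchanging the roles of $n'$ and $m'$ gives $(n,m)=(m',n')\in\N^2$ satisfying $|v_m-v_n-s|<\e$. This establishes density of $D$ in $\R$, and hence the conclusion.

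There is no real obstacle here: the only point requiring a minute of care is to notice that the ``no large jumps'' property of $(v_n)$ forces the sequence to enter every $\e$-window on its way to $+\infty$. The argument never uses anything beyond the two stated hypotheses, and the same scheme in fact shows density of $\{v_m-v_n\, /\, m,n\geq K\}$ for every fixed $K$, should a strengthening be needed in the application.
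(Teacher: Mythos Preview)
Your proof is correct. The paper argues directly with the ratios: for a target $\ell>1$ it fixes $n_0$ so that $|u_{k+1}/u_k-1|\le\e/\ell$ for all $k\ge n_0$, takes $m_0$ to be the first index with $u_{m_0}>\ell u_{n_0}$, and then checks $\ell<u_{m_0}/u_{n_0}<(1+\e/\ell)\ell=\ell+\e$ via a one-step multiplicative estimate; the cases $\ell=0$, $\ell=1$ and $0<\ell<1$ are treated separately (the last by swapping $m$ and $n$). Your passage to $v_n=\log u_n$ linearizes the problem to ``a sequence diverging to $+\infty$ with increments tending to $0$ has differences dense in $\R$'', which eliminates the $\e/\ell$ rescaling and collapses the case analysis into a single argument plus the symmetry $D=-D$. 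The core mechanism is identical --- small steps together with divergence force the sequence through every $\e$-window on its way to $+\infty$ --- but your additive reformulation makes the bookkeeping cleaner and the structure more transparent.
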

\begin{proof} 
Our purpose is  to prove that   for all $\ell \geq 0$ and all $\epsilon > 0$, there exists $(n,m)\in \N^2$ such that~$\ds \big|\frac{u_{m}}{u_n}-\ell\big| \leq \epsilon$. We shall argue according to the value of $\ell$.

\smallskip
(i) The result is true when $\ell =0$ (respectively $\ell=1$) since by hypothesis  $\ds \frac{u_{0}}{u_n}$ tends to $0$ (respectively $\ds \frac{u_{n+1}}{u_n}$ tends to $1$), as $n$ goes to infinity. 

\smallskip
(ii) Assume now that $\ell >1$. Since the sequence $(u_n)_{n \in \N}$  is such that   $\frac{u_{n+1}}{u_n} \to 1$, there exists~$n_0 \in \N$ such that for all $n \geq n_0$, one has~$\ds \big|\frac{u_{n+1}}{u_n}-1\big| \leq \frac{\epsilon}{\ell}\cdotp$ 
Then using that $u_n \to + \infty$, one can define  
$$ m_0= \min_{m \geq n_0} \{u_{m} > \ell u_{n_0}\}$$ 
which clearly satisfies   $\ds \frac{u_{m_0-1}}{u_{n_0}} \leq \ell$. We deduce    that $$\ell  < \frac{u_{m_0}}{u_{n_0}}   < \big(1+ \frac{\epsilon}{\ell}\big) \frac{u_{m_0-1}}{u_{n_0}} < \big(1+ \frac{\epsilon}{\ell}\big) \ell=\ell+\epsilon\virgp$$
that is to say $\ds \big|\frac{u_{m_0}}{u_{n_0}}-\ell\big| \leq \epsilon$, which completes the proof of the claim in that case. 

\smallskip 
(iii) The case when $\ell<1$ can be dealt by inverting  $m$ and $n$ in the proof of the case $\ell >1$. 
\end{proof}

\medbreak

Invoking\refeq{actionsubf} together with\refeq{def:dist}, we readily gather  that, as in the Euclidean case,  
 the regularity of a function implies the decay of its Fourier transform   in~$(\widehat{G},\wh d)$.
 In the next statement we have used the  notation
 $$
  \|u\|_{N,\cS(G)}\eqdefa\sup_{x \in G} (1+d_G(x,0))^N\big |(\mbox{Id} - \Delta_G ^N)u(x)\big|
 $$
for the semi-norms on~$\cS(G)$.
\begin{proposition}
\label {decayFouriercSH}
{\sl
Denoting by $\wh 0$ the point in ~$\wh G_0$  corresponding to $(\dot x=0, \dot y=0, \nu=0, \lam=0)$, for any~$k$ in~$\N$, an integer~$N_k$ and a constant~$C_k$ exist  such that 
$$
(1+\wh d(\wh x,\wh 0))^k  |\cF(u)(\wh x)|  \leq  C_k  \|u\|_{N_k,\cS(G)}\, .
$$
}
\end{proposition}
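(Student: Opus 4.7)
The proof rests on the four Fourier identities displayed just before the statement, which convert multiplication by $E_m(\nu,\lambda)$, $E_n(\nu,\lambda)$, $\nu$ and $\lambda$ on the frequency side into the action of four specific left-invariant differential operators on $u$, together with the elementary $L^1\to L^\infty$ bound $|\cF(v)(\hat x)|\leq \|v\|_{L^1(G)}$ from \eqref{eq: useful}. The strategy is first to bound $(1+\wh d(\wh x,\wh 0))^k$ pointwise by a linear combination of integer powers of $E_m$, $E_n$, $|\nu|$ and $|\lambda|$, then to apply these Fourier identities, and finally to dominate the resulting $L^1$-norms by a single seminorm $\|u\|_{N_k, \cS(G)}$.

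For the first step, by \eqref{def:dist} we have $\wh d(\wh x,\wh 0) = E_m(\nu,\lambda)^{1/2} + |E_m - E_n|(\nu,\lambda)^{1/2} + |\nu|^{1/4} + |\lambda|^{1/3}$. Combining $(a+b+c+d)^k \lesssim a^k+b^k+c^k+d^k$, $|E_m - E_n|^\alpha \leq 2^\alpha (E_m^\alpha + E_n^\alpha)$, and $x^\alpha \leq 1 + x^{\lceil \alpha \rceil}$ for $x\geq 0$ produces an integer $M_k$ and a constant $C_k$ such that
$$(1+\wh d(\wh x, \wh 0))^k \leq C_k \big(1 + E_m(\nu,\lambda)^{M_k} + E_n(\nu,\lambda)^{M_k} + |\nu|^{M_k} + |\lambda|^{M_k}\big).$$
Iterating the Fourier identities above then rewrites the product of each term on the right-hand side with $\cF(u)(\hat x)$ as $\cF(Pu)(\hat x)$ for an appropriate left-invariant differential operator $P$ (namely $(-\D_G)^{M_k}$, $(-\wt\D_G)^{M_k}$, $(X_4 X_2 - \tfrac12 X_3^2)^{M_k}$, or $i^{M_k}X_4^{M_k}$); combining with $|\cF(Pu)(\hat x)|\leq \|Pu\|_{L^1(G)}$ yields
$$(1+\wh d(\wh x,\wh 0))^k |\cF(u)(\hat x)| \leq C_k \Big(\|u\|_{L^1} + \|\D_G^{M_k} u\|_{L^1} + \|\wt\D_G^{M_k} u\|_{L^1} + \|X_4^{M_k} u\|_{L^1} + \big\|\big(X_4X_2-\tfrac12 X_3^2\big)^{M_k}u\big\|_{L^1}\Big).$$

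The main technical point consists in bounding each of these $L^1$-norms by a single seminorm $\|u\|_{N_k, \cS(G)}$ involving only $\D_G$ and $d_G(\cdot, 0)$. Each of the operators $\D_G^{M_k}$, $\wt\D_G^{M_k}$, $X_4^{M_k}$ and $(X_4 X_2-\tfrac12 X_3^2)^{M_k}$ is a differential operator with polynomial coefficients on $\R^4$ and hence acts continuously on $\cS(G) = \cS(\R^4)$; together with the integrability $\int_G (1+d_G(x,0))^{-R}\,dx < \infty$ for $R > Q = 7$, this shows that each of the four $L^1$-norms above defines a continuous seminorm on the Fr\'echet space $\cS(G)$. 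It then remains to check that the family $\{\|\cdot\|_{N,\cS(G)}\}_{N\in\N}$ generates the topology of $\cS(\R^4)$, so that any continuous seminorm is dominated by some $\|\cdot\|_{N_k,\cS(G)}$: this is the key subelliptic step, which follows from the hypoellipticity of $\D_G$ (via H\"ormander's theorem, already used in the proof of Proposition~\ref{p:function-calculus-abstract}) combined with the fact that the Euclidean and sub-Riemannian polynomial weights on $G$ are comparable up to a fixed power. Choosing $N_k$ large enough to absorb all the seminorms appearing above then concludes the proof.
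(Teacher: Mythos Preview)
Your proof is correct and follows the same approach as the paper: use the four Fourier identities to convert the weights $E_m$, $E_n$, $\nu$, $\lambda$ into differential operators acting on $u$, apply the $L^1\to L^\infty$ bound on $\cF$, and then dominate the resulting $L^1$-norms by a Schwartz seminorm. The paper's proof is terser---it treats each of the four weights separately and simply asserts the final inequality $\|Pu\|_{L^1(G)}\leq C_k\|u\|_{N_k,\cS(G)}$ without comment---whereas you make explicit the pointwise bound on $(1+\wh d(\wh x,\wh 0))^k$ and supply a justification (via hypoellipticity of $\D_G$ and comparability of weights) for why the family $\{\|\cdot\|_{N,\cS(G)}\}_N$ generates the Schwartz topology, which is indeed what is needed to close the argument.
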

\begin{proof}
Taking advantage of\refeq{actionsubf}, we get that
$$
E^k_m(\nu, \lambda)  \cF(u)(\wh x)  = \cF\bigl( (-\D_G)^k u \bigr) (\wh x) \, .
$$
Hence, invoking\refeq{embtG}, we infer that 
$$\begin{aligned}
E^k_m(\nu, \lambda)  |\cF(u)(\wh x)| & \leq  \bigl\|(-\D_G)^k u\bigr\|_{L^1(G)}\\ & \leq  C_k \|u\|_{N_k,\cS(G)}.
\end{aligned}$$
Similarly, one has
$$
E^k_n(\nu, \lambda)  \cF(u)(\wh x)  = \cF\bigl((-\wt \D_G)^k u \bigr)(\wh x) \, ,
$$
which implies that 
$$
E^k_n(\nu, \lambda)   |\cF(u)(\wh x)|  \leq  C_k \|u\|_{N_k,\cS(G)}\, .
$$
Finally, using that $$\nu \cF(u)(\wh x) = \cF\big(\big(X_{4}X_{2}-\frac12 X_{3}^{2}\big)u\big) (\wh x) \andf i \lambda \cF(u)(\wh x)= -\cF (X_{4}u) (\wh x)\, ,$$ we end up with the result. 
\end{proof}
\medbreak

\appendix 
\section{Irreducible representations}\label{irrep} 
In this section we briefly summarize the Kirillov theory which permits to compute explicitly the irreducible unitary representations for nilpotent groups and in particular to recover those of the Engel group described in Section~\ref{defsta}. For a comprehensive description we refer the reader to \cite{corwingreenleaf,kirillov}.   
 See also~\cite[Section~2]{Helffer-Nourrigat} for another derivation.

   \subsection{Induced representations} Let $G$ be a nilpotent Lie group and $H$ be a subgroup.
 Given a representation $\mathcal{X}:H\to U(V)$ of $H$ onto the space $U(V)$ of unitary operators on a vector space~$V$ one can define an {\it induced representation} $\mathcal{R}:G\to U(W)$ on a Hilbert space $W$ which we now define. Consider functions $f:G\to V$ such that $\mathcal{X}(h) f=f\circ L_{h}$, where $L_{h}$ denotes the left translation, or 
 \begin{equation}\label{eq:picov} 
  f(hg)=\mathcal{X}(h)f(g) \, , \qquad h\in H \, , g\in G \, .
  \end{equation}
   Notice that for such a function, since $\mathcal{X}(h)$ is unitary, we have that $\|f(hg)\|$ is independent of $h$ and hence the norm of $\|f(Hg)\|$ is well-defined, where $Hg$ denotes the left coset of $g$ in $H\backslash G$. We also require that
  \begin{equation}\label{eq:picov2}
  \int_{H\backslash G} \|f(Hg)\|^{2}d\mu<\infty \, ,
  \end{equation}
  where $d\mu$ is an invariant measure on $H\backslash G$. This means that the function $f$ is in $L^{2}(H\backslash G,d\mu)$.
Then we set
$$W \eqdefa \{f:G\to V\mid f \text{\ satisfies\ \eqref{eq:picov}-\eqref{eq:picov2}} \}.$$
Finally one defines $\mathcal{R}:G\to U(W)$ as follows
   $$\mathcal{R}(g)f \eqdefa f\circ R_{g} \, ,\qquad \text{i.e., }(\mathcal{R}(g)f)(g')=f(g'g) \, ,$$
  where the $R_{g}$ is the right translation. One can check that $\mathcal{R}$ is unitary and strongly continuous.   
   
  \begin{remark} \label{r:a1} {\sl  In order to compute explicitly the induced representation one can use the following observation. Consider the natural projection $\pi: G\to H\backslash G$ of the group onto its quotient. Given any section\footnote{recall that a section is a map $s:H\backslash G\to G$ such that $\pi\circ s=\mathrm{id}_{H\backslash G}$.} $s:H\backslash G\to G$ we can consider its image $K\eqdefa s(H\backslash G)$ and write elements of $G$ as products $H\cdot K$. If $g'g=hk$, where $h\in H$ and $k\in K$ (both depending on $g'g$), we can write
 \begin{equation}\label{eq:me}
 (\mathcal{R}(g)f)(g')=f(g'g)=f(hk)=\mathcal{X}(h)f(k) \, .
 \end{equation}
 In what follows we apply this construction when $\mathcal{X}$ is a character of the group. Thus, in the induced representation, $\mathcal{X}$ represents the exponential part while the component $f(k)$ is a ``shift''.
The crucial step in the computations will be to solve the  equation
 \begin{equation}\label{eq:ggprimokh}
 g'g=h(g'g)k(g'g) \, .
 \end{equation}
Since $f$ satisfies \eqref{eq:picov},  
it is enough to solve \eqref{eq:ggprimokh} for $g'\in K$. (In a compact form, one has to solve~$K\cdot G=H\cdot K$.)}
   \end{remark}
  
  \subsection{Coadjoint orbits and Poisson structure} Given a Lie group $G$ and its Lie algebra $\mathfrak{g}$ one can consider the 
so-called {\it coadjoint action} for $g\in G$
  $$\mathrm{Ad}^{*}_{g}:\mathfrak{g}^{*}\to \mathfrak{g}^{*},\qquad \langle\mathrm{Ad}^{*}_{g}\eta,v\rangle\eqdefa \langle \eta,(\mathrm{Ad}_{g^{-1}})_{*}v\rangle \, ,$$
 where $\mathrm{Ad}_{g}$ is the usual adjoint map. Notice that $\mathrm{Ad}^{*}$ can be seen as an action of $G$ on $\mathfrak{g}^{*}$. Given~$\eta\in \mathfrak{g}^{*}$ the {\it coadjoint orbit} of $\eta$ is by definition the set
   $$\mathcal{O}_{\eta} \eqdefa \{\mathrm{Ad}^{*}_{g}\eta\mid g\in G\} \, .$$
The dual of the Lie algebra $\mathfrak{g}^{*}$ has the natural structure of Poisson manifold with the bracket
$$\{a,b\}(\eta) \eqdefa \langle \eta,[da,db]\rangle \, ,$$
where $a,b:\mathfrak{g}^{*}\to \R$ are smooth functions and $da,db$ are their differentials thought as elements of~$(\mathfrak{g}^{*})^{*}\simeq\mathfrak{g}$ (hence the Lie bracket $[da,db]$ is a well-defined element of $\mathfrak{g}$). Given a smooth function~$a:\mathfrak{g}^{*}\to \R$ we can define its {\it Poisson vector field} by setting for every smooth $b:\mathfrak{g}^{*}\to \R$
$$\vec a (b) \eqdefa \{a,b\} \, .$$
The computation of the coadjoint orbits can be done in a coordinate independent way using the Poisson structure. 
The set of all Poisson vector  at a point defines a distribution
$$D_{\eta} \eqdefa \{\vec a (\eta)\mid a\in C^{\infty}(\mathfrak{g}^{*})\},$$
which does not have in general  constant rank (notice indeed that we always have $D_{0}=\{0\}$). 
We can define also the {\it Poisson orbit} of $\eta\in  \mathfrak{g}^{*}$, in the sense of dynamical systems, as follows
$$\mathcal{O}^{P}_{\eta} \eqdefa \{e^{t_{1}\vec a_{1}}\circ \ldots \circ e^{t_{\ell}\vec a_{\ell}}(\eta)\mid \ell \in \N, t_{i}\in \R, a_{i}\in C^{\infty}(\mathfrak{g}^{*})\} \, .$$
Notice that both $\mathcal{O}^{P}_{\eta}$ and $\mathcal{O}_{\eta}$ are subsets of $\mathfrak{g}^{*}$ containing $\eta$.
\begin{proposition} [\cite{kirillov}] {\sl For every $\eta\in \mathfrak{g}^{*}$ we have the equality $\mathcal{O}^{P}_{\eta}=\mathcal{O}_{\eta}$. Each orbit is an even dimensional symplectic manifold.}
\end{proposition}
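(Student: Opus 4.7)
The plan is to establish the two assertions in turn, exploiting the fact that linear functions on $\mathfrak{g}^*$ provide a canonical family of Hamiltonians whose Poisson flows are precisely the one-parameter coadjoint actions. The key computational input is the formula $\vec{a}(\eta) = \mathrm{ad}^*_{da(\eta)}\eta$ for the Poisson vector field of an arbitrary smooth $a:\mathfrak{g}^*\to\R$, which follows directly from the definition $\{a,b\}(\eta) = \langle \eta,[da,db]\rangle$ by taking $b$ linear and reading off the action on $\eta$.

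For the identification $\mathcal{O}^P_\eta = \mathcal{O}_\eta$, I would first show $\mathcal{O}_\eta \subset \mathcal{O}^P_\eta$: for any $\xi \in \mathfrak{g}$, the linear function $a_\xi(\eta) \eqdefa \langle \eta,\xi\rangle$ has constant differential $\xi$, so the formula above gives $\vec{a}_\xi(\eta) = \mathrm{ad}^*_\xi \eta$, which is exactly the infinitesimal generator of $t\mapsto \mathrm{Ad}^*_{\exp(t\xi)}\eta$. Thus the flow of $\vec{a}_\xi$ coincides with the coadjoint action of $\exp(t\xi)$, and since $G$ is connected and nilpotent, exponentials generate $G$, so every point of $\mathcal{O}_\eta$ is reached by a finite composition of such Poisson flows. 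The reverse inclusion follows from the same formula: for \emph{any} smooth $a$, the vector $\vec{a}(\zeta) = \mathrm{ad}^*_{da(\zeta)}\zeta$ is tangent to the coadjoint orbit through $\zeta$, so the flow of $\vec{a}$ preserves coadjoint orbits, and by induction every point of the form $e^{t_1\vec{a}_1}\circ\cdots\circ e^{t_\ell\vec{a}_\ell}(\eta)$ belongs to $\mathcal{O}_\eta$.

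For the second assertion, I would identify $\mathcal{O}_\eta$ with the homogeneous space $G/G_\eta$, where $G_\eta$ is the coadjoint stabilizer of $\eta$, with Lie algebra $\mathfrak{g}_\eta = \{\xi \in \mathfrak{g} : \mathrm{ad}^*_\xi \eta = 0\}$. The tangent space $T_\eta \mathcal{O}_\eta$ is then naturally isomorphic to $\mathfrak{g}/\mathfrak{g}_\eta$ via $\xi + \mathfrak{g}_\eta \mapsto \mathrm{ad}^*_\xi \eta$. I would define the Kirillov--Kostant--Souriau 2-form by $\omega_\eta(\mathrm{ad}^*_\xi\eta, \mathrm{ad}^*_\zeta\eta) \eqdefa \langle \eta,[\xi,\zeta]\rangle$, check that it is well-defined (invariance modulo $\mathfrak{g}_\eta$ follows from the definition of $\mathfrak{g}_\eta$), skew-symmetric, and non-degenerate (a degenerate direction would give an element of $\mathfrak{g}_\eta$, contradiction), then transport it along the $G$-action to the full orbit. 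Non-degeneracy at every point forces $\dim \mathcal{O}_\eta$ to be even.

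The genuinely delicate step — and the expected main obstacle — is verifying closedness $d\omega = 0$. The plan is to evaluate $d\omega$ on the canonical frame of vector fields $\vec{a}_\xi, \vec{a}_\zeta, \vec{a}_\chi$ associated to linear Hamiltonians: using $[\vec{a}_\xi,\vec{a}_\zeta] = \vec{a}_{[\xi,\zeta]}$ (which itself is a manifestation of the Jacobi identity for $\{\cdot,\cdot\}$), the Cartan formula for $d\omega$ reduces to the cyclic sum $\langle \eta,[\xi,[\zeta,\chi]]\rangle + \langle \eta,[\zeta,[\chi,\xi]]\rangle + \langle \eta,[\chi,[\xi,\zeta]]\rangle$, which vanishes by the Jacobi identity in $\mathfrak{g}$. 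This is the only nontrivial algebraic computation; everything else is a matter of carefully unpacking definitions.
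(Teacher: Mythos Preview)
The paper does not actually prove this proposition; it is stated as a classical result with a citation to Kirillov's monograph and no argument is given. Your proposal is the standard Kirillov--Kostant--Souriau proof and is correct: the identification of the Poisson flow of a linear Hamiltonian $a_\xi$ with the one-parameter coadjoint action $t\mapsto \mathrm{Ad}^*_{\exp(t\xi)}$ yields $\mathcal{O}_\eta\subset\mathcal{O}^P_\eta$, while the formula $\vec{a}(\zeta)=\mathrm{ad}^*_{da(\zeta)}\zeta$ shows every Poisson vector field is tangent to coadjoint orbits and hence $\mathcal{O}^P_\eta\subset\mathcal{O}_\eta$; the KKS two-form is then well-defined and nondegenerate on $\mathfrak{g}/\mathfrak{g}_\eta$, forcing even dimension, and closedness reduces to the Jacobi identity in $\mathfrak{g}$ via the Cartan formula and the bracket relation $[\vec{a}_\xi,\vec{a}_\zeta]=\vec{a}_{[\xi,\zeta]}$. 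There is nothing further to compare since the paper simply quotes the result.
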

\subsection{Computation of coadjoint orbits}\label{s:a3}
To compute explicitly coadjoint orbits on a nilpotent Lie group $G$ one can use the following method (cf.\ for instance \cite[Ch.~18]{ABB19}). Consider a basis of the Lie algebra $X_{1},\ldots,X_{n}$ such that
$$[X_{i},X_{j}]=\sum_{k=1}^{n}c_{ij}^{k}X_{k} \, ,$$
Thanks to the fact that the vector fields are left-invariant, the functions $c_{ij}^{k}$ are constant. Define the corresponding coordinates on the fibers of $T^{*}G$ given by
$h_{i}(p,x)=p\cdot X_{i}(x)$. Notice that $h_{i}$ are functions which are linear on fibers.
These functions, due to left-invariance, can be thought as smooth functions on $\mathfrak{g}^{*}$ and satisfy the relations
$$\{h_{i},h_{j}\}=\sum_{k=1}^{n}c_{ij}^{k}h_{k} \, .$$
We recall that a {\it Casimir} is a smooth function $f\in C^{\infty}(\mathfrak{g}^{*})$ such that 
$$\{a,f\}=0,\qquad \forall\, a\in  C^{\infty}(\mathfrak{g}^{*}) \, .$$
If we consider an arbitrary function $f\in C^{\infty}(\mathfrak{g}^{*})$ as a function of the coordinates just introduced~$f=f(h_{1},\ldots,h_{n})$, then $f$ is a Casimir if and only if 
$\{f,h_{j}\}=0$ for all $j=1,\ldots,n$, 
which means
$$\sum_{i=1}^{n}\frac{\partial f}{\partial h_{i}}c_{ij}^{k}=0,\qquad j,k=1,\ldots,n.$$
With similar computations, the Poisson vector field associated to a function $f$ is given by
\begin{equation}\label{eq:pvf}
\vec f=\sum_{i,j,k=1}^{n}\frac{\partial f}{ \partial h_{i}}c_{ij}^{k}h_{k}\frac{\partial}{\partial h_{j}} \, \cdotp
\end{equation}
We stress that the Poisson vector field associated to a Casimir is the zero vector field. Moreover, for coordinate functions $h_{1},\ldots,h_{n}$ we have
\begin{equation}\label{eq:pvf2}
\vec h_{i}=\sum_{j,k=1}^{n}c_{ij}^{k}h_{k}\frac{\partial}{\partial h_{j}} \, \cdotp
\end{equation}
Clearly, to compute Poisson orbits $\mathcal{O}^{P}_{\eta}$, it is sufficient to consider the flow of the vector fields from the family $\vec{h}_{1},\ldots,\vec{h}_{n}$.

 \subsection{Kirillov theory} The Kirillov theory gives a way to describe all  irreducible unitary representations of $G$ in terms of coadjoint orbits of the group. The Kirillov theorem can be described as the following three-steps algorithm:
 \begin{enumerate}
 \item Fix an element $\eta\in \mathfrak{g}^{*}$ and any maximal (with respect to inclusion) Lie subalgebra $\mathfrak{h}$ of $ \mathfrak{g}$ in such a way that $\eta([\mathfrak{h},\mathfrak{h}])=0$.
\item Consider the one-dimensional representation $\mathcal{X}_{\eta,\mathfrak{h}}:H\to S^{1}=U(\C)$ defined by
$$\mathcal{X}_{\eta,\mathfrak{h}}(e^{X})=e^{i\langle \eta, X\rangle},\qquad X\in \mathfrak{h} \, .$$
where as usual $\langle \eta, X\rangle$ denotes the duality product $\mathfrak{g}^{*}$ and $\mathfrak{g}$.
\item Compute the induced representation $\mathcal{R}_{\eta,\mathfrak{h}}:G\to U(W)$.
 \end{enumerate}
Notice that, due to the previous discussion, the space $W$ of functions $f:G\to V$ satisfying \eqref{eq:picov} and are in $L^{2}(H\backslash G)$ can be identified with $L^{2}(\R^{d})$ with $d=\dim \mathfrak{g}-\dim \mathfrak{h}$.

The Kirillov theorem states that the map which assigns to $\eta\in \mathfrak{g}^{*}/G$ to $\mathcal{R}_{\eta,\mathfrak{h}}$ in $\widehat G$ is a bijection. This is formalized in the following statement.
\begin{theorem} [\cite{kirillov}] {\sl We have the following properties:
\begin{itemize}
\item[(a)] every irreductible unitary representation of a nilpotent Lie group $G$ is of the form $\mathcal{R}_{\eta,\mathfrak{h}}$ for some $\eta\in \mathfrak{g}^{*}/G$ and $\mathfrak{h}$ maximal subalgebra of $\mathfrak{g}$ such that $\eta([\mathfrak{h},\mathfrak{h}])=0$,
\item[(b)] two representations $\mathcal{R}_{\eta,\mathfrak{h}}$ and $\mathcal{R}_{\eta',\mathfrak{h}'}$ are equivalent if and only if $\eta$ and $\eta'$ belong to the same coadjoint orbit.
\end{itemize}}
\end{theorem}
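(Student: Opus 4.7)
The plan is to prove the theorem by induction on $\dim G$, following Kirillov's orbit method. The base case when $G$ is abelian is immediate: the coadjoint action is trivial, so $\mathfrak{g}^{*}/G = \mathfrak{g}^{*}$, the only admissible polarization is $\mathfrak{h} = \mathfrak{g}$ itself, and the construction produces the characters $e^{i\langle \eta, \cdot\rangle}$, which exhaust the unitary dual by Pontryagin duality.

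For the inductive step giving part (a), I would start from an arbitrary irreducible unitary representation $\pi$ of $G$. Since $G$ is nilpotent, its center $Z$ is nontrivial, and Schur's lemma forces $\pi|_Z$ to act by a scalar character $e^{i\langle \eta_0, \cdot \rangle}$ for some $\eta_0 \in \mathfrak{z}^{*}$. Extend $\eta_0$ arbitrarily to $\eta \in \mathfrak{g}^{*}$ and consider the alternating form $B_\eta(X,Y) = \langle \eta, [X,Y]\rangle$ on $\mathfrak{g}$. Using this form one constructs a codimension-one ideal $\mathfrak{g}_0 \subset \mathfrak{g}$ subordinate to $\eta$ and containing the radical of $B_\eta$. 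Mackey's imprimitivity theorem applied to $G_0 = \exp \mathfrak{g}_0$ then realizes $\pi$ as an induced representation from $G_0$, and the inductive hypothesis applied to $G_0$ produces a polarization $\mathfrak{h} \subset \mathfrak{g}_0$; induction in stages finally identifies $\pi$ with $\mathcal{R}_{\eta, \mathfrak{h}}$.

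For part (b), I would first show independence of the polarization: if $\mathfrak{h}, \mathfrak{h}'$ are two maximal subalgebras subordinate to the same $\eta$, then $\mathcal{R}_{\eta,\mathfrak{h}} \simeq \mathcal{R}_{\eta,\mathfrak{h}'}$ via an intertwining operator built as a partial Fourier transform in the directions along which $\mathfrak{h}$ and $\mathfrak{h}'$ differ; this uses that both are Lagrangian for $B_\eta$ modulo its radical. Next, for $\eta' = \mathrm{Ad}^{*}_{g}\eta$, a direct computation shows that $\mathcal{R}_{\eta', g\mathfrak{h}g^{-1}} \simeq \mathcal{R}_{\eta, \mathfrak{h}}$ via conjugation by $g$. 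Combined with part (a), this yields a well-defined surjection from $\mathfrak{g}^{*}/G$ onto the unitary dual, and injectivity follows by showing that different coadjoint orbits produce inequivalent representations, for instance by distinguishing them through the central character of the universal enveloping algebra (whose generators, viewed as polynomial functions on $\mathfrak{g}^{*}$, are constant precisely on orbits).

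The hard part will be the polarization-independence step: constructing the intertwiner unitarily and verifying it genuinely intertwines requires a delicate secondary induction on the codimension $\dim \mathfrak{h} - \dim(\mathfrak{h} \cap \mathfrak{h}')$, reducing to the case where the two polarizations are in ``generic position.'' Once the Mackey machine and polarization framework are correctly set up, the remainder is essentially bookkeeping based on the symplectic geometry of the coadjoint orbit through $\eta$.
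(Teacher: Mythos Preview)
The paper does not supply its own proof of this theorem: it is stated with the citation \cite{kirillov} and used as a black box from Kirillov's orbit method, with no argument given in the text beyond recalling the definition of equivalence of representations. Your sketch is the standard outline of Kirillov's original proof (induction on $\dim G$, reduction to a codimension-one normal subgroup via the Mackey machine, induction in stages, independence of the polarization), so in that sense you are reproducing exactly what the cited reference contains rather than diverging from anything the paper does.
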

Here two irreductible unitary representations $R_{1}:G\to U(W_{1})$ and $R_{2}:G\to U(W_{2})$ are equivalent if there exists an isometry $T:W_{1}\to W_{2}$ between the corresponding Hilbert spaces  such that
$T\circ R_{1}(g)\circ T^{-1}=R_{2}(g)$ for every $g\in G$.

{\bf Notation.} In what follows we write $\mathcal{R}_{\eta}\eqdefa \mathcal{R}_{\eta,\mathfrak{h}}$ by removing the Lie algebra from the parameters to simplify the notation.

\subsection{The irreducible representations on the Engel group}
Recall that the Engel group is a nilpotent Lie group of dimension 4 with a basis of the Lie algebra satisfying
$$[X_{1},X_{2}]=X_{3},\qquad [X_{1},X_{3}]=X_{4} \, .$$
Following the discussion in Section~\ref{s:a3}, to find a basis of the Poisson vector fields it is enough to compute  $\vec h_{i}$ for every $i=1,2,\ldots,5$. Using formula \eqref{eq:pvf} we have that 
\begin{equation}\label{eq:pvfields}
\vec h_{1}=h_{3}\partial_{h_{2}}+h_{4}\partial_{h_{3}},\qquad \vec h_{2}=-h_{3}\partial_{h_{1}},\qquad \vec{h}_{3}=-h_{4}\partial_{h_{1}},\qquad \vec h_{4}=0 \, .
\end{equation}
Notice that $h_{4}$ is a Casimir since the corresponding vector field $X_{4}$ is in the center of the Lie algebra. The Lie algebra admits a second independent Casimir.
\begin{lemma} {\sl The function $f=\frac12 h_{3}^{2}-h_{2}h_{4}$ is a Casimir. In particular all coadjoint orbits are contained in the level sets $L_{\nu,\lambda}$ defined by
\begin{equation}\label{eq:orbite1}
\begin{cases}
h_{4}=\lambda, \\
\frac12 h_{3}^{2}-\lambda h_{2}=\nu \, .
\end{cases}
\end{equation} }
\end{lemma}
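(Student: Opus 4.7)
My plan is to check directly from the definition that both $h_4$ and $f = \tfrac12 h_3^2 - h_2 h_4$ are Casimir functions, by verifying that the Poisson vector field associated with each coordinate function $h_j$ annihilates $f$. Since the Casimir property $\{a,f\}=0$ for all $a \in C^\infty(\mathfrak g^*)$ is equivalent to $\{h_j, f\} = \vec{h}_j f = 0$ for $j=1,\dots,4$ (the coordinates generate $C^\infty(\mathfrak g^*)$ as an algebra, together with the Leibniz rule for the Poisson bracket), this reduces the claim to four short computations.

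Concretely, using the formulas for $\vec{h}_1,\vec{h}_2,\vec{h}_3,\vec{h}_4$ given in \eqref{eq:pvfields}, I would compute:
\begin{align*}
\vec{h}_1 f & = (h_3 \partial_{h_2} + h_4 \partial_{h_3})\Big(\tfrac12 h_3^2 - h_2 h_4\Big) = h_3(-h_4) + h_4 h_3 = 0, \\
\vec{h}_2 f & = -h_3 \partial_{h_1} f = 0, \qquad \vec{h}_3 f = -h_4 \partial_{h_1} f = 0, \qquad \vec{h}_4 f = 0,
\end{align*}
the last two vanishing because $f$ does not depend on $h_1$ and $\vec{h}_4 = 0$. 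This shows $f$ is a Casimir (and the analogous, even simpler verification handles $h_4$, which is already noted in the text to be a Casimir because $X_4$ lies in the center).

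For the consequence on coadjoint orbits, I would invoke the identification $\mathcal O_\eta = \mathcal O^P_\eta$ recalled just before Section~\ref{s:a3}: since $\mathcal O^P_\eta$ is by definition obtained by flowing under Poisson vector fields $\vec a$, and since any Casimir $f$ satisfies $\vec a(f) = \{a,f\} = 0$ for every smooth $a$, the value of $f$ is preserved along each such flow. Hence $f$ is constant on every coadjoint orbit, and similarly $h_4$ is constant along every coadjoint orbit. Fixing the two values $h_4 = \lambda$ and $f = \nu$ yields the level set \eqref{eq:orbite1}, which contains the orbit. The only step that requires a small comment is the equivalence between $\{a,f\}=0$ for all $a$ and $\{h_j,f\}=0$ for all $j$; this follows from the Leibniz rule for the Poisson bracket and the fact that the $h_j$ are linear coordinates on $\mathfrak g^*$. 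No real obstacle is expected here: the lemma is essentially a direct computation combined with the general principle that Casimirs are constant on symplectic leaves of the Lie--Poisson structure.
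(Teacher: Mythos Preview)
Your proposal is correct and essentially the same as the paper's proof. The paper computes $\{f,h_j\}$ directly via the Leibniz rule and the relations $\{h_i,h_j\}=\sum_k c_{ij}^k h_k$, observing that $\{f,h_j\}=0$ for $j=2,3,4$ since all nontrivial brackets involve $h_1$, and then checking $\{f,h_1\}=\{h_3,h_1\}h_3-\{h_2,h_1\}h_4=-h_4h_3+h_3h_4=0$; your computation of $\vec h_j f$ using the explicit formulas~\eqref{eq:pvfields} is the same calculation written on the dual side, since $\vec h_j f=\{h_j,f\}$.
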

\begin{proof}
This is a consequence of an explicit calculation. Indeed we have $\{f,h_{j}\}=0$ for $j= 2,3,4,$ since $\{h_{i},h_{j}\}(p,x)=p\cdot [X_{i},X_{j}](x)$ which vanishes identically if $i$ and $j$ are both different from $1$. Moreover 
$$\{f,h_{1}\}=\{h_{3},h_{1}\}h_{3}-\{h_{2},h_{1}\}h_{4}=-h_{4}h_{3}+h_{3}h_{4}=0 \, . $$
This proves the lemma.  
\end{proof}
Coadjoint orbits are given by the flow of the Poisson vector fields restricted to the level sets of the Casimirs. One gets the following description.
\begin{proposition} {\sl \label{p:poisson}
In coordinates $(h_{1},h_{2},h_{3},h_{4})$ on $\mathfrak{g}^{*}$, the coadjoint orbits are described as follows:
\begin{itemize}
\item[(i)] if $\lambda=\nu=0$, then every point $(h_{1},h_{2},0,0)$ is an orbit,
\item[(ii)] if $\lambda=0$ and $\nu\neq 0$, then orbits are planes $\{h_{3}=c\}$ for $c\in \R$, \item[(iii)] if $\lambda \neq 0$, then the orbit coincides with the set defined by the equations \eqref{eq:orbite1}.
\end{itemize}}
\end{proposition}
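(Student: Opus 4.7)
The plan is to combine the identification of Poisson orbits with coadjoint orbits (recalled just before the proposition) with an explicit analysis of the flows of the Poisson vector fields in \eqref{eq:pvfields}, exploiting the two Casimirs $h_4$ and $f = \tfrac12 h_3^2 - h_2 h_4$ to confine each orbit inside the level set $L_{\nu,\lambda}$ defined by \eqref{eq:orbite1}. In each of the three cases I will first estimate the size of $L_{\nu,\lambda}$ and then show, by flowing along suitable $\vec{h}_i$'s, that the orbit fills the appropriate component of it.

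For case (i), $\lambda=\nu=0$ forces $h_4=0$ and $h_3=0$ on the whole level set. Inspecting \eqref{eq:pvfields}, every vector field $\vec h_1,\vec h_2,\vec h_3,\vec h_4$ vanishes identically when $h_3=h_4=0$, so each point $(h_1,h_2,0,0)$ is fixed by the Poisson flow and hence forms a singleton orbit.

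For case (iii), $\lambda\neq 0$, the level set $L_{\nu,\lambda}$ is a smooth $2$-dimensional surface, globally parameterized by $(h_1,h_3)$ via $h_4=\lambda$ and $h_2=(h_3^2-2\nu)/(2\lambda)$. I will use $\vec h_3=-\lambda\,\partial_{h_1}$ to translate freely in the $h_1$ direction, and then $\vec h_1=h_3\,\partial_{h_2}+\lambda\,\partial_{h_3}$, whose $h_3$-component is the nonzero constant $\lambda$, to sweep across all values of $h_3$ (the induced motion in $h_2$ is automatic, as it must remain on the level set of $f$). Composing the two flows from any base point yields all of $L_{\nu,\lambda}$, so the orbit coincides with $L_{\nu,\lambda}$.

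For case (ii), $\lambda=0$ and $\nu\neq 0$, the Casimir equations force $h_4=0$ and $h_3^2=2\nu$, so necessarily $\nu>0$ and $h_3=c$ with $c=\pm\sqrt{2\nu}\in\R^*$; the $h_3$-coordinate is therefore frozen at a nonzero constant, whereas $h_1,h_2$ are unconstrained. From \eqref{eq:pvfields}, at such points $\vec h_3=\vec h_4=0$, while $\vec h_1=c\,\partial_{h_2}$ and $\vec h_2=-c\,\partial_{h_1}$ are nonzero and span the $(h_1,h_2)$-plane. Flowing along these two fields from any base point gives the full plane $\{h_3=c,\ h_4=0\}$, which is precisely the statement of (ii) (with the tacit convention that $c$ ranges over $\R^*$).

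The only mildly delicate point is the surjectivity claim in case (iii): I need the two flows generated by $\vec h_3$ and $\vec h_1$ to reach every $(h_1,h_3)\in\R^2$ from a given starting point. This is where the non-vanishing of $\lambda$ enters crucially, as it makes the $h_3$-component of $\vec h_1$ a nonzero constant and the $h_1$-component of $\vec h_3$ a nonzero constant, so the composition of their flows realizes an arbitrary translation in the $(h_1,h_3)$-parameterization of $L_{\nu,\lambda}$. No further technical obstacle is expected.
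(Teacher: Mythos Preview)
Your proof is correct and follows essentially the same approach as the paper: both use the Poisson vector fields \eqref{eq:pvfields}, confine orbits to the level sets $L_{\nu,\lambda}$ via the Casimirs, and then verify in each case whether the flows are trivial or fill out the level set. Your treatment of case~(iii) via the $(h_1,h_3)$-parameterization and the explicit flows of $\vec h_3$ and $\vec h_1$ is slightly more detailed than the paper's, and in case~(ii) you are in fact a bit more precise (noting $\nu>0$ and $c=\pm\sqrt{2\nu}$, hence $c\in\R^*$), but the underlying argument is the same.
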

\begin{proof} Case (i) is easy. By assumption $\lambda=\nu=0$, then $h_{3}=h_{4}=0$ by \eqref{eq:orbite1}. Hence coadjoint orbits are contained in the set $L_{0,0}=\{(h_{1},h_{2},0,0) \mid h_{1},h_{2}\in \R\}$ but since all Poisson vector fields vanish on this 2-dimensional set thanks to \eqref{eq:pvfields}, all points in $L_{0,0}$ are orbits.
 
 Case (ii) is similar. By assumption $\lambda=0$, $\nu \neq 0$, then $h_{4}=0$ and $h_{3}\neq 0$ by \eqref{eq:orbite1}. 
 Hence coadjoint orbits are contained in the set $L_{\nu,0}=\{(h_{1},h_{2},h_{3},0) \mid h_{1},h_{2}\in \R, h_{3}\neq 0\}$. When restricted to $L_{\nu,0}$ the only non zero Poisson vector fields are
\begin{equation}\label{eq:pvfields2}
\vec h_{1}=h_{3}\partial_{h_{2}},\qquad \vec h_{2}=-h_{3}\partial_{h_{1}} \, ,
\end{equation}
 so that if $h_{3} \neq 0$ orbits are planes $\{h_{3}=c\}$ for $c\in \R$.

Case (iii). Here $\lambda\neq 0$ hence each orbit is contained in the level set $L_{\nu,\lambda}$ defined by equations~$h_{4}=\lambda$ and $\frac12 h_{3}^{2}-\lambda h_{2}=\nu$ as in \eqref{eq:orbite1}. On the other hand the non zero vector fields \eqref{eq:pvfields} restricted to the level set have the form
\begin{equation}\label{eq:pvfields3}
\vec h_{1}=\nu\partial_{h_{2}}+\lambda \partial_{h_{3}},\qquad \vec h_{2}=-\nu\partial_{h_{1}},\qquad \vec{h}_{3}=-\lambda \partial_{h_{1}} \, ,
\end{equation}
Since $\lambda\neq 0$, it is not difficult to check that the orbit in this case   coincides with the level set itself. 
\end{proof}

Let us now compute all irreducible representations corresponding to the case (iii), i.e., $\lambda \neq 0$. In this case the orbit is the set $L_{\nu,\lambda}$ described by \eqref{eq:orbite1} and
on this set we fix the element~$\eta=(0,-\nu/\lambda,0,\lambda)$. Then we choose the subalgebra
$$\mathfrak{h}=\mathrm{span}\{X_{2},X_{3},X_{4}\},\qquad [\mathfrak{h},\mathfrak{h}]=0 \, .$$
which clearly satisfies $\eta([\mathfrak{h},\mathfrak{h}])=0$ and is maximal with respect to inclusion since $\eta$ is not zero. The corresponding 1-dim representation acts on $H=\exp(\mathfrak{h})$ as follows
$$\mathcal{X}_{\nu,\lambda}(e^{x_{2}X_{2}+x_{3}X_{3}+x_{4}X_{4}})=e^{i(-\nu x_{2}/\lambda +\lambda x_{4})} \, .$$
Let us write points on $G$ as follows
\begin{equation}\label{eq:coordg}
g=e^{x_{2}X_{2}+x_{3}X_{3}+x_{4}X_{4}}e^{x_{1}X_{1}}.
\end{equation}
Following the discussion in   Remark~\ref{r:a1}, we consider  the complement $K=\exp(\R X_{1})$ and we have to solve the equation $K\cdot G=H\cdot K$.  Thanks to Lemma~\ref{l:lemmacb} below (applied in the form~$e^{A}e^{B}=e^{C(A,B)}e^{A}$) we have the identity
\begin{align*}
e^{\theta X_{1}}&e^{x_{2}X_{2}+x_{3}X_{3}+x_{4}X_{4}}e^{x_{1}X_{1}}
=e^{x_{2}X_{2}+(x_{3}+\theta x_{2})X_{3}+(x_{4}+\theta x_{3}+\frac{\theta^{2}}{2}x_{2})X_{4}}e^{(\theta+x_{1})X_{1}} \, .
\end{align*}
We deduce that
$$\mathcal{R}_{\nu,\lambda}f(e^{\theta X_{1}})=\mathcal{X}_{\nu,\lambda}(e^{x_{2}X_{2}+(x_{3}+ \theta x_{2})X_{3}+(x_{4}+\theta x_{3}+\frac{\theta^{2}}{2}x_{2})X_{4}})f(e^{(\theta+x_{1})X_{1}}) \, .$$
Introducing the  notation $\widetilde f(\theta)\eqdefa f(e^{\theta X_{1}})$ we can summarize the above result as follows
\begin{proposition} {\sl All unitary irreducible representations on the Engel group corresponding to coadjoint orbits of case (iii) are parametrized by $\lambda \neq 0$ and $\nu\in \R$, acting on $L^{2}(\R)$ as follows:
\begin{equation}\label{eq:finale}
\mathcal{R}_{\nu,\lambda}\widetilde f(\theta)=\exp\left[i\left(-\frac{\nu}{\lambda} x_{2}+\lambda(x_{4}+\theta x_{3}+\frac{\theta^{2}}{2}x_{2})\right)\right] \widetilde f(\theta+x_{1}) \, .
\end{equation}
where $(x_{1},x_{2},x_{3},x_{4})$ are coordinates on $G$ defined by \eqref{eq:coordg}.}
\end{proposition}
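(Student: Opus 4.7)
My approach is to apply the three-step Kirillov algorithm already laid out in the excerpt to the orbit $L_{\nu,\lambda}$ of case (iii), and then reduce the induced representation to an explicit action on $L^2(\R)$ via the section $K = \exp(\R X_1)$. The parametrization of the orbits themselves has been handled by Proposition~\ref{p:poisson}: for $\lambda \neq 0$ the orbit through any $\eta \in L_{\nu,\lambda}$ coincides with the full level set, so each orbit is labeled by $(\nu,\lambda) \in \R \times \R^*$, and it suffices to choose one convenient representative on each orbit. I will take $\eta = (0,-\nu/\lambda,0,\lambda) \in L_{\nu,\lambda}$, and the candidate polarization $\mathfrak{h} = \mathrm{span}\{X_2,X_3,X_4\}$.

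First I would verify that $\mathfrak{h}$ is an admissible (maximal subordinate) subalgebra: the bracket relations $[X_2,X_3]=[X_2,X_4]=[X_3,X_4]=0$ show that $\mathfrak{h}$ is abelian, hence $\eta([\mathfrak{h},\mathfrak{h}])=0$ trivially; and since $\mathfrak{h}$ has codimension $1$ and $\eta$ is nonzero on the center component, $\mathfrak{h}$ is maximal with that property. The associated character is then $\mathcal{X}_{\nu,\lambda}(e^{x_2 X_2 + x_3 X_3 + x_4 X_4}) = e^{i \langle \eta, x_2 X_2 + x_3 X_3 + x_4 X_4\rangle} = e^{i(-\nu x_2/\lambda + \lambda x_4)}$, well-defined on $H=\exp(\mathfrak{h})$ because $\exp$ restricted to the abelian $\mathfrak{h}$ is a homomorphism.

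Next, following Remark~\ref{r:a1}, I use the global decomposition $G = H \cdot K$ with $K = \exp(\R X_1)$ (which is justified by the coordinate choice~\eqref{eq:coordg}) to identify the Hilbert space $W$ of functions covariant under $\mathcal{X}_{\nu,\lambda}$ with $L^2(\R)$ via $\widetilde f(\theta) = f(e^{\theta X_1})$. To compute $(\mathcal{R}_{\nu,\lambda}(g)f)(e^{\theta X_1}) = f(e^{\theta X_1}\cdot g)$, I must solve the ``$K\cdot G = H\cdot K$'' problem, namely rewrite
\begin{equation*}
e^{\theta X_1}\, e^{x_2 X_2 + x_3 X_3 + x_4 X_4}\, e^{x_1 X_1} = e^{Y(\theta,x)}\, e^{(\theta+x_1)X_1}, \qquad Y(\theta,x) \in \mathfrak{h},
\end{equation*}
and then apply the character $\mathcal{X}_{\nu,\lambda}$ to the $H$-component. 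This identification step is the heart of the proof and uses only the auxiliary Lemma~\ref{l:lemmacb} (conjugation by $e^{\theta X_1}$).

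The main technical obstacle is thus the explicit BCH-type computation of $Y(\theta,x)$. I would handle it by computing the adjoint action of $e^{\theta X_1}$ on $\mathfrak{h}$: because $[X_1,X_2]=X_3$, $[X_1,X_3]=X_4$, $[X_1,X_4]=0$ and the nilpotency truncates the $\mathrm{ad}_{X_1}$-series, one obtains
\begin{equation*}
\mathrm{Ad}_{e^{\theta X_1}} X_2 = X_2 + \theta X_3 + \tfrac{\theta^2}{2} X_4, \quad \mathrm{Ad}_{e^{\theta X_1}} X_3 = X_3 + \theta X_4, \quad \mathrm{Ad}_{e^{\theta X_1}} X_4 = X_4.
\end{equation*}
Since $\mathfrak{h}$ is abelian, $e^{\theta X_1} e^{Y} e^{-\theta X_1} = e^{\mathrm{Ad}_{e^{\theta X_1}}Y}$ for any $Y \in \mathfrak{h}$, giving
$
Y(\theta,x) = x_2 X_2 + (x_3+\theta x_2)X_3 + \bigl(x_4 + \theta x_3 + \tfrac{\theta^2}{2} x_2\bigr) X_4.
$
Applying $\mathcal{X}_{\nu,\lambda}$ to $e^{Y(\theta,x)}$ and then using covariance of $f$ to express the shifted argument as $\widetilde f(\theta+x_1)$ yields precisely~\eqref{eq:finale}. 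Finally, irreducibility and the fact that all representations of case (iii) arise this way are immediate from Kirillov's theorem stated above, completing the proof.
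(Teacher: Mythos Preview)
Your proposal is correct and follows essentially the same approach as the paper: the same representative $\eta=(0,-\nu/\lambda,0,\lambda)$, the same polarization $\mathfrak{h}=\mathrm{span}\{X_2,X_3,X_4\}$, the same section $K=\exp(\R X_1)$, and the same use of Lemma~\ref{l:lemmacb} to solve the $K\cdot G=H\cdot K$ decomposition. The only cosmetic difference is that you spell out the individual conjugates $\mathrm{Ad}_{e^{\theta X_1}}X_j$ before assembling $Y(\theta,x)$, whereas the paper states the outcome directly.
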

We state here without proof the following algebraic lemma.
\begin{lemma}\label{l:lemmacb} {\sl Assume that the Lie algebra generated by $A,B$ is nilpotent. Then
we have that~$e^{A}e^{B}e^{-A}=e^{C(A,B)}$ with
$$C(A,B)=e^{\mathrm{ad}(A)}B=\sum_{k=0}^{s-1}\frac{\mathrm{ad}^{k}(A)}{k!}B,$$
where $s$ is the nilpotency step of the structure. In particular in the case of the Engel group  we have
$$C(A,B)=B+[A,B]+\frac{1}{2}[A,[A,B]] \, .$$
}
\end{lemma}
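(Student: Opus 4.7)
The plan is to split the proof into two standard steps: first establish the adjoint formula $e^A X e^{-A} = e^{\mathrm{ad}(A)} X$ for any $X$ in the Lie algebra, and then observe that conjugation by $e^A$ commutes with the exponential series, so that $e^A e^B e^{-A} = \exp(e^A B e^{-A})$. To make sense of the exponentials concretely I would work inside a faithful finite-dimensional representation of the Lie algebra generated by $A$ and $B$ (which exists by Ado's theorem, since the algebra is finite-dimensional nilpotent), although everything could equivalently be done in the completed universal enveloping algebra modulo the nilpotent ideal.

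For the adjoint formula, I would introduce $f : \R \to \mathfrak{g}$ defined by $f(t) \eqdefa e^{tA} B e^{-tA}$ and differentiate:
\begin{equation*}
f'(t) = A f(t) - f(t) A = [A, f(t)] = \mathrm{ad}(A) f(t), \qquad f(0) = B,
\end{equation*}
whence $f(t) = e^{t \mathrm{ad}(A)} B$. Setting $t = 1$ gives $e^A B e^{-A} = \sum_{k \geq 0} \frac{\mathrm{ad}^k(A)}{k!} B$. At this point the nilpotency hypothesis enters: if the Lie algebra generated by $A$ and $B$ is nilpotent of step $s$, then the lower central series terminates at level $s+1$, so every iterated bracket of length $\geq s+1$ vanishes. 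Since $\mathrm{ad}^k(A) B$ is a bracket of length $k+1$, all terms with $k \geq s$ are zero and the series truncates to $\sum_{k=0}^{s-1} \frac{\mathrm{ad}^k(A)}{k!} B = C(A,B)$.

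For the conjugation step I would expand $e^B = \sum_{n \geq 0} B^n/n!$ (again a finite sum in the relevant setting) and use the telescoping identity $e^A B^n e^{-A} = (e^A B e^{-A})^n$. Summing over $n$ gives
\begin{equation*}
e^A e^B e^{-A} = \sum_{n \geq 0} \frac{(e^A B e^{-A})^n}{n!} = \exp\bigl(e^A B e^{-A}\bigr) = \exp\bigl(e^{\mathrm{ad}(A)} B\bigr) = e^{C(A,B)},
\end{equation*}
which is the desired identity. Specializing to the Engel group, where $s = 3$ and $[\mathfrak{g}_1,\mathfrak{g}_3]=0$ force $\mathrm{ad}^3(X)Y = 0$ on the subalgebra of interest, the truncation leaves precisely $B + [A,B] + \tfrac12 [A,[A,B]]$.

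I expect the main obstacle to be conceptual rather than computational, namely setting up a clean framework in which $e^A$, $e^B$ and the product $B^n$ are well-defined objects satisfying the associative-algebra identities used above; the cleanest route is the faithful representation given by Ado's theorem, after which both the ODE argument and the telescoping of conjugates become mechanical and the nilpotency of step $s$ simply guarantees that every series that appears is in fact a finite sum.
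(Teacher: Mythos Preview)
The paper states this lemma without proof (it writes ``We state here without proof the following algebraic lemma''), so there is nothing to compare against. Your argument is the standard one and is correct: the ODE for $f(t)=e^{tA}Be^{-tA}$ gives the adjoint formula, nilpotency of step $s$ truncates the series at $k=s-1$, and conjugation commutes with the exponential via the telescoping $e^AB^ne^{-A}=(e^ABe^{-A})^n$. One small remark: your parenthetical ``again a finite sum'' for $e^B$ is only literally true if you choose the faithful representation by nilpotent matrices (the strengthened Ado theorem for nilpotent algebras); in an arbitrary faithful finite-dimensional representation it is merely a convergent series, but continuity of conjugation makes the argument go through unchanged, so this does not affect correctness.
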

\begin{remark}{\sl 
Formula \eqref{eq:finale} gives the representations of the element of the group $G$ parametrized by coordinates $(x_{1},x_{2},x_{3},x_{4})$, where $(0,0,0,0)$ is the origin of the group (which corresponds indeed to the identical representation). 

Hence, differentiating \eqref{eq:finale} with respect to the variables $x_{i}$ at $x=0$, we get also the representations of the element of the Lie algebra, as follows
\begin{align*}
X_{1}\widetilde f&=\frac{d}{d\theta}\widetilde f \, ,\qquad
X_{2}\widetilde f=i\left(-\frac{\nu}{\lambda}+\lambda \frac{\theta^{2}}{2}\right) \widetilde f \, ,\qquad
X_{3}\widetilde f=i\lambda \theta \widetilde f \, , \qquad
X_{4}\widetilde f=i \lambda \widetilde f \, ,
\end{align*}
which indeed satisfy $[X_{1},X_{2}]=X_{3}$ and $[X_{1},X_{3}]=X_{4}$ as differential operators. 
Notice that the Laplacian in this form is written as
$$X_{1}^{2}+X_{2}^{2}=\frac{d^{2}}{d\theta^{2}}-\left( \frac{\lambda}{2}\theta^{2}-\frac{\nu}{\lambda}\right)^{2} \, .$$}
\end{remark}

\begin{remark}{\sl 
Notice that in the explicit computations of Section~\ref{Fourier} only the representations corresponding to the case (iii) of Proposition~\ref{p:poisson} are involved, since in the Fourier trasform the representations are integrated with respect to the the Plancherel measure, which in this coordinates is written as $dP=d\lambda d\nu$. Computing the representations corresponding to the case (i) and (ii) reduces to the representations of the Euclidean plane and the Heisenberg group, respectively. See\ccite{Dixmier,kirillov} for more details on the Plancherel measure and \cite{kirillov2} for an explicit formula on nilpotent Lie groups.}
\end{remark}

%%%%%%%%%%%%%%%%%%%%%
\section{Spectral theory  
}\label{ap2}
\subsection{Spectral analysis of the quartic oscillator $P_{\nu,\lambda}$}\label{anspctm} 
We first collect general properties of the operator $P_{\nu,\lambda}$ defined in~\eqref{eq: oscop} for $(\nu, \lambda)\in\R\times \R^*$, and endowed with the domain 
$$
D(P_{\nu,\lambda}) = \left\{ u \in L^2(\R) ,- \frac{d^{2}}{d\theta^{2}} u+ \left( \frac{\lambda}{2}\theta^{2}-\frac{\nu}{\lambda}\right)^{2}u \in L^2(\R) \right\}  \, .
$$
\begin{proposition}
\label{p:def-E-psi}
{\sl For any $(\nu, \lambda)\in\R\times \R^*$, the following statements hold true.
The operator~$(P_{\nu,\lambda},D(P_{\nu,\lambda}))$ is selfadjoint on $L^2(\R)$, with compact resolvents. Its spectrum consists in countably many real eigenvalues, accumulating only at $+\infty$. Moreover, 
\begin{enumerate}
\item \label{eigenvalue-ordering} all eigenvalues are simple and positive, and we may thus write $\Sp(P_{\nu,\lambda}) = \{ E_m(\nu, \lambda), m \in \N\}$ with 
\begin{align*}
& 0< E_0(\nu, \lambda) < E_1(\nu, \lambda) < \cdots < E_m(\nu, \lambda) < E_{m+1}(\nu, \lambda) \to + \infty , \\
& \dim \ker (P_{\nu,\lambda}-E_m(\nu, \lambda)) = 1 \,  ,
\end{align*}
\item \label{regularity} all eigenfunctions are real-analytic and belong to $\mathcal{S}(\R)$,
\item \label{parity} for all $m\in \N$, functions in $\ker (P_{\nu,\lambda}-E_m(\nu, \lambda))$ have the parity of $m$,
\item \label{def-psim} for all $m\in \N$, there is a unique function $\psi_m^{\nu,\lambda}$ in $ \ker (P_{\nu,\lambda}-E_m(\nu, \lambda))$ such that 
$$
 \psi_m^{\nu,\lambda} \text{ is real-valued}, \quad \| \psi_m^{\nu,\lambda}\|_{L^2(\R)} = 1 , \quad \psi_m^{\nu,\lambda}(0)>0 \text{ if } m\text{ is even} ,\quad \frac{d}{d\theta} \psi_m^{\nu,\lambda}(0)>0 \text{ if } m\text{ is odd} \,  , 
$$
\item the family $\big( \psi_m^{\nu,\lambda} \big)_{m\in \N}$ forms a Hilbert basis of $L^2(\R)$.
\end{enumerate}}
\end{proposition}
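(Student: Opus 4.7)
The plan is to reduce all the claims to the corresponding ones for the reference operator $\mathsf{P}_\mu$ stated in Proposition~\ref{p:def-E-fimu}, by means of an explicit unitary equivalence between $P_{\nu,\lambda}$ and a rescaled version of $\mathsf{P}_\mu$. First, since the potential $\bigl(\frac{\lambda}{2}\theta^2 - \frac{\nu}{\lambda}\bigr)^2$ is invariant under $\lambda \mapsto -\lambda$, one has $P_{\nu,\lambda}=P_{\nu,-\lambda}$, so one may assume $\lambda>0$. Using the dilation $T_\alpha$ of~\eqref{e:Talpha} with $\alpha = |\lambda|^{-1/3}$, the same computation already used in Section~\ref{poisson} yields the intertwining identity
\begin{equation*}
T_{|\lambda|^{-1/3}} \, P_{\nu,\lambda} \, T_{|\lambda|^{1/3}} = |\lambda|^{2/3} \mathsf{P}_\mu  \, , \qquad \mu \eqdefa \frac{\nu}{|\lambda|^{4/3}} \, ,
\end{equation*}
first on $\mathcal{S}(\R)$ and then on the maximal domains after verifying that $T_{|\lambda|^{1/3}}$ sends $D(\mathsf{P}_\mu)$ bijectively onto $D(P_{\nu,\lambda})$, which is immediate from their definitions.

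Once this intertwining relation is available, every item of the proposition follows by transport. Self-adjointness and compactness of the resolvent are preserved under unitary conjugation and multiplication by the positive scalar $|\lambda|^{2/3}$; the eigenvalues of $P_{\nu,\lambda}$ are precisely $|\lambda|^{2/3}\mathsf{E}_m(\mu)$, which both recovers the definition~\eqref{defEmnulambda} of $E_m(\nu,\lambda)$ and delivers the ordering, simplicity and positivity claims. Setting
$$
\psi_m^{\nu,\lambda}(\theta) \eqdefa T_{|\lambda|^{1/3}} \varphi_m^\mu(\theta) = |\lambda|^{1/6}\varphi_m^\mu(|\lambda|^{1/3}\theta) \, ,
$$
one gets unit $L^2$ norm (because $T_\alpha$ is unitary); the same parity as $\varphi_m^\mu$, hence that of $m$, since dilations preserve parity; real-analyticity and Schwartz decay inherited from $\varphi_m^\mu$; and the prescribed signs for $\psi_m^{\nu,\lambda}(0) = |\lambda|^{1/6}\varphi_m^\mu(0)$ and $(\psi_m^{\nu,\lambda})'(0) = |\lambda|^{1/2}(\varphi_m^\mu)'(0)$. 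The Hilbert basis property is immediate from that of $(\varphi_m^\mu)_m$ together with the unitarity of $T_{|\lambda|^{1/3}}$.

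The only substantive work is therefore the proof of Proposition~\ref{p:def-E-fimu}, which is a collection of classical facts on one-dimensional Schr\"odinger operators with confining polynomial potential $V(\theta) = (\theta^2/2-\mu)^2$: self-adjointness and compactness of the resolvent follow from the facts that $V$ is smooth, nonnegative and tends to $+\infty$ at infinity; simplicity of eigenvalues follows from uniqueness for the second-order Cauchy problem associated with the eigenvalue equation; positivity from $\mathsf{P}_\mu \geq 0$ combined with the observation that a null eigenfunction would have to vanish identically on $\{V>0\}$, hence everywhere by unique continuation; real-analyticity from the analyticity of the coefficients; Schwartz decay from Agmon-type estimates using the confining behavior of $V$; the parity statement from the $\theta \mapsto -\theta$ invariance of $\mathsf{P}_\mu$ combined with the Sturm oscillation theorem applied to the $m$-th eigenfunction; and the Hilbert basis property from the spectral theorem for operators with compact resolvent. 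The only mildly delicate point in implementing the reduction is to keep track of the two maximal domains under the scaling, but this is routine.
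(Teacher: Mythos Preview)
Your proof is correct, but the organization differs from the paper's. The paper proves Proposition~\ref{p:def-E-psi} \emph{directly} for the general operator $P_{\nu,\lambda}$ using the same classical ingredients you list (compactness of the resolvent from the confining polynomial potential, simplicity via the Sturm--Liouville/Cauchy uniqueness argument, positivity from nonnegativity of the quadratic form plus the observation that $E=0$ forces $\psi'=0$, real-analyticity from analytic Cauchy--Lipschitz, $\mathcal{S}(\R)$ membership from Agmon estimates, parity from evenness of the potential combined with the zero-counting of the $m$-th eigenfunction). It then \emph{defines} $\mathsf{P}_\mu \eqdefa P_{\mu,1}$ and deduces Proposition~\ref{p:def-E-fimu} as the special case, with the scaling relation recorded afterwards in Lemma~\ref{l:scaling} and Corollary~\ref{c:scaling}. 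You run the logic in the opposite direction: reduce $P_{\nu,\lambda}$ to $\mathsf{P}_\mu$ by the scaling conjugation, then carry out the same spectral arguments for $\mathsf{P}_\mu$. Since the potential $(\tfrac{\lambda}{2}\theta^2-\tfrac{\nu}{\lambda})^2$ has exactly the same qualitative features (smooth, even, nonnegative, confining) for every $(\nu,\lambda)\in\R\times\R^*$, nothing is gained or lost either way; the substantive content is identical, and your scaling step is precisely what the paper records in Corollary~\ref{c:scaling}.
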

This proposition serves as a definition for the eigenvalue $E_m(\nu, \lambda)$ and the associated eigenfunction $\psi_m^{\nu,\lambda}$ for $m\in \N$. Note that for $\psi_m^{\nu,\lambda}$, we made a particular choice.

\begin{proof}
If $\psi \in D(P_{\nu,\lambda})$, the inner product of $P_{\nu,\lambda}\psi$ with $\psi$ implies in particular that $\psi \in H^1(\R)$ and $\left( \frac{\lambda}{2}\theta^{2}-\frac{\nu}{\lambda}\right)\psi \in L^2(\R)$, whence the compactness of the embedding $D(P_{\nu,\lambda}) \hookrightarrow L^2(\R)$ and that of the resolvent of $P_{\nu,\lambda}$.
The structure of the spectrum is a direct consequence of the first stated facts.
 Then we notice that the coefficients of $P_{\nu,\lambda}$ are real and one may thus choose real-valued eigenfunctions.
The fact that the eigenvalues are simple follows from the classical Sturm-Liouville argument, see e.g.~\cite{Reed-Simon-4}. The latter also yields that any real-valued eigenfunction $\psi$ associated to~$E_m$ has exactly $m$ zeroes.

The property $(-\infty ,0] \cap \Sp(P_{\nu,\lambda}) =\emptyset$ follows from the fact that $P_{\nu,\lambda}\psi = E \psi$ for $\psi \in D(P_{\nu,\lambda})) \setminus \{0\}$ implies 
$$
0 \leq \|\psi'\|_{L^2(\R)}^2 +  \left\|  \left( \frac{\lambda}{2}\theta^{2}-\frac{\nu}{\lambda}\right) \psi \right\|_{L^2(\R)}^2 = E \|\psi\|_{L^2(\R)}  \, .
$$
Hence, $E\geq 0$. If $E=0$, then the left hand-side yields $\psi'=0$ in $\mathcal{D}'(\R)$, thus $\psi =0$ (since $\psi \in L^2(\R)$), which is a contradiction.

In Item~\ref{regularity}, real-analyticity of the eigenfunctions follows from the analytic Cauchy-Lipschitz theorem. That eigenfunctions belong to $\mathcal{S}(\R)$ follows from Agmon estimates, see~\cite{HS:84,Helffer:booksemiclassic,DS:book}.
Item~\ref{parity} is a consequence of the fact that $ \left( \frac{\lambda}{2}\theta^{2}-\frac{\nu}{\lambda}\right)^2$ is even.  Hence, if $\psi_m$ is an eigenfunction associated to $E_m$, then $x \mapsto \psi_m(-x)$ is also an eigenfunction. Simplicity of the spectrum implies that $x \mapsto \psi_m(-x)$ is proportional to $\psi_m$. Since we choose $\psi_m$ real-valued and $L^2$-normalized, we necessarily have $\psi_m(-x) = \pm \psi_m(x)$. That $\psi_m$ has the parity of $m$ follows from the fact that $\psi_m$ has $m$ zeroes.

Concerning Item~\ref{def-psim}, since $\dim \ker (P_{\nu,\lambda}-E_m(\nu, \lambda)) = 1$, there are only two normalized eigenvalues, say $\psi$ and $-\psi$. In case $m$ is even (resp. odd), these eigenvalues are even (resp. odd) from Item~\ref{parity} and hence one has $\psi(0)\neq 0$ (resp. $\psi'(0)\neq 0$), and we choose among $\pm \psi$ the one having positive value  (resp. positive derivative) at zero.

Finally, the last item is a consequence of the spectral theorem for compact selfadjoint operators.
\end{proof}

We now explain how the study of the two parameter family of operators $P_{\lambda,\nu}$ reduces to that of $\mathsf{P}_\mu$.
We start with the following scaling argument, referring to the scaling operator $T_\alpha$ defined in~\eqref{e:Talpha}.
\begin{lemma}[Scaling]
\label{l:scaling}
{\sl For all $\alpha>0$ and $(\nu,\lambda)\in\R \times \R^*$, the operators $\alpha^2 P_{\nu,\lambda}$ and $P_{\alpha^4\nu,\alpha^3\lambda}$  are unitarily equivalent: we have 
\begin{align*}
  P_{\alpha^4\nu,\alpha^3\lambda} =\alpha^2T_\alpha P_{\nu,\lambda} T_{\alpha^{-1}}  \, .
\end{align*}
 In particular, we have for all  $\alpha>0$ and $(\nu,\lambda)\in\R \times \R^*$, and all $m\in \N$,
\begin{align}
\label{eq: hom}
 E_m(\alpha^4  \nu,\alpha^3  \lam)&= \alpha^2 E_m(\nu,\lam) \,  , \\ 
\label{e:relbis}
 \psi_m^{\alpha^4 \nu,\alpha^3 \lambda}(\theta) & = T_{\alpha} \psi_m^{\nu,\lambda}(\theta)   \, .
\end{align}}
\end{lemma}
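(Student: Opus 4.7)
The plan is essentially a direct computation of the conjugation $T_\alpha P_{\nu,\lambda} T_{\alpha^{-1}}$ using the definition of $T_\alpha$ in \eqref{e:Talpha}, followed by invoking the uniqueness statements of Proposition~\ref{p:def-E-psi} to deduce \eqref{eq: hom} and \eqref{e:relbis}.

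First, I would unfold the conjugation. Given $\varphi\in \mathcal{S}(\R)$, write $\psi = T_{\alpha^{-1}}\varphi$, i.e.\ $\psi(\theta) = \alpha^{-1/2}\varphi(\theta/\alpha)$, and compute
$$
P_{\nu,\lambda}\psi(\theta) = -\alpha^{-5/2}\varphi''(\theta/\alpha) + \alpha^{-1/2}\Big(\frac{\lambda}{2}\theta^2 - \frac{\nu}{\lambda}\Big)^2 \varphi(\theta/\alpha).
$$
Then apply $T_\alpha$ on the left, which is evaluation at $\alpha\theta$ times $\alpha^{1/2}$, to obtain
$$
T_\alpha P_{\nu,\lambda} T_{\alpha^{-1}}\varphi(\theta) = -\alpha^{-2}\varphi''(\theta) + \Big(\frac{\alpha^2\lambda}{2}\theta^2 - \frac{\nu}{\lambda}\Big)^2 \varphi(\theta).
$$
Multiplying by $\alpha^2$ and rewriting $\alpha^2\big(\frac{\alpha^2\lambda}{2}\theta^2 - \frac{\nu}{\lambda}\big)^2 = \big(\frac{\alpha^3\lambda}{2}\theta^2 - \frac{\alpha\nu}{\lambda}\big)^2 = \big(\frac{\alpha^3\lambda}{2}\theta^2 - \frac{\alpha^4\nu}{\alpha^3\lambda}\big)^2$ gives precisely $P_{\alpha^4\nu,\alpha^3\lambda}\varphi(\theta)$, which establishes the operator identity on $\mathcal{S}(\R)$, hence on the domain by a standard density/closedness argument.

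For the consequences, I would use that $T_\alpha$ is a unitary automorphism of $L^2(\R)$ sending the domain of $P_{\nu,\lambda}$ onto that of $P_{\alpha^4\nu,\alpha^3\lambda}$. From the operator identity, if $P_{\nu,\lambda}\psi = E\psi$ with $\psi \neq 0$, then
$$
P_{\alpha^4\nu,\alpha^3\lambda}(T_\alpha \psi) = \alpha^2 T_\alpha P_{\nu,\lambda}\psi = \alpha^2 E \,(T_\alpha\psi),
$$
so the spectra satisfy $\Sp(P_{\alpha^4\nu,\alpha^3\lambda}) = \alpha^2\Sp(P_{\nu,\lambda})$ with the same multiplicities. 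Combining this with the simplicity and ordering of eigenvalues (Proposition~\ref{p:def-E-psi}(\ref{eigenvalue-ordering})) yields $E_m(\alpha^4\nu,\alpha^3\lambda) = \alpha^2 E_m(\nu,\lambda)$, i.e.\ \eqref{eq: hom}.

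Finally, for \eqref{e:relbis}, I would check that $T_\alpha\psi_m^{\nu,\lambda}$ satisfies all four normalization constraints listed in Proposition~\ref{p:def-E-psi}(\ref{def-psim}) for the operator $P_{\alpha^4\nu,\alpha^3\lambda}$: it is real-valued, of unit $L^2$ norm (since $T_\alpha$ is unitary), of the same parity as $\psi_m^{\nu,\lambda}$ (since $T_\alpha$ preserves parity), and the sign conditions at $\theta=0$ are preserved because $(T_\alpha\psi_m^{\nu,\lambda})(0) = \alpha^{1/2}\psi_m^{\nu,\lambda}(0)$ and $\frac{d}{d\theta}(T_\alpha\psi_m^{\nu,\lambda})(0) = \alpha^{3/2}\frac{d}{d\theta}\psi_m^{\nu,\lambda}(0)$, both with $\alpha>0$. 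The uniqueness part of Proposition~\ref{p:def-E-psi}(\ref{def-psim}) then forces $T_\alpha\psi_m^{\nu,\lambda} = \psi_m^{\alpha^4\nu,\alpha^3\lambda}$, concluding the proof. No substantial obstacle is expected; the only bookkeeping care needed is in tracking the powers of $\alpha$ in the conjugation computation and verifying that all four normalizations are preserved by the dilation.
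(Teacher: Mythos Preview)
Your proof is correct and follows essentially the same approach as the paper: a direct computation of the conjugation $T_\alpha P_{\nu,\lambda} T_{\alpha^{-1}}$, followed by the spectral consequence via the ordering and simplicity of eigenvalues, and finally the eigenfunction identification via the uniqueness clause in Proposition~\ref{p:def-E-psi}. Your write-up is slightly more detailed (you explicitly verify parity preservation and the sign conditions at $0$, whereas the paper merely remarks that $T_\alpha$ preserves the signs of $\psi(0)$ and $\psi'(0)$), but the argument is the same.
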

This scaling property will later allow us to get rid of one of the two parameters. 
Note that the last property can also be written, if needed: for all $a>0$, we have 
\begin{equation}
\label{e:rel}
a^{1/4}\psi_k^{a^{-2}\nu, a^{-3/2}\lambda}(a^{1/2}\theta)= \psi_k^{\nu, \lambda}(\theta) \, .
\end{equation}

\begin{proof}[Proof of Lemma~{\rm\ref{l:scaling}}]
The first statement simply follows from the following computation:
\begin{align*}
T_\alpha P_{\nu, \lambda} T_{\alpha^{-1}} &= -\frac{1}{\alpha^2}\frac{d^2}{d\theta^2}  + \left(\lambda\alpha^2 \frac{\theta^2}{2} - \frac{\nu}{\lambda}\right)^2 
 = \frac{1}{\alpha^2}\left[  -\frac{d^2}{d\theta^2}  + \left(\lambda\alpha^3 \frac{\theta^2}{2} - \alpha\frac{\nu}{\lambda}\right)^2 \right]\\
& = \frac{1}{\alpha^2}\left[  -\frac{d^2}{d\theta^2}  + \left((\lambda\alpha^3) \frac{\theta^2}{2} - \frac{(\nu\alpha^4)}{(\lambda\alpha^3)}\right)^2 \right]
 = \frac{1}{\alpha^2} P_{\alpha^4\nu, \alpha^3\lambda} \,  .
\end{align*}
Concerning the second statement, we deduce from the first one that 
$$
P_{\alpha^4\nu, \alpha^3\lambda} T_{\alpha} \psi_m^{\nu,\lambda} = \alpha^2T_\alpha P_{\nu,\lambda} T_{\alpha^{-1}}T_{\alpha} \psi_m^{\nu,\lambda}  =
 \alpha^2 T_\alpha P_{\nu,\lambda}   \psi_m^{\nu,\lambda}  = \alpha^2 E_m(\nu,\lam)  T_\alpha    \psi_m^{\nu,\lambda}   \, .
$$
Hence, $T_{\alpha} \psi_m^{\nu,\lambda}$ is an eigenfunction associated to the eigenvalue $\alpha^2 E_m(\nu,\lam)$. From Item~\ref{eigenvalue-ordering} of Proposition~\ref{p:def-E-psi}, we deduce that $\alpha^2 E_m(\nu,\lam)$ is the $m$-th eigenvalue of $P_{\alpha^4\nu, \alpha^3\lambda}$, whence~\eqref{eq: hom}.
From the uniqueness of the eigenvalue in Item~\ref{eigenvalue-ordering} of Proposition~\ref{p:def-E-psi} and the fact that $\psi \mapsto T_\alpha\psi$ preserves the sign of $\psi(0)$ and $\psi'(0)$, we deduce~\eqref{e:relbis}.
\end{proof}

We also notice that $P_{\nu,\lambda}=P_{\nu,-\lambda}=P_{\nu,|\lambda|}$.
Now, we choose a particular value of $\alpha$ with so that to reduce to a one-parameter problem, namely $\alpha = |\lambda|^{-1/3}>0$.

\begin{definition}[Reference operator]
{\sl For $\mu \in \R$, and $m\in \N$, we set 
\begin{align}
\label{e:def-sfPEphi}
\mathsf{P}_\mu \eqdefa  P_{\mu,1} =  - \frac{d^2}{d\theta^2}  + \left( \frac{\theta^2}{2} - \mu \right)^2, 
\qquad \mathsf{E}_m(\mu) \eqdefa  E_m(\mu, 1) \, ,  
\qquad  \varphi_m^\mu \eqdefa   \psi_m^{\mu,1} \, .
\end{align}}
\end{definition}
Note that Proposition~\ref{p:def-E-psi} applies to $\mathsf{P}_\mu,\mathsf{E}_m(\mu),\varphi_m^\mu$ and we use it implicitly. In particular, $\mathsf{E}_m(\mu)$ is the $m$-th eigenvalue of $\mathsf{P}_\mu$ and $\varphi_m^\mu$ is the (with the appropriate choice) associated eigenfunction.

According to Lemma~\ref{l:scaling} taken for $\alpha = |\lambda|^{-1/3}>0$, we have the following statement.
\begin{corollary}[Scaling and reference operator]
\label{c:scaling}
{\sl For all $(\nu,\lambda)\in\R \times \R^*$, and all $m\in \N$, we have 
\begin{align}
P_{\nu,\lambda} & =  |\lambda|^{2/3}  T_{|\lambda|^{1/3}}  \mathsf{P}_\mu T_{|\lambda|^{-1/3}} , \quad  \mu =  \frac{\nu}{|\lambda|^{4/3}} \in \R \,  ,   \\
\label{eq: relnumu} E_m(\nu,\lambda) &= |\lambda|^{2/3}\mathsf{E}_m(\mu) , \quad  \mu =  \frac{\nu}{|\lambda|^{4/3}} \in \R ,   \\
\psi_m^{\nu,\lambda} & = T_{|\lambda|^{1/3}} \varphi_m^\mu , \quad  \mu =  \frac{\nu}{|\lambda|^{4/3}}\in \R  \,  .
\end{align}}
\end{corollary}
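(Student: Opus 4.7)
The strategy is to apply Lemma~\ref{l:scaling} with the specific choice $\alpha = |\lambda|^{-1/3}>0$, which is designed to reduce the parameter $\lambda$ to a sign $\pm 1$, and then to use the manifest symmetry $\lambda \mapsto -\lambda$ of the operator $P_{\nu,\lambda}$ to normalize this sign to $+1$.

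First, I would record the preliminary observation that $P_{\nu,\lambda}=P_{\nu,-\lambda}$, which is immediate from the identity
\[
\left(\tfrac{-\lambda}{2}\theta^2-\tfrac{\nu}{-\lambda}\right)^2 = \left(\tfrac{\lambda}{2}\theta^2-\tfrac{\nu}{\lambda}\right)^2.
\]
Since the domain $D(P_{\nu,\lambda})$ and the normalizing conventions of Proposition~\ref{p:def-E-psi} are unchanged under $\lambda\mapsto-\lambda$, this yields in particular $\mathsf{P}_\mu = P_{\mu,1}=P_{\mu,-1}$, $\mathsf{E}_m(\mu)=E_m(\mu,1)=E_m(\mu,-1)$ and $\varphi_m^\mu = \psi_m^{\mu,1}=\psi_m^{\mu,-1}$ for all $m\in\N$ and $\mu\in\R$.

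Next, setting $\alpha = |\lambda|^{-1/3}>0$ and $\mu=\nu/|\lambda|^{4/3}$, one computes $\alpha^4\nu = \mu$ and $\alpha^3\lambda = \mathrm{sign}(\lambda) \in \{\pm 1\}$. Applying Lemma~\ref{l:scaling} to $(\nu,\lambda)$ with this choice of $\alpha$ gives
\[
P_{\mu,\mathrm{sign}(\lambda)} = \alpha^2\, T_\alpha\, P_{\nu,\lambda}\, T_{\alpha^{-1}} = |\lambda|^{-2/3}\, T_{|\lambda|^{-1/3}}\, P_{\nu,\lambda}\, T_{|\lambda|^{1/3}}.
\]
By the preliminary observation, $P_{\mu,\mathrm{sign}(\lambda)}=\mathsf{P}_\mu$. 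Inverting the unitary conjugation (using $T_\alpha^{-1}=T_{\alpha^{-1}}$) then yields the first identity $P_{\nu,\lambda}=|\lambda|^{2/3}T_{|\lambda|^{1/3}}\mathsf{P}_\mu T_{|\lambda|^{-1/3}}$.

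The two remaining identities follow by the same substitution into the second assertion of Lemma~\ref{l:scaling}: the eigenvalue relation \eqref{eq: hom} gives $E_m(\mu,\mathrm{sign}(\lambda))=|\lambda|^{-2/3}E_m(\nu,\lambda)$, whence $E_m(\nu,\lambda)=|\lambda|^{2/3}\mathsf{E}_m(\mu)$ by the symmetry in $\lambda$; and \eqref{e:relbis} gives $\psi_m^{\mu,\mathrm{sign}(\lambda)}=T_{|\lambda|^{-1/3}}\psi_m^{\nu,\lambda}$, i.e.\ $\varphi_m^\mu=T_{|\lambda|^{-1/3}}\psi_m^{\nu,\lambda}$, which upon applying $T_{|\lambda|^{1/3}}$ yields the last identity. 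There is no real obstacle in the argument; the only point requiring mild care is the bookkeeping of the sign of $\alpha^3\lambda$, handled uniformly once the $\lambda\mapsto-\lambda$ symmetry has been recorded.
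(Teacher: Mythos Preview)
Your proof is correct and follows exactly the approach of the paper: apply Lemma~\ref{l:scaling} with $\alpha=|\lambda|^{-1/3}$ and use the symmetry $P_{\nu,\lambda}=P_{\nu,-\lambda}$ (which the paper records just before the definition of $\mathsf{P}_\mu$) to absorb the residual sign. The paper states the corollary as an immediate consequence of the lemma without spelling out the sign bookkeeping; you have simply made that step explicit.
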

As a consequence, we are left with the study of the family of operators $\mathsf{P}_\mu$, depending on a single parameter $\mu \in \R$.

%%%%%%%%%%%%%%%%%%%%%
\subsection{Spectral theory for semiclassical Schr\"odinger operators} 

In this section, we collect several results of spectral theory, that are used in the main part of the paper to study the operator~$\mathsf{P}_\mu$ (or equivalently $P_{\nu, \lambda}$).

We refer e.g. to \cite[Section~6.4]{Zworski:book} for the following very classical Weyl law.
\begin{theorem}[Weyl's law in dimension $1$]
\label{t:semiclassical-weyl}
{\sl Assume that $V \in C^\infty(\R;\R)$ is real valued and satisfies~$|\p^\alpha V(\x)| \leq \langle \x \rangle^{k}$ for all $\alpha$ and all $\x \in \R$, and $V(\x) \geq c \langle \x \rangle^{k}$ for $|\x| \geq R>0$. Then, for all~$h>0$, the operator 
\begin{align}
\label{e:Ph-def}
P(h) = -h^2 \frac{d^2}{d\x^2} + V(\x), \quad D(P(h)) = \Big\{u \in L^2(\R),  -h^2 \frac{d^2}{d\x^2}u + Vu \in L^2(\R)\Big\} \, , 
\end{align}
acting on $L^2(\R)$ is selfadjoint, has compact resolvent, has discrete real spectrum, and an orthonormal basis of eigenfunctions.
Moreover, for any $a<b$, 
\begin{align}
\label{e:weyl}
\sharp \left(\Sp(P(h)) \cap [a,b] \right) = (2\pi h)^{-1} \Big( \Vol \{(\x,\xi) \in\R^2,  a \leq \xi^2 + V(\x) \leq b \} + o(1) \Big)\red  \, ,
\end{align}
as $h \to 0^+$.}
\end{theorem}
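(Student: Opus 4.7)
My plan for this result splits naturally into two parts: the abstract spectral assertions on $P(h)$, and the quantitative counting estimate~\eqref{e:weyl}.

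For the first part, I would argue as follows. The coercivity hypothesis $V(\x) \geq c\langle \x \rangle^k$ for $|\x| \geq R$, combined with continuity of $V$, forces $V$ to be bounded below on $\R$. The quadratic form $q(u, u) \eqdefa \int_\R \bigl(h^2|u'|^2 + V|u|^2\bigr) d\x$ is then closed and semibounded on its natural form domain $\mathcal{Q} \eqdefa \{u \in H^1(\R),\, V^{1/2} u \in L^2(\R)\}$, and defines by Friedrichs extension the unique self-adjoint operator associated with $P(h)$. Compactness of the resolvent reduces to the compactness of the embedding $\mathcal{Q} \hookrightarrow L^2(\R)$: for a sequence $(u_n)$ bounded in $\mathcal{Q}$, the coercivity of $V$ confines the $L^2$-mass to an arbitrarily large compact set up to arbitrarily small error, and a diagonal extraction using the Rellich embedding $H^1 \hookrightarrow L^2$ on each compact interval produces the required $L^2(\R)$-convergent subsequence. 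Discreteness of $\Sp(P(h))$ and existence of an orthonormal eigenbasis then follow from the spectral theorem for self-adjoint operators with compact resolvent.

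For the Weyl law~\eqref{e:weyl}, my strategy relies on semiclassical functional calculus. For any $\chi \in C_c^\infty(\R)$, the operator $\chi(P(h))$ is trace class and is a semiclassical pseudodifferential operator whose principal symbol is $\chi \circ p$, with $p(\x, \xi) \eqdefa \xi^2 + V(\x)$. One thus has the asymptotic trace expansion
\[
\tr \bigl(\chi(P(h))\bigr) = (2\pi h)^{-1} \int_{\R^2} \chi \bigl(p(\x, \xi)\bigr) d\x d\xi + O(1), \quad h \to 0^+.
\]
To transfer this to a counting estimate on $[a, b]$ one uses a squeeze: for any $\eps > 0$ one picks $\chi_-, \chi_+ \in C_c^\infty(\R)$ satisfying $\chi_- \leq \mathbf{1}_{[a, b]} \leq \chi_+$ and $\int_{\R^2} (\chi_+ -\chi_-)\circ p \, d\x d\xi \leq \eps$, which is possible since the level sets $p^{-1}(\{a\})$ and $p^{-1}(\{b\})$ are real-analytic curves in $\R^2$, hence Lebesgue-negligible. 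Substituting into the trace formula, dividing by $(2\pi h)^{-1}$, letting $h \to 0^+$ and then $\eps \to 0$ produces~\eqref{e:weyl}.

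The main technical difficulty is that the $O(1)$ remainder in the trace expansion must be uniform over the one-parameter family $\chi_\pm$ chosen in the squeeze (which depends on $\eps$) in order to ensure an $o(1)$ remainder and not merely $O(\eps)$. This uniformity follows from standard estimates in semiclassical calculus: the polynomial bounds $|\p^\alpha V| \leq \langle \x \rangle^k$ together with the lower bound ensure that $p$ is an admissible order function and that all symbols remain in a fixed Kohn--Nirenberg class. This is carried out in full detail in~\cite[Section~6.4]{Zworski:book}, so in practice I would invoke that reference directly rather than reproduce the computation here.
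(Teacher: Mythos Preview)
Your proposal is correct and follows the standard semiclassical route; the paper itself does not prove this theorem but simply cites \cite[Section~6.4]{Zworski:book}, which is exactly the reference you invoke. Your sketch of the Friedrichs extension, compact embedding, functional-calculus trace expansion, and squeezing argument is precisely the classical argument carried out there, so the two are aligned.
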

Note that the phase space volume (taken according to the symplectic volume form $d\x\, d\xi$) is given by 
$$
\Vol \{(\x,\xi) \in\R^2,  a \leq \xi^2 + V(\x) \leq b \} = \int_{\{ a \leq \xi^2 + V(\x) \leq b \}} d\x\, d\xi  \, .
$$
In the $1$-dimensional context, it can often be computed more simply, see e.g. Remark~\ref{r:homogeneity-L} below.

We shall also make use of the following lemma, which is a simple consequence of the minimax and maximin formulae (see~\cite[Chapter~11 and discussion top of p148]{Helffer:book-spectral-theory}).
\begin{lemma}
\label{t:comparison}
{\sl Let $H$ be a Hilbert space. 
Assume that $(A,D(A))$ and $(B,D(B))$ are two selfadjoint operators, with compact resolvents, that are bounded from below and such that $D(B)\subset D(A)$.
 Denote for $j \in \N$ by $E_j(A)$ (resp. $E_j(B)$) the $j-$th eigenvalue of the operator $A$ (resp. $B$), defined by the minimax formula, so that in particular 
 $E_0(A) \leq E_1(A) \leq \cdots \leq E_j(A) \leq E_{j+1}(A) \leq \cdots \to + \infty$.
 
Assume further that $(A u ,u)_H \leq (B u , u)_H$ for all $u$ in a dense set of $D(B)$. Then we have 
$$
 E_j(A) \leq  E_j(B), \quad \text{ for all } j \in \N  \, . 
$$}
\end{lemma}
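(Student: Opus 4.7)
The plan is to apply the Courant-Fischer minimax principle to both operators and exploit the nested inclusion $D(B)\subset D(A)$, together with the pointwise inequality between the quadratic forms, to compare the eigenvalues directly.

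First, I would extend the assumed inequality $(Au,u)_H\leq(Bu,u)_H$ from the given dense subset of $D(B)$ to all of $D(B)$. Since $A$ is selfadjoint (hence closed), and $D(B)\subset D(A)$, the closed graph theorem applied to $A$ viewed as a linear map from $(D(B),\|\cdot\|_{D(B)})$ (graph norm of $B$) into $H$ yields continuity of $A|_{D(B)}:D(B)\to H$. Combined with the continuity of $B:D(B)\to H$, both quadratic forms $u\mapsto (Au,u)_H$ and $u\mapsto (Bu,u)_H$ are continuous on $D(B)$, so the inequality extends by density.

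Next, I would invoke the minimax characterization of eigenvalues for selfadjoint operators bounded below with compact resolvent, namely
$$
E_j(A)=\min_{\substack{V\subset D(A)\\ \dim V=j+1}}\ \max_{\substack{u\in V\\ u\neq 0}}\frac{(Au,u)_H}{(u,u)_H},
$$
and the analogous formula for $B$ with $D(B)$ in place of $D(A)$. Here both minima are attained on finite-dimensional subspaces of eigenvectors so there is no subtlety about whether the infimum is achieved.

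Finally, I would combine these two ingredients: any $(j+1)$-dimensional subspace $V\subset D(B)$ automatically sits in $D(A)$, and on such a $V$ the extended pointwise inequality gives
$$
\max_{u\in V\setminus\{0\}}\frac{(Au,u)_H}{(u,u)_H}\leq \max_{u\in V\setminus\{0\}}\frac{(Bu,u)_H}{(u,u)_H}.
$$
Taking the infimum of both sides over all $(j+1)$-dimensional $V\subset D(B)$ gives the bound $\inf_{V\subset D(B)}\max\cdots\leq E_j(B)$; and since enlarging the class of admissible $V$ from $D(B)$ to $D(A)$ only decreases the infimum, we conclude $E_j(A)\leq \inf_{V\subset D(B)}\max\cdots\leq E_j(B)$, as desired. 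The only delicate step is the density extension in the first paragraph; the rest is a direct application of the minimax principle.
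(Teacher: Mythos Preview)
Your proof is correct and follows exactly the approach the paper indicates: the paper does not actually give a detailed proof of this lemma but simply states that it is ``a simple consequence of the minimax and maximin formulae'' with a reference to Helffer's spectral theory book. Your argument spells out this standard minimax comparison, including the careful density extension via the closed graph theorem, which the paper leaves implicit.
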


We now consider the operator 
$$P(h) = -h^2 \frac{d^2}{d\x^2} + \left(\frac{\x^2}{2}-\eps(h)\right)^2, \quad D(P(h)) = \Big\{u \in L^2(\R),  -h^2 \frac{d^2}{d\x^2}u +  \left(\frac{\x^2}{2}-\eps(h)\right)^2u \in L^2(\R)\Big\}. $$
Note that 
\begin{align}
\label{e:dvt-Ph}
P(h) = -h^2 \frac{d^2}{d\x^2} + \frac{\x^4}{4}-\eps(h)\x^2 + \eps(h)^2 \,  .
\end{align}
Since $-\eps(h)\x^2 + \eps(h)^2$ is a relatively compact perturbation of $-h^2 \frac{d^2}{d\x^2}+  \frac{\x^4}{4}$, we notice that 
$$D(P(h)) = \Big\{u \in L^2(\R),  -h^2 \frac{d^2}{d\x^2}u + \frac{\x^4}{4} u \in L^2(\R)\Big\}   $$ 
does not depend on $\eps(h)$.

\begin{proposition}
\label{p:semiclass-perturb}
{\sl For any $L>0$, there are two continuous nondecreasing functions $\Gamma_\pm : \R_+\to \R_+$ such that $\Gamma_\pm(\eps_0)>0$ for $\eps_0 >0$ and $\Gamma_\pm(0)=0$ satisfying the following statement. For all~$\eps_0 >0$ and all $|\eps(h)| \leq \eps_0$, we have 
$$
 \Vol_L- \Gamma_-(\eps_0)+ o(1) 
 \leq (2\pi h) \sharp \left( \Sp(P(h)) \cap [0,L]\right) 
\leq  \Vol_L + \Gamma_+(\eps_0)+ o(1) 
$$
as $h \to 0^+$, where
$$
\Vol_L \eqdefa  \Vol \Big\{(\x,\xi) \in\R \times \R ,  \xi^2 + \frac{\x^4}{4} \leq L \Big\} = \int_{\{ \xi^2 + \frac{\x^4}{4} \leq L \}} d\x \, d\xi  .
$$ In particular, if $\eps(h) \to 0$ as $h \to 0^+$, we have 
$$
 (2\pi h) \sharp \left( \Sp(P(h)) \cap [0,L]\right) 
=  \Vol_L + o(1) \quad \text{ as }h \to 0^+  \, .
$$}
\end{proposition}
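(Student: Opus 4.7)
The plan is to realize $P(h)$ as a relatively compact perturbation of a pure quartic semiclassical Schr\"odinger operator, and then to sandwich it (in the quadratic form sense) between two pure quartic operators to which Theorem~\ref{t:semiclassical-weyl} applies directly. First I would expand the potential as in~\eqref{e:dvt-Ph} and use a Young-type inequality to absorb the crossed term: for any $\beta >0$ and any $|\eps(h)| \leq \eps_0$,
\begin{equation*}
\left|\eps(h) \x^2\right| \leq \beta\, \x^4 + \frac{\eps_0^2}{4\beta} \, .
\end{equation*}
Choosing $\beta = \eps_0$ (any $\beta$ with $\beta \to 0$ and $\eps_0^2/\beta \to 0$ as $\eps_0 \to 0$ would do), this yields pointwise bounds
\begin{equation*}
V_-^{\eps_0}(\x) \leq \left( \frac{\x^2}{2} - \eps(h) \right)^2 \leq V_+^{\eps_0}(\x) \, , \quad V_\pm^{\eps_0}(\x) \eqdefa \left( \frac14 \pm \eps_0 \right) \x^4 \pm C(\eps_0) \, ,
\end{equation*}
with an explicit constant $C(\eps_0) = \eps_0/4 + \eps_0^2$ (say) satisfying $C(\eps_0) \to 0$ as $\eps_0 \to 0$. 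The natural auxiliary operators
\begin{equation*}
P_\pm^{\eps_0}(h) \eqdefa -h^2 \frac{d^2}{d\x^2} + V_\pm^{\eps_0}(\x)
\end{equation*}
have quartic confining potentials (for $\eps_0 < 1/4$) and share the common form domain $\big\{u \in H^1(\R) : \x^2 u \in L^2(\R)\big\}$ with $P(h)$.

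The next step is to invoke the comparison Lemma~\ref{t:comparison}: the inequalities $P_-^{\eps_0}(h) \leq P(h) \leq P_+^{\eps_0}(h)$ in the quadratic form sense, combined with the fact that all three operators have the same form domain, imply that the $j$-th eigenvalues (in increasing order) satisfy $E_j(P_-^{\eps_0}(h)) \leq E_j(P(h)) \leq E_j(P_+^{\eps_0}(h))$. Turning this into a statement on counting functions, one obtains
\begin{equation*}
\sharp \big( \Sp(P_+^{\eps_0}(h)) \cap [0,L] \big) \leq \sharp \big( \Sp(P(h)) \cap [0,L] \big) \leq \sharp \big( \Sp(P_-^{\eps_0}(h)) \cap [0,L] \big) \, .
\end{equation*}
Theorem~\ref{t:semiclassical-weyl} applies to both $P_\pm^{\eps_0}(h)$ (their potentials are smooth, positive at infinity, with polynomially controlled derivatives), yielding as $h\to 0^+$,
\begin{equation*}
(2\pi h) \sharp \big( \Sp(P_\pm^{\eps_0}(h)) \cap [0,L] \big) = W_L^{\pm}(\eps_0) + o(1) \, ,
\end{equation*}
with $W_L^{\pm}(\eps_0) \eqdefa \Vol\big\{ (\x,\xi) : \xi^2 + \big(\frac14 \pm \eps_0\big)\x^4 \leq L \mp C(\eps_0) \big\}$.

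Finally, I would define
\begin{equation*}
\Gamma_-(\eps_0) \eqdefa \Vol_L - W_L^+(\eps_0) \, , \qquad \Gamma_+(\eps_0) \eqdefa W_L^-(\eps_0) - \Vol_L \, ,
\end{equation*}
which are both nonnegative, and check that they are continuous, nondecreasing, strictly positive for $\eps_0>0$, and vanishing at $\eps_0=0$. Continuity and the limit at $0$ follow by dominated convergence in the volume integral, exploiting that $C(\eps_0)\to 0$; monotonicity follows from monotonicity of the inclusions of the sublevel sets in $\eps_0$; strict positivity for $\eps_0>0$ comes from the fact that the sublevel set of $\xi^2+\frac14 \x^4 =L$ has non-empty open neighborhoods inside/outside. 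This gives the quantitative estimate exactly as stated, and the final ``in particular'' claim then follows by taking a sequence $\eps_0 = \eps_0(h) \to 0$ slowly enough that $|\eps(h)|\leq \eps_0(h)$ for $h$ small. The only mildly delicate point I anticipate is verifying the coincidence of form domains so that Lemma~\ref{t:comparison} applies with a dense common subspace; this is harmless since $V$ and $V_\pm^{\eps_0}$ differ by at most a quadratic term plus a constant, so all three operators have form domain $H^1(\R)\cap L^2(\R, \x^4 d\x)$.
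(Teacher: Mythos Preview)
Your argument is correct and follows the same overall strategy as the paper: sandwich $P(h)$ between two comparison Schr\"odinger operators via pointwise potential bounds, transfer this to eigenvalue ordering by Lemma~\ref{t:comparison}, and then apply the Weyl law (Theorem~\ref{t:semiclassical-weyl}) to the two comparison operators to define $\Gamma_\pm$ as phase-space volume defects.

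The only substantive difference is your choice of comparison potentials. The paper simply replaces $-\eps(h)\x^2+\eps(h)^2$ by its obvious pointwise envelopes $-\eps_0\x^2$ and $\eps_0\x^2+\eps_0^2$, so that $P^-(h)=-h^2\partial_\x^2+\tfrac{\x^4}{4}-\eps_0\x^2$ and $P^+(h)=-h^2\partial_\x^2+(\tfrac{\x^2}{2}+\eps_0)^2$; these remain confining for \emph{all} $\eps_0>0$. Your Young-inequality trick yields the cleaner pure-quartic potentials $(\tfrac14\pm\eps_0)\x^4\pm C(\eps_0)$, which has the aesthetic advantage that the corresponding phase-space volumes can be computed exactly by the scaling of Remark~\ref{r:homogeneity-L}, but the drawback (which you note) that $P_-^{\eps_0}(h)$ is only confining for $\eps_0<1/4$. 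This is harmless for the application in Lemma~\ref{l:asympt-k-grand}, where only small $\eps_0$ matters, and you can extend $\Gamma_\pm$ to $[1/4,\infty)$ by any continuous nondecreasing continuation (the lower bound becoming trivial once $\Gamma_-(\eps_0)\geq \Vol_L$); but strictly speaking the proposition is stated for all $\eps_0>0$, and the paper's choice covers that range without any patching.
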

\begin{remark}
\label{r:homogeneity-L}{\sl 
Notice that we can take advantage of the homogeneity of the symbol $\xi^2 + \frac{\x^4}{4}$ to prove that $\Vol_L = L^{3/4} \Vol_1$.
Indeed, we have explicitly
\begin{align*}
\Vol_L 
& = \int_{x_-(L)}^{x_+(L)} \sqrt{L -  \frac{x^4}{4} } dx , \quad \text{ where }\quad \frac{x_\pm(L)^4}{4} = L , \quad \pm x_\pm(L)>0 \\
& = \int_{-(4L)^{1/4}}^{(4L)^{1/4}} \sqrt{L -  \frac{x^4}{4} } dx , \quad \text{ and thus, setting }y \eqdefa  x/L^{1/4} ,\\
& =  \int_{-4^{1/4}}^{4^{1/4}} \sqrt{L -  L \frac{y^4}{4} } L^{1/4}dy 
=  L^{3/4}\int_{-4^{1/4}}^{4^{1/4}} \sqrt{1 - \frac{y^4}{4} } dy  
=L^{3/4} \Vol_1  \, .
\end{align*}}
\end{remark}
The proof of the proposition relies on a comparison argument using Theorem~\ref{t:semiclassical-weyl} and Lemma~\ref{t:comparison}.
\begin{proof}[Proof of Proposition~{\rm\ref{p:semiclass-perturb}}]
For $|\eps(h)|\leq \eps_0$, we define 
\begin{align*}
P^-(h) & = -h^2 \frac{d^2}{d\x^2} + \frac{\x^4}{4} -\eps_0 \x^2, \\
P^+(h) & = -h^2 \frac{d^2}{d\x^2} + \frac{\x^4}{4} + \eps_0 \x^2 +\eps_0^2 = -h^2 \frac{d^2}{d\x^2} + \left( \frac{\x^2}{2} + \eps_0\right)^2 \,  ,
\end{align*}
with respective domains defined as in~\eqref{e:Ph-def}. According to the same remarks as above, we have~$D(P^\pm(h))=D(P(h))$.
According to~\eqref{e:dvt-Ph}, we further notice that 
$$
\left( P^-(h) u ,  u \right)_{L^2(\R)} \leq \left( P(h) u ,  u \right)_{L^2(\R)} \leq \left( P^+(h) u ,  u \right)_{L^2(\R)} \quad \text{ for all } u \in \mathcal{S}(\R)  \, ,
$$ 
where $\mathcal{S}(\R)$ is dense in $D(P(h))$.

For $j \in \N$, we now denote by $E_j^\pm$ (resp.~$E_j$) the $j-$th eigenvalue of the operator~$P^\pm(h)$~(resp.~$P(h)$), defined by the minimax formula. Lemma~\ref{t:comparison} yields for all $j \in \N$ and $h>0$
$$
E_j^- \leq E_j \leq E_j^+ .
$$ 
As a consequence, for any $L,h >0$,
$$
\sharp \left\{j \in \N,  E_j^+ \leq L \right\} \leq \sharp \left( \Sp(P(h)) \cap [0,L]\right) =\sharp \left\{j \in \N,  E_j \leq L \right\} \leq \sharp \left\{j \in \N,  E_j^- \leq L \right\}  \, .
$$
Theorem~\ref{t:semiclassical-weyl} then implies that for any $\eps_0, L >0$ we have in the limit $h \to 0^+$;
$$
  \int_{\{ \xi^2 +  \left( \frac{\x^2}{2} + \eps_0\right)^2 \leq L \}} d\x\, d\xi+ o(1) 
 \leq (2\pi h) \sharp \left( \Sp(P(h)) \cap [0,L]\right) 
\leq  \int_{\{ \xi^2 + \frac{\x^4}{4} -\eps_0 \x^2\leq L \}} d\x\, d\xi+ o(1)  \, . 
$$
The sought result follows by taking $\Gamma(\eps_0) =\max \{\Gamma_+(\eps_0),\Gamma_-(\eps_0)\}$ with 
\begin{align*}
\Gamma_+(\eps_0)&\eqdefa  \int_{\{ \xi^2 + \frac{\x^4}{4} -\eps_0 \x^2\leq L \}} d\x\, d\xi - \int_{\{ \xi^2 +  \frac{\x^4}{4}  \leq L \}} d\x\, d\xi  \, , \\
\Gamma_-(\eps_0)&\eqdefa  \int_{\{ \xi^2 +  \frac{\x^4}{4}  \leq L \}} d\x\, d\xi -\int_{\{ \xi^2 +  \left( \frac{\x^2}{2} + \eps_0\right)^2 \leq L \}} d\x\, d\xi  \, .
\end{align*}
and noticing that $\Gamma$ has the desired properties. \end{proof}

\end{document}